\documentclass[11pt]{amsart}

\usepackage[margin=3cm]{geometry}
\usepackage[T1]{fontenc}
\usepackage{lmodern}
\usepackage{microtype}
\usepackage{mathtools,amssymb,amsthm,mathrsfs}
\usepackage{comment}
\usepackage{hyperref}
\usepackage{tikz}
\numberwithin{equation}{section}
\usepackage[foot]{amsaddr}
\allowdisplaybreaks

\theoremstyle{plain}
\newtheorem{theorem}{Theorem}[section]
\newtheorem{lemma}[theorem]{Lemma}
\newtheorem{corollary}[theorem]{Corollary}

\newtheorem{definition}{Definition}[section]
\theoremstyle{remark}
\newtheorem{remark}[theorem]{Remark}

\title[]
{Singular analysis of the principal eigenvalue for cooperative
elliptic systems with drifts}
\author[]{Shuang Liu}

\email{liushuangnqkg@bit.edu.cn
}

\thanks{{S. Liu}: School of Mathematics and Statistics, Beijing Institute of Technology, Beijing, 100081, 
China.
}

\subjclass[2010]{35P15, 
34D20, 
37N25
}

\keywords{Principal eigenvalue, cooperative
systems with drifts, 
   asymptotic analysis. 
   }
\begin{document}


\begin{abstract}
We determine the small-diffusion limit of the principal eigenvalue for
a one-dimensional 
cooperative elliptic system with
component-dependent drifts and Neumann boundary conditions. For scalar
equations, this limit is determined by local data at the drift
equilibria and boundary points. We show that such localization fails
for systems: switching between components can make some intervals
spectrally relevant. We first prove that the logarithmic transforms of all component eigenfunctions converge to
the same function, which solves a scalar Hamilton-Jacobi equation. The
associated Aubry set determines the asymptotic behavior of the principal eigenvalue. Under nondegeneracy assumptions, this set
decomposes into isolated points, regular intervals, and composite
intervals containing common drift zeros. We then identify the effective
value of each class through local Ornstein-Uhlenbeck spectra and
interval transport problems, and establish that the limit of the principal eigenvalue is
the minimum of these class values.  A unique minimizing class also
determines the common logarithmic eigenfunction profile.
\end{abstract}

\maketitle

\section{\bf Introduction}
\label{sec:introduction-main}


Let $\ell<r$, and consider the coupled elliptic eigenvalue problem
\begin{equation}\label{eq:main-system}
 \begin{cases}
 -d\varphi_{i}''-b_i(x)\varphi_{i}'
 +\displaystyle\sum_{j=1}^2M_{ij}(x)\varphi_{j}
 =\lambda\varphi_{i},&x\in(\ell,r),\quad i=1,2,\\[1mm]
 \varphi_{i}'(\ell)=\varphi_{i}'(r)=0,&i=1,2,
 \end{cases}
\end{equation}
Here $d>0$ is the diffusion rate,
$b_i\in C^1([\ell,r])$, and
$\boldsymbol M=(M_{ij})\in
C^1([\ell,r];\mathbb R^{2\times2})$. We assume
$M_{12},M_{21}<0$, so that \eqref{eq:main-system} is strongly
cooperative. By the Krein-Rutman theorem \cite{CWW2020,KR1950}, the problem
has a real and simple principal eigenvalue, denoted by  $\lambda(d)$. It is the
eigenvalue with the smallest real part and has an eigenfunction whose
two components are strictly positive.  Our aim is to study the limit of
$\lambda(d)$ as $d\searrow0$. For the sake of clarity and tractability, we restrict our attention to the one-dimensional case in this paper. The more general higher-dimensional results will be presented in subsequent work.

The principal eigenvalue of  diffusion-advection operators characterizes the delicate balance among dispersal and directed motion \cite{ALL2015}. Its asymptotic behavior in the small-diffusion regime plays a pivotal role in determining the stability, persistence, and invasion dynamics of spatially heterogeneous models \cite{CC2003,LL2022book}. Classical approaches to this singular limit,
via local asymptotic analysis and large deviations \cite{DevinatzEllisFriedman1974,EizenbergKifer1987, Friedman1973,Kifer1990,Wentzell1975}, have established some fundamental connections between the limiting eigenvalue and the underlying drift structure. 
However, a central issue remains largely unresolved: 
which parts of
the spatial domain determine the limit of the principal eigenvalue as diffusion vanishes. 

For scalar elliptic operators with  gradient drifts, Chen and Lou
\cite{ChenLou2012} obtained an explicit limit under nondegeneracy and
appropriate boundary hypotheses. The candidates are local values at
interior equilibria and the relevant boundary critical points,
determined by the zero-order coefficient and the linearized drift.
In this setting, finitely many nondegenerate equilibria determine the asymptotic behaviors of the principal eigenvalue. We also refer to recent progress on the scalar elliptic problem with divergence-free drifts \cite{BHN2005,LL2024,M2025}, and to related small-diffusion results for the general boundary conditions \cite{GuoLouZhang2025,LL2024,PengZhangZhou2019,PR2016}.

For cooperative systems without the advection term,
it was shown in  \cite{Dancer2009,LamLou2016} that the
small-diffusion limits of the principal eigenvalue is determined by the pointwise matrix eigenvalues. 
Similar observation was found in \cite{WangLamZhang2025} for the cooperative system with a common drift $b_1=b_2$ and \cite{BH2020,BHN2022,L2025} for the time-periodic parabolic systems; see also \cite{GM2026,LRW2026} for some related results. 
However, the presence of fixed, component-dependent drifts poses a qualitatively different question: If each drift
has only finitely many nondegenerate equilibria, does local information
at these equilibria and the boundary still determine the limit?
Unlike the scalar case, the coupling between components permits a spatial itinerary that may involve more than one drift. Hence, the dynamics of either component alone need not identify the relevant spectral regions. In this paper, we investigate this issue for the case of two components on an interval.


Our analysis starts with the logarithmic eigenfunctions.
Their common limit solves a scalar Hamilton-Jacobi equation whose
Aubry set depends only on the drifts. We classify its one-dimensional
static classes and identify the effective spectral problem on each,
including the endpoint conditions inherited from
\eqref{eq:main-system}.

\subsection{The logarithmic limit and the Aubry set}

The relevant geometry is  naturally seen after taking logarithms, as the following result shows.

\begin{theorem}
\label{thm:main-HJ-limit}
Let
$\boldsymbol\varphi_d=(\varphi_{1,d},\varphi_{2,d})^{\mathsf T}\gg0$
be the principal eigenfunction of \eqref{eq:main-system}, normalized
by $\max_{i,x}\varphi_{i,d}(x)=1$, and set
$W_{i,d}=-d\log\varphi_{i,d}$. For every sequence
$d_k\searrow0$, there is a subsequence along which
$W_{1,d_k}$ and $W_{2,d_k}$ converge uniformly on
$[\ell,r]$ to the same Lipschitz function $W$. The limit $W$
is a viscosity solution of the Hamilton-Jacobi equation
\begin{equation}\label{main:eq:intro-HJ-Neumann}
 \begin{cases}
 \medskip
 \displaystyle\max_{1\leq i\leq 2}\left\{|W'|^2- b_i(x)W'\right\}=0,\quad x\in (\ell, r),\\
 \displaystyle
 W'(\ell)=W'(r)=0,\qquad 
 \min_{x\in[\ell,r]}W(x)=0.\end{cases}
\end{equation}
\end{theorem}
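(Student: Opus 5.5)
The plan is to start from the equations satisfied by $W_{i,d}$. Writing $\varphi_{i,d}=e^{-W_{i,d}/d}$ and substituting into \eqref{eq:main-system} gives, for $i=1,2$ and $j\neq i$,
\begin{equation*}
 d W_{i,d}''-|W_{i,d}'|^2+b_i W_{i,d}' + d\,M_{ii} + d\,M_{ij}\,e^{(W_{i,d}-W_{j,d})/d}=d\lambda(d),
\end{equation*}
with $W_{i,d}'(\ell)=W_{i,d}'(r)=0$. Since $M_{ij}<0$, the coupling term $d\,M_{ij}e^{(W_{i,d}-W_{j,d})/d}$ is nonpositive. The first step is a uniform bound on $\lambda(d)$. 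Comparing with constants should give $|\lambda(d)|\le C$: the lower bound comes from evaluating each component at its minimum point, and the upper bound from testing with a positive constant supersolution, or by using the adjoint together with Krein--Rutman monotonicity.

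The second step, which I expect to be the main obstacle, is a uniform Lipschitz bound on $W_{i,d}$ that is independent of $d$. The difficulty is the coupling term, which is unbounded above in absolute value whenever $W_{i,d}\gg W_{j,d}$. Its sign helps, though. At an interior maximum point of $p=W_{i,d}'$, or equivalently in a Bernstein-type argument on the ODE, we have
\begin{equation*}
|p|^2-b_ip\le d\,W_{i,d}''+C d,
\end{equation*}
because the coupling term only makes the left-hand side of the original equation smaller. In one dimension I would argue directly on the ODE for $p$. Where $|p|$ exceeds $2\|b_i\|_\infty+C$, we get $d\,p'\ge |p|^2/2>0$, so $p$ is strictly increasing there. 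Combined with $p(\ell)=p(r)=0$, this prevents $p$ from ever reaching large positive values, since it could not come back down to $0$ at $r$. For large negative values I would integrate from $\ell$ instead, exploiting the same monotonicity. This gives $\|W_{i,d}'\|_\infty\le C$.

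The third step is to show $|W_{1,d}-W_{2,d}|\to0$ uniformly. Suppose $W_{1,d}-W_{2,d}$ attains a maximum $\delta>0$ at $x_0$. At $x_0$ (if it is interior, or via the Neumann condition if it is a boundary point) the derivatives agree and $W_{1,d}''\le W_{2,d}''$. Subtracting the two equations, the difference of the gradient terms is $(b_1-b_2)W'$, which is bounded. The coupling term in the equation for $W_{1,d}$ equals $d\,M_{12}e^{\delta/d}$, which is of size $-c\,d\,e^{\delta/d}$. All the remaining terms are $O(1)$. Hence $d\,e^{\delta/d}\le C$, which forces $\delta\le C d|\log d|\to0$. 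The same argument applies with the indices swapped.

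The final step is compactness and stability. By Arzelà--Ascoli, a subsequence of the $W_{i,d_k}$ converges uniformly to a common Lipschitz $W$. The normalization $\max\varphi=1$ gives $\min_i\min_x W_{i,d}=0$, and hence $\min W=0$. To prove the viscosity inequalities, take a smooth test function $\phi$ touching $W$ from above at $x_0$, perturbed so the contact is strict. Then $W_{i,d}-\phi$ has nearby maxima $x_d\to x_0$, and since the coupling term is $\le0$ we get $-|\phi'|^2+b_i\phi'\ge -o(1)$ for each $i$. This yields the subsolution property $\max_i\{|\phi'|^2-b_i\phi'\}\le0$.

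For the supersolution direction, take $\phi$ touching $W$ from below. I would choose the index $i$ with the smaller value $W_{i,d}(x_d)$ at the corresponding minimum point. For that index the coupling exponent is $\le0$ up to $o(1)$ deviations, so the coupling term is $O(d)$. This gives $|\phi'|^2-b_i\phi'\ge -o(1)$ for that $i$, and hence the max over $i$ is $\ge0$. Choosing this index consistently needs care, and I would handle it by taking minima of $\min_i(W_{i,d}-\phi)$.

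The Neumann conditions are understood in the viscosity sense at $\ell$ and $r$. They follow in the same way, since the boundary condition $W_{i,d}'=0$ excludes the Neumann alternative exactly as in the standard stability argument.
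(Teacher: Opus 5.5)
Your proof is correct. The viscosity-stability part is essentially the paper's argument. Choosing at $x_d$ an index that minimizes $W_{i,d}-\phi$ is the paper's perturbed-test-function selection with $\boldsymbol v=(1,1)$. It is this exact choice, not an $o(1)$ approximation, that gives $\mathrm e^{(W_{i,d}-W_{j,d})(x_d)/d}\le1$. In the subsolution step you use the sign of the coupling term directly instead of $\mathrm e^r-1\ge r$.

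Where you genuinely differ is in the a priori estimates, which you derive in the opposite order. The paper first obtains the phase gap $\|W_{1,d}-W_{2,d}\|_\infty\le d\log(C/d)$ from a quantitative Harnack inequality at scale $d$ (weak Harnack, a Green-kernel lower bound, boundary flattening). It then proves a Bernstein estimate for $\mathrm e^{-A\rho}|\nabla W_{i,d}|^2$ maximized over components, which differentiates the equation and uses that gap.

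You get the Lipschitz bound first. It comes from the Riccati inequality $dp'\ge p^2-b_ip-Cd$, which holds everywhere because the coupling term is nonpositive, together with $p(\ell)=p(r)=0$. Only then do you get the same $d\log(C/d)$ gap, from the maximum principle at a maximum of $W_{1,d}-W_{2,d}$. There $W_{1,d}''\le W_{2,d}''$ has the favorable sign and the gradient bound controls $(b_1-b_2)W'$.

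Your route is entirely elementary, but its gradient step is intrinsically one-dimensional: it relies on the ordering of $[\ell,r]$ and the boundary values of $p$. The paper's route is built to prove Theorem~\ref{thm:common-HJ-limit} for $m$ components on $C^2$ domains in $\mathbb R^N$. Your maximum-principle gap argument does extend to $m$ components by maximizing over ordered pairs. However, it needs the gradient bound as input, whereas the paper's Harnack gap does not.

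A minor point on the bound for $\lambda(d)$: you have the directions muddled. A positive constant vector $\boldsymbol e$ with $\boldsymbol M\boldsymbol e\le\mu\boldsymbol e$ is a subsolution and yields the upper bound $\lambda(d)\le\mu$. The lower bound from a minimum of $W$ needs the minimum over both components, so that the coupling exponent is nonpositive. This is standard and does not affect the argument.
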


Theorem~\ref{thm:main-HJ-limit} is the two-component,
one-dimensional specialization of
Theorem~\ref{thm:common-HJ-limit} in
Section~\ref{sec:HJ-Aubry-selection}, where the result is proved
for any finite number of components in arbitrary dimension. We refer to \cite{B2013,CrandallLions1983, L1985} for the definition of the viscosity solution of \eqref{main:eq:intro-HJ-Neumann}, for which  the Hamiltonian is
$H(x,p)=\max_{i=1,2}\{p^2-b_i(x)p\}$. Strict cooperativity forces
the two logarithmic profiles to have the same limit, but
$\boldsymbol M$ itself disappears from $H$. The Hamilton-Jacobi
equation \eqref{main:eq:intro-HJ-Neumann} therefore locates the possible concentration set but does
not yet determine the limiting behaviors of the principal  eigenvalue. 

To describe the geometry from $H$, let $d_H(x,y)$ be the supremum of
$u(x)-u(y)$ over all Lipschitz viscosity subsolutions $u$ of
\eqref{main:eq:intro-HJ-Neumann}, with the relaxed boundary condition
included. The Aubry set $\mathcal A_N$ consists of those $y$ for
which $d_H(\,\cdot\,,y)$ is also a supersolution. The representation
$W(x)=\min_{y\in\mathcal A_N}\{W(y)+d_H(x,y)\}$ proved in 
\cite[Theorem~6.8]{Ishii2011} explains its relevance: once the values
of $W$ on $\mathcal A_N$ are known, the solution $W$ is determined
everywhere. Two Aubry points lie in the same static class when
$d_H(x,y)+d_H(y,x)=0$. Every critical solution is constant on each
such class.

Set $b_-=\min\{b_1,b_2\}$ and $b_+=\max\{b_1,b_2\}$,
Theorem~\ref{thm:exact-one-dimensional-Aubry} establishes that
\begin{equation}\label{eq:main-Aubry-set}
 \begin{aligned}
 \mathcal A_N&=\{x\in[\ell,r]:b_1(x)b_2(x)\leq0\}\cup\{\ell:b_-(\ell)\leq0\}
   \cup\{r:b_+(r)\geq0\}.
 \end{aligned}
\end{equation}
Hence, the static classes are the connected components of
$\mathcal A_N$. 
Indeed, on an interval where the drifts have opposite signs,
$H(x,p)\leq0$ forces $p=0$, and the critical semidistance
vanishes in both directions. 
For the further  analysis we impose the following assumption:
\begin{itemize}
    \item[(H)] $b_i'(z)\ne0$ if  $ b_i(z)=0$, and $b_i(\ell)b_i(r)\ne0$ for $i=1,2$.
\end{itemize}
This assumption leaves only finitely many drift zeros and hence
finitely many static classes. We denote their family by
$\mathcal C$. Exactly three geometries can occur.  An
\emph{isolated class} is either an interior common zero with
same-sign drift derivatives or an additional boundary singleton. A
\emph{regular interval class} has opposite-sign drifts in its
interior and contains no interior drift zero. Finally, a
\emph{composite interval class} contains common zeros with opposite
drift derivatives. These common zeros divide the interval into
regular transport cells. The examples in Figures~\ref{fig:quadratic-static-classes} and
\ref{fig:moving-minimum-classes} show all three possibilities.

\subsection{Class values and the selection theorem}

Once the static classes are known, we next attach an
effective value to each of them. For an irreducible matrix
$\boldsymbol A$ with negative off-diagonal entries, let
$\sigma(\boldsymbol A)$ denote the eigenvalue associated with a
positive eigenvector, and write $s_+=\max\{s,0\}$. If
$K=\{z\}$ is an interior isolated class, set
$\Lambda_K=\sigma(\boldsymbol M(z)+
\operatorname{diag}((b_1'(z))_+,(b_2'(z))_+))$.
For a boundary singleton, $\Lambda_K=\sigma(\boldsymbol M(z))$.
Under the assumption (H),  a boundary singleton occurs
at $\ell$ precisely when both drifts are negative there, and at
$r$ precisely when both are positive. Section~\ref{sec:isolated-classes}
derives these values from the corresponding frozen local problems.

For an interval class, the effective value comes from a transport
problem with endpoint conditions determined by the drift geometry.
At an interior endpoint of a regular interval, one drift vanishes.
For that component, consider the forward flow
$\dot x=b_i(x)$. The endpoint is \emph{repelling} if trajectories
starting nearby on the interval side move away from it, and
\emph{attracting} if they move towards it. An endpoint at $\ell$
or $r$ is a \emph{domain endpoint}, where the limiting condition
reflects the Neumann boundary condition.

\begin{definition}\label{reg:def:regular-interval}
Let $I=[a,b]\in\mathcal C$ be regular, and choose
$\{p,n\}=\{1,2\}$ so that $b_p>0>b_n$ in $(a,b)$.
Its endpoint types  can be classified as follows:
\[
 \begin{aligned}
 \operatorname{type}(a)&=
 \begin{cases}
 \mathrm R,&a>\ell,\ b_p(a)=0,\\
 \mathrm A,&a>\ell,\ b_n(a)=0,\\
 \mathrm D,&a=\ell,
 \end{cases}
 &\quad
 \operatorname{type}(b)&=
 \begin{cases}
 \mathrm R,&b<r,\ b_n(b)=0,\\
 \mathrm A,&b<r,\ b_p(b)=0,\\
 \mathrm D,&b=r.
 \end{cases}
 \end{aligned}
\]
Let $X=\operatorname{type}(a)$ and $Y=\operatorname{type}(b)$, with
$X,Y\in\{\mathrm R,\mathrm A,\mathrm D\}$. The regular transport value $\Lambda_{XY}^{\mathrm{reg}}(I)$ is the unique $\lambda$ for which the transport equation
\begin{equation}\label{reg:eq:transport-system}
 \begin{cases}
 -b_pu_p'+(M_{pp}-\lambda)u_p+M_{pn}u_n=0,\\
 -b_nu_n'+M_{np}u_p+(M_{nn}-\lambda)u_n=0,
 \end{cases}
 \qquad a<x<b,
\end{equation}
has a strictly positive profile
$\boldsymbol u\in C^1_{\mathrm{loc}}((a,b);\mathbb R^2)
\cap L^1((a,b);\mathbb R^2)$ satisfying
\begin{equation}\label{reg:eq:endpoint-conditions}
 \begin{aligned}
 x=a:\quad&
 \begin{cases}
 u_n(a)=0,&X=\mathrm R,\\
 M_{np}(a)u_p(a)+(M_{nn}(a)-\lambda)u_n(a)=0,
     &X\in\{\mathrm A,\mathrm D\},
 \end{cases}\\[1mm]
 x=b:\quad&
 \begin{cases}
 u_p(b)=0,&Y=\mathrm R,\\
 (M_{pp}(b)-\lambda)u_p(b)+M_{pn}(b)u_n(b)=0,
     &Y\in\{\mathrm A,\mathrm D\}.
 \end{cases}
 \end{aligned}
\end{equation}
We can 
abbreviate $\Lambda_{XY}^{\mathrm{reg}}(I)$  as $\Lambda_I^{\mathrm{reg}}$ when the types need not
be displayed. 
\end{definition}

Lemma~\ref{reg:lem:transport-value} establishes the existence and
uniqueness of \(\Lambda_I^{\mathrm{reg}}\), with a positive profile
unique up to multiplication by a positive constant. The endpoint
conditions \eqref{reg:eq:endpoint-conditions} reflect the drift
geometry: a repelling endpoint imposes zero incoming data, whereas
an attracting or domain endpoint imposes an algebraic relation. These distinctions
are visible already in Corollary~\ref{cor:quadratic-potentials}. 
The condition $b_p>0>b_n$
 in  Definition \ref{reg:def:regular-interval} implies that two
families of characteristics are directed to the right and left,
respectively. The coupling has a probabilistic interpretation:
the component index switches from \(i\) to \(j\) at rate
\(-M_{ij}\), while the spatial position follows
\(\dot x=b_i(x)\) between switches
\cite{BakhtinHurthMattingly2015,BenaimEtAl2015}.
Switching therefore allows successive portions of a trajectory to
follow different drifts and reverse direction within the interval.
This provides an interpretation of the interval contribution in
the selection formula, whose value can depend on interior
coefficients beyond the local data at drift zeros and domain
endpoints.


A common zero requires an additional local value. Although the
critical semidistance vanishes across it, a common zero cannot be
crossed by a finite concatenation of drift trajectories. A composite interval is therefore
one static class containing several transport cells. 

\begin{definition}\label{def:main-class-values}
Let $K=[a,b]\in\mathcal C$ be composite, with interior common
zeros $\mathscr Z_K=\{z_1<\cdots<z_q\}$. Set $z_0=a$,
$z_{q+1}=b$, and $G_j=(z_j,z_{j+1})$. At each common zero,
define 
$$\omega_z=\max_{0\leq\vartheta\leq1}
\sigma\left(\boldsymbol M(z)+\vartheta
\operatorname{diag}(b_1'(z),b_2'(z))\right).$$
For a cell $G=(\alpha,\beta)$, label the components so that
$b_p>0>b_n$ in $G$. Move each common-zero endpoint inward by
$\delta_\xi>0$, leaving the endpoints of $K$ unchanged.
Let $\Lambda_{G,\boldsymbol\delta}$ be the transport eigenvalue
on the resulting finite interval, with zero incoming data
$u_n(\alpha+\delta_\alpha)=0$ at an artificial left endpoint
and $u_p(\beta-\delta_\beta)=0$ at an artificial right endpoint.
At every unchanged endpoint retain
\eqref{reg:eq:endpoint-conditions} and its trace requirement. Define
\begin{equation}\label{eq:main-composite-value}
 \Lambda_G^{\mathrm{tr}}
   =\lim_{\substack{\delta_\xi\searrow0\\
                    \xi\in\partial G\cap\mathscr Z_K}}
       \Lambda_{G,\boldsymbol\delta},
 \qquad
 \Lambda_K=\Lambda_K^{\mathrm{comp}}
   =\min\left\{\min_{0\leq j\leq q}\Lambda_{G_j}^{\mathrm{tr}},
               \min_{1\leq j\leq q}\omega_{z_j}\right\}.
\end{equation}
The existence of the limit in \eqref{eq:main-composite-value} is established in Lemma {\rm\ref{comp:lem:cell-exhaustion}}.
\end{definition}


Let $K$ be a composite interval. Since
$b_1(x)b_2(x)\leq0$ on $K$, the Hamiltonian satisfies $\{p\in\mathbb R:H(x,p)\leq0\}=\{0\}$ for all $x\in K$. 
Every critical subsolution is therefore constant on $K$, and hence
$d_H(x,y)=d_H(y,x)=0$ for all $x,y\in K$. Thus $K$ is one
static class in the Neumann weak KAM framework of
\cite{Ishii2011}.
This conclusion concerns the critical semidistance, not finite-time
reachability by the drift flows. Indeed, let $z$ be a common zero.
The trajectory
starting away from $z$ cannot reach or cross $z$ in finite time.
The same conclusion holds for every finite concatenation of the two
flows; see \cite[Section~6]{BakhtinHurthMattingly2015} and
\cite[Section~3.2]{BenaimEtAl2015}. Consequently, the components of
$K\setminus\mathcal Z_K$ are distinct transport cells although they
belong to the same static class. Formula
\eqref{eq:main-composite-value} accounts for both mechanisms by
comparing the cell values with the common-zero
Ornstein-Uhlenbeck values.

Our main result can be stated as follows. 
\begin{theorem}\label{thm:main-one-dimensional}
Suppose that 
{\rm (H)} holds.
Let $\mathcal Z_{\mathrm{iso}}$ be the isolated points of
$\mathcal A_N$, and let $\mathcal C_{\mathrm{reg}}$ and
$\mathcal C_{\mathrm{comp}}$ be its families of regular and
composite interval classes. These sets are finite, and
$$
 \mathcal A_N=\mathcal Z_{\mathrm{iso}}
 \sqcup\Big(\bigsqcup_{I\in\mathcal C_{\mathrm{reg}}}I\Big)
 \sqcup\Big(\bigsqcup_{K\in\mathcal C_{\mathrm{comp}}}K\Big).
$$
Then there holds
$$
 \begin{aligned}
 \lim_{d\searrow0}\lambda(d)=\min\Bigg\{&
 \min_{z\in\mathcal Z_{\mathrm{iso}}\cap(\ell,r)}
 \sigma\!\left(\boldsymbol M(z)
  +\operatorname{diag}((b_1'(z))_+,(b_2'(z))_+)\right),\\
 &\min_{z\in\mathcal Z_{\mathrm{iso}}\cap\{\ell,r\}}
       \sigma(\boldsymbol M(z)),\quad
 \min_{I\in\mathcal C_{\mathrm{reg}}}\Lambda_I^{\mathrm{reg}},\quad
 \min_{K\in\mathcal C_{\mathrm{comp}}}\Lambda_K^{\mathrm{comp}}
 \Bigg\},
 \end{aligned}
$$
where $\Lambda_I^{\mathrm{reg}}$ and
$\Lambda_K^{\mathrm{comp}}$ are defined in
Definitions {\rm\ref{reg:def:regular-interval}}
and {\rm \ref{def:main-class-values}}, respectively. If exactly one static
class $K_*$ realizes the minimum, then
$-d\log\varphi_{i,d}\to d_H(\,\cdot\,,K_*)$ uniformly on
$[\ell,r]$, $i=1,2$, where
$d_H(x,K_*)=\min_{y\in K_*}d_H(x,y)$.
\end{theorem}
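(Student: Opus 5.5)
The proof has two halves: an upper bound $\limsup_{d\searrow0}\lambda(d)\le\Lambda_K$ for every class $K\in\mathcal C$, and a matching lower bound $\liminf_{d\searrow0}\lambda(d)\ge\min_{K}\Lambda_K$. We take Theorem~\ref{thm:main-HJ-limit}, the Aubry set formula \eqref{eq:main-Aubry-set}, and the existence results of Lemma~\ref{reg:lem:transport-value} and Lemma~\ref{comp:lem:cell-exhaustion} as given. Under (H) the drift zeros are finite, so $\mathcal A_N$ has finitely many connected components. The trichotomy into isolated points, regular intervals and composite intervals then follows by inspecting the sign pattern of $b_1,b_2$ near each zero.

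\textbf{Upper bound.} We use the variational characterization of $\lambda(d)$ for cooperative systems: if a positive $\boldsymbol\psi$ satisfies $L_d\boldsymbol\psi\le\mu\boldsymbol\psi$ componentwise, with Neumann-compatible boundary behaviour and $\boldsymbol\psi$ not identically zero, then $\lambda(d)\le\mu$. Sub-eigenfunctions supported in a neighbourhood of $K$ and extended by zero are admissible. The test functions depend on the class.
\begin{itemize}
\item Interior isolated zero $z$: take a Gaussian $e^{-(x-z)^2/2d}$ (or the appropriate Ornstein--Uhlenbeck ground state in each component), multiplied by the positive eigenvector of $\boldsymbol M(z)+\operatorname{diag}((b_i'(z))_+)$ and cut off. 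Component $i$ sees the OU operator $-dv''-b_i'(z)(x-z)v'$, whose bottom spectral value is $(b_i'(z))_+$, which produces the stated $\sigma$.
\item Boundary singleton: use a constant vector near the endpoint. The drift pushes mass toward the wall, so $\sigma(\boldsymbol M(z))$ is obtained.
\item Regular interval $I$: take the positive transport profile $\boldsymbol u$ from Lemma~\ref{reg:lem:transport-value} and regularize it near the endpoints by boundary layers of width $O(d)$ or $O(\sqrt d)$ that realize the conditions \eqref{reg:eq:endpoint-conditions}. Away from the layers, $-d\boldsymbol u''$ is $O(d)$ and the first-order terms cancel by \eqref{reg:eq:transport-system}.
\item Composite class: work cell by cell with the truncated problems $\Lambda_{G,\boldsymbol\delta}$, using zero incoming data at the artificial endpoints, which gives a genuine subsolution after cutoff. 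Then let $\boldsymbol\delta\to0$. The common-zero value $\omega_z$ comes from a two-component OU test function whose scaling parameter $\vartheta$ is optimized.
\end{itemize}

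\textbf{Lower bound.} Normalize $\boldsymbol\varphi_d$ and pass to a subsequence as in Theorem~\ref{thm:main-HJ-limit}, so that $W_{i,d}\to W$ with $W$ critical. By \cite[Theorem~6.8]{Ishii2011}, $W$ vanishes somewhere on $\mathcal A_N$. Outside a neighbourhood of $\mathcal A_N$, $W$ strictly exceeds its minimum on each piece, so $\boldsymbol\varphi_d$ is exponentially small there.

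Rescale $\boldsymbol\varphi_d$ near a class $K_0$ where $\min W=0$ is attained. At an isolated interior point, blow up with $x=z+\sqrt d\,y$; the rescaled system converges to a frozen OU system whose positive solution forces $\lim\lambda=\sigma(\boldsymbol M(z)+\operatorname{diag}((b_i')_+))$. At an interval class, the $\varphi_d$ are bounded in $L^1$ after normalization by mass. They converge weakly to a nonnegative measure solving \eqref{reg:eq:transport-system} with the inherited endpoint conditions. Positivity, together with the uniqueness in Lemma~\ref{reg:lem:transport-value}, identifies $\lim\lambda$ with the class value.

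Since $\lambda(d)$ equals some class value along every subsequence, and is at most every class value by the upper bound, the limit equals the minimum. If the minimizer $K_*$ is unique, $W$ must vanish on $K_*$ and be positive on the other classes; otherwise a second class would carry mass and force its own value. The representation $W=\min_{y\in\mathcal A_N}(W(y)+d_H(\cdot,y))$ then gives $W=d_H(\cdot,K_*)$, and because this limit is independent of the subsequence, the whole family converges.

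\textbf{Main obstacle.} The hardest step is the lower bound on composite classes. The limiting mass can concentrate at a common zero or in a cell with nonintegrable behaviour at that zero, and we must show that the resulting eigenvalue is at least $\min(\Lambda^{\mathrm{tr}}_G,\omega_z)$. The plan is a local Agmon/Liouville analysis at $z$ in the scale $\sqrt d$. If the profile does not decay into the OU regime, the mass in the adjacent cell satisfies the $\boldsymbol\delta$-truncated problem up to errors, and monotonicity of $\Lambda_{G,\boldsymbol\delta}$ in $\boldsymbol\delta$ closes the argument. If it does decay into the OU regime, a principal eigenvalue comparison for $\boldsymbol M(z)+\vartheta\operatorname{diag}(b_i'(z))$ bounds $\lambda$ below by $\omega_z$.
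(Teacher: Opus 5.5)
\textbf{There is a genuine gap in the lower bound.} Your upper-bound half follows the paper's strategy (compactly supported subsolutions, Definition~\ref{def:local-effective-value}(ii)). The lower-bound half fails, beginning with the choice of class.

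You blow up at ``a class $K_0$ where $\min W=0$ is attained'' and assert $\lim\lambda=\Lambda_{K_0}$. This is false, because $W$ can vanish on a class that is fed at zero cost by another one. Take Corollary~\ref{cor:quadratic-potentials}(iii) with $k>0>h$. There $b_+>0$ on $[\ell,x_2)$, so \eqref{eq:explicit-critical-distance} gives $d_H(x,[x_2,r])=0$ for all $x\le x_2$. Choose $\boldsymbol M$ so that $[x_2,r]$ is the unique minimizer. The theorem then gives $W=d_H(\cdot,[x_2,r])\equiv0$, so $W$ also vanishes on $[\ell,x_1]$, while $\lim\lambda=\Lambda^{\mathrm{reg}}_{\mathrm{AD}}([x_2,r])<\Lambda^{\mathrm{reg}}_{\mathrm{DR}}([\ell,x_1])$. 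An admissible positive profile exists only at the class value, so a blow-up on $[\ell,x_1]$ cannot produce one. Heuristically, $u_2$ is carried into the repelling endpoint $x_1$ from $(x_1,x_2)$, so $u_2(x_1)=0$ is not inherited.

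The same example refutes your uniqueness step (``$W$ is positive on the other classes''). Your claim that $\boldsymbol\varphi_d$ is exponentially small away from $\mathcal A_N$ also fails: $d_H(\cdot,I)=0$ on the exterior side of an attracting endpoint, and in case (ii) $W\equiv0$ on $[\ell,r]$.

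The missing idea is the order $K\preceq_W L\iff W(L)=W(K)+d_H(L,K)$ from Theorem~\ref{thm:finite-class-selection}. You must work at a \emph{minimal} class, where $W=W(K)+d_H(\cdot,K)$ holds near $K$, and aim only for $\lim\lambda\ge\Lambda_K$ there. Together with the upper bounds this suffices. A unique minimizer is then the unique minimal element, above which every class lies, and that is what gives $W=d_H(\cdot,K_*)$.

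\textbf{Even at a minimal class, your compactness argument has no mechanism.}
\begin{itemize}
\item The information carried by $W$ lives at the $O(1)$ spatial scale. It says nothing at the $\sqrt d$ or $d$ scales of your blow-ups.
\item Positive entire solutions of frozen Ornstein--Uhlenbeck problems exist for a whole half-line of levels. The profiles $Z_{q,\alpha}$ of Lemma~\ref{reg:lem:repelling-profile} satisfy $-Z''-\alpha yZ'=-\alpha qZ$ for every $q>-1$. So positivity of a blow-up limit does not determine $\lambda$.
\item For the weak limit on an interval, you give no argument for nontriviality, for zero incoming trace and integrability at repelling endpoints, or for the algebraic relation at attracting and domain endpoints. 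These are exactly what the uniqueness in Lemma~\ref{reg:lem:transport-value} requires.
\end{itemize}

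The paper never identifies a limit profile. It compares $\boldsymbol\varphi_d$ with a strict positive supersolution $\boldsymbol\Psi_{d,\eta/2}$ at level $\Lambda_K-\eta/2$. Its phases are $o(1)$ on $K$ and at most $d_H(\cdot,K)-\delta_0$ on the artificial boundary. Minimality gives $W=W(K)+d_H(\cdot,K)$ near $K$, so this phase gap forces the maximum of $\varphi_{i,d}/\Psi_{d,\eta/2,i}$ into the interior. The strong maximum principle and the Hopf lemma then exclude $\lambda(d)<\Lambda_K-\eta$.

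Constructing $\boldsymbol\Psi$ is the real work. It needs endpoint modules at R, A and D endpoints, and slow exterior tails (growing ones at attracting endpoints, precisely because $d_H(\cdot,I)=0$ there). For composite classes it also needs gluing of cell supersolutions with tails $|x-z|^{-\vartheta_z}\boldsymbol v_z(\vartheta_z)$ to the algebraically decaying mixed Ornstein--Uhlenbeck supersolutions of Lemma~\ref{comp:lem:mixed-OU}. Your Agmon/Liouville plan does not supply this.

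\textbf{Two smaller points on the upper bound.}
\begin{itemize}
\item A common or componentwise Gaussian times a Perron vector is not a sharp subsolution when $b_1'(z)\ne b_2'(z)$. You need the coupled profile of Lemma~\ref{iso:lem:localized-OU-profile}.
\item At a common zero, no single-$\vartheta$ test function reaches $\omega_z=\max_\vartheta\rho_z(\vartheta)$: optimizing a family of upper bounds yields an infimum, not a maximum. This is why the paper proves $\omega_{z,R}\searrow\omega_z$ with an exponent that sweeps through all values.
\end{itemize}
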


Theorem~\ref{thm:main-one-dimensional} provides a complete characterization of the small-diffusion limit of the principal eigenvalue under assumption {\rm (H)}. Unlike the pointwise candidates in the scalar problem
\cite{ChenLou2012}, the candidates in Theorem \ref{thm:main-one-dimensional} include transport values
attached to intervals. Such a value can depend on coefficients in
the interval interior, beyond the data at drift zeros and domain
endpoints. The examples below make this dependence explicit and illustrate how the class geometry changes as drift zeros move.

\subsection{Examples and discussions}

In this section, we consider the example, where the drifts are given by the
quadratic potentials, to illustrate Theorem \ref{thm:main-one-dimensional}. 

\begin{corollary}\label{cor:quadratic-potentials}
Let $b_i=m_i'$, where
$$
 m_1(x)=k(x-x_1)^2,\qquad
 m_2(x)=h(x-x_2)^2,
 \qquad kh\ne0.
$$

If $\ell<x_1<x_2<r$, the following alternatives hold.

\par\smallskip
\noindent\emph{(i)}
If $k,h>0$, the classes are $\{\ell\}$, the
repelling-repelling interval $[x_1,x_2]$, and $\{r\}$, with
$$
 \lim_{d\searrow0}\lambda(d)
 =\min\left\{
   \sigma(\boldsymbol M(\ell)),\,\,
   \Lambda_{\mathrm{RR}}^{\mathrm{reg}}([x_1,x_2]),\,\,
   \sigma(\boldsymbol M(r))
 \right\},
$$

\par\smallskip
\noindent\emph{(ii)}
If $k,h<0$, the attracting-attracting interval $[x_1,x_2]$
is the only class, and
$$
 \lim_{d\searrow0}\lambda(d)
 =\Lambda_{\mathrm{AA}}^{\mathrm{reg}}([x_1,x_2]).
$$

\par\smallskip
\noindent\emph{(iii)}
If $kh<0$, the classes are $[\ell,x_1]$ and $[x_2,r]$, with
$$
 \lim_{d\searrow0}\lambda(d)
 =\begin{cases}
 \min\bigl\{
   \Lambda_{\mathrm{DR}}^{\mathrm{reg}}([\ell,x_1]),
   \,\,\Lambda_{\mathrm{AD}}^{\mathrm{reg}}([x_2,r])
 \bigr\},&k>0>h,\\[1mm]
 \min\bigl\{
   \Lambda_{\mathrm{DA}}^{\mathrm{reg}}([\ell,x_1]),
   \,\, \Lambda_{\mathrm{RD}}^{\mathrm{reg}}([x_2,r])
 \bigr\},&h>0>k.
 \end{cases}
$$

Furthermore, if $x_1=x_2=z$, then
\[
 \lim_{d\searrow0}\lambda(d)=
 \begin{cases}
 \min\{\sigma(\boldsymbol M(\ell)),
       \,\, \sigma(\boldsymbol M(z)+\operatorname{diag}(2k,2h)),
       \,\,\sigma(\boldsymbol M(r))\},&k,h>0,\\[1mm]
 \sigma(\boldsymbol M(z)),&k,h<0,\\[1mm]
 \min\{\Lambda_{(\ell,z)}^{\mathrm{tr}},\,\,
       \Lambda_{(z,r)}^{\mathrm{tr}}, \,\, \max_{0\leq\vartheta\leq1}
\sigma(\boldsymbol M(z)+\vartheta\operatorname{diag}(2k,2h))\},&kh<0.
 \end{cases}
\]
\end{corollary}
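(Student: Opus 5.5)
This corollary follows by specializing Theorem~\ref{thm:main-one-dimensional}. The work is to compute the Aubry set \eqref{eq:main-Aubry-set} for $b_1=2k(x-x_1)$ and $b_2=2h(x-x_2)$, split it into static classes, and read off each class type and the endpoint types from Definition~\ref{reg:def:regular-interval}. First I check (H). The zeros $x_1,x_2$ are simple, with $b_1'=2k\neq0$ and $b_2'=2h\ne0$. Since $\ell<x_1\le x_2<r$, we have $b_i(\ell)b_i(r)\neq 0$. So the theorem applies, and only the class structure has to be determined.

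For $x_1<x_2$ I compute signs case by case. When $k,h>0$, both drifts are negative on $[\ell,x_1)$ and positive on $(x_2,r]$, while $b_1>0>b_2$ on $(x_1,x_2)$. Hence $\{b_1b_2\le0\}=[x_1,x_2]$, and both boundary singletons $\{\ell\}$ (since $b_-(\ell)<0$) and $\{r\}$ (since $b_+(r)>0$) are present.
\begin{itemize}
\item The interval $[x_1,x_2]$ is regular, with $p=1$ and $n=2$.
\item At $a=x_1$ we have $b_p(a)=0$, so $a$ is of type R.
\item At $b=x_2$ we have $b_n(b)=0$, so $b$ is of type R.
\end{itemize}
When $k,h<0$ all signs flip. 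Then $b_\pm(\ell)>0$ and $b_\pm(r)<0$, so there are no boundary singletons. The class $[x_1,x_2]$ has $p=2$ and $n=1$, and $b_n(x_1)=0$, $b_p(x_2)=0$, giving type AA.

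When $k>0>h$, we have $b_1b_2\le0$ exactly on $[\ell,x_1]\cup[x_2,r]$. The boundary conditions add nothing, because $b_-(\ell)<0$ gives $\ell$, which already lies in $[\ell,x_1]$, and similarly for $r$.
\begin{itemize}
\item On $(\ell,x_1)$ we have $b_1<0<b_2$, so $p=2$ and $n=1$. The left end is type D, and the right end has $b_n(x_1)=0$, giving type R.
\item On $(x_2,r)$ we have $b_1>0>b_2$, so $p=1$ and $n=2$. The left end has $b_n(x_2)=0$, giving type A, and the right end is type D.
\end{itemize}
The case $h>0>k$ is symmetric and gives DA and RD. None of these intervals contains an interior common zero, so there are no composite classes, and the formulas follow.

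For $x_1=x_2=z$, the only interior zero is the common zero $z$. If $kh>0$, then $b_1b_2=4kh(x-z)^2$ vanishes only at $z$.
\begin{itemize}
\item If $k,h>0$, then $z$ is an isolated interior class with same-sign derivatives, and the value is $\sigma(\boldsymbol M(z)+\operatorname{diag}(2k,2h))$. The singletons $\{\ell\}$ and $\{r\}$ are present as before.
\item If $k,h<0$, the boundary singletons are absent and the positive parts vanish, leaving $\sigma(\boldsymbol M(z))$.
\end{itemize}
If $kh<0$, then $b_1b_2\le 0$ on all of $[\ell,r]$. So the single class is the composite interval $[\ell,r]$ with $\mathscr Z_K=\{z\}$, and Definition~\ref{def:main-class-values} gives exactly the stated minimum of the two cell values and $\omega_z$.

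The only delicate step is verifying that the sign bookkeeping matches the R/A/D conventions of Definition~\ref{reg:def:regular-interval}. In particular, one must confirm that in each case the singletons at $\ell,r$ either appear or are absorbed into an interval class, so that the decomposition in Theorem~\ref{thm:main-one-dimensional} is disjoint as stated. I would also confirm that in case (ii) no class exists besides $[x_1,x_2]$, which holds because $b_1b_2>0$ outside it.
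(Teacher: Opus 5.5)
Your proposal is correct and follows the paper's own route. You compute $\mathcal A_N$ from the sign of $b_1b_2=4kh(x-x_1)(x-x_2)$ together with the boundary criteria, read off the endpoint types from $b_1'=2k$ and $b_2'=2h$, and then specialize Theorem~\ref{thm:main-one-dimensional}; your case-by-case sign bookkeeping, including the check of (H) and the absorption of $\ell,r$ into the interval classes in case (iii), is accurate and more detailed than the paper's brief argument.
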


\begin{figure}[!b]
\centering
\includegraphics[width=0.85\linewidth]
  {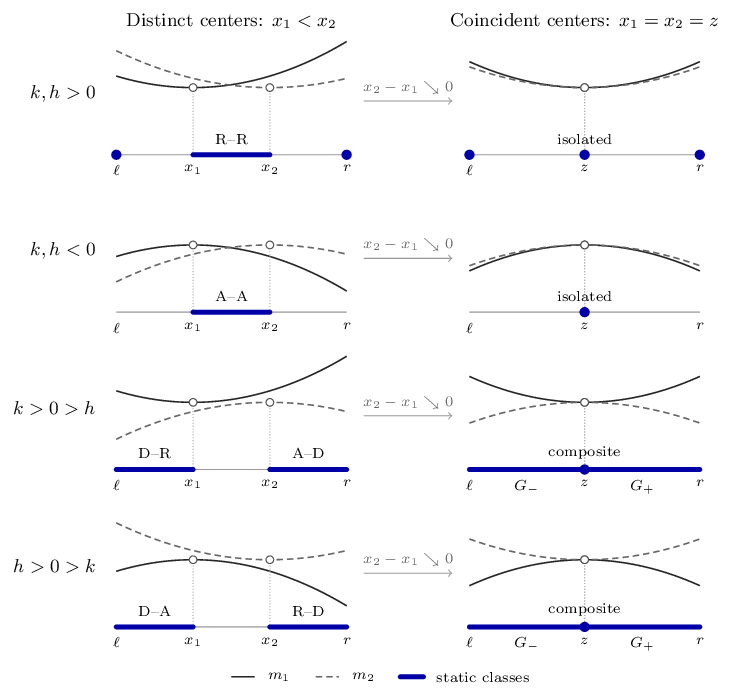}
\caption{\small Static classes for quadratic potentials. Thin solid and dashed curves represent $m_1$ and $m_2$, with schematic vertical scales. Open circles mark extrema. Blue segments and isolated dots represent interval and singleton classes. A dot inside a composite interval marks its common zero, separating the cells $G_-$ and $G_+$.}
\label{fig:quadratic-static-classes}
\end{figure}

The endpoint types remain visible when a regular interval collapses.
Fix $z\in(\ell,r)$, set $x_1=z-\delta/2$ and
$x_2=z+\delta/2$. 
When $k$ and $h$ share the same sign,
\begin{equation}\label{eq:intro-same-sign-coalescence}
\begin{aligned}
 \lim_{\delta\searrow0}
 \Lambda_{\mathrm{RR}}^{\mathrm{reg}}
   ([z-\delta/2,z+\delta/2])
 &=\sigma\!\left(
   \boldsymbol M(z)+\operatorname{diag}(2k,2h)\right),
 &&k,h>0,\\
 \lim_{\delta\searrow0}
 \Lambda_{\mathrm{AA}}^{\mathrm{reg}}
   ([z-\delta/2,z+\delta/2])
 &=\sigma(\boldsymbol M(z)),
 &&k,h<0.
\end{aligned}
\end{equation}
Indeed, by the rescaling $x=x_1+\delta s$, we can reduce the two limits to
the corresponding frozen transport problems on $(0,1)$. Hence, the
repelling-repelling interval retains the drift-derivative
contributions, whereas the attracting-attracting interval does not.
Both limits coincide with the effective value of the isolated class
obtained when $x_1=x_2=z$.  
The opposite-sign collision alters the class structure: as the gap
closes, two regular classes merge into a composite class.
Corollary~\ref{cor:quadratic-potentials} identifies the effective
values for each fixed configuration, both before and at the collision.
One expects the minimum of the two regular-class values to converge to
the composite-class value as the two zeros coalesce. A proof of this
continuity requires a separate and more delicate analysis and is
not included here. In this limit the drifts vary and the repelling or
attracting endpoint conditions themselves degenerate. 

Both collision mechanisms can be followed within a single
one-parameter family without losing nondegeneracy. For example, on $[-2,2]$, let
\begin{equation}\label{eq:intro-moving-minimum}
 m_1(x;s)=(x-s)^2/2,
 \qquad m_2(x)=x^3/3-x,
 \qquad -2<s<2.
\end{equation}
The first potential has a single minimum, located at $s$, whereas the
second possesses a maximum at $-1$ and a minimum at $1$. Accordingly,
the zero $s$ of $b_1$ is repelling, while the zeros $-1$ and
$1$ of $b_2$ are attracting and repelling, respectively. Since
$b_1(x;s)=x-s$ and $b_2(x)=x^2-1$, the sign of
$(x-s)(x^2-1)$ immediately determines the interval classes. The
right endpoint $\{2\}$ remains an additional singleton for every
$s$. Figure \ref{fig:moving-minimum-classes} tracks the three drift zeros
as $s$ increases and displays both transitions at a glance.

\begin{figure}[tbph]
\centering
\includegraphics[width=0.85\linewidth]
  {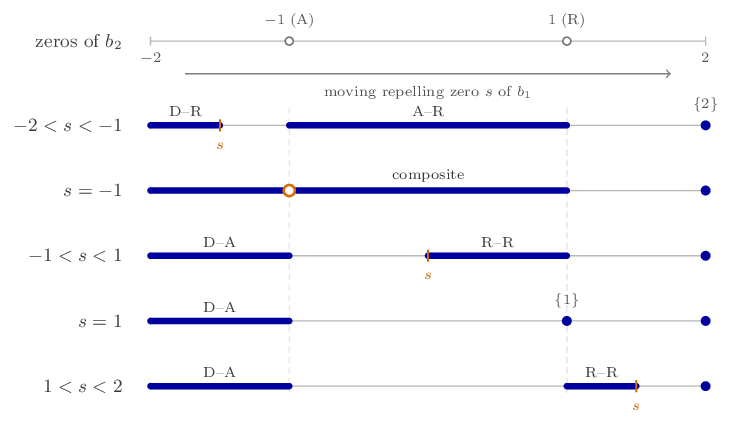}
\caption{\small Evolution of the static classes as the repelling zero
$s$ of $b_1$ moves past the attracting zero $-1$ and the
repelling zero $1$ of $b_2$. Blue segments and filled dots denote
interval and isolated classes, respectively. At $s=-1$, two regular
classes form a composite class with a common zero. At $s=1$, the
$\mathrm R$--$\mathrm R$ interval contracts to the isolated class
$\{1\}$ and then reappears to its right.}
\label{fig:moving-minimum-classes}
\end{figure}

Suppose first that $s<-1$. The two interval classes are
$[-2,s]$ of type $\mathrm D$-$\mathrm R$, and
$[-1,1]$ of type $\mathrm A$-$\mathrm R$. When $s=-1$,
they meet in the composite class $[-2,1]$, whose common zero is
$-1$. Its value is the minimum of the two cell values on
$(-2,-1)$ and $(-1,1)$ together with 
$\omega_{-1}=\max_{0\leq\vartheta\leq1}
\sigma(\boldsymbol M(-1)+\vartheta\operatorname{diag}(1,-2))$.
After $s$ passes $-1$, the classes become $[-2,-1]$ and
$[s,1]$, now of types $\mathrm D$-$\mathrm A$ and
$\mathrm R$-$\mathrm R$. Hence the passage through $s=-1$
is a regular-to- composite-to-regular transition, with both endpoint
types changing across the collision.

A different event occurs at $s=1$. As $s\nearrow1$, the
repelling-repelling interval $[s,1]$ contracts to the isolated
class $\{1\}$, whose value is
$\sigma(\boldsymbol M(1)+\operatorname{diag}(1,2))$. For
$1<s<2$, the interval reopens as $[1,s]$, while
$[-2,-1]$ and $\{2\}$ remain unchanged. This is the same
geometric transition as the collision of two quadratic minima. Hence, 
moving a single minimum realizes both the regular-to-composite and
regular-to-isolated mechanisms. Replacing $m_1$ by
$-\tfrac12(x-s)^2$ gives the attracting-attracting counterpart:
near $s=-1$, the interval between the two maxima contracts to the
isolated point $-1$ and then reopens on the other side.

In
Section~\ref{sec:HJ-Aubry-selection}, we derive the common logarithmic
limit, characterize the  Aubry geometry, and
establish the finite-class selection principle.
Section~\ref{sec:isolated-classes} is devoted to identifying the effective values of
isolated classes. Sections~\ref{sec:regular-intervals}
and~\ref{sec:composite-intervals} determine the values of regular and
composite interval classes, respectively. These results
prove Theorem~\ref{thm:main-one-dimensional}.

\section{\bf Hamilton-Jacobi limit and Aubry geometry}
\label{sec:HJ-Aubry-selection}

\subsection{Logarithmic limit of the principal eigenfunction}

In this section, we investigate the asymptotic behavior of the principal
eigenfunction of problem \eqref{eq:main-system} in the general case.
Let $\Omega\subset\mathbb R^N$ be a bounded $C^2$ domain with unit
normal vector $\nu(x)$ on $\partial\Omega$. Let $m\geq1$, and let
$\boldsymbol b_i$ ($1\leq i\leq m$) be $C^1$ vector fields.
Suppose that $\boldsymbol M\in C^1(\overline{\Omega};\mathbb R^{m\times m})$ with 
$M_{ij}<0$ for $i\neq j$.
Consider the following eigenvalue problem
\begin{equation}\label{liu-20260913-1}
 \begin{cases}
 -d\Delta\varphi_{i}-\boldsymbol b_i(x)\cdot\nabla\varphi_{i}
 +\displaystyle\sum_{j=1}^mM_{ij}(x)\varphi_{j}
 =\lambda\varphi_{i},&x\in\Omega,\quad1\leq i\leq m,\\
 \partial_\nu\varphi_{i}=0,&x\in\partial\Omega.
 \end{cases}
\end{equation}

\begin{theorem}\label{thm:common-HJ-limit}
Let $\boldsymbol\varphi_d=(\varphi_{1,d},\cdots,\varphi_{m,d})\gg 0$ be the principal eigenfunction of \eqref{liu-20260913-1}, which is normalize by
$\max_{i,x}\varphi_{i,d}(x)=1$, and set
$W_{i,d}\coloneqq -d\log\varphi_{i,d}$.  
For every sequence $d_k\searrow0$, there are a subsequence, still
denoted by $d_k$, and a function
$W\in W^{1,\infty}(\Omega)\cap C(\overline{\Omega})$ such
that $W_{i,d_k}\to W$ uniformly on $\overline{\Omega}$ for
each $i$.  The common limit is a viscosity solution of
\begin{equation}\label{main:eq:HJ-Neumann}
 \begin{cases}
 \medskip
 \displaystyle\max_{1\leq i\leq m}\left\{|\nabla W|^2- \boldsymbol b_i(x)\cdot \nabla W\right\}=0,\quad x\in \Omega,\\
 \displaystyle
 \partial_\nu W=0, \,\, x\in \partial \Omega, \qquad 
 \min_{x\in\overline\Omega}W(x)=0.\end{cases}
\end{equation}
\end{theorem}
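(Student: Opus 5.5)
We follow the standard Bernstein–half-relaxed-limit route, adapted to the coupled setting. Since $\varphi_{i,d}>0$, the functions $W_{i,d}$ satisfy
\[
 -d\Delta W_{i,d}+|\nabla W_{i,d}|^2-\boldsymbol b_i\cdot\nabla W_{i,d}
 +M_{ii}+\sum_{j\neq i}M_{ij}\,e^{(W_{i,d}-W_{j,d})/d}=\lambda(d),
 \qquad \partial_\nu W_{i,d}=0 .
\]
The first step is to bound $\lambda(d)$ uniformly. Testing the system against constant super- and subsolutions $(1,\dots,1)$ and using the characterization of $\lambda(d)$ via positive (super/sub)solutions gives
\[
 \min_{x}\min_i\sum_jM_{ij}\le\lambda(d)\le\max_x\max_i\sum_jM_{ij}.
\]

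The second step, which I expect to be the main obstacle, is a gradient bound $|\nabla W_{i,d}|\le C$ uniform in $d$ and $i$. The coupling terms have the good sign, because $M_{ij}e^{(W_i-W_j)/d}\le0$. This gives the one-sided inequality
\[
 -d\Delta W_i+|\nabla W_i|^2-\boldsymbol b_i\cdot\nabla W_i\ge\lambda-M_{ii}\ge -C .
\]
That inequality alone does not bound the gradient. To get a bound I would combine two ingredients. The first is a cross-component Harnack-type estimate: at a maximum point of $W_i-W_j$ the coupling term of equation $i$ is large, which forces $W_i-W_j\le Cd$ uniformly, i.e.\ $\varphi_{j,d}\le C\varphi_{i,d}$. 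I would argue this by choosing the pair $(i,j)$ and the point where $W_i-W_j$ is maximal and comparing the two equations there; the first- and second-order terms must be handled via $\nabla W_i=\nabla W_j$ and $\Delta(W_i-W_j)\ge0$ at that point, together with the Neumann condition (Hopf) at boundary maxima. This makes the coupling term bounded, of size $O(1)$. The second ingredient is then a Bernstein argument: apply the maximum principle to $|\nabla W_i|^2$ (or to $\psi(x)|\nabla W_i|^2$ near $\partial\Omega$, using a $C^2$ boundary-distance cutoff to absorb the Neumann curvature term). Since the coupling term is now controlled, and its gradient can be expressed through $\nabla(W_i-W_j)/d$ with a favorable sign at the maximum, we obtain $|\nabla W_{i,d}|\le C$.

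The third step extracts the limit. With the normalization $\max\varphi=1$ we have $\min_{i,x}W_{i,d}=0$, and the gradient bound gives equi-Lipschitz continuity, hence equiboundedness on $\overline\Omega$. Arzel\`a–Ascoli then yields a subsequence along which $\lambda(d_k)\to\lambda_0$, $W_{i,d_k}\to W_i$, and, by the Harnack estimate $|W_i-W_j|\le Cd$, all limits coincide with a common $W$ satisfying $\min W=0$.

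The fourth step verifies the viscosity equation. I would argue that the correct limit equation is the one displayed, with right-hand side $0$; this is set up by showing that the limiting $H$-value must be zero.

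For the subsolution property, take a test function $\phi$ touching $W$ from above at $x_0$. Perturbed maxima of $W_{i,d}-\phi$ converge to $x_0$ for each $i$. At such maxima, $-d\Delta W_i\ge -d\Delta\phi$, and the coupling term is bounded. Choosing instead, at each $d$, the index $i$ that maximizes $W_{i,d}-\phi$ makes the coupling term nonpositive as well as bounded. This gives $\max_i H_i(x_0,\nabla\phi)\le$ a bounded constant. To obtain exactly $0$, I expect the bounded coupling term itself to be essentially determined: passing to the limit in a suitably weighted sum of the equations exhibits $\lambda_0$ as absorbing the zero-order terms.

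For the supersolution property, pick at a minimum of $W_{i,d}-\phi$ the index achieving the overall minimum; there the coupling term is $\ge\sum_{j\ne i}M_{ij}$.

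Boundary conditions are handled in the generalized viscosity sense via the Neumann condition at boundary extremal points. Finally, the characterization $\min_x\max_i H_i(x,\nabla W)=0$ forces the constant in both the sub- and supersolution inequalities to be $0$: at a minimum point of $W$, testing with constants gives $H=0$.
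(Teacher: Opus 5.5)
Your overall architecture matches the paper: bound $\lambda(d)$, compare components, Bernstein, Arzel\`a--Ascoli, then half-relaxed limits with a minimal index in the supersolution test. Two concrete errors break it, however.

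\textbf{The logarithmic equation is mis-scaled.} Dividing the $i$-th equation by $\varphi_{i,d}=e^{-W_{i,d}/d}$ and multiplying by $-d$ gives
\[
 -d\Delta W_{i,d}+|\nabla W_{i,d}|^2-\boldsymbol b_i\cdot\nabla W_{i,d}
 =d\Bigl(M_{ii}-\lambda(d)+\sum_{j\ne i}M_{ij}\,e^{(W_{i,d}-W_{j,d})/d}\Bigr).
\]
So every zero-order term carries a factor $d$, with the opposite sign from yours. This is why $\boldsymbol M$ and $\lambda$ disappear and the limit has right-hand side exactly $0$.

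In your scaling, Step~4 cannot reach $0$. You only get $H\le C_1$ in the subsolution test and $H\ge -C_2$ in the supersolution test, with unrelated constants. Testing constants at an extremum of $W$ presupposes a single equation $H=c$, which you do not have.

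With the correct equation nothing needs to be identified. Set $H_i(x,p)=|p|^2-\boldsymbol b_i\cdot p$. Since $M_{ij}<0$, the equation gives the \emph{upper} bound $-d\Delta W_{i,d}+H_i(x,\nabla W_{i,d})\le d(M_{ii}-\lambda(d))$, not the lower bound you wrote.
\begin{itemize}
\item At a local maximum of $W_{\ell,d_k}-\phi$, this gives $H_\ell(x_0,\nabla\phi(x_0))\le0$ for \emph{every} $\ell$, hence $H\le0$.
\item At a local minimum of $\min_iW_{i,d_k}-\phi$, your minimal-index choice gives $e^{(W_{\ell,d_k}-W_{j,d_k})/d_k}\le1$. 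The whole right-hand side is then $O(d_k)$, so $H_\ell(x_0,\nabla\phi(x_0))\ge0$.
\end{itemize}
The same upper bound is what closes the Bernstein estimate, via $P^2\le d|\Delta W_{i_*,d}|+CP+C$ at the maximizing point.

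\textbf{The claim $\varphi_{j,d}\le C\varphi_{i,d}$, i.e.\ $|W_{i,d}-W_{j,d}|\le Cd$, is false in general.} If it held, each coupling term would be $O(d)$. Then $W$ would be a viscosity solution of $H_i(x,\nabla W)=0$ for each $i$ separately. In one dimension this forces $W'=0$ a.e.\ wherever $b_1\ne b_2$. But the limit is typically $d_H(\cdot,K_*)$, whose slope to the right of $K_*$ is $(b_-)_+$, nonzero where $0<b_-<b_+$. Heuristically, where $H_k(x,\nabla W)<0$ the equation for $W_{k,d}$ forces $d\sum_{j\ne k}|M_{kj}|\varphi_{j,d}/\varphi_{k,d}\approx|H_k|$, so the ratio is of order $1/d$.

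Your maximum-principle argument cannot give more. At a maximum of $W_i-W_j$ one has $\Delta(W_i-W_j)\le0$, not $\ge0$. The quadratic terms cancel there, but the drifts leave $(\boldsymbol b_i-\boldsymbol b_j)\cdot\nabla W_i$. This term is of order one, and bounding it needs the gradient estimate that you only derive afterwards from this very estimate. Meanwhile the coupling enters with weight $d$. So at best the argument yields $\varphi_{j,d}/\varphi_{i,d}\le C/d$.

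That weaker bound is the true one, and it suffices. The paper proves $d/C\le\varphi_{i,d}/\varphi_{j,d}\le C/d$ without gradient information:
\begin{itemize}
\item rescale $u_i(y)=\varphi_{i,d}(x_0+dy)$;
\item combine the weak Harnack inequality with a Green-kernel lower bound for the source $-d\sum_{k\ne i}M_{ik}u_k$, which costs exactly one factor $d$.
\end{itemize}
This gives $\|W_{i,d}-W_{j,d}\|_\infty\le d\log(C/d)\to0$, which is enough for a common limit. It also gives $d\,e^{(W_{i,d}-W_{j,d})/d}\le C$, which is what the Bernstein computation needs to control the term containing $\nabla M_{ij}$.
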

\begin{proof}

 Set
$H_i(x,p)\coloneqq |p|^2-\boldsymbol b_i(x)\cdot p$ and
$H(x,p)\coloneqq \max_{1\leq i\leq m}H_i(x,p)$. By direct calculations, we derive that $W_{i,d}$ satisfies
\begin{equation}\label{eq:logarithmic-system}
 -d\Delta W_{i,d}+H_i(x,\nabla W_{i,d})
 -d\sum_{j\ne i}M_{ij}
 \left(\exp\left[\frac{W_{i,d}-W_{j,d}}d\right]-1\right)-d\sum_{j=1}^mM_{ij}
 =-d\lambda(d).
\end{equation}


\noindent\emph{Step 1.}
We first establish the uniform convergence of $W_{i,d}$. 
By the maximum  principle, we can show that $\lambda(d)$ is uniformly bounded.
We first compare the two components. Suppose that
$B_{2d}(x_0)\subset\Omega$, and set
$u_i(y)=\varphi_{i,d}(x_0+dy)$. Then
$$
 -\Delta u_i-\boldsymbol b_i(x_0+dy)\cdot\nabla u_i
 +d\bigl(M_{ii}(x_0+dy)-\lambda(d)\bigr)u_i
 =-d\sum_{j\ne i}M_{ij}(x_0+dy)u_j.
$$
Fix $p\in(0,1)$ 
and define
$U=\max_{1\leq k\leq m}u_k$. The uniform coefficient bounds imply
that $U$ is a nonnegative viscosity subsolution of
$$
 -\Delta U-C|\nabla U|\leq CdU
 \quad\text{in }B_2.
$$ 
Since $U^p\leq\sum_k u_k^p$, 
we apply the 
scalar weak Harnack inequality for $u_i$ to obtain
\begin{equation}\label{liu-20260914-1}
    \max_{1\leq j\leq m}\sup_{B_1}u_j
 \leq C\sum_{k=1}^m\|u_k\|_{L^p(B_{3/2})},
 \qquad
 \|u_i\|_{L^p(B_{3/2})}
 \leq C\inf_{B_{1/2}}u_i.
\end{equation}
For each fixed $k\ne i$, by the uniform strict negativity of the
off-diagonal entries we have
$$
 \begin{aligned}
 &-\Delta u_i-\boldsymbol b_i(x_0+dy)\cdot\nabla u_i
 +d\bigl(M_{ii}(x_0+dy)-\lambda(d)\bigr)u_i\geq cd\,u_k.
 \end{aligned}
$$
The zero-order coefficient of the scalar operator on the left is
$O(d)$. Hence, for all sufficiently small $d$, this operator
satisfies the maximum principle on $B_2$, and its Dirichlet Green
kernel has a uniform positive lower bound on
$B_{1/2}\times B_{3/2}$. Green-kernel comparison therefore yields
$$
 \inf_{B_{1/2}}u_i
 \geq cd\int_{B_{3/2}}u_k(z)\,\mathrm{d}z
 \geq cd\|u_k\|_{L^p(B_{3/2})},
 \qquad k\ne i,
$$
where the second inequality follows from $p<1$. This together with \eqref{liu-20260914-1} yields that  
$$
 d\sum_{k=1}^m\|u_k\|_{L^p(B_{3/2})}
 \leq C\inf_{B_{1/2}}u_i, \quad 1\leq i\leq m.
$$
Using \eqref{liu-20260914-1}  again, we
obtain that
\begin{equation}\label{eq:quantitative-component-Harnack}
 \max_{1\leq j\leq m}\sup_{\Omega\cap B_d(x_0)}\varphi_{j,d}
 \leq\frac Cd
 \min_{1\leq i\leq m}
 \inf_{\Omega\cap B_{d/2}(x_0)}\varphi_{i,d}.
\end{equation}
If $B_{2d}(x_0)$ meets $\partial\Omega$, \eqref{eq:quantitative-component-Harnack} can be obtained by flattening the boundary and 
reducing the argument to a half-ball. 
Then for any
$x\in\overline\Omega$ and $1\leq i,j\leq m$, we derive that
$$
 \begin{aligned}
 \varphi_{i,d}(x)
 \leq
 \max_k\sup_{\Omega\cap B_d(x)}\varphi_{k,d}\leq\frac Cd
 \min_k\inf_{\Omega\cap B_{d/2}(x)}\varphi_{k,d}
 \leq\frac Cd\,\varphi_{j,d}(x).
 \end{aligned}
$$
Since $i$ and $j$ are arbitrary, we derive that
\begin{equation}\label{eq:component-phase-gap}
 \frac dC\leq
 \frac{\varphi_{i,d}(x)}{\varphi_{j,d}(x)}
 \leq\frac Cd,
 \qquad
 \|W_{i,d}-W_{j,d}\|_{L^\infty(\Omega)}
 \leq d\log(C/d)=o(1).
\end{equation}

We next derive a uniform gradient bound by the classical Bernstein
method \cite{EvansIshii1985}. We present only the
modifications required by the Neumann boundary condition and the
coupling terms. The calculation below is justified by standard
difference quotients, with constants depending only on the prescribed
$C^1$-coefficient bounds and the $C^2$ geometry of
$\partial\Omega$.
Choose $\rho\in C^2(\overline\Omega)$ such that
$\partial_\nu\rho=1$ on $\partial\Omega$, and set
$
 F_i:=\mathrm e^{-A\rho}|\nabla W_{i,d}|^2.
$
The Neumann condition and the boundedness of the second fundamental
form imply that
$$
 \partial_\nu|\nabla W_{i,d}|^2
 \leq C_{\rm b}|\nabla W_{i,d}|^2
 \quad\text{on }\partial\Omega.
$$
Taking $A>C_{\rm b}$, we have
$\partial_\nu F_i<0$ wherever $F_i>0$ on $\partial\Omega$.
Since the outward derivative is nonnegative at a boundary maximum,
every positive maximum of $F_i$, taken over all components and
$\overline\Omega$, lies in $\Omega$.
Let $(i_*,x_*)$ be a maximizing pair and set
$P=|\nabla W_{i_*,d}(x_*)|$. We may assume that $P>0$.
At $x_*$, the maximality implies that 
$$
 |\nabla W_{j,d}|\leq P,
 \qquad
 \nabla W_{i_*,d}\cdot
 (\nabla W_{i_*,d}-\nabla W_{j,d})
 \geq P\bigl(P-|\nabla W_{j,d}|\bigr)\geq0.
$$
The weighted maximum identities also yield that at $x_*$, 
$$
 \begin{aligned}
 \nabla|\nabla W_{i_*,d}|^2
 =AP^2\nabla\rho,\quad
 \Delta|\nabla W_{i_*,d}|^2
 \leq CP^2,\quad 
 2D^2W_{i_*,d}\nabla W_{i_*,d}
 =AP^2\nabla\rho.
 \end{aligned}
$$

Differentiate \eqref{eq:logarithmic-system} and take the scalar product
with $\nabla W_{i_*,d}$, then we  derive from \eqref{eq:component-phase-gap} that 
$$
 \begin{aligned}
 d|D^2W_{i_*,d}|^2
 &-\sum_{j\ne i_*}M_{i_*j}
  \exp\!\left[\frac{W_{i_*,d}-W_{j,d}}d\right]\\
 &\qquad{}\times
  \nabla W_{i_*,d}\cdot
  (\nabla W_{i_*,d}-\nabla W_{j,d})
 \leq C(1+P^3) \quad \text{at } x=x_*.
 \end{aligned}
$$
Hence, 
$
 d|D^2W_{i_*,d}(x_*)|^2\leq C(1+P^3).
$
Combining the two matrix sums in
\eqref{eq:logarithmic-system}, we find  
$$
 P^2
 =d\Delta W_{i_*,d}
  +\boldsymbol b_{i_*}\cdot\nabla W_{i_*,d}
  +dM_{i_*i_*}
  +d\sum_{j\ne i_*}M_{i_*j}
   \exp\!\left[\frac{W_{i_*,d}-W_{j,d}}d\right]
  -d\lambda(d).
$$
The off-diagonal sum is nonpositive, and thus
$
 P^2\leq d|\Delta W_{i_*,d}(x_*)|+CP+C.
$ 
Hence, 
$$
 P^4
 \leq Cd^2|D^2W_{i_*,d}(x_*)|^2
 \leq Cd(1+P^3),
$$
which implies that $P\leq C$. Since $\rho$ is bounded and
$(i_*,x_*)$ maximizes the weighted gradients,
$$
 \max_{1\leq i\leq m}
 \|\nabla W_{i,d}\|_{L^\infty(\Omega)}\leq C.
$$

It remains to establish the boundedness of $W_{i,d}$. By the normalization, there exist
$i_d$ and $x_d\in\overline\Omega$ such that
$W_{i_d,d}(x_d)=0$, while $W_{i,d}\geq0$ in
$\overline\Omega$. It follows from \eqref{eq:component-phase-gap} that
$
 0\leq W_{i,d}(x_d)\leq d\log(C/d).
$ 
Then we obtain, for every $x\in\overline\Omega$,
$$
 \begin{aligned}
 W_{i,d}(x)
 \leq
 |W_{i,d}(x)-W_{i,d}(x_d)|+W_{i,d}(x_d)\leq
 C_\Omega\|\nabla W_{i,d}\|_{L^\infty(\Omega)}
 +d\log(C/d)
 \leq C.
 \end{aligned}
$$
Thus $0\leq W_{i,d}\leq C$ uniformly in $i$ and $d$.
Hence, 
for every sequence
$d_k\searrow0$, by the Arzel\`a-Ascoli theorem and
\eqref{eq:component-phase-gap}, there exsits a subsequence along which all
components converge uniformly to the same function $W$. The
normalization ensures that
$\min_{\overline\Omega}W=0$.

\medskip
\noindent\emph{Step 2.}
We identify the limiting function $W$ as a viscosity solution of \eqref{main:eq:HJ-Neumann}. 
Fix a sequence $d_k\to0$ along which
$W_{i,d_k}\to W$ uniformly on $\overline{\Omega}$ for every
$1\leq i\leq m$.  We use the half-relaxed limits to verify the two
viscosity inequalities and their boundary alternatives separately.
Define
\begin{equation}\label{eq:lower-half-relaxed-limit}
 \underline U_i(x)\coloneqq
 \liminf_{k\to\infty,\,x'\to x}W_{i,d_k}(x'),
 \qquad
 U_*(x)\coloneqq \min_{1\leq i\leq m}\underline U_i(x).
\end{equation}
By Step 1, the functions
$\underline U_i$ are well defined, and $U_*$ is lower
semicontinuous.  Uniform convergence identifies each of them with
$W$, so $\underline U_i=U_*=W$.  The formulation
\eqref{eq:lower-half-relaxed-limit} is retained to select a component
at each contact point.

Let $\phi\in C^2(\overline{\Omega})$, and suppose that
$U_*-\phi$ has a strict local minimum at $x_0$.  We must prove
\begin{equation}\label{eq:HJ-supersolution-test}
 \begin{cases}
  H(x_0,\nabla\phi(x_0))\geq0,
       &x_0\in\Omega,\\[1mm]
  \max\{H(x_0,\nabla\phi(x_0)),
         \partial_\nu\phi(x_0)\}\geq0,
       &x_0\in\partial\Omega.
 \end{cases}
\end{equation}
Choose any vector $\boldsymbol v=(v_1,\ldots,v_m)\gg0$.
Since $d_k\log v_i\to0$, the definition of $U_*$ is unchanged if
$W_{i,d_k}$ is replaced by $W_{i,d_k}+d_k\log v_i$.
The perturbed-test-function lemma therefore provides an index
$\ell$ and points $x_k\to x_0$ such that
$W_{\ell,d_k}+d_k\log v_\ell-\phi$ has a local minimum at $x_k$,
and
\begin{equation}\label{eq:perturbed-component-minimum}
 W_{\ell,d_k}(x_k)+d_k\log v_\ell
 \leq W_{j,d_k}(x_k)+d_k\log v_j,
 \qquad 1\leq j\leq m.
\end{equation}
After passing to a subsequence, $\ell$ may be taken independent of
$k$.  Then \eqref{eq:perturbed-component-minimum} implies that
\begin{equation}\label{eq:perturbed-ratio-bound}
 \exp\left[\frac{W_{\ell,d_k}(x_k)-W_{j,d_k}(x_k)}{d_k}\right]
 \leq\frac{v_j}{v_\ell},
 \qquad 1\leq j\leq m.
\end{equation}

Suppose first that $x_0\in\Omega$.  Then $x_k\in\Omega$ for
large $k$.  At the minimum point,
$\nabla W_{\ell,d_k}=\nabla\phi$ and
$\Delta W_{\ell,d_k}\geq\Delta\phi$.  Substituting
\eqref{eq:perturbed-ratio-bound} into the $\ell$-th equation of
\eqref{eq:logarithmic-system}, we find
\[
 \begin{aligned}
 -d_k\lambda(d_k)
 \leq-d_k\Delta\phi(x_k)
   +H_\ell(x_k,\nabla\phi(x_k))
   +d_k\sum_{j\ne\ell}a_{\ell j}(x_k)
       \left(\frac{v_j}{v_\ell}-1\right)
   -d_kc_\ell(x_k).
 \end{aligned}
\]
Passing to the limit, we deduce from the eigenvalue bound that
$H_\ell(x_0,\nabla\phi(x_0))\geq0$.  Since $H\geq H_\ell$, this
proves the first line of \eqref{eq:HJ-supersolution-test}.

Now assume that $x_0\in\partial\Omega$.  If
$\partial_\nu\phi(x_0)\geq0$, the second line of
\eqref{eq:HJ-supersolution-test} is immediate.  It remains to consider
$\partial_\nu\phi(x_0)<0$.  In this case $x_k$ must lie in
$\Omega$ for all sufficiently large $k$.  Indeed, if
$x_k\in\partial\Omega$, the one-sided minimum condition and the
Neumann boundary condition would imply
$
 0=\partial_\nu W_{\ell,d_k}(x_k)
 \leq\partial_\nu\phi(x_k),
$ 
contrary to $\partial_\nu\phi(x_0)<0$.  We may therefore repeat the
interior calculation and obtain
$H(x_0,\nabla\phi(x_0))\geq0$.   

It remains to prove the viscosity subsolution property.
Set
\begin{equation}\label{eq:upper-half-relaxed-limit}
 \overline U_i(x)\coloneqq
 \limsup_{k\to\infty,\,x'\to x}W_{i,d_k}(x'),
 \qquad
 U^*(x)\coloneqq \max_{1\leq i\leq m}\overline U_i(x).
\end{equation}
The function $U^*$ is upper semicontinuous.  Uniform convergence
identifies $\overline U_i$ and $U^*$ with $W$ for every $i$. 
Let $U^*-\phi$ have a strict local maximum at $x_0$.  We claim
that
\begin{equation}\label{eq:HJ-subsolution-test}
 \begin{cases}
  H(x_0,\nabla\phi(x_0))\leq0,
       &x_0\in\Omega,\\[1mm]
  \min\{H(x_0,\nabla\phi(x_0)),
         \partial_\nu\phi(x_0)\}\leq0,
       &x_0\in\partial\Omega.
 \end{cases}
\end{equation}
Fix $1\leq\ell\leq m$.  Since
$W_{\ell,d_k}\to U^*$ uniformly, there are points
$x_{\ell,k}\to x_0$ at which
$W_{\ell,d_k}-\phi$ has a local maximum.  If
$x_0\in\Omega$, then $x_{\ell,k}\in\Omega$ for large $k$.
At these maximum points, it follows from \eqref{eq:logarithmic-system} that
\[
 \begin{aligned}
 -d_k\lambda(d_k)
 &\geq-d_k\Delta\phi(x_{\ell,k})
   +H_\ell(x_{\ell,k},\nabla\phi(x_{\ell,k}))\\
 &\quad
   +d_k\sum_{j\ne\ell}a_{\ell j}(x_{\ell,k})
    \left(
     \mathrm{e}^{(W_{\ell,d_k}-W_{j,d_k})(x_{\ell,k})/d_k}-1
    \right)
   -d_kc_\ell(x_{\ell,k}).
 \end{aligned}
\]
Using $\mathrm{e}^r-1\geq r$, we have
\[
 d_k a_{\ell j}(x_{\ell,k})
 \left(\mathrm{e}^{(W_{\ell,d_k}-W_{j,d_k})(x_{\ell,k})/d_k}-1\right)
 \geq
 a_{\ell j}(x_{\ell,k})
 \bigl(W_{\ell,d_k}-W_{j,d_k}\bigr)(x_{\ell,k}).
\]
The right-hand side tends to zero because all components converge
uniformly to $W$.  Hence
$H_\ell(x_0,\nabla\phi(x_0))\leq0$.  Since $\ell$ was arbitrary,
$H(x_0,\nabla\phi(x_0))\leq0$, which is the first line of
\eqref{eq:HJ-subsolution-test}.

Finally, suppose that $x_0\in\partial\Omega$.  If
$\partial_\nu\phi(x_0)\leq0$, the second line of
\eqref{eq:HJ-subsolution-test} already holds.  When
$\partial_\nu\phi(x_0)>0$, the points $x_{\ell,k}$ are interior
for large $k$. Otherwise the one-sided maximum condition would
force
$
 0=\partial_\nu W_{\ell,d_k}(x_{\ell,k})
 \geq\partial_\nu\phi(x_{\ell,k}),
$
which contradicts $\partial_\nu\phi(x_0)>0$.  Applying the
interior argument for every $\ell$ proves the boundary
subsolution inequality.  Since $U_*=U^*=W$, the relaxed Neumann
conditions stated above are satisfied.  Together with
$\min_{\overline{\Omega}}W=0$, this completes the proof.
\end{proof}

\subsection{The Aubry geometry and finite-class selection principle}
We now return to $N=1$, $m=2$, and the problem
\eqref{eq:main-system}.  The vector fields $\boldsymbol b_i$ again
become the scalar functions $b_i$, and
$H(x,p)=\max_{i=1,2}\{p^2-b_i(x)p\}$. 
Let $\mathscr S_N$ be the Lipschitz viscosity subsolutions of the
critical Hamilton--Jacobi equation, including the subsolution part of
the relaxed Neumann condition, and define
$d_H(x,y)\coloneqq \sup_{u\in\mathscr S_N}\{u(x)-u(y)\}$.
The Neumann Aubry set consists of the points $y$ at which
$d_H(\,\cdot\,,y)$ also satisfies the supersolution condition.
Two Aubry points are in the same static class when
$d_H(x,y)+d_H(y,x)=0$.  The Neumann representation theorem
\cite[Theorem~6.8]{Ishii2011} gives, for every critical solution,
\begin{equation}\label{eq:Aubry-representation}
 W(x)=\min_{y\in\mathcal A_N}\{W(y)+d_H(x,y)\}.
\end{equation}
In particular, $W$ is constant on each static class.

\begin{theorem}\label{thm:exact-one-dimensional-Aubry}
The Hamiltonian and its zero sublevel have the explicit forms
\[
 H(x,p)=
 \begin{cases}
  p^2-b_-(x)p,&p\geq0,\\
  p^2-b_+(x)p,&p\leq0,
 \end{cases}
 \qquad
 \{p:H(x,p)\leq0\}
 =[b_+(x)\wedge0,\,b_-(x)\vee0].
\]
Moreover, it follows that
\begin{equation}\label{eq:explicit-critical-distance}
 d_H(x,\xi)=
 \begin{cases}
  \displaystyle\int_\xi^x(b_-(s))_+\,\mathrm{d}s,&x\geq\xi,\\[3mm]
  \displaystyle\int_x^\xi(-b_+(s))_+\,\mathrm{d}s,&x\leq\xi.
 \end{cases}
\end{equation}
The zero sublevel reduces to $\{0\}$ exactly on
$\mathcal A_0$, and the full Neumann Aubry set is
$\mathcal A_N$, with both sets given by
\eqref{eq:main-Aubry-set}.  Every connected component of
$\mathcal A_0$ is one static class, and every endpoint in
$\mathcal A_N\setminus\mathcal A_0$ is an additional singleton
class.
\end{theorem}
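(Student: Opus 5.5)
The plan has four steps: compute $H$ and its zero sublevel, identify the critical subsolutions, compute $d_H$ by maximizing over them, and then read off the Aubry set and its static classes.

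\emph{Hamiltonian and sublevel.} For fixed $x$ and $p\geq0$, $p^2-b_i(x)p$ is largest for the smaller drift, so $H=p^2-b_-p$. For $p\leq0$ it is largest for the larger drift, so $H=p^2-b_+p$. Hence, for $p\ge0$, $H\le0$ iff $p\le b_-\vee0$. For $p\le0$, $H\le0$ iff $p\ge b_+\wedge0$. This gives the stated interval $[b_+\wedge0,\,b_-\vee0]$. It collapses to $\{0\}$ exactly when $b_-\le0\le b_+$, i.e.\ when $b_1b_2\le0$, which defines $\mathcal A_0$.

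\emph{Subsolutions and the semidistance.} Since $H$ is convex in $p$ and $N=1$, a Lipschitz function is an interior viscosity subsolution iff $u'(s)\in[b_+(s)\wedge0,\,b_-(s)\vee0]$ for a.e.\ $s$. The relaxed Neumann subsolution condition at $\ell,r$ adds no constraint for Lipschitz $u$, by the usual convexity argument for Neumann problems; I would cite \cite{Ishii2011} for this. Take $x\ge\xi$. Integrating $u'\le(b_-)_+$ gives $u(x)-u(\xi)\le\int_\xi^x(b_-)_+$. Equality is attained by $u(s)=\int_\xi^s(b_-)_+$ for $s\ge\xi$ and $u(s)=-\int_s^\xi(-b_+)_+$ for $s\le\xi$. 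This $u$ is Lipschitz and its derivative is an admissible endpoint of the sublevel. The case $x\le\xi$ is symmetric and yields \eqref{eq:explicit-critical-distance}.

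\emph{Aubry set.} The candidate $w=d_H(\cdot,y)$ is a subsolution. Its derivative equals $(b_-)_+$ to the right of $y$ and $-(-b_+)_+$ to the left, so it lies on the boundary of the sublevel where $H(x,w')=0$ classically, away from $y$. The supersolution test therefore only matters at $x=y$ and at the domain endpoints.

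At an interior $y$, the one-sided derivatives are $(b_-(y))_+\ge0$ on the right and $-(-b_+(y))_+\le0$ on the left. This is a convex kink, so no smooth $\phi$ touches $w$ from above; in the supersolution test the constraint comes from touching from below, with admissible slopes $p\in[-(-b_+)_+,(b_-)_+]$. Since $H(y,\cdot)<0$ on the interior of the sublevel, the supersolution inequality holds iff that slope interval lies in $\{H\ge0\}$, i.e.\ iff it is $\{0\}$, i.e.\ iff $b_-(y)\le0\le b_+(y)$.

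At $y=\ell$ only the right derivative matters. The relaxed condition $\max\{H,\partial_\nu\phi\}\ge0$ with $\nu=-1$ allows all slopes $p\le (b_-(\ell))_+$ to be tested, and $\partial_\nu\phi=-p\ge0$ handles $p\le0$. For $0<p<(b_-(\ell))_+$ the inequality fails, so $\ell\in\mathcal A_N$ iff $b_-(\ell)\le0$. Symmetrically, $r\in\mathcal A_N$ iff $b_+(r)\ge0$. This yields \eqref{eq:main-Aubry-set}, and $\mathcal A_0\subset\mathcal A_N$ is immediate.

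\emph{Static classes.} For $x<y$, $d_H(x,y)+d_H(y,x)=\int_x^y\big((b_-)_++(-b_+)_+\big)$. This vanishes iff $b_-\le0\le b_+$ a.e.\ on $[x,y]$, i.e.\ iff $[x,y]\subset\mathcal A_0$ by continuity. So the static classes among points of $\mathcal A_0$ are exactly its connected components.

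Now take an endpoint such as $\ell\in\mathcal A_N\setminus\mathcal A_0$. Then $b_-(\ell)\le0$ but $b_1b_2(\ell)>0$, so both drifts are negative at $\ell$. Hence $b_+<0$ near $\ell$ and $(-b_+)_+>0$ there, so $\ell$ has positive symmetric distance to every other point and is a singleton class. The endpoint $r$ is symmetric.

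The main obstacle I expect is the careful verification of the viscosity tests at the kink $x=y$ and at the domain endpoints under the relaxed Neumann condition. In particular, one must correctly identify which side of the kink allows touching. If the pointwise argument is delicate, I would instead invoke Ishii's characterization: $y\in\mathcal A_N$ iff there is no strict subsolution near $y$. The sublevel collapsing to $\{0\}$ obstructs strict subsolutions, and when it does not collapse an explicit strict subsolution can be built.
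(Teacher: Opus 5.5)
Your proposal is correct and follows the paper's proof essentially step for step. Both arguments compute $H$ and its sublevel explicitly, characterize critical subsolutions by $b_+\wedge0\le u'\le b_-\vee0$ a.e., and integrate to obtain $d_H$. Both use the lower-test slopes $[b_+(y)\wedge0,\,b_-(y)\vee0]$ at the kink, apply the relaxed Neumann test when $y\in\{\ell,r\}$, and use the round-trip integral for the static classes.

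There is one slip to fix. On $s\le\xi$ your extremizer should be $u(s)=+\int_s^\xi(-b_+)_+$, whose slope is $b_+\wedge0$. As written, $u(s)=-\int_s^\xi(-b_+)_+$ has slope $(-b_+)_+$, which violates $u'\le(b_-)_+$ wherever $b_+<0$. Alternatively, take $u$ constant off $(\xi,x)$, as the paper does.

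There is also one check you announce but never carry out: the relaxed supersolution test at the domain endpoint opposite to $y$. It is immediate. Since $d_H(\cdot,y)'(\ell+)=b_+(\ell)\wedge0\le0$ and $d_H(\cdot,y)'(r-)=(b_-(r))_+\ge0$, every lower test there has $\partial_\nu\phi\ge0$.
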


\begin{proof}

We first derive the explicit Hamiltonian and the critical
semidistance.
For $p\geq0$, the maximum defining $H$ selects $b_-$, whereas
for $p\leq0$ it selects $b_+$. This proves the stated formula for
$H$ and its zero sublevel. Every critical subsolution therefore satisfies
\[
 b_+(s)\wedge0\leq u'(s)\leq b_-(s)\vee0
 \quad\text{for a.e. }s.
\]
If $x>\xi$, integration yields
$u(x)-u(\xi)\leq\int_\xi^x(b_-(s))_+\,\mathrm{d}s$.  Equality is attained by taking
$u'=(b_-)_+$ on $(\xi,x)$ and zero elsewhere.  The case
$x<\xi$ is analogous, using $u'=b_+\wedge0$ on $(x,\xi)$.
The resulting Lipschitz functions are viscosity subsolutions because
$H(x,\cdot)$ is convex and the differential inequality holds almost
everywhere.  This proves \eqref{eq:explicit-critical-distance}.

We next identify the Aubry points. Set
$v_y(x)\coloneqq d_H(x,y)$.  At an
interior base point $y$, the one-sided slopes are
$q_-(y)=b_+(y)\wedge0$ and $q_+(y)=b_-(y)\vee0$.
In view of \eqref{eq:explicit-critical-distance}, the equation holds
away from $y$, as does the relaxed condition at the opposite endpoint
of the domain. It remains to verify the supersolution test at $y$.
A smooth function touching $v_y$ from below at $y$ may have any
slope in $[q_-(y),q_+(y)]$.  The supersolution condition therefore
holds exactly when this interval is the singleton $\{0\}$, namely
when $b_-(y)\leq0\leq b_+(y)$.  This is equivalent to
$b_1(y)b_2(y)\leq0$.

At $y=\ell$, one has
$v_\ell'(\ell+)=(b_-(\ell))_+$.  If $b_-(\ell)>0$, a lower test
with slope strictly between $0$ and $b_-(\ell)$ makes both the
Hamiltonian and the outward normal derivative negative, so the relaxed
supersolution condition fails.  If $b_-(\ell)\leq0$, every lower
test satisfies that condition.  Thus $\ell$ is an Aubry point
exactly when $b_-(\ell)\leq0$.  The same one-sided test at $r$
shows that $r$ is an Aubry point exactly when $b_+(r)\geq0$.
This proves \eqref{eq:main-Aubry-set}.

It remains to determine the static classes.  For $x<y$,
it follows from \eqref{eq:explicit-critical-distance} that
\[
 d_H(y,x)+d_H(x,y)
 =\int_x^y\bigl((b_-(s))_++(-b_+(s))_+\bigr)\,\mathrm{d}s.
\]
The integral vanishes exactly when $[x,y]\subset\mathcal A_0$, which
proves the assertion about static classes.  The proof is complete.
\end{proof}

Under 
the assumption (H), 
Theorem~\ref{thm:exact-one-dimensional-Aubry} shows that the list in
Section~\ref{sec:introduction-main} is exhaustive.  An isolated
interior component of $\mathcal A_0$ must be
a common zero with same-sign derivatives.  On the interior of every
nondegenerate component, the drifts have opposite signs.  If that
component contains no common zero in its interior, the labeling
$(p,n)$ is fixed and the class is regular.  Otherwise the labels
switch only at finitely many simple common zeros, and the class is
composite.


For the remainder of this section, set
\[
 (\mathcal L_d\boldsymbol U)_i
 \coloneqq -dU_i''-b_iU_i'+\sum_{j=1}^2M_{ij}U_j,
 \qquad i=1,2.
\]

\begin{definition}\label{def:local-effective-value}
A compact static class $K$, isolated from the other static classes,
has local effective value
$\Lambda_K$ if, for every $\eta>0$ and every sufficiently small
relative neighborhood $U$ of $K$ containing no other static class,
the following properties hold.

\emph{(i)} There are a relative neighborhood $V$, with
$K\subset V\subset\overline V\subset U$, and, for all sufficiently
small $d$, a vector function
$\boldsymbol\Psi_{d,\eta}\in
C(\overline V;(0,\infty)^2)$ such that
\[
 \mathcal L_d\boldsymbol\Psi_{d,\eta}
 \geq(\Lambda_K-\eta)\boldsymbol\Psi_{d,\eta}
 \quad\text{in }V
\]
in the distributional sense.  The function is piecewise $C^2$, and satisfies
$\partial_\nu\boldsymbol\Psi_{d,\eta}\geq0$ on
$\partial V\cap\partial\Omega$. At an interior interface, the one-sided derivatives
are allowed to satisfy
$(\Psi_{d,\eta,i}')^->(\Psi_{d,\eta,i}')^+$.
After multiplication by one common positive scalar, its phases
$S_{d,\eta,i}\coloneqq -d\log\Psi_{d,\eta,i}$ satisfy
$\max_i\|S_{d,\eta,i}\|_{C(K)}\to0$ and, for some $\delta_0>0$,
\begin{equation}\label{eq:local-phase-gap}
 \max_{i=1,2}S_{d,\eta,i}(x)
 \leq d_H(x,K)-\delta_0
 \quad\text{on }\partial V\cap(\ell,r)
\end{equation}
for all sufficiently small $d$.

\emph{(ii)} There are a relative neighborhood $V^Q$, with
$K\subset V^Q\subset\overline{V^Q}\subset U$, and, for all
sufficiently small $d$, a nonzero, nonnegative, compactly supported
subsolution $\boldsymbol Q_{d,\eta}$ of
\[
 \mathcal L_d\boldsymbol Q_{d,\eta}
 \leq(\Lambda_K+\eta)\boldsymbol Q_{d,\eta}
\]
for the direct Neumann problem on $(\ell,r)$, with
$\operatorname{supp}\boldsymbol Q_{d,\eta}
\subset\overline{V^Q}$.
\end{definition}


\begin{theorem}\label{thm:finite-class-selection}
Suppose that
$
 \mathcal A_N=\mathop{\bigcup}_{K\in\mathcal C}K
$
is a finite disjoint union of compact static classes and that every
$K\in\mathcal C$ has a local effective value in the sense of
Definition~{\rm\ref{def:local-effective-value}}.  Then
\begin{equation}\label{eq:finite-class-limit}
 \lim_{d\searrow0}\lambda(d)
 =\min_{K\in\mathcal C}\Lambda_K.
\end{equation}
If this minimum is attained at a unique class $K_*$, then
$W_{i,d}\to d_H(\,\cdot\,,K_*)$ uniformly on $[\ell,r]$ for
$i=1,2$.
\end{theorem}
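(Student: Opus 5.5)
We prove the two inequalities $\limsup_{d\searrow0}\lambda(d)\le\min_K\Lambda_K$ and $\liminf_{d\searrow0}\lambda(d)\ge\min_K\Lambda_K$ separately, and then identify the logarithmic limit.

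\emph{Upper bound.} Fix $\eta>0$ and a class $K$. Let $\boldsymbol Q_{d,\eta}$ be the nonzero, nonnegative, compactly supported subsolution from Definition~\ref{def:local-effective-value}(ii). We compare it with the adjoint principal eigenfunction $\boldsymbol\psi_d\gg0$. This is the principal eigenfunction of the formal adjoint with the corresponding adjoint boundary condition, and it has the same eigenvalue $\lambda(d)$ because the problem is cooperative and the eigenvalue is simple. Pairing gives
\[
(\Lambda_K+\eta)\langle\boldsymbol Q,\boldsymbol\psi_d\rangle\ge\langle\mathcal L_d\boldsymbol Q,\boldsymbol\psi_d\rangle=\lambda(d)\langle\boldsymbol Q,\boldsymbol\psi_d\rangle .
\]
The boundary terms have the right sign because $\boldsymbol Q$ is a subsolution of the direct Neumann problem and has compact support in $\overline{V^Q}$. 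Since $\langle\boldsymbol Q,\boldsymbol\psi_d\rangle>0$, we get $\lambda(d)\le\Lambda_K+\eta$. Alternatively, a Collatz--Wielandt/Donsker--Varadhan characterization of $\lambda(d)$ for cooperative systems gives the same bound directly. Minimizing over $K$ and letting $\eta\to0$ yields the upper bound.

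\emph{Lower bound.} Argue by contradiction. Take $d_k\searrow0$ with $\lambda(d_k)\to\lambda_*<\min_K\Lambda_K-2\eta$. By Theorem~\ref{thm:common-HJ-limit}, pass to a subsequence with $W_{i,d_k}\to W$ uniformly, where $W$ is a critical solution. By \eqref{eq:Aubry-representation}, $\min W=0$ is attained at some point of $\mathcal A_N$, hence on some class $K$, and $W$ is constant ($=0$) on $K$. Since $W$ is a critical subsolution, $W(x)\le W(y)+d_H(x,y)=d_H(x,y)$ for $y\in K$, so $W\le d_H(\cdot,K)$.

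Now take $\boldsymbol\Psi=\boldsymbol\Psi_{d,\eta}$ from part (i) on $V$, and compare $\boldsymbol\varphi_{d}$ with $t\boldsymbol\Psi$ via the generalized maximum principle for cooperative systems. Choose $t$ minimal such that $t\Psi_i\ge\varphi_{i,d}$ on $\overline V$. The touching point cannot lie on $\partial V\cap(\ell,r)$: by \eqref{eq:local-phase-gap} there $\Psi_i\ge e^{-(d_H-\delta_0)/d}$, while $\varphi_{i,d}= e^{-W_{i,d}/d}$ with $W_{i,d}\ge W-o(1)$. I expect this step to be the main obstacle: it requires $W\ge d_H(\cdot,K)-\delta_0/2$ near $\partial V$, i.e.\ a matching lower bound, which is exactly where the assumption of isolated finite classes and a careful choice of normalization enter. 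I would handle it by choosing $K$ to maximize, among the classes, the rescaled contact level, or by working with the classes on which $W$ attains its local minima. Since $W$ is constant on each class and satisfies \eqref{eq:Aubry-representation}, $W=\min_{K'}\{W(K')+d_H(\cdot,K')\}$. Taking $K$ to be the class realizing the minimum near $\partial V$, and a neighborhood $U$ small enough to exclude the other classes, ensures $W\ge d_H(\cdot,K)$ up to $o(1)$ on $\partial V$.

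With boundary touching excluded, the touching point also cannot lie at an interior interface, since there the kink $(\Psi_i')^->(\Psi_i')^+$ makes it impossible for $t\Psi_i-\varphi_i$ to have a minimum, and it cannot lie on the Neumann boundary because $\partial_\nu\Psi\ge0$ (Hopf lemma). Hence it lies in the interior. There the strong maximum principle applied to $t\boldsymbol\Psi-\boldsymbol\varphi$ yields $(\Lambda_K-\eta)\le\lambda(d)$, using cooperativity in the off-diagonal terms. This contradicts $\lambda_*<\Lambda_K-2\eta$ for large $k$.

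\emph{Logarithmic limit.} If $K_*$ is the unique minimizer, every subsequential limit $W$ satisfies $W(K')=0$ only for $K'=K_*$. Indeed, if $W$ vanished on another class $K'$, then running the lower-bound argument localized at $K'$, together with the upper bound at $K_*$, would force $\Lambda_{K'}\le\lim\lambda(d)=\Lambda_{K_*}$, contradicting uniqueness. Moreover $W(K')\ge d_H(K',K_*)$ on every other class. Then \eqref{eq:Aubry-representation} gives $W=d_H(\cdot,K_*)$, and since the limit is independent of the subsequence, the whole family $W_{i,d}$ converges uniformly.
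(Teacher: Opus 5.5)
Your upper bound (the adjoint pairing with the compactly supported subsolution) and the maximum-principle/Hopf part of your lower bound are fine and agree with the paper. The gap is at the step you flagged yourself: the choice of the class $K$ at which you localize. Neither $W(K)=0$ nor ``$W$ has a local minimum on $K$'' gives $W\ge W(K)+d_H(\cdot,K)-\delta_0/2$ on $\partial V$.

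Here is a counterexample. Let a class $K$ lie to the left of a class $K'$, with both drifts positive on the gap between them. Then $d_H(K,K')=\int(-b_+)_+=0$. A critical solution with $W(K)=W(K')=0$ satisfies $0\le W\le W(K')+d_H(\cdot,K')=0$ on the gap, while $d_H(x,K)=\int(b_-)_+>0$ there. At the right artificial endpoint of $V$, the bound $S_{d,\eta,i}\le d_H(\cdot,K)-\delta_0$ therefore only gives
\[
d\log(\varphi_{i,d}/\Psi_{d,\eta,i})\le d_H(x,K)-\delta_0+o(1).
\]
This need not lie below the value $-W(K)=0$ attained on $K$. For the supersolutions actually constructed in the paper, the phase at that point is strictly positive, so the maximum of the quotient really does sit on $\partial V$. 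Your remedy, ``take $K$ to be the class realizing the minimum near $\partial V$'', is circular, because $V$ is chosen after $K$. The existence of a class whose own term realizes the minimum in \eqref{eq:Aubry-representation} on a neighborhood of itself is precisely what has to be proved.

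The missing idea is the paper's order on $\mathcal C$: $K\preceq_W L$ iff $W(L)=W(K)+d_H(L,K)$.
\begin{itemize}
\item Transitivity follows from the triangle inequality together with $W\le W(K)+d_H(\cdot,K)$.
\item Antisymmetry holds because two distinct static classes have $d_H(K,L)+d_H(L,K)>0$.
\item Hence the finite set $\mathcal C$ has minimal elements. A minimal $K$ satisfies $W(K)<W(L)+d_H(K,L)$ for all $L\neq K$.
\item These finitely many strict inequalities persist near $K$, so $W=W(K)+d_H(\cdot,K)$ on a neighborhood of $K$. This is exactly what \eqref{eq:local-phase-gap} needs to push the maximum off the artificial boundary.
\end{itemize}
The minimal class lying below a global minimizer of $W$ has $W=0$, so you can keep your normalization. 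However, the paper's argument works for any minimal class.

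The same structure is missing in your last paragraph. Knowing $W(K')=0$ does not let you run the lower bound at $K'$, since only minimal classes are known to satisfy the local identity above. The inequality $W(K')\ge d_H(K',K_*)$ is also asserted without proof. The correct route is:
\begin{itemize}
\item The lower bound gives $\Lambda_K\le\lim\lambda(d)=\Lambda_{K_*}$ for every minimal class $K$, so $K_*$ is the unique minimal element.
\item Every class lies above it, i.e.\ $W(K)=W(K_*)+d_H(K,K_*)$.
\item Hence $W=W(K_*)+d_H(\cdot,K_*)$ by \eqref{eq:Aubry-representation} and the triangle inequality, and $W(K_*)=0$ follows from $\min W=0$.
\end{itemize}
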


\begin{proof}

We first identify the static classes that can determine a
subsequential phase.  Fix $d_k\searrow0$.  After passing to a
subsequence, Theorem~\ref{thm:common-HJ-limit} implies that
$\lambda(d_k)\to\lambda$ and $W_{i,d_k}\to W$ uniformly.  By
\eqref{eq:Aubry-representation}, it follows that
\[
 W(x)=\min_{K\in\mathcal C}\{W(K)+d_H(x,K)\}.
\]
For two static classes, set $d_H(L,K)\coloneqq d_H(y,x)$, where
$x\in K$ and $y\in L$.  This definition is independent of the
representatives because both directed costs within a static class
vanish.

On $\mathcal C$, we define
\[
 K\preceq_W L
 \quad\Longleftrightarrow\quad
 W(L)=W(K)+d_H(L,K).
\]
If $K\preceq_WL$ and $L\preceq_WP$, the triangle inequality yields
one inequality between $W(P)$ and $W(K)+d_H(P,K)$, while the Aubry
representation yields the reverse inequality.  Equality follows, so
the relation is
transitive.  If both $K\preceq_WL$ and $L\preceq_WK$, then $K$
and $L$ have zero round-trip distance and must coincide.  Thus
$\preceq_W$ is a partial order.

Let $K$ be a minimal class.  Then
$W(K)<W(L)+d_H(K,L)$ for every $L\ne K$.  Since $\mathcal C$
is finite, these strict inequalities persist in a relative
neighborhood of $K$.  Consequently,
\begin{equation}\label{eq:significant-class}
 W(x)=W(K)+d_H(x,K)
\end{equation}
in that neighborhood.  In particular, every minimal class is
significant.

We next establish the lower bound in \eqref{eq:finite-class-limit}.  Fix $\eta>0$, and take the local
supersolution from Definition~\ref{def:local-effective-value} with tolerance
$\eta/2$.  On $K$, it follows that
\[
 d_k\log\frac{\varphi_{i,d_k}}{\Psi_{d_k,\eta/2,i}}
 =S_{d_k,\eta/2,i}-W_{i,d_k}\to-W(K)
\]
uniformly in $i$.  On the artificial boundary, the same quantities
lie strictly below $-W(K)$, in view of
\eqref{eq:local-phase-gap} and \eqref{eq:significant-class}.  Hence, the simultaneous
maximum of $\varphi_{i,d_k}/\Psi_{d_k,\eta/2,i}$ is attained away
from the artificial boundary.
Suppose that $\lambda(d_k)<\Lambda_K-\eta$, and set
\[
 t_k\coloneqq \max_{i=1,2}\max_{x\in\overline V}
 \frac{\varphi_{i,d_k}(x)}{\Psi_{d_k,\eta/2,i}(x)},
 \qquad
 \boldsymbol Z_k\coloneqq
 t_k\boldsymbol\Psi_{d_k,\eta/2}-\boldsymbol\varphi_{d_k}.
\]
Then $\boldsymbol Z_k\geq0$, and one component vanishes at a point
away from the artificial boundary.  Moreover, we note that
\[
 \bigl(\mathcal L_{d_k}\boldsymbol Z_k\bigr)_i
 -(\Lambda_K-\eta/2)Z_{k,i}
 \geq[\Lambda_K-\eta/2-\lambda(d_k)]\varphi_{i,d_k}>0.
\]
If $Z_{k,i}$ vanished at an interior point, then 
\[
 (-d_k\partial_{xx}-b_i\partial_x+M_{ii}-\Lambda_K+\eta/2)Z_{k,i}
 \geq[\Lambda_K-\eta/2-\lambda(d_k)]\varphi_{i,d_k}
      -\sum_{j\ne i}M_{ij}Z_{k,j}>0.
\]
The scalar strong maximum principle excludes such a zero.
At a domain endpoint, the Hopf boundary point lemma would imply
$\partial_\nu Z_{k,i}<0$, contrary to
$\partial_\nu Z_{k,i}\geq0$.  This contradiction proves
$\lambda(d_k)\geq\Lambda_K-\eta$ for all large $k$.  Letting
$k\to\infty$ and then $\eta\searrow0$, we obtain
\[
 \lambda\geq\Lambda_K\geq
 \Lambda_{\min}\coloneqq \min_{L\in\mathcal C}\Lambda_L.
\]
Every subsequential phase has a minimal class, and thus
$\liminf_{d\searrow0}\lambda(d)\geq\Lambda_{\min}$.

For the reverse inequality, choose $K_0$ with
$\Lambda_{K_0}=\Lambda_{\min}$ and take the compactly supported
subsolution supplied by Definition~\ref{def:local-effective-value}(ii).
The standard comparison principle for compactly supported
subsolutions of cooperative Neumann systems gives
$\lambda(d)\leq\Lambda_{\min}+\eta$. We therefore deduce
$\limsup_{d\searrow0}\lambda(d)\leq\Lambda_{\min}$, which proves
\eqref{eq:finite-class-limit}.

It remains to identify the limiting phase when the minimizing class is
unique.
Suppose that $K_*$ is the unique minimizing class.  For every
subsequential phase, each minimal class is significant.  By 
\eqref{eq:finite-class-limit}, every significant class has
value $\Lambda_{\min}$.  Hence, $K_*$ is the unique minimal element
of $(\mathcal C,\preceq_W)$.  Every element of a finite partially
ordered set lies above a minimal element, and hence
\[
 W(K)=W(K_*)+d_H(K,K_*)\qquad(K\in\mathcal C).
\]
In view of the Aubry representation and the triangle inequality,
$W(x)=W(K_*)+d_H(x,K_*)$.  Since $\min W=0$, one has
$W(K_*)=0$. The limit is independent of the chosen subsequence.
Therefore, the convergence holds for the full family. The proof is
complete.
\end{proof}

\section{\bf Nondegenerate isolated classes}
\label{sec:isolated-classes}

This section determines the local effective value of an isolated static
class.  The resulting values will be combined with the interval-class
values through Theorem~\ref{thm:finite-class-selection}.

Let $\xi\in(\ell,r)$ be an isolated static class such that
\begin{equation}\label{iso:eq:interior-geometry}
 b_1(\xi)=b_2(\xi)=0,
 \qquad
 \beta_i\coloneqq b_i'(\xi)\ne0,
 \qquad
 \beta_1\beta_2>0.
\end{equation}
Under the simple-zero hypothesis, the last condition distinguishes an
isolated common zero from a point in a nondegenerate interval class.
Define
\[
 \Lambda_\xi
 \coloneqq \sigma\!\left(
 \boldsymbol M(\xi)+
 \operatorname{diag}\bigl((\beta_1)_+,(\beta_2)_+\bigr)
 \right).
\]
We also consider a strict boundary singleton. The two
possible configurations are
\begin{equation}\label{iso:eq:strict-boundary-geometry}
 \xi=\ell,\quad b_1(\ell), b_2(\ell)<0,
 \qquad\text{or}\qquad
 \xi=r,\quad b_1(r)0, b_2(r)>0.
\end{equation}
For either configuration, set
$\Lambda_\xi\coloneqq \sigma(\boldsymbol M(\xi))$.
At a strict noncharacteristic boundary class, the frozen zero-order
matrix governs the local problem. No Ornstein--Uhlenbeck correction is
present.

The localized supersolution requires an integrable whole-line
Ornstein-Uhlenbeck profile with quantitative Gaussian decay.  The next
lemma provides this estimate. No sharp two-sided expansion is needed.

\begin{lemma}\label{iso:lem:localized-OU-profile}
Let $\beta_1,\beta_2>0$, set
$\beta_*\coloneqq \min\{\beta_1,\beta_2\}$, and let
\[
 (\mathscr L_\xi\boldsymbol U)_i
 \coloneqq -U_i''-\beta_i yU_i'
   +\sum_{j=1}^2M_{ij}(\xi)U_j.
\]
There is a postive function
$\boldsymbol U\in C^2(\mathbb R)\cap
L^1(\mathbb R;\mathbb R^2)$
such that
\[
 \mathscr L_\xi\boldsymbol U=\Lambda_\xi\boldsymbol U
 \quad\text{in }\mathbb R,
 \qquad
 \Lambda_\xi
 =\sigma\!\left(
 \boldsymbol M(\xi)+\operatorname{diag}(\beta_1,\beta_2)
 \right).
\]
Moreover, for every $0<\gamma<\beta_*$, there is
$C_\gamma>0$ such that
\begin{equation}\label{iso:eq:OU-Gaussian-bound}
 U_i(y)+|U_i'(y)|
 \le C_\gamma(1+|y|)\mathrm{e}^{-\gamma y^2/2},
 \qquad y\in\mathbb R,\quad i=1,2.
\end{equation}
\end{lemma}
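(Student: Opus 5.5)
The plan is to build $\boldsymbol U$ as a limit of Dirichlet principal eigenfunctions on growing intervals, to get its Gaussian decay from an explicit Gaussian comparison function, and to identify the eigenvalue by integrating the equation over $\mathbb R$. The elementary computation behind everything is the following. For $g_\gamma(y)=\mathrm e^{-\gamma y^2/2}$ one has
\[
 -g_\gamma''-\beta_i y g_\gamma'=\bigl(\gamma+\gamma(\beta_i-\gamma)y^2\bigr)g_\gamma .
\]
So for $0<\gamma<\beta_*$ the Gaussian $g_\gamma$ is a strict supersolution of each scalar part, with a confining potential that grows like $y^2$.

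\emph{Step 1 (identifying the eigenvalue).} Suppose $\boldsymbol U\gg0$ solves $\mathscr L_\xi\boldsymbol U=\lambda\boldsymbol U$ and obeys the bound \eqref{iso:eq:OU-Gaussian-bound}. Integrate the $i$-th equation over $(-R,R)$ and let $R\to\infty$. Since $\int -\beta_i yU_i' = \beta_i\int U_i - [\beta_i yU_i]_{-R}^{R}$, and all boundary terms vanish by the Gaussian decay, we obtain
\[
 \bigl(\operatorname{diag}(\beta_1,\beta_2)+\boldsymbol M(\xi)\bigr)\boldsymbol m=\lambda\boldsymbol m,
 \qquad \boldsymbol m=\int_{\mathbb R}\boldsymbol U\gg0 .
\]
Because this matrix has negative off-diagonal entries, its only eigenvector with positive entries belongs to $\sigma$. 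Hence $\lambda=\Lambda_\xi$, and it suffices to produce some positive integrable eigenpair with the stated decay.

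\emph{Step 2 (truncation and uniform bounds).} For $R\ge1$, let $(\lambda_R,\boldsymbol U_R)$ be the principal Dirichlet eigenpair of $\mathscr L_\xi$ on $(-R,R)$, which exists by Krein--Rutman and strong cooperativity. Normalize by $\max_{i,y}U_{R,i}=1$.

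\begin{itemize}
\item \emph{Monotonicity and upper bound.} By domain monotonicity, $\lambda_R$ is nonincreasing in $R$, so $\lambda_R\le\lambda_1$.
\item \emph{Lower bound.} Take the test vector $\boldsymbol c\,g_\gamma$, with $\boldsymbol c\gg0$ the Perron vector of $\boldsymbol M(\xi)$. It satisfies $\mathscr L_\xi(\boldsymbol cg_\gamma)\ge(\gamma+\sigma(\boldsymbol M(\xi)))\boldsymbol cg_\gamma$ and is positive on $[-R,R]$. The comparison argument used in the proof of Theorem~\ref{thm:finite-class-selection} then gives $\lambda_R\ge\gamma+\sigma(\boldsymbol M(\xi))$.
\item \emph{Gaussian tail.} Fix $\gamma<\beta_*$. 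Since $\lambda_R$ is bounded, there is $R_0$, independent of $R$, such that on $\{|y|\ge R_0\}$
\[
 \mathscr L_\xi(\boldsymbol c\,g_\gamma)-\lambda_R\boldsymbol c\,g_\gamma\ge0 .
\]
This holds because $\gamma(\beta_i-\gamma)y^2$ dominates $\lambda_R$ and the entries of $\boldsymbol M(\xi)$. Set $\boldsymbol Z=Ct\,\boldsymbol c\,g_\gamma-\boldsymbol U_R$ on $R_0\le|y|\le R$, with $C\,\mathrm e^{-\gamma R_0^2/2}\min_i c_i\ge1$. Then $\boldsymbol Z\ge0$ at $|y|=R_0$ and at $|y|=R$. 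The cooperative strong maximum principle, applied exactly as in Theorem~\ref{thm:finite-class-selection}, then gives $\boldsymbol U_R\le C'g_\gamma$ there.
\item \emph{Nondegeneracy.} As a consequence, for $R_0$ large the maximum value $1$ is attained in $|y|\le R_0$.
\item \emph{Derivative bound.} Write the equation as $(\mathrm e^{\beta_i y^2/2}U_{R,i}')'=\mathrm e^{\beta_i y^2/2}F_i$ with $|F_i|\le Cg_\gamma$. Integrating from a point where $U_{R,i}'$ is controlled yields $|U_{R,i}'|\le C(1+|y|)g_\gamma$, after possibly lowering $\gamma$ slightly, which is harmless since $\gamma$ was arbitrary.
\end{itemize}

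\emph{Step 3 (passing to the limit).} Local $C^{2,\alpha}$ estimates and a diagonal argument give $\lambda_R\to\lambda$ and $\boldsymbol U_R\to\boldsymbol U$ in $C^2_{\mathrm{loc}}$. The limit solves $\mathscr L_\xi\boldsymbol U=\lambda\boldsymbol U$, is nonnegative, inherits the Gaussian bounds, and has $\max_{i,\,|y|\le R_0}U_i=1$. The strong maximum principle for the strongly cooperative system makes both components strictly positive. Step 1 then gives $\lambda=\Lambda_\xi$.

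I expect the main obstacle to be the tail comparison in Step 2, for two reasons. First, it must be uniform in $R$, which is where the a priori bound on $\lambda_R$ enters. Second, the exterior region $\{R_0\le|y|\le R\}$ has two components, so the cooperative maximum principle must be applied there carefully, with $\boldsymbol Z$ checked at both inner and outer boundary points of each component.
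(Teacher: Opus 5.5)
Your proposal is correct and follows essentially the paper's route: Dirichlet principal eigenpairs on $(-R,R)$, the Gaussian $\mathrm e^{-\gamma y^2/2}\boldsymbol e$ as an $R$-uniform strict supersolution on $R_0\le|y|<R$ giving the tail bound, local compactness, and integration of the equation to force $\Lambda=\sigma(\boldsymbol M(\xi)+\operatorname{diag}(\beta_1,\beta_2))$ via the Perron property. The differences are minor. Your sup-normalization replaces the paper's $L^1$ normalization, and with it the paper's local $L^1$-to-$L^\infty$ step and its tightness argument; note that your exterior bound places the maximum in $|y|\le R_1$ for some fixed $R_1$ slightly larger than $R_0$, not necessarily in $|y|\le R_0$ itself. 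Your integrating-factor bound for $U_i'$ replaces the paper's rescaled interior gradient estimate.
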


\begin{proof}
 Let
$(\Lambda_R,\boldsymbol U^R)$ be the principal  eigenpair of the following Dirichlet problem
\begin{equation*}
 \mathscr L_\xi\boldsymbol U=\Lambda_\xi\boldsymbol U, \,\,y\in(-R,R), \quad \boldsymbol U(\pm R)=0, 
 \quad
 \sum_i\int_{-R}^R U_i {\rm d}y=1,
\end{equation*}
for which the existence, positivity, and algebraic simplicity follow from the
cooperative maximum principle on a bounded interval. 
For any $\gamma>0$, set
$\rho_\gamma(y)=\exp(-\gamma y^2/2)$.  Then
\begin{equation}\label{eq:OU-Gaussian-calculation}
 -\rho_\gamma''-\beta_i y\rho_\gamma'
 =
 \bigl[\gamma+\gamma(\beta_i-\gamma)y^2\bigr]\rho_\gamma, \quad \forall y\in \mathbb{R}.
\end{equation}
Choose $0<\gamma<\min_i\beta_i$ and fix $\boldsymbol e\gg0$.
 By 
\eqref{eq:OU-Gaussian-calculation} and the uniform boundedness of $\Lambda_R$,  there exists $R_0\in(0,R)$, 
independently of $R$, such that
\[
 (\mathscr L_\xi-\Lambda_R)
 (\rho_\gamma\boldsymbol e)
 \ge c(1+y^2)\rho_\gamma\boldsymbol e
 \quad\text{for }R_0\le |y|<R,
\]
with some $c>0$.  For $R\ge R_0+1$, the normalization, combined
with the local $L^1$-to-$L^\infty$ estimate, provides a uniform bound for
$\boldsymbol U^R(\pm R_0)$.  Comparing
$\boldsymbol U^R$ with a fixed multiple of $\rho_\gamma \boldsymbol e$, by the quotient
maximum argument with this strict positive supersolution, separately
on $(R_0,R)$ and $(-R,-R_0)$, gives
\begin{equation}\label{eq:OU-tail}
 U_i^R(y)\leq C \exp(-\gamma y^2/2)
 \quad (|y|\geq R_0,\ i=1,2),
\end{equation}
which in particular implies that
\[
 \sup_R\sum_i\int_{\{R_1<|y|<R\}}U_i^R(y){\rm d}y
 \to0
 \quad\text{as }R_1\to\infty.
\]
This is the asserted uniform tightness.
By  \eqref{eq:OU-tail}, applying the  interior Harnack and Schauder estimates for cooperative
systems yields that
$\|\boldsymbol U^R\|_{C^{2,\alpha}(\tilde{I})}\leq C$ for any  compact interval $\tilde{I}$.
Thus, along a sequence $R_j\to\infty$, there exist $\Lambda\in\mathbb{R}$ and $\boldsymbol U\in (C^2(\mathbb{R})^2$ such that $\Lambda_{R_j}\to\Lambda$ and $\boldsymbol U^{R_j}\to \boldsymbol U$ in $(C^2_{\rm loc}(\mathbb{R}))^2$. Moreover, the limiting function $\boldsymbol U$ solves $\mathscr L_\xi\boldsymbol U=\Lambda\boldsymbol U$. 
Tightness gives $\sum_i\int_{\mathbb{R}} U_i=1$, and the strong maximum
 principle gives $\boldsymbol U\gg0$.

 Next, we apply
 \eqref{eq:OU-tail} to claim that
 \begin{equation}\label{liu-20260728-2}
      U_i'(y)+\beta_i yU_i(y)\to 0
  \quad\text{as }y\to\pm\infty.
 \end{equation}
 Indeed, we fix sufficiently large $|y|$, and set
\begin{equation}\label{liu-20260728-4}
h_y:=\frac{1}{1+|y|},\qquad
V_i(s):=U_i(y+h_y s),\quad |s|<2.
\end{equation}
Then we calculate that $V=(V_i)$ satisfies
\begin{equation}\label{liu-20260728-5}
-V_i''
-\beta_i h_y(y+h_y s)V_i'
+h_y^2\sum_j
M_{ij}(\xi)V_j=h_y^2\Lambda V_i, \quad |s|<2.
\end{equation}
Since
$|h_y(y+h_y s)|\le C$ for $|s|\leq 2$, all coefficients of the above equation are uniformly bounded. Standard  internal gradient estimates yield
\[
|V_i'(0)|
\leq C\max_j\|V_j\|_{L^\infty(-2,2)},
\]
where the constant $C$ is independent of $y$. Therefore,
\begin{equation}\label{liu-20260728-3}
|U_i'(y)|
=\frac{|V_i'(0)|}{h_y}
\le C(1+|y|)
\max_j\sup_{|x-y|\le 2h_y}|U_j(x)|.
\end{equation}
 In view of $|x^2-y^2|\le C$ whenever $|x-y|\le 2h_y$, by \eqref{eq:OU-tail} we have
\[
\sup_{|x-y|\le 2h_y}U_j(x)
\le C e^{-\gamma y^2/2}.
\]
Consequently, it follows from \eqref{liu-20260728-3} that
$
|U_i'(y)|
\le C(1+|y|)e^{-\gamma y^2/2}$.
On the other hand,
in view of
$|\beta_i yU_i(y)|
\le C|y|e^{-\gamma y^2/2}$, we derive that
\[
\left|U_i'(y)+\beta_i yU_i(y)\right|
\le C(1+|y|)e^{-\gamma y^2/2}
\to 0
\qquad (y\to\pm\infty),
\]
which proves the claim \eqref{liu-20260728-2}.

To proceed further, we define
$
 \mathcal F_iU_i:=U_i''+(\beta_i yU_i)'$, then
\[
 -U_i''-\beta_i yU_i'
 =-\mathcal F_iU_i+\beta_iU_i,
\]
It follows from \eqref{liu-20260728-2} that $\int_{\mathbb{R}}F_iU_i{\rm d}y=0$, so that integration of $\mathscr L_\xi\boldsymbol U=\Lambda\boldsymbol U$ yields 
\[
 \left(
 \boldsymbol M(\xi)+\operatorname{diag}(\beta_1,\beta_2)
 \right)\overline{\boldsymbol U}
 =\Lambda\overline{\boldsymbol U},
 \qquad
 \overline U_i\coloneqq \int_{\mathbb R}U_i(y)\,\mathrm{d}y.
\]
Since $\overline{\boldsymbol U}\gg0$, the defining Perron property of
$\sigma$ requires $\Lambda=\Lambda_\xi$.  This completes the proof.
\end{proof}

\begin{theorem}\label{iso:thm:isolated-local-value}
Let $K=\{\xi\}$ satisfy either
\eqref{iso:eq:interior-geometry} or
\eqref{iso:eq:strict-boundary-geometry}, and define $\Lambda_\xi$
as above. Then $K$ has local effective value
$\Lambda_\xi=\Lambda_K^{\mathrm{iso}}$ in the sense of
Definition~\ref{def:local-effective-value}.
\end{theorem}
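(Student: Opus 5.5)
The plan is to verify the two requirements of Definition~\ref{def:local-effective-value} separately, in three geometries: an interior repelling common zero ($\beta_1,\beta_2>0$), an interior attracting common zero ($\beta_1,\beta_2<0$, where $(\beta_i)_+=0$ and $\Lambda_\xi=\sigma(\boldsymbol M(\xi))$), and a strict boundary singleton. In each case I would freeze the coefficients at $\xi$ on a shrinking neighborhood. I would use the continuity of $\sigma$ under small perturbations of the matrix and of the drift slopes $\beta_i$ to absorb the freezing errors into $\eta$. The first step is to compute $d_H(\cdot,\xi)$ near $\xi$ from \eqref{eq:explicit-critical-distance}, since this fixes the required phase gap \eqref{eq:local-phase-gap}. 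In the repelling case $d_H(x,\xi)\ge(\beta_*-\epsilon)(x-\xi)^2/2$ with $\beta_*=\min\{\beta_1,\beta_2\}$. In the attracting case, and at a strict boundary singleton, $d_H(\cdot,\xi)\equiv0$ near $\xi$, so the supersolution must be exponentially \emph{large} on $\partial V$.

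\emph{Subsolutions (part (ii)).} In every case I would take the principal Dirichlet eigenfunction of a frozen system on a bounded rescaled window and extend it by zero. A zero extension of a nonnegative Dirichlet eigenfunction is a distributional subsolution, because its derivative jumps upward at the support boundary. In the interior cases I set $x=\xi+\sqrt d\,y$ and use the Dirichlet problem for $-U''-\beta_iyU'+\sum_jM_{ij}(\xi)U_j$ on $(-R,R)$.
\begin{itemize}
\item In the repelling case, the proof of Lemma~\ref{iso:lem:localized-OU-profile} gives $\Lambda_R\to\Lambda_\xi$.
\item In the attracting case, the scalar operators $-u''+|\beta_i|yu'$ have Dirichlet principal eigenvalue exponentially small in $R$. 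Indeed, constants are almost eigenfunctions, since the inward drift makes exit exponentially rare. Hence $\Lambda_R\to\sigma(\boldsymbol M(\xi))$.
\end{itemize}
Fixing $R$ large and then letting $d\to0$, the coefficient errors are $O(\sqrt d R)$, which handles these two cases. At a boundary singleton, say $\xi=\ell$ with $b_i(\ell)<0$, I would instead rescale $x=\ell+dy$ and use the mixed problem $-U''-b_i(\ell)U'+\boldsymbol M(\ell)U$ on $(0,R)$, Neumann at $0$ and Dirichlet at $R$. The drift points toward $y=0$, so the eigenvalue again tends to $\sigma(\boldsymbol M(\ell))$, and the Neumann condition is exactly the one required for the direct problem.

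\emph{Supersolutions (part (i)).} In the attracting case I would take $\Psi_i=e_i\exp(\kappa(x-\xi)^2/(2d))$, where $\boldsymbol e\gg0$ is the Perron vector of $\boldsymbol M(\xi)$. A direct computation gives
\[
-d\Psi_i''-b_i\Psi_i'=\Big[-\kappa+\frac{\kappa(x-\xi)^2}{d}\big(|\beta_i|+O(|x-\xi|)-\kappa\big)\Big]\Psi_i .
\]
For $\kappa<\min\{\eta/2,|\beta_i|/2\}$ and $V$ small this yields $\mathcal L_d\boldsymbol\Psi\ge(\sigma(\boldsymbol M(\xi))-\eta)\boldsymbol\Psi$. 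The phase $S=-\kappa(x-\xi)^2/2$ vanishes on $K$ and is $\le-\delta_0$ on $\partial V$. At $\xi=\ell$ I would use $\Psi_i=e_i\exp(\kappa g(x)/d)$, where $g(x)=(x-\ell)^2/(2\delta)$ for $x-\ell\le\delta$ and $g$ is continued linearly with slope $1$ afterwards. Then $g\in C^1$, $g'(\ell)=0$ gives $\partial_\nu\Psi=0$, and the same computation leaves only the error $-\kappa g''\ge-\kappa/\delta$, which I make smaller than $\eta/2$.

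\emph{Main obstacle.} The repelling interior supersolution is the main obstacle. The natural candidate is $\Psi_i(x)=U_i((x-\xi)/\sqrt d)$ with $\boldsymbol U$ from Lemma~\ref{iso:lem:localized-OU-profile}. Its phase satisfies $S\approx\gamma(x-\xi)^2/2<d_H(x,\xi)-\delta_0$ on $\partial V$, but only if $\boldsymbol U$ also has a matching Gaussian \emph{lower} bound. The drift error is $O(\sqrt d\,y^2)|U_i'|$, and it is not controlled by $U_i$ pointwise without information on the sign of $U_i'$. I would first try to avoid this by building $\Psi$ from explicit Gaussians, $\Psi_i=e_i^\epsilon\exp(-\gamma(x-\xi)^2/(2d))$. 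By \eqref{eq:OU-Gaussian-calculation}, this satisfies $-d\Psi_i''-b_i\Psi_i'\ge[\gamma+\gamma(\beta_i-\gamma-C|x-\xi|)(x-\xi)^2/d]\Psi_i$. The term $\gamma(\beta_i-\gamma)y^2$ dominates far out, but near $y=0$ this only gives the value $\gamma+\sigma(\boldsymbol M)$, which is too low. So I would glue this outer Gaussian, at $|y|=R_0$ and using the downward-kink rule allowed by the definition, to an inner piece equal to $\boldsymbol U$ built with slightly perturbed data $\beta_i\pm\epsilon$ and $\boldsymbol M(\xi)-\epsilon I$. The inner region is compact in $y$, so the perturbed strict inequality absorbs the $O(\sqrt d)$ errors there. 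The delicate point is matching the two pieces at $|y|=R_0$ with the correct kink direction, which requires choosing $\gamma$ slightly below the true decay rate of $\boldsymbol U$.
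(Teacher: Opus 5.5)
\textbf{Verdict: there is a genuine gap, at the repelling interior supersolution.} Your zero-action supersolutions (the growing Gaussian at an attracting common zero, and its Neumann-compatible variant at a strict boundary singleton) are essentially the paper's. Dirichlet eigenfunctions on a $\sqrt d$-window are a legitimate substitute for the paper's truncated profiles in the two interior subsolution cases.

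The gap is the step you flag yourself: the supersolution at a repelling common zero. The gluing you describe does not work.
\begin{itemize}
\item Put the inner piece on the left of $y=R_0$ and $Ce_i\mathrm{e}^{-\gamma y^2/2}$ on the right, with values matched. Then the admissible kink $(\Psi_i')^->(\Psi_i')^+$ is equivalent to $(\log U_i)'(R_0)\geq-\gamma R_0$. So the outer Gaussian must decay at least as fast as the inner profile.
\item If $\gamma$ lies below the decay rate of the inner profile, as you propose, the derivative jumps \emph{upward}. That is the kink a subsolution may have, not a supersolution. Already for the scalar profile $U=\mathrm{e}^{-\beta y^2/2}$ the interface condition reads $\gamma\geq\beta$.
\item The outer Gaussian is a supersolution for large $|y|$ only if its $y^2$-coefficient $\gamma(\beta_*-\gamma)$ is nonnegative, i.e.\ $\gamma\leq\beta_*$.
\item Hence the inner piece would have to decay strictly slower than $\beta_*$, so it must be built with reduced slopes. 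Under the true drift it then produces the term $-\epsilon yU_i'$, which has no sign. Controlling that term together with the interface inequality in both components requires exactly the pointwise control of $U_i'/U_i$ you were trying to avoid: two-sided Gaussian asymptotics, or radial monotonicity of $\boldsymbol U$.
\end{itemize}

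\textbf{The missing idea is to superpose rather than glue.} Set $\boldsymbol F_d(x)=\boldsymbol U((x-\xi)/\sqrt d)$ and $G_{d,i}=v_i\mathrm{e}^{-q(x-\xi)^2/(2d)}$ with $q<\gamma<\beta_*$, and take $\boldsymbol\Psi=\boldsymbol F_d+\varepsilon\boldsymbol G_d$.
\begin{itemize}
\item The residual of $\boldsymbol F_d$ needs only the upper bound \eqref{iso:eq:OU-Gaussian-bound}. In absolute terms it is $o_\delta(1)(1+|y|)^2\mathrm{e}^{-\gamma y^2/2}$. For large $|y|$ it is absorbed by the strictly positive residual $\varepsilon c_0(1+y^2)\boldsymbol G_d$ of the slower Gaussian, from \eqref{iso:eq:slow-Gaussian-strict}.
\item On a fixed compact $y$-set this residual is $o(1)\boldsymbol F_d$, because $\boldsymbol U$ is bounded below there; no perturbation of the data is needed.
\item Taking $\varepsilon$ small makes the possibly negative residual of $\varepsilon\boldsymbol G_d$ near $y=0$ smaller than $\eta\boldsymbol F_d$.
\item The phase needs no lower bound on $\boldsymbol U$ either. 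From $\Psi_i\geq\varepsilon G_{d,i}$ you get $-d\log\Psi_i\leq q(x-\xi)^2/2+O(d)$. This is strictly below $d_H(x,\xi)=\beta_*(x-\xi)^2/2+o(|x-\xi|^2)$ on $\partial V$, which gives \eqref{eq:local-phase-gap}.
\item $(\boldsymbol F_d-\varepsilon\boldsymbol G_d)_+$ is then the compactly supported subsolution.
\end{itemize}

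\textbf{Separately, your boundary subsolution is mis-scaled.} Under $x=\ell+dy$ the operator is $d^{-1}(-\partial_{yy}-b_i(\ell)\partial_y)+\boldsymbol M(\ell)$ up to small errors. Let $\mu_{i,R}>0$ be the exponentially small mixed eigenvalue of $-\partial_{yy}-b_i(\ell)\partial_y$ on $(0,R)$. The mixed eigenvalue of the rescaled system is then at least $d^{-1}\min_i\mu_{i,R}-C_R$, which diverges as $d\to0$ for fixed $R$. You need one of the following instead: $R\gtrsim|\log d|$, a $\sqrt d$-window, or the paper's cut Gaussian $\boldsymbol v(\mathrm{e}^{-\alpha(x-\ell)^2/(2d)}-c)_+$.
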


\begin{proof}
Fix $\eta>0$ and a sufficiently small relative neighborhood $U$
as in Definition~\ref{def:local-effective-value}. Fix a relative
neighborhood $V^Q$ with
$K\subset V^Q\subset\overline{V^Q}\subset U$. All compactly
supported subsolutions constructed below will have support in
$\overline{V^Q}$ for small $d$.

We first treat the case $\beta_1,\beta_2>0$ by constructing the
positive local supersolution.
Let $\boldsymbol U$ be given by
Lemma~\ref{iso:lem:localized-OU-profile}, write
$z=x-\xi$, $y=z/\sqrt d$, and set
$\boldsymbol F_d(x)\coloneqq \boldsymbol U(y)$.  Then
\begin{equation}\label{iso:eq:variable-coefficient-error}
 \begin{split}
 \bigl((\mathcal L_d-\Lambda_\xi)\boldsymbol F_d\bigr)_i
 =E_{d,i}\coloneqq {}&
 -\frac{b_i(\xi+z)-\beta_i z}{\sqrt d}\,U_i'(y)\\
 &+\sum_{j=1}^2
  \bigl(M_{ij}(\xi+z)-M_{ij}(\xi)\bigr)U_j(y).
 \end{split}
\end{equation}
Choose
$0<q<\gamma<\beta_*\coloneqq \min_i\beta_i$, let
$\boldsymbol v\gg0$ satisfy
$\boldsymbol M(\xi)\boldsymbol v=\sigma(\boldsymbol M(\xi))\boldsymbol v$, and
define the vector function $\boldsymbol G_d$ by
$G_{d,i}(x)\coloneqq v_i\mathrm{e}^{-qz^2/(2d)}$.
By continuity of $b_i'$ and $\boldsymbol M$, a sufficiently small
$\delta>0$ can be chosen so that, for both
$\lambda=\Lambda_\xi-\eta$ and
$\lambda=\Lambda_\xi+\eta$,
\begin{equation}\label{iso:eq:slow-Gaussian-strict}
 (\mathcal L_d-\lambda)\boldsymbol G_d
 \ge c_0(1+y^2)\boldsymbol G_d
 \quad\text{if }|y|\ge R_0,\quad |z|<\delta,
\end{equation}
where $c_0>0$ and $R_0>1$ are independent of $d$.  Indeed,
\[
 \frac{((\mathcal L_d-\lambda)\boldsymbol G_d)_i}{G_{d,i}}
 =q+\frac{q b_i(\xi+z)z-q^2z^2}{d}
  +\frac{(\boldsymbol M(\xi+z)\boldsymbol v)_i}{v_i}-\lambda,
\]
and the coefficient of $y^2$ tends to
$q(\beta_i-q)>0$.

By differentiability of $b_i$ at $\xi$, continuity of
$\boldsymbol M$, and
\eqref{iso:eq:OU-Gaussian-bound}--%
\eqref{iso:eq:variable-coefficient-error}, we have
\[
 |E_{d,i}|
 \leq o_\delta(1)(1+|y|)^2
       \mathrm{e}^{-\gamma y^2/2},
 \qquad |z|<\delta,
\]
where $o_\delta(1)\to0$ as $\delta\searrow0$, uniformly in $d$
and $i$. By the Gaussian upper bound for $F_{d,i}$ and the slower
Gaussian lower bound for $G_{d,i}$, in view of $q<\gamma$, we have
\begin{equation}\label{iso:eq:error-absorbed-by-slow-tail}
 \frac{|E_{d,i}|+\eta F_{d,i}}{G_{d,i}}
 \leq C\eta(1+|y|)^2
 \mathrm{e}^{-(\gamma-q)y^2/2},
 \qquad |z|<\delta.
\end{equation}
For every fixed $R>0$, the relation $z=\sqrt d\,y$ and the local
uniform positivity of the Ornstein-Uhlenbeck profile yield $\max_{1\leq i\leq m}\sup_{|y|\leq R}
  \frac{|E_{d,i}|}{F_{d,i}}\to 0$ as $d\searrow0$.
In particular, once $R_0$ is fixed, we have $|E_{d,i}|\leq\frac{\eta}{4}F_{d,i}$ when $|y|\leq R_0$
for all small $d$. On the same fixed region, the
coefficient bounds and the positivity of $F_{d,i}$ show that
\[
 \max_{\lambda\in\{\Lambda_\xi-\eta,\Lambda_\xi+\eta\}}
 \max_{1\leq i\leq m}\sup_{|y|\leq R_0}
 \frac{\bigl|((\mathcal L_d-\lambda)\boldsymbol G_d)_i\bigr|}
      {F_{d,i}}
 \leq C_{R_0,\eta}.
\]
Then we may  choose $\varepsilon>0$ small enough  such that
\[
 \varepsilon
 \bigl|((\mathcal L_d-\lambda)\boldsymbol G_d)_i\bigr|
 \leq\frac{\eta}{4}F_{d,i},
 \qquad
 |y|\leq R_0,\quad
 \lambda=\Lambda_\xi\pm\eta.
\]
Let $\varepsilon$ small if necessary so that
$U_i(0)-\varepsilon v_i>0$ for $i=1,2$. Then we choose $R\geq R_0$ such that
\[
 C(1+|y|)^2
 \mathrm{e}^{-(\gamma-q)y^2/2}
 \leq\frac{\varepsilon c_0}{2}(1+|y|^2)
 \qquad (|y|\geq R),
\]
where $C$ is the constant in
\eqref{iso:eq:error-absorbed-by-slow-tail}. We then take $d$
sufficiently small that $R\sqrt d<\delta$ and $|E_{d,i}|\leq{\eta}F_{d,i}/2$ for $|y|\leq R$.
On $|y|\leq R_0$, the variable-coefficient error is at most
$\eta F_{d,i}/2$, while the term generated by
$\varepsilon\boldsymbol G_d$ has absolute value at most
$\eta F_{d,i}/4$. The terms $\pm\eta F_{d,i}$ therefore determine
the required signs. On $R_0\leq|y|\leq R$, the same conclusion
follows from $|E_{d,i}|\leq\eta F_{d,i}/2$ and
\eqref{iso:eq:slow-Gaussian-strict}. Finally, for
$R\leq|y|<\delta/\sqrt d$, the choice of $R$ gives
\[
 |E_{d,i}|+\eta F_{d,i}
 \leq\frac{\varepsilon c_0}{2}(1+|y|^2)G_{d,i},
\]
which is dominated by the lower bound in
\eqref{iso:eq:slow-Gaussian-strict}. Hence, we derive that
\begin{equation}\label{iso:eq:two-local-inequalities}
 \begin{aligned}
 [\mathcal L_d-(\Lambda_\xi-\eta)]
   (\boldsymbol F_d+\varepsilon\boldsymbol G_d)&\geq\boldsymbol0,\\
 [\mathcal L_d-(\Lambda_\xi+\eta)]
   (\boldsymbol F_d-\varepsilon\boldsymbol G_d)&\leq\boldsymbol0
 \end{aligned}
 \qquad\text{in }(\xi-\delta,\xi+\delta).
\end{equation}

Take
$V=(\xi-\delta,\xi+\delta)$ and
$\boldsymbol\Psi_{d,\eta}^{-}
 \coloneqq \boldsymbol F_d+\varepsilon\boldsymbol G_d$.  The first inequality
in \eqref{iso:eq:two-local-inequalities} proves the supersolution
requirement in Definition~\ref{def:local-effective-value}. Moreover, it follows from
\eqref{iso:eq:OU-Gaussian-bound} that
$ -d\log\Psi_{d,\eta,i}^{-}(x)
 \to \frac q2(x-\xi)^2$
locally uniformly away from
$\xi$. 
The explicit semidistance in Theorem \ref{thm:exact-one-dimensional-Aubry} implies 
\[
 d_H(x,\xi)
 =\frac{\beta_*}{2}(x-\xi)^2
  +o(|x-\xi|^2)
 \quad(x\to\xi).
\]
After decreasing $\delta$, we may arrange that
\[
 d_H(x,\xi)-\frac q2(x-\xi)^2
 \ge\frac{\beta_*-q}{4}(x-\xi)^2
 \qquad (0<|x-\xi|\le\delta).
\]
The phase convergence at the two artificial endpoints then implies
\eqref{eq:local-phase-gap}, for example with
$\delta_0=(\beta_*-q)\delta^2/8$.  Multiplication by the common
factor $[\max_i\{U_i(0)+\varepsilon v_i\}]^{-1}$ changes these phases
by $O(d)$ and makes their minimum at $\xi$ equal to zero.  This
establishes the first part of Definition~\ref{def:local-effective-value}.
Step 1 is complete.

For the compactly supported subsolution, set
\[
 \boldsymbol q_d\coloneqq \boldsymbol F_d-\varepsilon\boldsymbol G_d,
 \qquad
 \boldsymbol Q_{d,\eta}\coloneqq
 \bigl((q_{d,1})_+,(q_{d,2})_+\bigr)^{\mathsf T}.
\]
Because $q<\gamma$, estimate
\eqref{iso:eq:OU-Gaussian-bound} implies that
$F_{d,i}/G_{d,i}\to0$ as $|y|\to\infty$.  Hence
$\boldsymbol Q_{d,\eta}\not\equiv0$ and
\[
\operatorname{supp}\boldsymbol Q_{d,\eta}
 \subset[\xi-Y\sqrt d,\xi+Y\sqrt d]\Subset V^Q,
\]
for some $Y$ independent of small $d$.  Let $d$ small such that
$Y\sqrt d<\min\{\delta,
\operatorname{dist}(\xi,\overline\Omega\setminus V^Q)\}$.
By the second inequality in \eqref{iso:eq:two-local-inequalities}, we derive that
\[
 [\mathcal L_d-(\Lambda_\xi+\eta)]\boldsymbol Q_{d,\eta}
 \leq\boldsymbol0.
\]
Its support is compactly contained in the region where the local
inequality holds, so $\boldsymbol Q_{d,\eta}$ is the required
compactly supported subsolution.

It remains to give the two constructions in the zero-action
configurations.
Suppose that $\beta_1,\beta_2<0$, or that $\xi$ is a strict
boundary singleton.
In all these configurations there is an affine local coordinate
$s=s(x)$, with $|s'|=1$, $s(\xi)=0$, and a constant
$\kappa>0$ such that
\begin{equation}\label{iso:eq:zero-action-drift}
 b_i(x)s'(x)s(x)\le-\kappa s(x)^2,
 \qquad x\in V,\quad i=1,2.
\end{equation}
For an interior point take $s=x-\xi$. At $\ell$ take
$s=x-\ell$, and at $r$ take $s=r-x$.  The explicit
semidistance formula also gives $d_H(x,\xi)=0$ for $x\in V$.
Let $\boldsymbol v\gg0$ satisfy
$\boldsymbol M(\xi)\boldsymbol v=\Lambda_\xi\boldsymbol v$.  Shrink $V$
so that
\[
 \left|\frac{(\boldsymbol M(x)\boldsymbol v)_i}{v_i}-\Lambda_\xi\right|
 \le\frac\eta4,
 \qquad x\in V,\quad i=1,2.
\]
Choose $0<a<\min\{\kappa,\eta/2\}$ and set
\begin{equation}\label{iso:eq:zero-action-supersolution}
 \Psi_{d,\eta,i}^{-}(x)
 \coloneqq v_i\exp\!\left(\frac{a s(x)^2}{2d}\right).
\end{equation}
Since $s''=0$, direct differentiation, together with
\eqref{iso:eq:zero-action-drift}, yields
\[
 \frac{(\mathcal L_d\boldsymbol\Psi_{d,\eta}^{-})_i}
      {\Psi_{d,\eta,i}^{-}}
 \ge\Lambda_\xi-\frac\eta4-a
   +a(\kappa-a)\frac{s^2}{d}
 \ge\Lambda_\xi-\eta.
\]
At a domain endpoint, the functions in
\eqref{iso:eq:zero-action-supersolution} satisfy the exact Neumann
condition.  Their normalized phases converge to $-as^2/2$, and thus it follows from $d_H(x,\xi)=0$ that
\eqref{eq:local-phase-gap} holds after fixing the artificial boundary of
$V$.

For the compactly supported subsolution, fix $c=1/2$ and choose
$\alpha>0$ so small
that $\alpha/\kappa\le\log(4/3)$ and $\alpha\le {3\eta}/{32}$. 
Set $g_d(x)\coloneqq \mathrm{e}^{-\alpha s(x)^2/(2d)}$ and $\boldsymbol Q_{d,\eta}\coloneqq \boldsymbol v\,(g_d-c)_+$.
On $D_d\coloneqq \{g_d>c\}$, with $t=s^2/d$, by direct calculations we have
\begin{align*}
 \frac{([\mathcal L_d-(\Lambda_\xi+\eta)]
       [\boldsymbol v(g_d-c)])_i}{v_i}
 &\le
 \bigl[\alpha-\alpha(\kappa+\alpha)t\bigr]g_d
 -\frac{3\eta}{4}(g_d-c).
\end{align*}
If $t\ge2/\kappa$, the right-hand side is nonpositive.  If
$t\le2/\kappa$, then
$g_d\ge3/4$ due to $\alpha/\kappa\le\log(4/3)$, and the right-hand side is at most
$\alpha-3\eta/16<0$.  This proves the desired inequality
classically on $D_d$. If $\xi$ is a domain endpoint, then
$g_d'(\xi)=0$, so the Neumann condition is satisfied. The support
has diameter $O(\sqrt d)$ and therefore lies in $V^Q$ for small
$d$. Thus $\boldsymbol Q_{d,\eta}$ is the required compactly
supported subsolution.
\end{proof}

\section{\bf Regular interval classes}
\label{sec:regular-intervals}

Throughout this section, the hypotheses and notation of
Section~\ref{sec:introduction-main} remain in force.  In particular,
all drift zeros are simple and the domain endpoints are
noncharacteristic.

\subsection{Endpoint geometry and the transport value}

The endpoint geometry, the transport problem, and its admissibility
conditions were specified in
Definition~\ref{reg:def:regular-interval}, and in particular,
$$b_p(x)>0>b_n(x),\qquad a<x<b.$$
  Under the hypotheses of
Theorem~\ref{thm:main-one-dimensional}, every nondegenerate component
of $\{b_1b_2\leq0\}$ without an interior common zero is regular.


We now prove that $\Lambda_I^{\mathrm{reg}}$ in Definition~\ref{reg:def:regular-interval} is well defined.  The
argument is based on the positive characteristic operators associated
with the two transport directions.  Repelling-repelling intervals are
treated through the continuous adjoint flux problem, while for every other
endpoint combination, the primal one-sided operators can be composed
on a continuous component. Under the notations in Definition~\ref{reg:def:regular-interval}, we define
 \[
  \lambda_a\coloneqq
  \begin{cases}
  M_{pp}(a)+b_p'(a),&a\text{ is }\mathrm R,\\
   M_{nn}(a),&a\text{ is }\mathrm A\text{ or }\mathrm D,
  \end{cases}
  \qquad
  \lambda_b\coloneqq
  \begin{cases}
   M_{nn}(b)+b_n'(b),&b\text{ is }\mathrm R,\\
   M_{pp}(b),&b\text{ is }\mathrm A\text{ or }\mathrm D,
  \end{cases}
 \]
 and set $\lambda_{\partial,I}\coloneqq
 \min\{\lambda_a,\lambda_b\}$.
 Throughout this subsection, set $B=-b_n$.

 For
$\lambda<\lambda_{\partial,I}$ and $a<x<b$, we define the primal characteristic operators
 as 
\begin{align}
 (\mathcal H_{n,\lambda}f)(x)
 &\coloneqq{}
 \int_a^x
 \exp\left[-\int_y^x
 \frac{M_{nn}(s)-\lambda}{-b_n(s)}\,\mathrm{d}s\right]
 \frac{-M_{np}(y)}{B(y)}f(y)\,\mathrm{d}y,
 \label{reg:eq:Hn}\\
 (\mathcal H_{p,\lambda}g)(x)
 &\coloneqq{}
 \int_x^b
 \exp\left[-\int_x^y
 \frac{M_{pp}(s)-\lambda}{b_p(s)}\,\mathrm{d}s\right]
 \frac{-M_{pn}(y)}{b_p(y)}g(y)\,\mathrm{d}y.
 \label{reg:eq:Hp}
\end{align}
If $a=\ell$ is a domain endpoint, the term
\begin{equation}\label{reg:eq:left-domain-characteristic-term}
 \frac{-M_{np}(\ell)}{M_{nn}(\ell)-\lambda}f(\ell)
 \exp\left[-\int_\ell^x
 \frac{M_{nn}(s)-\lambda}{-b_n(s)}\,\mathrm{d}s\right]
\end{equation}
is added to \eqref{reg:eq:Hn}. If $b=r$ is a domain endpoint, the term
\begin{equation}\label{reg:eq:right-domain-characteristic-term}
 \frac{-M_{pn}(r)}{M_{pp}(r)-\lambda}g(r)
 \exp\left[-\int_x^r
 \frac{M_{pp}(s)-\lambda}{b_p(s)}\,\mathrm{d}s\right]
\end{equation}
is added to \eqref{reg:eq:Hp}.  The endpoint values are defined by
continuity.  More precisely,
\begin{equation}\label{reg:eq:characteristic-endpoint-values}
 \begin{aligned}
 (\mathcal H_{n,\lambda}f)(a)
 &=
 \begin{cases}
 0,&\mathrm R,\\[1mm]
 \displaystyle\frac{-M_{np}(a)}{M_{nn}(a)-\lambda}f(a),
       &\mathrm A\text{ or }\mathrm D,
 \end{cases}\\[2mm]
 (\mathcal H_{p,\lambda}g)(b)
 &=
 \begin{cases}
 0,&\mathrm R,\\[1mm]
 \displaystyle\frac{-M_{pn}(b)}{M_{pp}(b)-\lambda}g(b),
       &\mathrm A\text{ or }\mathrm D.
 \end{cases}
 \end{aligned}
\end{equation}
At an attracting endpoint, the integral in \eqref{reg:eq:Hn} or
\eqref{reg:eq:Hp} is improper.  For example, at a left attracting
endpoint, set $h=x-a$, $\gamma=-b_n'(a)>0$, and
\[
 \beta(h)\coloneqq
 \frac{M_{nn}(a+h)-\lambda}{[-b_n(a+h)]/h},
 \qquad
 f_0(h)\coloneqq
 \frac{-M_{np}(a+h)}{[-b_n(a+h)]/h}f(a+h).
\]
Writing $\beta(h)=\beta_0+hr(h)$ and
$E(h)=\exp(\int_0^h r(s)\,\mathrm{d}s)$, we have
\begin{equation}\label{reg:eq:left-attracting-representation}
 (\mathcal H_{n,\lambda}f)(a+h)
 =\frac1{E(h)}
 \int_0^1t^{\beta_0-1}E(th)f_0(th)\,\mathrm{d}t,
\end{equation}
where
$\beta_0=[M_{nn}(a)-\lambda]/\gamma>0$. 
This formula gives the endpoint value in
\eqref{reg:eq:characteristic-endpoint-values}.  The right attracting
formula follows by reflection and interchange of $p$ and $n$.

When both endpoints are repelling, define, for $g,f\in C(I)$,
\begin{align}
 (\mathcal T_{p,\lambda}g)(x)
 &\coloneqq
 \frac1{b_p(x)}
 \int_a^x
 \exp\left[-\int_y^x
 \frac{M_{pp}(s)-\lambda}{b_p(s)}\,\mathrm{d}s\right]
 \bigl(-M_{np}(y)\bigr)g(y)\,\mathrm{d}y,
 &&a<x\leq b,
 \label{reg:eq:Tp}\\
 (\mathcal T_{n,\lambda}f)(x)
 &\coloneqq
 \frac1{B(x)}
 \int_x^b
 \exp\left[-\int_x^y
 \frac{M_{nn}(s)-\lambda}{-b_n(s)}\,\mathrm{d}s\right]
 \bigl(-M_{pn}(y)\bigr)f(y)\,\mathrm{d}y,
 &&a\leq x<b.
 \label{reg:eq:Tn}
\end{align}
Both expressions admit continuous extensions to the missing
endpoints.  We set
\begin{equation}\label{reg:eq:repelling-feedback}
 \mathcal K_{I,\lambda}
 \coloneqq
 \mathcal T_{p,\lambda}\mathcal T_{n,\lambda}
 \colon C(I)\to C(I).
\end{equation}

If the two endpoints are not both repelling, using
\eqref{reg:eq:Hn}-\eqref{reg:eq:right-domain-characteristic-term}, we define
\begin{equation*}
 \mathcal K_{I,\lambda}
 \coloneqq
 \begin{cases}
 \mathcal H_{p,\lambda}\mathcal H_{n,\lambda},
       &\text{if the left endpoint is not repelling},\\
 \mathcal H_{n,\lambda}\mathcal H_{p,\lambda},
       &\text{if the left endpoint is repelling and the right endpoint
       is not repelling}.
 \end{cases}
\end{equation*}
The composition acts on the component that remains continuous on
$I$.  
In every case, write
\begin{equation}\label{reg:eq:transport-spectral-radius}
 R_I(\lambda)\coloneqq r(\mathcal K_{I,\lambda}).
\end{equation}

\begin{lemma}
\label{reg:lem:characteristic-operators}
For every regular interval $I$ and every
$\lambda<\lambda_{\partial,I}$, the characteristic feedback operator
$\mathcal K_{I,\lambda}$ is compact and positive.  Its positive
spectral radius is simple.  Moreover, $R_I$ is continuous and
strictly increasing on $( -\infty,\lambda_{\partial,I})$, and
\[
 \lim_{\lambda\to-\infty}R_I(\lambda)=0,
 \qquad
 \lim_{\lambda\nearrow\lambda_{\partial,I}}R_I(\lambda)=+\infty.
\]
\end{lemma}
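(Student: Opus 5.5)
The argument is to verify the assertions for each of the two kinds of feedback operator. In every case $\mathcal K_{I,\lambda}$ is a composition of two integral operators with nonnegative, strictly positive kernels. For $\lambda<\lambda_{\partial,I}$, each characteristic weight $\exp[-\int(M-\lambda)/b]$ is integrable near a repelling or attracting endpoint, because of how $\lambda_a$ and $\lambda_b$ are chosen.

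\textbf{Step 1: continuity, compactness and positivity.} Near a repelling left endpoint, $b_p(s)\sim b_p'(a)(s-a)$. Substituting into \eqref{reg:eq:Tp}, one finds that $\mathcal T_{p,\lambda}g$ behaves like $\int_0^h (t/h)^{\theta}\,g\,\mathrm dt/h$ with $\theta=(M_{pp}(a)-\lambda)/b_p'(a)-1$. This exponent exceeds $-1$ exactly when $\lambda<M_{pp}(a)+b_p'(a)=\lambda_a$, so $\mathcal T_{p,\lambda}g$ has a finite continuous extension to $a$. At attracting endpoints, \eqref{reg:eq:left-attracting-representation} with $\beta_0>0$ plays the same role. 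Domain endpoints are regular because $b_i(\ell),b_i(r)\ne0$. The kernels are therefore uniformly integrable and depend equicontinuously on $x$. Arzel\`a--Ascoli then shows that each factor maps bounded sets of $C(I)$ into equicontinuous families, so $\mathcal K_{I,\lambda}$ is compact. Since $-M_{np},-M_{pn}>0$, each factor sends nonzero nonnegative functions to functions that are positive on the open interval. The composition is then strongly positive, or at least irreducible: a second application makes the output positive everywhere except possibly at a repelling endpoint, where it vanishes in a controlled way. The Krein--Rutman theorem, in the version for irreducible compact positive operators (or de Pagter's theorem), gives $R_I(\lambda)>0$ as a simple eigenvalue with a positive eigenfunction. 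I expect some care with the cone, since the functions may vanish at R endpoints. If that causes trouble, I would work in a weighted space or use a power of $\mathcal K$ that is strongly positive relative to the weight $\mathcal K\mathbf 1$.

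\textbf{Step 2: monotonicity and continuity in $\lambda$.} The kernels increase pointwise and strictly as $\lambda$ increases, since $\exp[\lambda\int ds/|b|]$ does. They are also continuous in $\lambda$ in operator norm on $(-\infty,\lambda_{\partial,I})$, by dominated convergence applied to the bounds from Step 1. Continuity of $R_I$ follows from continuity of an isolated simple eigenvalue under norm perturbation. For strict monotonicity, take $\lambda<\mu$ and let $\psi_\mu$ be the positive eigenfunction of the adjoint at $\mu$. Then $\mathcal K_{I,\mu}u_\lambda>\mathcal K_{I,\lambda}u_\lambda=R_I(\lambda)u_\lambda$ on the open interval, and pairing with $\psi_\mu$ gives $R_I(\mu)>R_I(\lambda)$.

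\textbf{Step 3: limits.} As $\lambda\to-\infty$, the weight $\exp[-\int(M-\lambda)/|b|]$ decays uniformly away from the diagonal. Hence $\|\mathcal K_{I,\lambda}\|\to0$, which gives $R_I\to0$. The main obstacle is the blow-up as $\lambda\nearrow\lambda_{\partial,I}$. Suppose $\lambda_{\partial,I}=\lambda_a$ with $a$ repelling. Apply $\mathcal K_{I,\lambda}$ to a test function $g\ge0$ concentrated near $a$ and estimate $\mathcal K g\ge c(\lambda)g$ near $a$. In the computation above the exponent $\theta$ tends to $-1$, so $\int_0^h (t/h)^\theta\,dt/h\sim 1/(\theta+1)\to\infty$. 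Combined with a positive lower bound for the other factor near $a$, this gives $R_I(\lambda)\ge c(\lambda)\to\infty$ by the Collatz--Wielandt lower bound. At attracting or domain endpoints, the endpoint values in \eqref{reg:eq:characteristic-endpoint-values} contain the factor $(M_{nn}(a)-\lambda)^{-1}$, or the analogous factor with $M_{pp}$. This factor diverges, and near the endpoint it forces $\mathcal K g\ge c(\lambda)g$ with $c(\lambda)\to\infty$. I expect the hardest bookkeeping in this step to be making the lower bound uniform on a small neighbourhood rather than at the point alone.
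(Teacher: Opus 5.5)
Your overall plan matches the paper's: endpoint exponents for continuity and compactness, Krein--Rutman for the simple positive spectral radius, strictly increasing kernels with a Collatz--Wielandt or adjoint pairing for monotonicity, and subinvariant test functions for the blow-up. For two repelling endpoints, and for the blow-up at an attracting or domain ceiling, your sketch is essentially the paper's argument. There are two slips. First, in the $\mathcal T_{p,\lambda}$ computation the exponent is $\theta=q_a(\lambda)=[M_{pp}(a)-\lambda]/b_p'(a)$, not $q_a(\lambda)-1$; with your $\theta$, the condition $\theta>-1$ would read $\lambda<M_{pp}(a)$. Second, for $\lambda\to-\infty$, decay away from the diagonal is not enough near an attracting endpoint, where $1/B$ is not integrable. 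The clean bound is $\int_a^x\exp[-c\int_y^x\mathrm ds/B]\,\mathrm dy/B(y)\le 1/c$ with $c\ge|\lambda|/2$. The genuine gap is the configuration with exactly one repelling endpoint. You never treat it separately, and three of your claims fail there as written.

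Take $a$ repelling and $b$ not, so $\mathcal K_{I,\lambda}=\mathcal H_{n,\lambda}\mathcal H_{p,\lambda}$.

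(i) When $q_a(\lambda)<0$, which happens near $\lambda_{\partial,I}$ if the ceiling is at $a$, one has $(\mathcal H_{p,\lambda}\mathbf 1)(a+h)\asymp h^{q_a(\lambda)}\to\infty$. So $\mathcal H_{p,\lambda}$ does not map $C(I)$ into $C(I)$, and your factor-by-factor equicontinuity argument fails. Compactness, and the role of $\lambda<\lambda_a$, come only from the composition: the $n$-characteristic integral of $h^{q_a}$ converges because $q_a>-1$, and it yields a continuous function vanishing like $h^{q_a+1}$ at $a$.

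(ii) Consequently $\mathcal K_{I,\lambda}$ maps $C(I)$ into the closed ideal $J_a=\{f\in C(I):f(a)=0\}$. It is therefore not irreducible on $C(I)$, and neither de Pagter's theorem nor the irreducible Krein--Rutman theorem applies there. You must restrict to $J_a$. There, $\mathcal K_{I,\lambda}f>0$ on $(a,b]$ for $0\le f\ne0$ gives irreducibility, and eigenvectors and generalized eigenvectors for nonzero eigenvalues lie in the range. Your suggested weight $\mathcal K_{I,\lambda}\mathbf 1$ fails when $q_a(\lambda)>0$: then $\mathcal K_{I,\lambda}\mathbf 1\asymp h$, while $\mathcal K_{I,\lambda}^m f\asymp h^{q_a+1}$ for every $m$ and every $f$ supported away from $a$.

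(iii) Your blow-up computation at a repelling ceiling uses the $\mathcal T_{p,\lambda}$ kernel, which does not occur here. A test function with $g(a)>0$ can never satisfy $\mathcal K_{I,\lambda}g\ge cg$, because $(\mathcal K_{I,\lambda}g)(a)=0$. The paper instead takes $\chi$ supported in $[a+2\delta,b-\delta]$. Then $(\mathcal H_{p,\lambda}\chi)(a+h)\ge c\,h^{q_a(\lambda)}$ on $(0,\delta)$, uniformly for $\lambda$ near $\lambda_a$. The $n$-integration over $(a,a+\delta)$ then gives $\mathcal K_{I,\lambda}\chi\ge c\,\delta^{q_a+1}(q_a+1)^{-1}\chi$. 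The divergence therefore comes from integrating the singular intermediate component, not from an endpoint trace.

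The reflected statements cover a right repelling endpoint paired with a non-repelling left endpoint.
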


\begin{proof}
We first verify the characteristic formulations, their endpoint
convergence, and compactness.  In the repelling--repelling case, consider the adjoint problem
\begin{equation}\label{reg:eq:interval-adjoint}
 \begin{cases}
 (b_i v_i)'+\displaystyle\sum_{j=p,n}M_{ji}v_j
       =\lambda v_i,
       &a<x<b,\quad i=p,n,\\[1mm]
 (b_pv_p)(a)=0,\qquad (b_nv_n)(b)=0.
 \end{cases}
\end{equation}
Then by integrating the $p$-equation from $a$ to $x$ and the
$n$-equation from $x$ to $b$, we observe that $v_p=\mathcal T_{p,\lambda}v_n$ and $v_n=\mathcal T_{n,\lambda}v_p$. 
Moreover, the expansions $b_p(a+h)=b_p'(a)h+o(h)$ and $-b_n(b-h)=b_n'(b)h+o(h)$, 
together with $q_a(\lambda),q_b(\lambda)>-1$, yield
\[
 (\mathcal T_{p,\lambda}g)(a)
 =
 \frac{-M_{np}(a)}
 {M_{pp}(a)+b_p'(a)-\lambda}\,g(a),
 \qquad
 (\mathcal T_{n,\lambda}f)(b)
 =
 \frac{-M_{pn}(b)}
 {M_{nn}(b)+b_n'(b)-\lambda}\,f(b).
\]
The denominators are positive, so both characteristic maps extend
continuously to $I$. Conversely, differentiating the two integral
identities recovers \eqref{reg:eq:interval-adjoint}. Hence, \eqref{reg:eq:interval-adjoint} has a
nontrivial nonnegative solution if and only if $v_p$ is a fixed
point of $\mathcal K_{I,\lambda}$, with
$v_n=\mathcal T_{n,\lambda}v_p$.

At a right attracting endpoint, write
$b_p(x)=\gamma_b(b-x)+O((b-x)^2)$, where
$\gamma_b=-b_p'(b)>0$, and set
$\nu_b=[M_{pp}(b)-\lambda]/\gamma_b>0$.  With $t=b-x$ and
$y=b-tz$, formula~\eqref{reg:eq:Hp} becomes
\begin{equation}\label{reg:eq:right-attracting-rescaling}
 \begin{aligned}
 (\mathcal H_{p,\lambda}g)(b-t)
 &={}
 \int_0^1
 \exp\left[-\int_{b-t}^{b-tz}
 \frac{M_{pp}(s)-\lambda}{b_p(s)}\,\mathrm{d}s\right]
 \frac{-tM_{pn}(b-tz)}{b_p(b-tz)}g(b-tz)\,\mathrm{d}z\\
 &={}
 \int_0^1
 \left[z^{\nu_b}(1+O(t))\right]
 \left[\frac{-M_{pn}(b)}{\gamma_bz}+O(t)\right]
 g(b-tz)\,\mathrm{d}z,
 \end{aligned}
\end{equation}
which is uniform for $0<z\leq1$.  Since $\nu_b>0$, the
integrand is dominated by an integrable multiple of
$z^{\nu_b-1}$.  By the dominated convergence theorem, we have
\[
 \lim_{x\nearrow b}(\mathcal H_{p,\lambda}g)(x)
 =\frac{-M_{pn}(b)}{M_{pp}(b)-\lambda}g(b).
\]
The analogous verification at a left attracting endpoint yields
\eqref{reg:eq:left-attracting-representation}.

At a left repelling endpoint, set
$q_a(\lambda)=[M_{pp}(a)-\lambda]/b_p'(a)>-1$.  By the change of
variables $y=a+z(x-a)$ in \eqref{reg:eq:Tp}, we see that its singular
part is a bounded perturbation of
\[
 \frac{-M_{np}(a)}{b_p'(a)}
 \int_0^1z^{q_a(\lambda)}g(a+z(x-a))\,\mathrm{d}z.
\]
It therefore has the continuous endpoint value in
\eqref{reg:eq:Tp}.  The same calculation at a right repelling endpoint
applies to \eqref{reg:eq:Tn}, with
$q_b(\lambda)=[M_{nn}(b)-\lambda]/b_n'(b)>-1$.  For a 
feedback operator with one
repelling endpoint, the corresponding power is locally integrable and
$\mathcal K_{I,\lambda}$ has a continuous
endpoint trace.  The domain endpoint terms
\eqref{reg:eq:left-domain-characteristic-term} and
\eqref{reg:eq:right-domain-characteristic-term} are regular.

We next prove the compactness of $\mathcal K_{I,\lambda}$.  Let
$\mathcal B$ be the unit ball of $C(I)$.  Suppose first that both
endpoints are repelling.  For $f\in\mathcal B$, set
$g=\mathcal T_{n,\lambda}f$ and
$h=\mathcal T_{p,\lambda}g$.  By \eqref{reg:eq:Tn},
$\|g\|_\infty\leq C_\lambda$.  Since $-b_n$ is bounded away from
zero near $a$, it follows that
$\|g\|_{W^{1,\infty}(a,a+\delta)}\leq C_\lambda$.  Moreover,
\begin{equation}\label{reg:eq:Tp-compactness-ode}
 b_ph'+(b_p'+M_{pp}-\lambda)h=-M_{np}g.
\end{equation}
The endpoint identity is
$
 \bigl(b_p'(a)+M_{pp}(a)-\lambda\bigr)h(a)
 =-M_{np}(a)g(a).
$
Subtracting this  from
\eqref{reg:eq:Tp-compactness-ode}, and using
$b_p'(a)+M_{pp}(a)-\lambda>0$, yields
\[
 |h(x)-h(a)|+(x-a)|h'(x)|
 \leq C_\lambda(x-a),\qquad a<x<a+\delta.
\]
Away from $a$, equation~\eqref{reg:eq:Tp-compactness-ode} directly
bounds $h'$.  Hence, $\mathcal K_{I,\lambda}$ maps
$\mathcal B$ into a bounded subset of $W^{1,\infty}(I)$, and is
compact by the Arzel\`a--Ascoli theorem.
It remains to consider the cases in which the endpoints are not both
repelling.  Suppose first that the left endpoint is not repelling, so that
$\mathcal K_{I,\lambda}
=\mathcal H_{p,\lambda}\mathcal H_{n,\lambda}$.  For
$f\in\mathcal B$, set
$g_f=\mathcal H_{n,\lambda}f$.  On each compact subinterval of
$[a,b)$, the family $\{g_f:f\in\mathcal B\}$ is uniformly
bounded.  At a left attracting endpoint, this follows from
\eqref{reg:eq:left-attracting-representation}. At a left domain
endpoint, it follows from the regular boundary term
\eqref{reg:eq:left-domain-characteristic-term}. 
Assume that the right endpoint is attracting.  Since $b_n(b)<0$, there
exist $\delta,c_0>0$ such that
$-b_n(x)\geq c_0$ for $b-\delta\leq x\leq b$.  Differentiating
\eqref{reg:eq:Hn} gives
\[
 -b_n(\mathcal H_{n,\lambda}f)'
 +(M_{nn}-\lambda)\mathcal H_{n,\lambda}f=-M_{np}f.
\]
It follows that
$
 \sup_{f\in\mathcal B}
 \|(\mathcal H_{n,\lambda}f)'\|_{L^\infty(b-\delta,b)}
 <\infty.
$ 
Thus $\{g_f:f\in\mathcal B\}$ is uniformly Lipschitz near $b$.
Using \eqref{reg:eq:right-attracting-rescaling}, with $t=b-x$, we
obtain
\[
 \begin{aligned}
 &\left|
 (\mathcal K_{I,\lambda}f)(b-t)
 -(\mathcal K_{I,\lambda}f)(b)
 \right|\\
 &\qquad\leq
 C\int_0^1z^{\nu_b-1}
 |g_f(b-tz)-g_f(b)|\,\mathrm{d}z+Ct
 =O(t),
 \end{aligned}
\]
uniformly for $f\in\mathcal B$.  Hence,
$\{\mathcal K_{I,\lambda}f:f\in\mathcal B\}$ is equicontinuous at
$b$.
If the endpoint is a domain endpoint, the verification is similar.

We next establish positivity and the spectral properties.  Every
kernel in \eqref{reg:eq:Hn}, \eqref{reg:eq:Hp},
\eqref{reg:eq:Tp}, and \eqref{reg:eq:Tn} is positive.  In the
repelling-repelling case, if $0\leq f\not\equiv0$, then
$\mathcal K_{I,\lambda}f\gg0$ on $I$.  Thus the operator is
strongly positive.  In every other case, its integral kernel is
strictly positive for $x,y\in(a,b)$. A possible zero at a repelling
endpoint is forced only by the prescribed incoming trace.  The
standard Perron argument for a compact positive integral operator,
or equivalently the Krein--Rutman theorem on its invariant principal
ideal \cite{KR1950}, shows that $R_I(\lambda)>0$ is simple and has an
eigenfunction strictly positive in $(a,b)$.

Let $\lambda_1<\lambda_2<\lambda_{\partial,I}$.  Inspection of the
four characteristic formulas and the two domain endpoint terms
shows that increasing $\lambda$ strictly increases every positive
kernel and endpoint factor.  Hence the positive kernel of
$\mathcal K_{I,\lambda_2}$ strictly dominates that of
$\mathcal K_{I,\lambda_1}$ in the interior.  The
Collatz-Wielandt inequalities imply
$R_I(\lambda_1)<R_I(\lambda_2)$.  Hence,
\[
 \|\mathcal K_{I,\lambda'}-\mathcal K_{I,\lambda}\|_{C(I)\to C(I)}
 \to0
 \qquad\text{as }\lambda'\to\lambda.
\]
The simplicity and isolation of the positive eigenvalue yield the
continuity of $R_I$.

It remains to determine the limits of the spectral radius.  For
$\lambda\ll-1$, the zeroth-order coefficients in the first-order
equations defining the characteristic maps are bounded below by
$|\lambda|/2$.  By the maximum principle, we arrive at
\[
 \|\mathcal K_{I,\lambda}\|_{C(I)\to C(I)}
 \leq\frac{C}{|\lambda|^2}.
\]
Consequently, $R_I(\lambda)\to0$ as $\lambda\to-\infty$.

We finally prove that
$R_I(\lambda)\to+\infty$ as
$\lambda\nearrow\lambda_{\partial,I}$.  We first assume that 
$\lambda_{\partial,I}=\lambda_a$. 
Suppose that the left endpoint is attracting or is a domain endpoint.
In either
case, $\lambda_a=M_{nn}(a)$ and
$\mathcal K_{I,\lambda}
 =\mathcal H_{p,\lambda}\mathcal H_{n,\lambda}$.
Choose $a<c<d<b$, with $c$ and $d$ close to
$a$, and a function
$\chi\in C(I)$ such that $0\leq\chi\leq1$,
$\chi=1$ near $a$, and $\chi=0$ on $[c,b]$.
At an attracting endpoint, by
\eqref{reg:eq:left-attracting-representation},
\[
 \inf_{x\in[c,d]}
 (\mathcal H_{n,\lambda}\chi)(x)
 \geq \frac{C}{M_{nn}(a)-\lambda}
\]
for $\lambda$ sufficiently close to $M_{nn}(a)$.  
At a domain endpoint, the same lower bound follows directly from
the boundary term
\eqref{reg:eq:left-domain-characteristic-term}. Its exponential
factor is bounded below on $[c,d]$, uniformly for $\lambda$ near
$M_{nn}(a)$.
The kernel of $\mathcal H_{p,\lambda}$ is bounded below by a
positive constant for $a\leq x\leq c$ and $c\leq y\leq d$.
Hence, we have $ (\mathcal K_{I,\lambda}\chi)(x)
 \geq \frac{C}{M_{nn}(a)-\lambda}$ for $a\leq x\leq c$.
On $[c,b]$, we have $\chi=0$ and
$\mathcal K_{I,\lambda}\chi\geq0$.  Therefore, it follows that $ \mathcal K_{I,\lambda}\chi
 \geq \frac{C}{M_{nn}(a)-\lambda}\chi$ on $I$, from which 
by the iteration we derive that 
$
 \mathcal K_{I,\lambda}^{\,m}\chi
 \geq
 \left(\frac{C}{M_{nn}(a)-\lambda}\right)^m\chi
$ for all $m\geq1$. 
The spectral-radius formula thus gives
\[
 R_I(\lambda)
 \geq\frac{C}{M_{nn}(a)-\lambda}
 \to+\infty.
\]

Suppose next that the left endpoint is repelling.  Then
$\lambda_a=M_{pp}(a)+b_p'(a)$, and
$q_a(\lambda)+1\to0$.  If the right endpoint is also repelling, the
argument in the original repelling--repelling construction applies
directly.  For $\lambda$ in a fixed left neighborhood of
$\lambda_a$,
$\mathcal T_{n,\lambda}\mathbf 1
 \geq c_0\mathbf 1$.  The rescaled kernel in
\eqref{reg:eq:Tp} then gives
$
 \inf_{x\in I}
 (\mathcal K_{I,\lambda}\mathbf 1)(x)
 \geq\frac{c_1}{q_a(\lambda)+1}-c_2.
$
It follows by iteration that
\[
 R_I(\lambda)
 \geq\frac{c_1}{q_a(\lambda)+1}-c_2
 \to+\infty.
\]

It remains to consider a left repelling endpoint whose right endpoint
is not repelling.  In this case,
$\mathcal K_{I,\lambda}
 =\mathcal H_{n,\lambda}\mathcal H_{p,\lambda}$.
Choose $\delta>0$ and
$\chi\in C(I)$, $0\leq\chi\leq1$, supported in
$[a+2\delta,b-\delta]$ and equal to one on a nondegenerate
subinterval.  Since
\[
 \frac{M_{pp}(a+h)-\lambda}{b_p(a+h)}
 =\frac{q_a(\lambda)}h+O(1)
 \qquad\text{as }h\searrow0,
\]
we can apply formula~\eqref{reg:eq:Hp}, restricted to the subinterval on which
$\chi=1$, to obtain 
\[
 (\mathcal H_{p,\lambda}\chi)(a+h)
 \geq c h^{q_a(\lambda)},
 \qquad 0<h<\delta,
\]
where $c>0$ is independent of $\lambda$ near $\lambda_a$.
The kernel of $\mathcal H_{n,\lambda}$ is uniformly bounded below
when $a<y<a+\delta$ and
$x\in\operatorname{supp}\chi$.  Hence
\[
 \begin{aligned}
 (\mathcal K_{I,\lambda}\chi)(x)
 \geq c\int_0^\delta h^{q_a(\lambda)}\,\mathrm{d}h=\frac{c\,\delta^{q_a(\lambda)+1}}
         {q_a(\lambda)+1},
 \qquad x\in\operatorname{supp}\chi.
 \end{aligned}
\]
Outside $\operatorname{supp}\chi$, the right-hand side of
$
 \mathcal K_{I,\lambda}\chi
 \geq
 \frac{c\,\delta^{q_a(\lambda)+1}}
      {q_a(\lambda)+1}\chi
$
vanishes.  Iteration and the spectral-radius formula now show that
$
 R_I(\lambda)
 \geq
 \frac{c\,\delta^{q_a(\lambda)+1}}
      {q_a(\lambda)+1}
 \to+\infty.
$

If the right endpoint ceiling is active, the same arguments apply
after reflecting the interval and interchanging $p$ and $n$.
They yield the corresponding bounds with
$M_{pp}(b)-\lambda$ at an attracting or domain endpoint and with
$q_b(\lambda)+1$ at a repelling endpoint.  When both endpoint ceilings
are active, either one of the preceding estimates suffices.  This
completes the proof.
\end{proof}

\begin{lemma}
\label{reg:lem:transport-value}
For every regular interval $I$, there is a unique
$\Lambda_I^{\mathrm{reg}}<\lambda_{\partial,I}$ such that
$R_I(\Lambda_I^{\mathrm{reg}})=1$.  This is precisely the unique
value for which \eqref{reg:eq:transport-system} admits a positive
admissible profile satisfying
\eqref{reg:eq:endpoint-conditions}.  The
profile is unique up to multiplication by a positive constant.
\end{lemma}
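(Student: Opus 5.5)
Existence and uniqueness of a $\lambda<\lambda_{\partial,I}$ with $R_I(\lambda)=1$ come straight from Lemma~\ref{reg:lem:characteristic-operators}. Since $R_I$ is continuous and strictly increasing on $(-\infty,\lambda_{\partial,I})$, with limits $0$ and $+\infty$, the intermediate value theorem gives exactly one such value. We call it $\Lambda_I^{\mathrm{reg}}$. The remaining work is to identify this root with the transport problem \eqref{reg:eq:transport-system}--\eqref{reg:eq:endpoint-conditions}, and to prove uniqueness of the profile.

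\emph{From a fixed point to a profile.} Let $\lambda=\Lambda_I^{\mathrm{reg}}$. Lemma~\ref{reg:lem:characteristic-operators} supplies a positive eigenfunction $f$ with $\mathcal K_{I,\lambda}f=f$, unique up to scaling because the spectral radius is simple. There are two cases.
\begin{itemize}
\item \emph{Not both endpoints repelling.} Say the left endpoint is not repelling, so $\mathcal K=\mathcal H_p\mathcal H_n$. Set $u_p=f$ and $u_n=\mathcal H_{n,\lambda}u_p$. Differentiating \eqref{reg:eq:Hn} and \eqref{reg:eq:Hp} recovers both equations of \eqref{reg:eq:transport-system} in $(a,b)$. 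The endpoint values \eqref{reg:eq:characteristic-endpoint-values} are exactly the algebraic or zero-trace conditions \eqref{reg:eq:endpoint-conditions}. For example, at an A or D left endpoint, $(M_{nn}(a)-\lambda)u_n(a)=-M_{np}(a)u_p(a)$. At a left R endpoint, $u_n(a)=0$. Positivity in $(a,b)$ follows from the strictly positive kernels. Integrability holds because all traces are finite. The case $\mathcal H_n\mathcal H_p$ is symmetric.
\item \emph{Repelling--repelling.} The fixed point yields a positive solution $(v_p,v_n)$ of the adjoint problem \eqref{reg:eq:interval-adjoint}, with $(b_pv_p)(a)=0=(b_nv_n)(b)$. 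From it we must produce a positive $L^1$ profile of the primal system with $u_n(a)=0$ and $u_p(b)=0$. Here I would work with the primal characteristic formulation directly: $u_n=\mathcal H_{n,\lambda}u_p$ with zero left trace, and $u_p=\mathcal H_{p,\lambda}u_n$ with zero right trace. The task is to show that its solvability threshold coincides with that of the adjoint.
\end{itemize}
My plan for the repelling--repelling identification is a duality argument. Pair the primal equations with the adjoint solution $(v_p,v_n)$ and integrate by parts. The boundary terms vanish, by the zero incoming traces on one side and the vanishing fluxes $b_pv_p$, $b_nv_n$ on the other. This yields $(\lambda-\lambda')\int(u_pv_p+u_nv_n)=0$ for a primal profile at $\lambda'$, which forces $\lambda'=\lambda$ once positivity is known. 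Existence of the primal profile I would get by applying the same Krein--Rutman argument to $\mathcal H_n\mathcal H_p$, acting on $L^1$-weighted spaces near the repelling endpoints. I expect this step to be the main obstacle. The primal profile may blow up like $(x-a)^{q_a-1}$-type powers at a repelling endpoint, so both the integrability claim and the justification of the boundary terms in the duality pairing need the exponent bound $q_a(\lambda)>-1$ together with careful asymptotics.

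\emph{Uniqueness of $\lambda$ and of the profile.} Conversely, let $\boldsymbol u$ be any positive admissible profile at some $\lambda$. I would first check that $\lambda<\lambda_{\partial,I}$, using the local endpoint behaviour: at an A or D endpoint, the algebraic condition together with positivity forces $M_{nn}(a)-\lambda>0$. Then integrating the equations along characteristics, via the variation-of-constants formula with the prescribed traces, shows that $u_p$ is a positive fixed point of $\mathcal K_{I,\lambda}$. By the Krein--Rutman uniqueness of positive eigenvectors, $R_I(\lambda)=1$, so $\lambda=\Lambda_I^{\mathrm{reg}}$. Simplicity of the spectral radius then gives that $u_p$, and hence $u_n=\mathcal H_{n,\lambda}u_p$, is unique up to a positive constant.
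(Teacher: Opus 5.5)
Your construction of $\Lambda_I^{\mathrm{reg}}$ and your treatment of intervals whose endpoints are not both repelling agree with the paper. The repelling--repelling case, however, has a genuine gap, and it affects both existence and uniqueness.

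\emph{Existence.} Your pairing identity only shows that a positive primal profile at some $\lambda'$ would force $\lambda'=\Lambda_I^{\mathrm{reg}}$. It does not produce a profile. Your proposed substitute is a new Krein--Rutman argument for $\mathcal H_{n,\lambda}\mathcal H_{p,\lambda}$ on weighted $L^1$ spaces. That would require redoing compactness, monotonicity, continuity and both limits of Lemma~\ref{reg:lem:characteristic-operators} with $\lambda$-dependent weights. You would then still have to identify the resulting spectral radius with $R_I(\lambda)$.

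The missing observation is that none of this is needed. Compare the kernels in \eqref{reg:eq:Hn}, \eqref{reg:eq:Hp}, \eqref{reg:eq:Tp} and \eqref{reg:eq:Tn}. With $\mathcal T^{\mathsf T}$ denoting the operator with transposed kernel, one finds $\mathcal H_{p,\lambda}g=(-M_{np})^{-1}\mathcal T_{p,\lambda}^{\mathsf T}[(-M_{pn})g]$ and $\mathcal H_{n,\lambda}f=(-M_{pn})^{-1}\mathcal T_{n,\lambda}^{\mathsf T}[(-M_{np})f]$. So the primal profile is the dual Perron eigenvector of $\mathcal K=\mathcal T_{p,\lambda}\mathcal T_{n,\lambda}$, and the paper obtains it as follows:
\begin{itemize}
\item The dual Krein--Rutman theorem gives a positive eigenfunctional $\mu=\mathcal K^*\mu$.
\item Because $\mathcal K$ has a kernel, $\mu=w\,\mathrm{d}x$ is absolutely continuous.
\item Set $z\,\mathrm{d}x=\mathcal T_{p,\lambda}^*(w\,\mathrm{d}x)$, $u_n=w/(-M_{pn})$ and $u_p=z/(-M_{np})$.
\item By Fubini these satisfy \eqref{reg:eq:primal-volterra-n}--\eqref{reg:eq:primal-volterra-p}, which give positivity, the zero incoming traces and the transport system.
\end{itemize}

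\emph{Uniqueness.} Your last paragraph claims that the $p$-component of every admissible profile is a positive fixed point of $\mathcal K_{I,\lambda}$. This holds when $\mathcal K_{I,\lambda}$ is $\mathcal H_{p,\lambda}\mathcal H_{n,\lambda}$ or $\mathcal H_{n,\lambda}\mathcal H_{p,\lambda}$. It fails in the repelling--repelling case. There $\mathcal K_{I,\lambda}=\mathcal T_{p,\lambda}\mathcal T_{n,\lambda}$ acts on the flux problem \eqref{reg:eq:interval-adjoint}, and the primal $u_p\sim C_ah^{q_a(\lambda)}$ need not even be continuous at $a$. Your pairing recovers $\lambda=\Lambda_I^{\mathrm{reg}}$, but not uniqueness of the profile up to a constant. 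Two fixes are available:
\begin{itemize}
\item The paper's route: by Fubini, $(-M_{pn})u_n\,\mathrm{d}x$ is a positive eigenfunctional of $\mathcal K^*$ at $1$, and that eigenvalue is simple.
\item An ODE route: by Lemma~\ref{reg:lem:repelling-endpoint-asymptotics}, the admissible local ray at one endpoint is one-dimensional, and interior ODE uniqueness then fixes the profile up to a constant.
\end{itemize}

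Two smaller slips remain.
\begin{itemize}
\item In the mixed case with one repelling endpoint, the component leaving that endpoint behaves like $h^{q}$ with possibly $-1<q<0$. Integrability therefore comes from $q>-1$, not from finiteness of traces.
\item In the converse you must also show $\lambda<\lambda_{\partial,I}$ at repelling endpoints, not only at A or D endpoints. This is the integrating-factor step in Lemma~\ref{reg:lem:repelling-endpoint-asymptotics}.
\end{itemize}
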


\begin{proof}

We first determine the spectral parameter and construct the primal
profile when the two endpoints are not both repelling.  By
Lemma~\ref{reg:lem:characteristic-operators}, continuity and strict
monotonicity of the spectral radius in
\eqref{reg:eq:transport-spectral-radius}, together with its two
limiting values, give a
unique $\Lambda_I^{\mathrm{reg}}<\lambda_{\partial,I}$ such that
$R_I(\Lambda_I^{\mathrm{reg}})=1$.
Suppose that the left endpoint is not repelling.  Let $u_p$ be the
positive eigenfunction of
$\mathcal H_{p,\Lambda_I^{\mathrm{reg}}}
 \mathcal H_{n,\Lambda_I^{\mathrm{reg}}}$ at the eigenvalue $1$,
and set
$u_n=\mathcal H_{n,\Lambda_I^{\mathrm{reg}}}u_p$.  If the left
endpoint is repelling and the right endpoint is not, start instead with
the positive eigenfunction $u_n$ of
$\mathcal H_{n,\Lambda_I^{\mathrm{reg}}}
 \mathcal H_{p,\Lambda_I^{\mathrm{reg}}}$ and set
$u_p=\mathcal H_{p,\Lambda_I^{\mathrm{reg}}}u_n$.  In both cases,
the two characteristic identities yield
\eqref{reg:eq:transport-system}.  The endpoint limits in
\eqref{reg:eq:characteristic-endpoint-values}, together with the two
domain endpoint terms, give exactly
\eqref{reg:eq:endpoint-conditions}.
The kernels are strictly positive in the interior. Hence
$\boldsymbol u\gg0$ on $(a,b)$.  At a repelling endpoint, the
intermediate component has exponent greater than $-1$, so
$\boldsymbol u\in L^1((a,b);\mathbb R^2)$.

We next establish the primal profile when both endpoints are repelling.
Let
$\mathcal K=\mathcal K_{I,\Lambda_I^{\mathrm{reg}}}
=\mathcal T_{p,\Lambda_I^{\mathrm{reg}}}
 \mathcal T_{n,\Lambda_I^{\mathrm{reg}}}$.
By Lemma~\ref{reg:lem:characteristic-operators}, $\mathcal K$ is
compact and strongly positive on $C(I)$, and
$r(\mathcal K)=1$.  The Krein--Rutman theorem and its dual form
\cite{KR1950} imply that $1$ is algebraically simple and that
$\mathcal K^*$ has a strictly positive eigenfunctional $\mu$.

The operator $\mathcal K$ has a kernel with respect to Lebesgue
measure.  Hence $\mathcal K^*$ maps finite measures to absolutely
continuous measures, and $\mu=\mathcal K^*\mu$ gives
$\mu=w\,\mathrm{d}x$ for some $w>0$ almost everywhere.  Let
$z\,\mathrm{d}x\coloneqq
\mathcal T_{p,\Lambda_I^{\mathrm{reg}}}^*(w\,\mathrm{d}x)$ and set
\[
 u_n\coloneqq\frac{w}{-M_{pn}},
 \qquad
 u_p\coloneqq\frac{z}{-M_{np}}.
\]
Since
$\mathcal T_{n,\Lambda_I^{\mathrm{reg}}}^*
 (z\,\mathrm{d}x)=w\,\mathrm{d}x$, by the Fubini's theorem we have
\begin{align}
 u_n(x)
 &={}
 \int_a^x
 \exp\left[-\int_y^x
 \frac{M_{nn}(s)-\Lambda_I^{\mathrm{reg}}}{-b_n(s)}\,\mathrm{d}s\right]
 \frac{-M_{np}(y)}{B(y)}u_p(y)\,\mathrm{d}y,
 \label{reg:eq:primal-volterra-n}\\
 u_p(x)
 &={}
 \int_x^b
 \exp\left[-\int_x^y
 \frac{M_{pp}(s)-\Lambda_I^{\mathrm{reg}}}{b_p(s)}\,\mathrm{d}s\right]
 \frac{-M_{pn}(y)}{b_p(y)}u_n(y)\,\mathrm{d}y.
 \label{reg:eq:primal-volterra-p}
\end{align}
It follows from \eqref{reg:eq:primal-volterra-n} and
\eqref{reg:eq:primal-volterra-p} that $\boldsymbol u\gg0$, the incoming traces
$u_n(a)=u_p(b)=0$, and, upon differentiation on compact
subintervals, equation~\eqref{reg:eq:transport-system}.  The endpoint
rescaling in the proof of
Lemma~\ref{reg:lem:characteristic-operators} gives local integrability.
Step 2 is complete.

It remains to prove the converse and uniqueness.  If the two endpoints
are not both repelling, every positive admissible profile satisfies the
two identities associated with $\mathcal H_{n,\lambda}$ and
$\mathcal H_{p,\lambda}$.  Its continuous component is therefore a
positive fixed point of $\mathcal K_{I,\lambda}$.  If both endpoints
are repelling, Fubini's theorem shows that
$(-M_{pn})u_n\,\mathrm{d}x$ is a positive eigenfunctional of
$\mathcal K_{I,\lambda}^*$ at the eigenvalue $1$.  In either case,
simplicity of the positive spectral radius gives
$R_I(\lambda)=1$ and proves uniqueness of the profile up to a
positive factor.

Finally, an admissible positive profile necessarily has
$\lambda<\lambda_a$ and $\lambda<\lambda_b$.  At an attracting or
domain endpoint, this follows immediately from the positive
algebraic trace in
\eqref{reg:eq:characteristic-endpoint-values}.  At a repelling endpoint,
the regular-singular expansion in
Lemma~\ref{reg:lem:repelling-endpoint-asymptotics} shows that local
integrability is equivalent to
$[M_{pp}(a)-\lambda]/b_p'(a)>-1$ on the left or
$[M_{nn}(b)-\lambda]/b_n'(b)>-1$ on the right.  Thus
$\lambda<\lambda_{\partial,I}$, and the strict monotonicity of
$R_I$ forces $\lambda=\Lambda_I^{\mathrm{reg}}$.  The proof is
complete.
\end{proof}

The endpoint powers used above also provide the matching data for the
viscous endpoint modules.  We record them explicitly.

\begin{lemma}
\label{reg:lem:repelling-endpoint-asymptotics}
Let $\lambda\in\mathbb R$, and let
$\boldsymbol u\in C^1_{\mathrm{loc}}((a,b);\mathbb R^2)
\cap L^1((a,b);\mathbb R^2)$ be a positive solution of
\eqref{reg:eq:transport-system}.  At a left repelling endpoint, assume
that $u_n(a)=0$ in the trace sense and set
$q_a(\lambda)=[M_{pp}(a)-\lambda]/b_p'(a)$.  Then
$q_a(\lambda)>-1$, and, for some
$C_a>0$, as $h\searrow0$,
\begin{equation}\label{reg:eq:left-repelling-asymptotics}
 \begin{aligned}
 u_p(a+h)&=C_ah^{q_a(\lambda)}\bigl(1+O(h)\bigr),\\
 u_n(a+h)&=C_a
 \frac{-M_{np}(a)}{[-b_n(a)](q_a(\lambda)+1)}
 h^{q_a(\lambda)+1}\bigl(1+O(h)\bigr),\\
 u_p'(a+h)&=C_aq_a(\lambda)h^{q_a(\lambda)-1}
 +O(h^{q_a(\lambda)}),\\
 u_n'(a+h)&=C_a\frac{-M_{np}(a)}{-b_n(a)}
 h^{q_a(\lambda)}\bigl(1+O(h)\bigr).
 \end{aligned}
\end{equation}
At a right repelling endpoint, assume that $u_p(b)=0$ in the trace
sense and set
$q_b(\lambda)=[M_{nn}(b)-\lambda]/b_n'(b)$.  Then
$q_b(\lambda)>-1$, and, for some
$C_b>0$, as $h\searrow0$,
\begin{equation}\label{reg:eq:right-repelling-asymptotics}
 \begin{aligned}
 u_n(b-h)&=C_bh^{q_b(\lambda)}\bigl(1+O(h)\bigr),\\
 u_p(b-h)&=C_b
 \frac{-M_{pn}(b)}{b_p(b)(q_b(\lambda)+1)}
 h^{q_b(\lambda)+1}\bigl(1+O(h)\bigr),\\
 u_n'(b-h)&=-C_bq_b(\lambda)h^{q_b(\lambda)-1}
 +O(h^{q_b(\lambda)}),\\
 u_p'(b-h)&=-C_b\frac{-M_{pn}(b)}{b_p(b)}
 h^{q_b(\lambda)}\bigl(1+O(h)\bigr).
 \end{aligned}
\end{equation}
At either endpoint, the positive locally integrable local solution ray
satisfying the incoming trace is unique up to multiplication by a
positive constant.
\end{lemma}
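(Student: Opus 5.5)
By the reflection $x\mapsto a+b-x$, which interchanges $p$ and $n$, it suffices to treat the left repelling endpoint. There $b_p(a)=0$ and $b_p'(a)>0$, since $b_p>0$ on $(a,b)$ and the zero is simple. Also $B=-b_n$ is bounded away from zero near $a$, because $b_n(a)\ne0$: a common zero cannot be the endpoint of a regular class. The plan has three steps: write the system as a Volterra-type pair near $a$, extract the leading power from an Euler-type equation for $u_p$, and bootstrap the remainder.

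\textbf{Step 1: integral form of the $n$-equation.} The $n$-equation is regular at $a$. Combining it with the trace condition $u_n(a)=0$ gives
\[
 u_n(a+h)=\int_a^{a+h}\exp\Bigl[-\int_y^{a+h}\frac{M_{nn}-\lambda}{B}\Bigr]\frac{-M_{np}(y)}{B(y)}u_p(y)\,\mathrm dy .
\]
This is formula~\eqref{reg:eq:Hn} with no boundary term.

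\textbf{Step 2: leading behaviour of $u_p$.} Write the $p$-equation as
\[
 u_p'=\frac{M_{pp}-\lambda}{b_p}u_p+\frac{M_{pn}}{b_p}u_n .
\]
Since $\frac{M_{pp}(a+h)-\lambda}{b_p(a+h)}=\frac{q_a}{h}+O(1)$, the integrating factor is $h^{q_a}(1+O(h))$. Integrating from a fixed $h_0$ gives
\[
 u_p(a+h)=h^{q_a}e(h)\Bigl[c_0+\int_h^{h_0}s^{-q_a}e(s)^{-1}\frac{-M_{pn}}{b_p}u_n\,\mathrm ds\Bigr],
\]
where $e(h)=1+O(h)$. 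The integral term is nonnegative.

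\textbf{Step 3: closing the bootstrap.} Set $m(h)=\sup_{(0,h]}u_p/h^{q_a}$. Step 1 gives the bound $u_n\le C\int_0^h u_p$. Since $u_p\in L^1$, a priori $u_n(a+h)=o(1)$; the main task is to upgrade this to $u_n=O(h^{q_a+1})$. I will do this by a Gronwall argument on the coupled pair. Insert the Step 1 estimate for $u_n$ into the integral term of Step 2. The term $\int_h s^{-q_a-1}u_n\,\mathrm ds$ is then controlled by $\int_h s^{-q_a-1}\int_0^s u_p$. If $u_p\le m h^{q_a}$ with $q_a>-1$, this contribution is $O(h)\,m\,h^{q_a}$, which is subdominant, so the bootstrap closes on a short interval.

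\textbf{Positivity of the coefficient.} It remains to show that the bracket tends to a positive limit $C_a$. The bracket is monotone in $h$ and bounded by the a priori $O(1)$ bound just obtained, so it has a limit, which is positive because $u_p>0$. This gives $u_p=C_ah^{q_a}(1+O(h))$.

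\textbf{The condition $q_a>-1$.} Local integrability of $u_p\sim C_ah^{q_a}$ forces $q_a>-1$. The main subtlety is to rule out an a priori faster blow-up. I will do this by showing that every solution with the zero trace belongs to the one-parameter family generated by the Frobenius solution of this regular-singular system. The indicial exponents are $q_a$ for $u_p$ and $0$ for $u_n$. The trace condition $u_n(a)=0$ removes the exponent-$0$ branch, because on that branch $u_n\to$ const $\ne0$ and $u_p$ picks up a nonzero particular part. This removal is also what gives uniqueness of the ray.

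\textbf{Remaining formulas.} Substituting the asymptotics of $u_p$ into Step 1 yields
\[
 u_n(a+h)=C_a\frac{-M_{np}(a)}{B(a)(q_a+1)}h^{q_a+1}(1+O(h)).
\]
The derivative formulas follow directly from the two ODEs: $u_n'$ comes from the $n$-equation, and $u_p'=(q_a/h+O(1))u_p+O(u_n/h)$.

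\textbf{Main obstacle.} The hardest step is the a priori control in Step 3, namely excluding non-power behaviour when $q_a$ is close to $-1$ and handling the uniform $O(h)$ remainders. I will handle it with the Frobenius and Volterra fixed-point formulation on $C([0,h_0])$ for the pair $(u_p/h^{q_a},\,u_n/h^{q_a+1})$, which is a contraction for small $h_0$.
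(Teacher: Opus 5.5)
Your overall architecture matches the paper's. You use variation of constants for $u_n$ with the zero trace, the homogeneous factor $h^{q_a}e(h)$ for $u_p$, a fixed point for $(u_p/h^{q_a},u_n/h^{q_a+1})$, and then substitution and the two ODEs for the remaining formulas. But the logical order has a genuine gap.

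\textbf{The condition $q_a>-1$ is circular.} Step~3 uses $q_a>-1$, to bound $\int_0^s u_p\le m\,s^{q_a+1}/(q_a+1)$. Your paragraph on $q_a>-1$ then derives it from the asymptotics $u_p\sim C_ah^{q_a}$ produced by Step~3. The alternative you offer is a Frobenius classification of all solutions, which you do not carry out; for non-analytic coefficients it would itself need a Volterra argument of exactly this kind. The ``subtlety'' you name is also misplaced. A faster blow-up would only make integrability worse, so only a \emph{lower} bound is needed, and your Step~2 already provides it. The integral term is nonnegative and $c_0=u_p(a+h_0)/(h_0^{q_a}e(h_0))>0$. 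Hence $u_p(a+h)\ge c_0e(h)h^{q_a}\ge\frac{c_0}{2}h^{q_a}$ near $0$, and $u_p\in L^1$ forces $q_a>-1$. This is precisely the paper's first step ($\frac{d}{dh}(\mu u_p)<0$), and it must come before everything else.

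\textbf{The a priori bound is not established.} The inequality $m\le Cc_0+Ch_0m$ bounds $m$ only if $m<\infty$ is already known. (The contribution is $O(h_0)\,m\,h^{q_a}$ rather than $O(h)\,m\,h^{q_a}$, which is harmless.) Likewise, a contraction on $C([0,h_0])$ gives uniqueness only among solutions for which $u_p/h^{q_a}$ and $u_n/h^{q_a+1}$ are already continuous up to $h=0$. That is exactly what must be shown for the given solution. For $-1<q_a<0$ boundedness is immediate from $u_n=O(1)$, but for $q_a\ge0$ it is not.

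\textbf{Suggested repair.} Work in a space known to contain the given solution. Set $\phi(h)=u_p(a+h)/h^{q_a}$. Steps~1 and~2 give the exact equation $\phi=c_0e+T\phi$, where $T$ has a nonnegative kernel $k(h,y)\le Cy^{q_a}\int_{\max\{h,y\}}^{h_0}s^{-q_a-1}\,\mathrm ds$. Fubini gives
$\int_0^{h_0}k(h,y)\,\mathrm dy\le Ch_0/(q_a+1)$ and $\int_0^{h_0}h^{q_a}k(h,y)\,\mathrm dh\le Ch_0\,y^{q_a}/(q_a+1)$.
So $T$ is a contraction both on $L^\infty(0,h_0)$ and on the weighted space $\{\phi:\int_0^{h_0}h^{q_a}|\phi|\,\mathrm dh<\infty\}$. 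The weighted space contains $L^\infty$ because $q_a>-1$, and contains the given $\phi$ because $u_p\in L^1$. Hence $\phi$ is the bounded fixed point $c_0(\mathrm{Id}-T)^{-1}e$. Its linear dependence on the single parameter $c_0$ also gives uniqueness of the ray, with no Frobenius argument needed.

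With these two repairs the rest of your plan goes through. $C_a>0$ because the bracket is at least $c_0$, and the $O(h)$ remainders follow. The formula for $u_n$ comes from substitution into Step~1, and the derivatives come from the two equations.
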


\begin{proof}
We first show that integrability forces the exponent at a left
repelling endpoint to
exceed $-1$, and then establish the expansion.  Set $h=x-a$ and
$q=q_a(\lambda)$.  Since $-b_n$ is bounded away from zero near
$a$, the second equation and local integrability imply that $u_n$
has the prescribed finite trace.  Fix a small $h_0>0$, and define
\[
 \mu(h)\coloneqq
 \exp\left[-\int_{h_0}^{h}
 \frac{M_{pp}(a+s)-\lambda}{b_p(a+s)}\,\mathrm{d}s\right].
\]
Then $\mu(h)=c h^{-q}(1+O(h))$ for some $c>0$, while the first
equation gives
\[
 \frac{\mathrm{d}}{\mathrm{d}h}
 \bigl(\mu(h)u_p(a+h)\bigr)
 =\mu(h)\frac{M_{pn}(a+h)}{b_p(a+h)}u_n(a+h)<0.
\]
Consequently, $u_p(a+h)\geq c_0h^q$ on $(0,h_0]$ for some
$c_0>0$.  Local integrability therefore requires $q>-1$.

We may now write
$u_p(a+h)=h^qA(h)$ and $u_n(a+h)=h^{q+1}B_0(h)$.  By the variation of
constants in the $n$-equation, with $u_n(a)=0$, we derive that
\begin{align}
 B(h)
 &=\int_0^1 K_a(h,r)r^{q_a}A(hr)\,{\rm d}r,
 \label{reg:eq:left-repelling-volterra}\\
 \frac{b_p(a+h)}{h}A'(h)
 &=c_a(h)A(h)+M_{pn}(a+h)B(h),
 \label{reg:eq:left-repelling-weighted-p}
\end{align}
where
\[
\begin{aligned}
 K_a(h,r)
 &:={}
 \exp\left[-\int_{hr}^{h}
 \frac{M_{nn}(a+s)-\Lambda_I}{-b_n(a+s)}\,{\rm d}s\right]
 \frac{-M_{np}(a+hr)}{-b_n(a+hr)},\\
 c_a(h)
 &:={}
 \frac{M_{pp}(a+h)-\Lambda_I
       -q_ab_p(a+h)/h}{h}.
\end{aligned}
\]
The quotient $b_p(a+h)/h$ extends continuously to $h=0$, where
its value is $b_p'(a)>0$.  The coefficient of $A(h)$ in
\eqref{reg:eq:left-repelling-weighted-p} also has a removable singularity
at $h=0$, since $M_{pp}(a)-\lambda=qb_p'(a)$.  The remaining
coefficients in \eqref{reg:eq:left-repelling-volterra} are continuous for
$0\leq hs\leq h\leq h_0$, whereas $s^q\in L^1(0,1)$.

Substituting \eqref{reg:eq:left-repelling-volterra} into
\eqref{reg:eq:left-repelling-weighted-p}, followed by integration in
$h$, we obtain a nonsingular Volterra equation of the second kind for
$A$.  Conversely, variation of constants in the $n$-equation and
removal of the regular-singular integrating factor in the
$p$-equation identify every locally integrable solution with a
solution of this Volterra equation.  Picard iteration shows that its
continuous solution is uniquely determined by $A(0)$.  The positive
admissible solution therefore has
$A,B\in C^1([0,h_0])$.  If $A(0)=0$, the uniqueness would give the
zero solution near $a$, contrary to positivity.  Thus
$C_a=A(0)>0$. 
By the dominated convergence theorem for \eqref{reg:eq:left-repelling-volterra}, we have 
\[
 B(0)
 =C_a\frac{-M_{np}(a)}{-b_n(a)}
   \int_0^1r^{q_a}\,{\rm d}r
 =C_a\frac{-M_{np}(a)}{[-b_n(a)](q_a+1)}.
\]
Then it follows from \eqref{reg:eq:left-repelling-volterra} and
\eqref{reg:eq:left-repelling-weighted-p} that
$A(h)=C_a+O(h)$ and $B_0(h)=B_0(0)+O(h)$.  They also give
$A'(h),B_0'(h)=O(1)$.  Substitution into the weighted components
proves \eqref{reg:eq:left-repelling-asymptotics}.

It remains to treat a right repelling endpoint.  Set $h=b-x$ and
interchange the roles of $p$ and $n$.  The reflected integrating
factor argument first shows that local integrability forces
$q_b(\lambda)>-1$.  The same Volterra
calculation applies, with $b_p(a+h)$ replaced by $-b_n(b-h)$ and
$-b_n(a+h)$ replaced by $b_p(b-h)$.  Since
$\mathrm{d}/\mathrm{d}x=-\mathrm{d}/\mathrm{d}h$, the two derivative
signs in \eqref{reg:eq:right-repelling-asymptotics} are negative.  The
calculation proves the right-hand expansion and uniqueness of the
right local ray.  The proof is complete.
\end{proof}

\subsection{Three endpoint layers}

To realize $\Lambda_I^{\mathrm{reg}}$ as a local effective value, we require
separate layers at repelling, attracting, and domain endpoints.
Throughout this construction, $\mathcal L_d$ is the operator defined
in Section~\ref{sec:HJ-Aubry-selection}.

The repelling-endpoint layer is built from the following explicit
scalar profile, whose two asymptotic regimes provide the inner and outer
matching data.

\begin{lemma}
\label{reg:lem:repelling-profile}
Let $q>-1$ and $\alpha>0$, and let
\[
 Z_{q,\alpha}(y)
 \coloneqq
 \int_0^\infty
 t^q
 \exp\left[-\frac{\alpha}{2}(t-y)^2\right]\,\mathrm{d}t.
\]
Then $Z_{q,\alpha}\in C^\infty(\mathbb R)$ is strictly positive and
satisfies
\begin{equation}\label{reg:eq:Z-ode}
 -Z_{q,\alpha}''
 -\alpha yZ_{q,\alpha}'
 +\alpha qZ_{q,\alpha}
 =0
 \qquad\text{in }\mathbb R.
\end{equation}
Moreover, the following asymptotic behaviors hold:
\begin{align}
 Z_{q,\alpha}(y)
 &=
 \sqrt{\frac{2\pi}{\alpha}}\,
 y^q\bigl(1+O(y^{-2})\bigr),
 &&y\to+\infty,
 \label{reg:eq:Z-plus}\\
 Z_{q,\alpha}(y)
 &=
 \frac{\Gamma(q+1)}
      {(\alpha|y|)^{q+1}}
 \exp\left(-\frac{\alpha y^2}{2}\right)
 \bigl(1+O(|y|^{-2})\bigr),
 &&y\to-\infty.
 \label{reg:eq:Z-minus}
\end{align}
\end{lemma}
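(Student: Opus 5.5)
Positivity and smoothness come straight from the integral representation. The integrand $t^q\exp[-\frac\alpha2(t-y)^2]$ is positive on $(0,\infty)$. Since $q>-1$, it is integrable near $t=0$, and the Gaussian factor makes it integrable at infinity. Derivatives in $y$ only produce polynomial factors $(t-y)^k$, which stay locally uniformly integrable in $y$, so differentiation under the integral is justified and $Z_{q,\alpha}\in C^\infty(\mathbb R)$ with $Z_{q,\alpha}>0$.

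For \eqref{reg:eq:Z-ode}, we substitute $s=t-y$ and write $Z(y)=\int_{-y}^\infty (s+y)^q e^{-\alpha s^2/2}\,\mathrm ds$. It is simpler, however, to work with $g(t,y)=e^{-\alpha(t-y)^2/2}$ directly. We have $\partial_y g=\alpha(t-y)g$ and $\partial_y^2 g=[\alpha^2(t-y)^2-\alpha]g$, so
\[
 -\partial_y^2 g-\alpha y\,\partial_y g=\bigl[\alpha-\alpha^2(t-y)^2-\alpha^2y(t-y)\bigr]g=\bigl[\alpha-\alpha^2 t(t-y)\bigr]g.
\]
Next, $\alpha t(t-y)g=-t\,\partial_t g$. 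Hence
\[
 -Z''-\alpha yZ'=\int_0^\infty t^q\bigl[\alpha g+\alpha t\,\partial_t g\bigr]\,\mathrm dt .
\]
Integrating by parts, $\int_0^\infty t^{q+1}\partial_t g\,\mathrm dt=-(q+1)\int_0^\infty t^q g\,\mathrm dt$. The boundary terms vanish because $q+1>0$ and $g$ has Gaussian decay. The right-hand side is therefore $\alpha Z-\alpha(q+1)Z=-\alpha qZ$, which is exactly \eqref{reg:eq:Z-ode}.

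For \eqref{reg:eq:Z-plus}, let $y\to+\infty$ and write $t=y+s$. Then
\[
 Z=y^q\int_{-y}^\infty (1+s/y)^q e^{-\alpha s^2/2}\,\mathrm ds .
\]
Split the integral at $|s|\le y/2$. On that region we Taylor expand $(1+s/y)^q=1+qs/y+O(s^2/y^2)$. The linear term integrates to zero by symmetry, up to an exponentially small truncation error, and the Gaussian integral equals $\sqrt{2\pi/\alpha}$. The remainder region $s<-y/2$ contributes $O(e^{-\alpha y^2/8})$ times a polynomial factor; the weak singularity of $t^q$ near $t=0$ is integrable, so it causes no trouble. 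The case $s>y/2$ is similar. Together these give the relative error $O(y^{-2})$.

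For \eqref{reg:eq:Z-minus}, set $y=-Y$ with $Y\to+\infty$. Then
\[
 Z=e^{-\alpha Y^2/2}\int_0^\infty t^q e^{-\alpha Yt}e^{-\alpha t^2/2}\,\mathrm dt .
\]
This is a Laplace-type integral. We substitute $u=\alpha Yt$ and expand $e^{-\alpha t^2/2}=e^{-u^2/(2\alpha Y^2)}=1+O(u^2/Y^2)$. This yields $\Gamma(q+1)(\alpha Y)^{-q-1}\bigl(1+O(Y^{-2})\bigr)$, where the correction is controlled by $\Gamma(q+3)$.

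The main obstacle is keeping these error terms uniform when $q$ is close to $-1$. For a fixed $q>-1$ this is routine Watson-lemma bookkeeping, and the constants in the $O(\cdot)$ terms are allowed to depend on $q$ and $\alpha$, so no uniformity is actually required.
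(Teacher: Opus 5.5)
Your proposal is correct and follows essentially the same route as the paper. You use differentiation under the integral sign; for the ODE, the identity $t(t-y)=(t-y)^2+y(t-y)$ with one integration by parts in $t$ (boundary terms vanish since $q+1>0$); the shift $s=t-y$ with a Taylor expansion on $|s|\le y/2$ for $y\to+\infty$; and the rescaling $u=\alpha|y|t$ with $0\le 1-e^{-r}\le r$, giving the $\Gamma(q+3)$ bound, for $y\to-\infty$. One cosmetic point: the linear Taylor term integrates to exactly zero on the symmetric interval $|s|\le y/2$, so no truncation error arises there.
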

\begin{proof}
Since $q>-1$, the defining integrand is locally integrable at
$t=0$. On every compact $y$-interval, its derivatives with respect
to $y$ are bounded by
$C(1+t)^N t^q\exp(-ct^2)$ for suitable $C,c>0$ and
$N\in\mathbb N$. Differentiation under the integral sign is therefore
valid to every order. Hence $Z_{q,\alpha}\in C^\infty(\mathbb R)$.
Its strict positivity is immediate.

Write $Z=Z_{q,\alpha}$. Direct differentiation yields
\[
 \begin{aligned}
 Z'(y)
 &=\alpha\int_0^\infty t^q(t-y)
   \exp\left(-\frac{\alpha}{2}(t-y)^2\right)\,\mathrm{d}t,\\
 Z''(y)
 &=\alpha^2\int_0^\infty t^q(t-y)^2
   \exp\left(-\frac{\alpha}{2}(t-y)^2\right)\,\mathrm{d}t-\alpha Z(y).
 \end{aligned}
\]
Because $q+1>0$, the boundary terms vanish in the integration by
parts
\[
 0=\int_0^\infty\frac{\mathrm{d}}{\mathrm{d}t}
 \left[t^{q+1}
 \exp\left(-\frac{\alpha}{2}(t-y)^2\right)\right]\,\mathrm{d}t.
\]
Hence, we derive that
\[
 \alpha\int_0^\infty t^{q+1}(t-y)
 \exp\left(-\frac{\alpha}{2}(t-y)^2\right)\,\mathrm{d}t
 =(q+1)Z(y).
\]
Using $t(t-y)=(t-y)^2+y(t-y)$ together with the derivative formulas
above proves \eqref{reg:eq:Z-ode}.

For $y\to+\infty$, the substitution $s=t-y$ gives
\[
 Z(y)=\int_{-y}^{\infty}(y+s)^q
       \exp\left(-\frac{\alpha s^2}{2}\right)\,\mathrm{d}s.
\]
Note that on $|s|\leq y/2$, $
 (y+s)^q
 =y^q\left(1+\frac{qs}{y}
       +O\left(\frac{s^2}{y^2}\right)\right),
$
uniformly in $s$. The odd term integrates to zero, while Gaussian
tail estimates give
\[
 \int_{[-y,-y/2]\cup[y/2,\infty)}
 (y+s)^q\exp\left(-\frac{\alpha s^2}{2}\right)\,\mathrm{d}s
 =O\left((1+y)^{|q|+1}\exp(-cy^2)\right)
\]
for some $c>0$. Hence, we arrive at
\[
 Z(y)=y^q\left(\sqrt{\frac{2\pi}{\alpha}}+O(y^{-2})\right),
\]
which is \eqref{reg:eq:Z-plus}.
Finally, let $y<0$. With $z=-\alpha yt$,
\[
 Z(y)=
 \frac{\exp(-\alpha y^2/2)}{(-\alpha y)^{q+1}}
 \int_0^\infty z^q\exp(-z)
 \exp\left(-\frac{z^2}{2\alpha y^2}\right)\,\mathrm{d}z.
\]
The inequality $0\leq1-\exp(-r)\leq r$ for $r\geq0$ yields
\[
 0\leq
 \Gamma(q+1)-
 \int_0^\infty z^q\exp(-z)
 \exp\left(-\frac{z^2}{2\alpha y^2}\right)\,\mathrm{d}z
 \leq\frac{\Gamma(q+3)}{2\alpha y^2}.
\]
Thus the integral equals
$\Gamma(q+1)(1+O(y^{-2}))$, and
\eqref{reg:eq:Z-minus} follows.
\end{proof}

\begin{lemma}
\label{reg:lem:repelling-module}
Let $a$ be a left repelling endpoint, let
$\Lambda=\Lambda_I^{\mathrm{reg}}$, and let $\boldsymbol u$ be the
positive transport solution given by Lemma {\rm\ref{reg:lem:transport-value}}.  Set
$q=[M_{pp}(a)-\Lambda]/b_p'(a)>-1$.  For
$\delta_d=d^{2/5}$, there are numbers $\Delta_d\searrow0$ with
$\delta_d\ll\Delta_d$ and positive functions
$\boldsymbol F_d^a\in C^2([a-3\delta_d,a+2\Delta_d])^2$ such that
\begin{align}\label{reg:eq:repelling-residual}
 \boldsymbol F_d^a=\boldsymbol u
    \text{ near }a+2\Delta_d,\qquad 
 \max_{i=p,n}\sup_{a-3\delta_d\leq x\leq a+2\Delta_d}
 \frac{|[(\mathcal L_d-\Lambda)\boldsymbol F_d^a]_i|}
      {F_{d,i}^a}\to0.
\end{align}
Furthermore, after one common scalar normalization, it follows that uniformly in $i$,
\begin{equation}\label{reg:eq:repelling-phase}
 -d\log F_{d,i}^a(x)
 =\int_a^x b_p(s)\,\mathrm{d}s+o(\delta_d^2)
 \quad(a-3\delta_d\leq x\leq a-\delta_d).
\end{equation}
The module at a right repelling endpoint is obtained by reversing
the coordinate and interchanging $p$ and $n$.
\end{lemma}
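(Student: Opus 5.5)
The plan is to build $\boldsymbol F_d^a$ by matched asymptotics. In the inner region near $a$ the $p$-component is the explicit profile $Z_{q,\alpha}$ of Lemma~\ref{reg:lem:repelling-profile} with $\alpha=b_p'(a)$. The $n$-component is obtained by solving the non-characteristic first-order $n$-equation exactly. In the outer region we use the transport profile $\boldsymbol u$ of Lemma~\ref{reg:lem:transport-value}, with local form given by Lemma~\ref{reg:lem:repelling-endpoint-asymptotics}. The two are glued logarithmically on $[\Delta_d,2\Delta_d]$, with, say, $\Delta_d=d^{1/4}$, so that $\delta_d=d^{2/5}\ll\Delta_d$.

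\emph{Inner $p$-profile.} Write $h=x-a$, $y=h/\sqrt d$, and set $F^{\rm in}_p(x)=d^{q/2}Z_{q,\alpha}(y)$. Since $M_{pp}(a)-\Lambda=\alpha q$, freezing coefficients at $a$ turns the $p$-equation into exactly \eqref{reg:eq:Z-ode}. The remaining residual, divided by $F_p$, has three parts.
\begin{itemize}
\item The drift error $(b_p(a+h)-\alpha h)F_p'/F_p=O(h^2)\,d^{-1/2}\,|Z'/Z|$. By \eqref{reg:eq:Z-plus}--\eqref{reg:eq:Z-minus}, $|Z'/Z|\lesssim 1+|y|$, so this is $O(\sqrt d\,(1+|y|)^3)$. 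On $|y|\le 3\delta_d/\sqrt d=3d^{-1/10}$ it is $O(d^{1/5})$; this is what fixes $\delta_d=d^{2/5}$ on the left side.
\item The term $(M_{pp}(a+h)-M_{pp}(a))=O(h)$, which is small on the same range.
\item The coupling term $M_{pn}F_n/F_p$. It is $O(\sqrt d(1+|y|))$ because of the inner $n$-profile below, and is therefore also $o(1)$ there.
\end{itemize}

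\emph{Inner $n$-profile.} The diagonal term $(M_{nn}-\Lambda)F_n$ is not small relative to $F_n$, so it cannot be treated as an error. Instead I define $F_n$ as the exact solution of the first-order equation
\[
-b_nF_n'+(M_{nn}-\Lambda)F_n+M_{np}F_p=0
\]
on $[a-3\delta_d,a+2\Delta_d]$. It is given by variation of constants, as in \eqref{reg:eq:Hn}, starting at $a-3\delta_d$ from the positive value of the frozen integral $d^{(q+1)/2}\frac{-M_{np}(a)}{-b_n(a)}\int_{-\infty}^{y}Z(t)\,{\rm d}t$. Then $F_n>0$ and its only residual is $-dF_n''$. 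The kernel in \eqref{reg:eq:Hn} together with the Gaussian asymptotics of $Z$ give $F_n\asymp F_p/(1+|y|)$ for $y\le0$ and $F_n\asymp \sqrt d\,y^{q+1}d^{q/2}$ for $y\ge1$. Moreover $F_n'/F_n$ is of size $d^{-1/2}(1+|y|)$ and $F_n''/F_n$ of size $d^{-1}(1+|y|)^2$. Hence $d|F_n''|/F_n=O((1+|y|)^2)\cdot d^0$. This bound is not small by itself. To handle it I differentiate the exact equation, which gives $F_n''$ in terms of $F_p'$ and $F_n'$ divided by $b_n$. That yields $dF_n''/F_n=O(\sqrt d(1+|y|))=O(d^{2/5})$ on the inner range. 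I expect the careful bookkeeping of these ratios near $y\to-\infty$ to be the main technical obstacle.

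\emph{Matching and gluing.} For $\delta_d\le h\le 2\Delta_d$ we have $y\ge d^{-1/10}\to\infty$. There \eqref{reg:eq:Z-plus} gives $F^{\rm in}_p=\sqrt{2\pi/\alpha}\,h^q(1+O(d/h^2))$. Comparing with \eqref{reg:eq:left-repelling-asymptotics}, after normalizing so that $\sqrt{2\pi/\alpha}=C_a$, we get $u_p=C_ah^q(1+O(h))$. The $n$-profiles match in the same way, with ratio $1+O(h+d/h^2)$, and their logarithmic derivatives agree to relative order $O(1/h)\cdot O(h+d/h^2)$.

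Let $\chi$ be a cutoff with $\chi=0$ for $h\le\Delta_d$, $\chi=1$ near $h=2\Delta_d$, and $|\chi^{(k)}|\lesssim\Delta_d^{-k}$. Set
\[
F_{d,i}^a=\exp\big((1-\chi)\log F^{\rm in}_i+\chi\log u_i\big).
\]
This keeps positivity and ensures $\boldsymbol F^a_d=\boldsymbol u$ near $a+2\Delta_d$. The extra relative residual consists of the terms $b_i\chi'(\log u_i-\log F_i^{\rm in})$ and $d(\cdots)''$. They are bounded by $O(\Delta_d+d/\Delta_d^2)(1+\Delta_d^{-1}b_i)$, which is $o(1)$ because $b_p\sim\Delta_d$ there. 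Beyond the gluing zone, $\boldsymbol u$ solves the transport system, so the only residual is $d\,u_i''/u_i=O(d/h^2)\to0$. This proves \eqref{reg:eq:repelling-residual}.

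\emph{Phase.} For $a-3\delta_d\le x\le a-\delta_d$, \eqref{reg:eq:Z-minus} gives
\[
-d\log F_{d,p}^a=\tfrac{\alpha}{2}h^2+O(d\log(1/d)).
\]
The $n$-component differs from this only by $O(d\log(1/d))$, since $F_n/F_p$ is polynomial in $y$ and $d$. Also $\int_a^xb_p=\tfrac\alpha2h^2+O(\delta_d^3)$. Since both $d\log(1/d)$ and $d^{6/5}$ are $o(d^{4/5})=o(\delta_d^2)$, this gives \eqref{reg:eq:repelling-phase} after the common normalization by $d^{-q/2}$. The right repelling module follows by the reflection $x\mapsto a+b-x$ together with the interchange $p\leftrightarrow n$.
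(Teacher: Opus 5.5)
Your estimates for the $p$-component, the region where $\boldsymbol F_d^a=\boldsymbol u$, and the phase are fine. The $n$-component, however, violates \eqref{reg:eq:repelling-residual} in two places.

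\emph{Inner $n$-profile.} Your $F_n$ solves the first-order $n$-equation exactly, so its residual is $-dF_n''$, and this is not small relative to $F_n$. Put $c=-b_n(a)$ and $k_0=-M_{np}(a)/c$. In the frozen model, $F_n=k_0d^{(q+1)/2}\int_{-\infty}^yZ_{q,\alpha}=\frac{k_0}{q+1}d^{(q+1)/2}Z_{q+1,\alpha}(y)$, hence $dF_n''/F_n=Z_{q+1,\alpha}''/Z_{q+1,\alpha}$. By \eqref{reg:eq:Z-ode} with $q+1$ in place of $q$, this ratio equals $\alpha(q+1)$ at $y=0$ and $\alpha^2y^2+O(1)$ as $y\to-\infty$. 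It is therefore of order $d^{-1/5}$ at $x=a-3\delta_d$, and the residual is negative there. Your improvement by differentiating the equation rests on $F_n\asymp F_p/(1+|y|)$. The correct ratio is $F_n/F_p\approx\sqrt d\,k_0/(\alpha|y|)$ as $y\to-\infty$. With it, $dF_n''\approx d\,k_0F_p'$ and $F_p'\approx\alpha|y|d^{-1/2}F_p$ give back $dF_n''\approx\alpha^2y^2F_n$. The diffusion is negligible against the transport term $-b_nF_n'\approx c\,\alpha|y|d^{-1/2}F_n$, but not against $F_n$ itself, which is what the statement measures. So the inner $n$-profile must absorb $dF_n''$. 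The paper does this by solving the full inner system exactly via the fixed point in $(\rho_p,\rho_n)$, with $S=Q'-k_dP$ and the stable kernel in \eqref{reg:eq:repelling-green-n}; this gives $(\mathcal L_d-\Lambda)\boldsymbol\Phi_d=0$ on $|x-a|\le3\delta_d$. A single Picard correction of $F_n$ with source $dF_n''$ would also leave only a relative residual $O(\sqrt d\,\langle y\rangle^3)=O(d^{1/5})$.

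\emph{Splice on $[\Delta_d,2\Delta_d]$.} There $d^{q/2}Z_{q,\alpha}=C_ah^q(1+O(d/h^2))$, with no term of order $h$. By contrast, $u_p=C_ah^q(1+c_1h+o(h))$ with $c_1\neq0$ in general. The reason is that $d^{q/2}Z_{q,\alpha}$ ignores the coupling $M_{pn}u_n$, which alone contributes $-M_{pn}(a)M_{np}(a)/(\alpha c(q+1))<0$ to $c_1$; this is all of $c_1$ when $M_{pp}$ is constant and $b_p$ is linear near $a$. Since $u_n$ and $F_n^{\mathrm{in}}$ solve the same $n$-transport equation, driven by $u_p$ and $F_p^{\mathrm{in}}$ respectively, one gets $\log u_n-\log F_n^{\mathrm{in}}=c_1\frac{q+1}{q+2}h+o(h)$ on the window. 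The term $-b_n\chi'(\log u_n-\log F_n^{\mathrm{in}})$ in the relative $n$-residual then has size $c\,|c_1|\frac{q+1}{q+2}\,h|\chi'|$, and $h\chi'(h)\ge1$ somewhere on $[\Delta_d,2\Delta_d]$. Your bound $O(\Delta_d)(1+\Delta_d^{-1}|b_i|)$ is $o(1)$ only for $i=p$, because $|b_n|\asymp1$, and no choice of $\Delta_d$ helps.

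The paper avoids this by leaving the inner layer already at $h=\delta_d$. It switches to the exact transport solution $\boldsymbol w_d$ with $\boldsymbol w_d(a+\delta_d)=\boldsymbol\Phi_d(a+\delta_d)$ over a window of length $h_d=d^{7/10}$, on which the mismatch grows from zero. Only then does it splice $\boldsymbol w_d$ to $\boldsymbol u$. These are two exact transport solutions, so $R_{d,p}-R_{d,n}$ solves a homogeneous equation and decays like $\omega_d(\delta_d/h)^{q+1}$. The choice $\vartheta<(q+1)/(q+2)$ then makes the splice error $\omega_d\delta_d^{q+1}/\Delta_d^{q+2}$ vanish; see \eqref{reg:eq:second-repelling-splice-error}. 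Alternatively, a cutoff spread logarithmically over many scales, with $h|\chi'|+h^2|\chi''|\lesssim1/L_d$, would absorb an $O(h)$ mismatch.
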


\begin{proof}
Throughout the proof, we fix the  constants $0<\vartheta<\frac{q+1}{q+2}$, $h_d=d^{7/10}$, and $\Delta_d=\delta_d^\vartheta$.
Then $h_d\ll\delta_d\ll\Delta_d\ll1$.

\medskip
\noindent\emph{Step 1.}
We first construct the corrected layer near the repelling endpoint.
Set
\[
 \alpha\coloneqq b_p'(a),\qquad c\coloneqq-b_n(a)>0,
 \qquad y\coloneqq\frac{x-a}{\sqrt d},
 \qquad R_d\coloneqq\frac{\delta_d}{\sqrt d}=d^{-1/10}.
\]
We abbreviate $Z\coloneqq Z_{q,\alpha}$ and set
\[
 Y(y)\coloneqq
 \frac{-M_{np}(a)}{c(q+1)}Z_{q+1,\alpha}(y).
\]
By differentiating the integral defining $Z_{q+1,\alpha}$ and
integrating by parts, we have the identity
\[
 Z_{q+1,\alpha}'=(q+1)Z_{q,\alpha}.
\]
Hence, we have $Y'=k_0Z$, where
$k_0\coloneqq-M_{np}(a)/c$, and $(Z,Y)$ solves the frozen system
\[
 -Z''-\alpha yZ'+\alpha qZ=0,
 \qquad cY'+M_{np}(a)Z=0.
\]

By the scalings $\psi_p=d^{q/2}P(y)$ and $\psi_n=d^{(q+1)/2}Q(y)$,
the equation $(\mathcal L_d-\Lambda)\boldsymbol\psi=0$ becomes
\begin{equation}\label{reg:eq:scaled-repelling-system}
\begin{aligned}
 -P''-\frac{b_p(a+\sqrt d\,y)}{\sqrt d}P'
 +(M_{pp}(a+\sqrt d\,y)-\Lambda)P
 +\sqrt d\,M_{pn}(a+\sqrt d\,y)Q&=0,\\
 -\sqrt d\,Q''-b_n(a+\sqrt d\,y)Q'
 +M_{np}(a+\sqrt d\,y)P
 +\sqrt d\,(M_{nn}(a+\sqrt d\,y)-\Lambda)Q&=0.
\end{aligned}
\end{equation}
We correct $(Z,Y)$ on $[-3R_d,3R_d]$ by a finite-interval
fixed-point argument.  Introduce
\[
 \mu (y):=Z(y)^2{\rm e}^{\alpha y^2/2},
 \qquad
 c_d(y):=-b_n(a+\sqrt d\,y),
\qquad
 k_d(y):={-M_{np}(a+\sqrt d\,y)\over c_d(y)}.
\]
After fixing a sufficiently small neighborhood of $a$, we have
$c_d\geq c_0>0$.  The first equation in
\eqref{reg:eq:scaled-repelling-system} can be written as
$
 -P''-\alpha yP'+\alpha qP=f_{p,d}[P,Q],
$
where
\[
\begin{aligned}
 f_{p,d}[P,Q]
 \coloneqq{}&
 \left(\frac{b_p(a+\sqrt d\,y)}{\sqrt d}-\alpha y\right)P'
 -\bigl(M_{pp}(a+\sqrt d\,y)-M_{pp}(a)\bigr)P\\
 &-\sqrt d\,M_{pn}(a+\sqrt d\,y)Q.
\end{aligned}
\]
Moreover, for any $C^2$ function $w$, direct calculations yield that
\[
 \bigl(-\partial_{yy}-\alpha y\partial_y+\alpha q\bigr)(Zw)
 =-Z\mu^{-1}(\mu w')',
\]
which  motivate the map $(P,Q)\mapsto \widetilde P$ given by 
\begin{equation}\label{reg:eq:repelling-green-p}
 \widetilde P(y)\coloneqq Z(y)\left[1-
 \int_0^y\mu(t)^{-1}\int_{-3R_d}^t
 \frac{\mu(s)}{Z(s)}f_{p,d}[P,Q](s)
 \,\mathrm{d}s\,\mathrm{d}t\right].
\end{equation}
It imposes $\widetilde P(0)=Z(0)$ and
$(\widetilde P/Z)'(-3R_d)=0$.

To treat the equation of $Q$ in \eqref{reg:eq:scaled-repelling-system}, we introduce $S\coloneqq Q'-k_dP$.
Direct substitution gives
\[
 S'-\frac{c_d}{\sqrt d}S
 =(M_{nn}(a+\sqrt d\,y)-\Lambda)Q-k_d'P-k_dP'.
\]
By imposing the terminal condition $S(3R_d)=0$, the stable
representation is given by
\begin{equation}\label{reg:eq:repelling-green-n}
\begin{aligned}
 S_d[P,Q](y)
 \coloneqq{}&\int_y^{3R_d}
 \exp\left[-\frac1{\sqrt d}\int_y^s c_d(t)\,\mathrm{d}t\right]\\[-1mm]
 &\quad\times
 \left[k_d'(s)P(s)+k_d(s)P'(s)
 -\bigl(M_{nn}(a+\sqrt d\,s)-\Lambda\bigr)Q(s)\right]
 \,\mathrm{d}s.
\end{aligned}
\end{equation}

We now close the map in relative variables.  Write
$P=Z(1+\rho_p)$ and $Q=Y(1+\rho_n)$.  Since $Y'=k_0Z$, the
identity $S=Q'-k_dP$ is equivalent to
\begin{equation*}
 [Y(\rho_n-\rho_p)]'
 =(k_d-k_0)Z(1+\rho_p)+S-Y\rho_p'.
\end{equation*}
For a trial pair $(\rho_p,\rho_n)$, formula
\eqref{reg:eq:repelling-green-p}, with
$(P,Q)=(Z(1+\rho_p),Y(1+\rho_n))$, first defines
$\widetilde P=Z(1+\widetilde\rho_p)$.  Formula
\eqref{reg:eq:repelling-green-n} then defines
$\widetilde S=S_d[\widetilde P,Q]$, and we set
\begin{equation}\label{liu-20260908-1} \widetilde\rho_n(y)\coloneqq\widetilde\rho_p(y)
 +\frac1{Y(y)}\int_{-3R_d}^y
 \bigl[(k_d-k_0)Z(1+\widetilde\rho_p)
       +\widetilde S-Y\widetilde\rho_p'\bigr](s)\,\mathrm{d}s.
\end{equation}
Thus the common relative amplitude is fixed by
$(\widetilde\rho_n-\widetilde\rho_p)(-3R_d)=0$.

Then we define the following space
\[
 \begin{aligned}
 \mathcal X_d\coloneqq\bigl\{(\rho_p,\rho_n)&\in
 C^1([-3R_d,3R_d])^2:\ 
 \rho_p(0)=0,\ \rho_p'(-3R_d)=0,\,\,  (\rho_n-\rho_p)(-3R_d)=0\bigr\},
 \end{aligned}
\]
endowed with the norm
\[
 \|(\rho_p,\rho_n)\|_{\mathcal X_d}
 \coloneqq\sum_{j=0}^1\sup_{|y|\leq3R_d}\langle y\rangle^j
 \bigl(|\partial_y^j\rho_p|+|\partial_y^j\rho_n|\bigr).
\]
Together with \eqref{reg:eq:repelling-green-p} and \eqref{liu-20260908-1}, for each $(\rho_p,\rho_n)\in {\mathcal X_d}$, we define
$
 \mathfrak T_d(\rho_p,\rho_n):=
 \left(\widetilde\rho_p,\,
       \widetilde\rho_n\right).
$ 
We next  present  the estimates needed for the contraction argument. 

By the integral representation in
Lemma~\ref{reg:lem:repelling-profile} and the definition of $Y$, we observe that
\begin{equation}\label{reg:eq:repelling-profile-ratios}
 \frac{Z'}Z=
 \begin{cases}
  O(\langle y\rangle^{-1}),&y\geq0,\\
  O(\langle y\rangle),&y\leq0,
 \end{cases}
 \qquad
 \frac YZ=
 \begin{cases}
  \asymp\langle y\rangle,&y\geq0,\\
  \asymp\langle y\rangle^{-1},&y\leq0,
 \end{cases}
\end{equation}
with the corresponding differentiated and reciprocal bounds.  Moreover,
\begin{equation}\label{reg:eq:repelling-kernel-bounds}
\begin{aligned}
 \sup_{|y|\leq3R_d}\langle y\rangle\mu(y)^{-1}
 \int_{-3R_d}^{y}\mu(s)\,\mathrm{d}s&\leq C,\\
 \sup_{|y|\leq3R_d}\int_y^{3R_d}
 \exp\left[-\frac1{\sqrt d}\int_y^s c_d(t)\,\mathrm{d}t\right]
 \,\mathrm{d}s&\leq C\sqrt d.
\end{aligned}
\end{equation}
For $k=0,1,2$, splitting at $0$ and using
\eqref{reg:eq:Z-plus} and \eqref{reg:eq:Z-minus} we derive 
\begin{equation}\label{reg:eq:repelling-Z-integrals}
 \int_{-3R_d}^{y}\langle s\rangle^kZ(s)\,\mathrm{d}s
 \leq C
 \begin{cases}
  \langle y\rangle^{k-1}Z(y),&y\leq-1,\\
  \langle y\rangle^{k+1}Z(y),&y\geq1.
 \end{cases}
\end{equation}
The estimates on $[-1,1]$ are absorbed into the same constant.
Uniformly on $|y|\leq3R_d$, 
\[
\begin{aligned}
 \left|\frac{b_p(a+\sqrt d\,y)}{\sqrt d}-\alpha y\right|
 &\leq C\sqrt d\,y^2,\\
 |M_{ij}(a+\sqrt d\,y)-M_{ij}(a)|
 +|b_n(a+\sqrt d\,y)-b_n(a)|
 &\leq C\sqrt d\,|y|.
\end{aligned}
\]
For a trial pair in a fixed ball of $\mathcal X_d$, these estimates
give
\[
 \frac{|f_{p,d}[P,Q](y)|}{Z(y)}
 \leq C\sqrt d \bigl(1+\|(\rho_p,\rho_n)\|_{\mathcal X_d}\bigr) \times
 \begin{cases}
  \langle y\rangle^3,&y\leq0,\\
  \langle y\rangle,&y\geq0,
 \end{cases}
\]
\[
 \frac{|S_d[\widetilde P,Q](y)|}{Z(y)}
 \leq C\sqrt d\,\langle y\rangle
 \bigl(1+\|(\rho_p,\rho_n)\|_{\mathcal X_d}\bigr).
\]
Substitute them  into the formula for $\widetilde\rho_n$, then we obtain
\[
 \|\mathfrak T_d(\rho_p,\rho_n)\|_{\mathcal X_d}
 \leq C\bigl(\sqrt d\,R_d+\sqrt d\,R_d^3
 +\sqrt d\log(2+R_d)\bigr)
 \bigl(1+\|(\rho_p,\rho_n)\|_{\mathcal X_d}\bigr).
\]
The same calculation for two trial pairs gives the corresponding
difference estimate. 
Using \eqref{reg:eq:repelling-profile-ratios}-\eqref{reg:eq:repelling-Z-integrals} in the three formulas defining
$(\widetilde\rho_p,\widetilde S,\widetilde\rho_n)$, we deduce that
\[
 \|\mathfrak T_d(0,0)\|_{\mathcal X_d}\leq C\kappa_d,
 \quad
 \|\mathfrak T_d(z_1)-\mathfrak T_d(z_2)\|_{\mathcal X_d}
 \leq C\kappa_d\|z_1-z_2\|_{\mathcal X_d},
 \qquad z_1,z_2\in\mathcal X_d,
\]
where
$
 \kappa_d\coloneqq\delta_d+{\delta_d^3}/{d}\to 0.
$
For clarity, the term $\delta_d^3/d$ is generated by the drift
remainder:
\[
 \sqrt d\,|y|^2\frac{|Z'(y)|}{Z(y)}
 \leq C\sqrt d\,R_d^3=C{\delta_d^3}/{d}.
\]
All other coefficient errors are $O(\delta_d)$, and thus the stable-kernel
bound in \eqref{reg:eq:repelling-kernel-bounds} controls the apparent
$d^{-1/2}$ factor in the equation for $S$.

For small $d$, the operator $\mathfrak T_d$ is a contraction on
the closed ball of radius $2C\kappa_d$.  Let
$(\widehat\rho_{p,d},\widehat\rho_{n,d})$ be its fixed point and set
\[
 \widehat Z_d\coloneqq Z(1+\widehat\rho_{p,d}),
 \qquad \widehat Y_d\coloneqq Y(1+\widehat\rho_{n,d}).
\]
The fixed-point formulas first give
$\widehat Z_d,\widehat Y_d\in C^2([-3R_d,3R_d])$, and
\begin{equation}\label{reg:eq:repelling-corrected-C1}
 \sum_{j=0}^1\sup_{|y|\leq3R_d}\langle y\rangle^j
 \left[
 \left|\partial_y^j\left(\frac{\widehat Z_d}{Z}-1\right)\right|
 +\left|\partial_y^j\left(\frac{\widehat Y_d}{Y}-1\right)\right|
 \right]
 \leq C\kappa_d.
\end{equation}
We next obtain the second-derivative estimate needed in the matching
region.  At the fixed point, it follows from \eqref{reg:eq:repelling-green-p} that $-Z\mu^{-1}(\mu \widehat\rho_{p,d}')'=f_{p,d}[\widehat Z_d,\widehat Y_d]$, which 
implies that 
\[
 \widehat\rho_{p,d}''
 =-\left(2\frac{Z'}Z+\alpha y\right)\widehat\rho_{p,d}'
 -\frac{f_{p,d}[\widehat Z_d,\widehat Y_d]}Z.
\]
Hence, applying \eqref{reg:eq:repelling-corrected-C1} and
\eqref{reg:eq:repelling-profile-ratios} we have
$|\widehat\rho_{p,d}''|\leq C\kappa_d$ on
$[R_d,2R_d]$. To estimate the other component $\widehat\rho_{n,d}$, set
$
 \widehat S_d
 \coloneqq S_d[\widehat Z_d,\widehat Y_d].
$
The fixed-point identities implies
$
 \widehat Y_d'=k_d\widehat Z_d+\widehat S_d.
$  Define 
\[
 B_d(y)\coloneqq
 k_d(y)\widehat Z_d'(y)
 -\bigl(M_{nn}(a+\sqrt d\,y)-\Lambda\bigr)\widehat Y_d(y)\quad\text{on }R_d\leq y\leq3R_d.
\]
By the first equation in \eqref{reg:eq:scaled-repelling-system},
together with \eqref{reg:eq:repelling-corrected-C1} and
\eqref{reg:eq:repelling-profile-ratios}, we derive that
\[
 |B_d'(y)|\leq CZ(y),
 \qquad
 |k_d'(y)\widehat Z_d(y)|
 \leq C\sqrt d\,Z(y).
\]
Integrating the $B_d$-term in
\eqref{reg:eq:repelling-green-n} by parts, 
we find that the terminal contribution at $s=3R_d$ is
exponentially small for $R_d\leq y\leq2R_d$,
while
\eqref{reg:eq:repelling-kernel-bounds} controls the remaining integral.
Hence,
\[
 |\widehat S_d'(y)|
 \leq C\sqrt d\,y^2Z(y),
 \qquad R_d\leq y\leq2R_d.
\]
Differentiating
$\widehat Y_d'=k_d\widehat Z_d+\widehat S_d$ now gives
\[
 Y\widehat\rho_{n,d}''
 =
 k_d'\widehat Z_d+k_d\widehat Z_d'
 +\widehat S_d'
 -Y''(1+\widehat\rho_{n,d})
 -2Y'\widehat\rho_{n,d}'.
\]
Since $Y/Z\asymp y$, $Y'/Y=O(y^{-1})$, and
$Y''/Y=O(y^{-2})$ on this interval, in view of
$R_d^{-2}=\delta_d^3/d$ and
$\sqrt d\,R_d=\delta_d$, we obtain that
\begin{equation}\label{reg:eq:repelling-corrected-C2}
 \sup_{R_d\leq y\leq2R_d}
 \left(
 |\widehat\rho_{p,d}''(y)|
 +|\widehat\rho_{n,d}''(y)|
 \right)
 \leq C\kappa_d.
\end{equation} 
On the rest of $[-3R_d,3R_d]$, classical ODE regularity gives a
finite $C^2$-bound, while no small second-derivative estimate there is used
below.  It follows from
\eqref{reg:eq:repelling-corrected-C1} that $\widehat Z_d$ and
$\widehat Y_d$ are positive for small $d$, and the fixed-point
identities show that they solve
\eqref{reg:eq:scaled-repelling-system} exactly.

Let $C_a>0$ be the coefficient in
\eqref{reg:eq:left-repelling-asymptotics}, and define
\begin{equation}\label{reg:eq:repelling-inner-field}
 \boldsymbol\Phi_d(x)\coloneqq
 C_a\sqrt{\frac{\alpha}{2\pi}}
 \begin{pmatrix}
  d^{q/2}\widehat Z_d((x-a)/\sqrt d)\\[1mm]
  d^{(q+1)/2}\widehat Y_d((x-a)/\sqrt d)
 \end{pmatrix}.
\end{equation}
Then we can conclude that
\begin{equation}\label{reg:eq:repelling-inner-exact}
 \boldsymbol\Phi_d\gg0,
 \qquad
 (\mathcal L_d-\Lambda)\boldsymbol\Phi_d=0
 \quad\text{for }|x-a|\leq3\delta_d.
\end{equation}

\medskip
\noindent\emph{Step 2.}
We next match the layer to a transport continuation.
Let $h=x-a$.  The large-$y$ asymptotics in
Lemma~\ref{reg:lem:repelling-profile}, the definition of $Y$, and
\eqref{reg:eq:repelling-corrected-C1} yield, uniformly for
$\delta_d\leq h\leq2\delta_d$,
\[
\begin{aligned}
 \Phi_{d,p}(a+h)=C_ah^q(1+o(1)),\quad
 \Phi_{d,n}(a+h)=
 C_a\frac{-M_{np}(a)}{c(q+1)}h^{q+1}(1+o(1)).
\end{aligned}
\]
The same leading terms, also after one differentiation, hold for
$\boldsymbol u$ by
Lemma~\ref{reg:lem:repelling-endpoint-asymptotics}.  Consequently, there
is a quantity $\omega_d=o(1)$ such that
\begin{equation}\label{reg:eq:repelling-layer-transport-match}
 \max_{i=p,n}\left\{
 \left|\frac{\Phi_{d,i}(a+h)}{u_i(a+h)}-1\right|
 +h\left|\partial_x
 \frac{\Phi_{d,i}(a+h)}{u_i(a+h)}\right|
 \right\}
 \leq C\omega_d,
 \qquad \delta_d\leq h\leq2\delta_d.
\end{equation}
The differentiated profile estimates, including
\eqref{reg:eq:repelling-corrected-C2} on
$R_d\leq y\leq2R_d$, also give
\begin{equation}\label{reg:eq:repelling-layer-derivatives}
 |\partial_x^j\Phi_{d,i}(a+h)|
 \leq Ch^{-j}\Phi_{d,i}(a+h),
 \qquad j=1,2,\quad i=p,n.
\end{equation}

Let $\boldsymbol w_d$ be the solution of the transport system
\eqref{reg:eq:transport-system} with
$\boldsymbol w_d(a+\delta_d)
 =\boldsymbol\Phi_d(a+\delta_d)$,
continued to $a+2\Delta_d$.  On
$J_{d}\coloneqq[a+\delta_d,a+\delta_d+h_d]$, set
$\boldsymbol D_d\coloneqq\boldsymbol w_d-
\boldsymbol\Phi_d$.  By
\eqref{reg:eq:repelling-inner-exact}, we have
\begin{equation}\label{reg:eq:repelling-D-equation} 
\begin{cases}
 -b_i(D_{d,i})'+(M_{ii}-\Lambda)D_{d,i}
 +M_{ij}D_{d,j}=-d(\Phi_{d,i})'',
 &\{i,j\}=\{p,n\},\\
 \boldsymbol D_d(a+\delta_d)=0.
\end{cases}
\end{equation}

We now estimate this mismatch.  On $J_{d}$,
$x-a\asymp\delta_d$, and
\eqref{reg:eq:repelling-layer-transport-match}-\eqref{reg:eq:repelling-layer-derivatives} imply
\begin{equation}\label{reg:eq:repelling-matching-profile-bounds}
 b_p\asymp\delta_d,\quad -b_n\asymp1,\quad
 {\Phi_{d,n}\over\Phi_{d,p}}\asymp\delta_d,\quad
 {\Phi_{d,p}\over\Phi_{d,n}}\asymp\delta_d^{-1},\quad
 { |\partial_x^j\Phi_{d,i}|\over\Phi_{d,i}}
 \leq C\delta_d^{-j}\quad(j=1,2).
\end{equation}
Set $E_i\coloneqq D_{d,i}/\Phi_{d,i}$.  Substituting
$D_{d,i}=\Phi_{d,i}E_i$ into
\eqref{reg:eq:repelling-D-equation}, and using
\eqref{reg:eq:repelling-inner-exact}, gives
\[
 -b_iE_i'
 +d\frac{(\Phi_{d,i})''}{\Phi_{d,i}}E_i
 +M_{ij}\frac{\Phi_{d,j}}{\Phi_{d,i}}(E_j-E_i)
 =-d\frac{(\Phi_{d,i})''}{\Phi_{d,i}},
 \qquad \{i,j\}=\{p,n\}.
\]
It follows from \eqref{reg:eq:repelling-matching-profile-bounds} that
\begin{align}
 |E_p'|
 &\leq C\left[
 \frac d{\delta_d^3}(1+|E_p|)+|E_n-E_p|
 \right], \quad
 |E_n'|
 \leq C\left[
 \frac d{\delta_d^2}(1+|E_n|)
 +\frac1{\delta_d}|E_p-E_n|
 \right].
 \label{reg:eq:repelling-relative-mismatch}
\end{align}
Integrating these inequalities over $J_{d}$ and absorbing the
same-component terms, which is possible because ${dh_d}/{\delta_d^3}=d^{1/2}$, $ {h_d}/{\delta_d}=d^{3/10}$, and ${dh_d}/{\delta_d^2}=d^{9/10}$. Then  
we obtain
\[
\begin{aligned}
 \|E_p\|_{L^\infty(J_{d})}
 \leq C\left[
 \frac{dh_d}{\delta_d^3}
 +h_d\|E_n\|_{L^\infty(J_{d})}\right],\quad
 \|E_n\|_{L^\infty(J_{d})}
 \leq C\left[
 \frac{dh_d}{\delta_d^2}
 +\frac{h_d}{\delta_d}
  \|E_p\|_{L^\infty(J_{d})}\right].
\end{aligned}
\]
Substitution of these inequalities into one another, followed by
\eqref{reg:eq:repelling-relative-mismatch}, yields
\begin{equation}\label{reg:eq:repelling-D-estimates}
\begin{aligned}
 \frac{|D_{d,p}|}{\Phi_{d,p}}
 &\leq C\frac{dh_d}{\delta_d^3},&
 \frac{|(D_{d,p})'|}{\Phi_{d,p}}
 &\leq C\frac d{\delta_d^3},\\
 \frac{|D_{d,n}|}{\Phi_{d,n}}
 &\leq C\left(
 \frac{dh_d}{\delta_d^2}
 +\frac{dh_d^2}{\delta_d^4}\right),&
 \frac{|(D_{d,n})'|}{\Phi_{d,n}}
 &\leq C\left(
 \frac d{\delta_d^2}
 +\frac{dh_d}{\delta_d^4}\right).
\end{aligned}
\end{equation}

Choose $\chi\in C^\infty(\mathbb R;[0,1])$ such that
$\chi=0$ on $( -\infty,0]$ and $\chi=1$ on
$[1/2,\infty)$, and set
\[
 \chi_{a,d}^{(1)}(x)\coloneqq
 \chi\left(\frac{x-a-\delta_d}{h_d}\right),
 \qquad
 \boldsymbol F_d^{a,1}\coloneqq
 (1-\chi_{a,d}^{(1)})\boldsymbol\Phi_d
 +\chi_{a,d}^{(1)}\boldsymbol w_d
 \quad\text{on }J_{d}.
\]
The estimates in \eqref{reg:eq:repelling-D-estimates} imply that
$\boldsymbol F_d^{a,1}\gg0$ and
\begin{equation}\label{reg:eq:repelling-first-comparability}
 F_{d,i}^{a,1}\asymp\Phi_{d,i}\asymp w_{d,i},
 \qquad i=p,n.
\end{equation}

It remains to control the diffusion applied to
$\boldsymbol w_d$.  From
\eqref{reg:eq:repelling-first-comparability} and
\eqref{reg:eq:repelling-matching-profile-bounds}, we have
$w_{d,n}\asymp(x-a)w_{d,p}$ on $J_{d}$.  The transport
equations and their first derivatives then give
\begin{equation}\label{reg:eq:repelling-transport-derivatives}
 |\partial_x^jw_{d,i}(a+h)|
 \leq Ch^{-j}w_{d,i}(a+h),
 \qquad j=1,2,\quad i=p,n.
\end{equation}
Writing
$\boldsymbol F_d^{a,1}=\boldsymbol\Phi_d+
\chi_{a,d}^{(1)}\boldsymbol D_d$, 
in view of $\boldsymbol w_d(a+\delta_d)
 =\boldsymbol\Phi_d(a+\delta_d)$ we have
\[
\begin{aligned}
 [ (\mathcal L_d-\Lambda)\boldsymbol F_d^{a,1}]_i
 =-d\chi_{a,d}^{(1)}(w_{d,i})''
 -2d(\chi_{a,d}^{(1)})'(D_{d,i})'-\bigl[d(\chi_{a,d}^{(1)})''
          +b_i(\chi_{a,d}^{(1)})'\bigr]D_{d,i}.
\end{aligned}
\]
Since $|(\chi_{a,d}^{(1)})'|\leq Ch_d^{-1}$ and
$|(\chi_{a,d}^{(1)})''|\leq Ch_d^{-2}$,
\eqref{reg:eq:repelling-D-estimates}-\eqref{reg:eq:repelling-transport-derivatives} imply
\begin{equation}\label{reg:eq:first-repelling-splice-error}
 \max_{i=p,n}
 \frac{|[(\mathcal L_d-\Lambda)\boldsymbol F_d^{a,1}]_i|}
      {F_{d,i}^{a,1}}
 \leq C\left[
 \frac d{\delta_d^2}
 +\frac{dh_d}{\delta_d^4}
 +\frac{d^2}{h_d\delta_d^3}
 +\frac{d^2}{\delta_d^4}
 \right]=o(1).
\end{equation}

\medskip
\noindent\emph{Step 3.}
We now compare the transport continuation directly with
$\boldsymbol u$ and perform the second match.
For $\delta_d\leq h\leq2\Delta_d$, define
\[
 R_{d,p}(h)\coloneqq\frac{w_{d,p}(a+h)}{u_p(a+h)},
 \qquad
 R_{d,n}(h)\coloneqq\frac{w_{d,n}(a+h)}{u_n(a+h)}.
\]
Since both $\boldsymbol w_d$ and $\boldsymbol u$ solve the same
transport system, we calculate that
\begin{equation}\label{reg:eq:repelling-ratio-system}
 R_{d,p}'=\gamma_p(h)(R_{d,n}-R_{d,p}),
 \qquad
 R_{d,n}'=\gamma_n(h)(R_{d,p}-R_{d,n}),
\end{equation}
where $\gamma_p$ and $\gamma_n$ are given by
\[
 \gamma_p(h)\coloneqq
 \frac{M_{pn}(a+h)u_n(a+h)}{b_p(a+h)u_p(a+h)},
 \qquad
 \gamma_n(h)\coloneqq
 \frac{M_{np}(a+h)u_p(a+h)}{b_n(a+h)u_n(a+h)}.
\]
The endpoint asymptotics in
Lemma~\ref{reg:lem:repelling-endpoint-asymptotics} imply that
\begin{equation}\label{reg:eq:repelling-ratio-coefficient-bounds}
 |\gamma_p(h)|\leq C,\qquad
 |\gamma_n(h)|\leq Ch^{-1},\qquad
 h\frac{|u_i'(a+h)|}{u_i(a+h)}\leq C.
\end{equation}
Set $Z_d\coloneqq R_{d,p}-R_{d,n}$.  In view of
\eqref{reg:eq:repelling-ratio-system}, we have
$
 Z_d'=-(\gamma_p+\gamma_n)Z_d.
$ 
The $n$-equation for $\boldsymbol u$ also gives
\[
 \gamma_n(h)=\frac{u_n'(a+h)}{u_n(a+h)}
 -\frac{M_{nn}(a+h)-\Lambda}{b_n(a+h)}.
\]
Therefore, by integrating the equation of $Z_d$ from $\delta_d$ to $h$, we derive that
\begin{equation}\label{reg:eq:repelling-ratio-difference-formula}
\begin{aligned}
 Z_d(h)=Z_d(\delta_d)
 \frac{u_n(a+\delta_d)}{u_n(a+h)}\exp\left\{
 \int_{\delta_d}^h
 \left[
 \frac{M_{nn}(a+s)-\Lambda}{b_n(a+s)}-\gamma_p(s)
 \right]\,\mathrm{d}s\right\}.
\end{aligned}
\end{equation}
Choose $h_0>0$  small such that the endpoint expansion gives
\[
 \frac{u_n(a+s)}{u_n(a+t)}
 \leq C\left({s}/{t}\right)^{q+1},
 \qquad 0<s\leq t\leq h_0.
\]
Since $2\Delta_d\leq h_0$ for small $d$, and the exponential in
\eqref{reg:eq:repelling-ratio-difference-formula} is uniformly bounded,
\eqref{reg:eq:repelling-layer-transport-match} implies
\begin{equation}\label{reg:eq:repelling-ratio-decay}
 |Z_d(h)|\leq C\omega_d
 \left({\delta_d}/{h}\right)^{q+1}
 \qquad \text{for }\delta_d\leq h\leq2\Delta_d.
\end{equation}
In addition, it follows from \eqref{reg:eq:repelling-ratio-system} and
\eqref{reg:eq:repelling-ratio-decay} that
\[
 |R_{d,p}(h)-R_{d,p}(\delta_d)|\leq C\omega_d,
 \qquad
 |R_{d,n}(h)-R_{d,p}(h)|\leq C\omega_d\quad \text{ for }\delta_d\leq h\leq2\Delta_d.
\]
Hence, $\boldsymbol w_d\gg0$ on
$[a+\delta_d,a+2\Delta_d]$ for all sufficiently small $d$.

Since the entire left construction is determined only up to a common
positive scalar, we normalize it so that
$R_{d,p}(2\Delta_d)=1$.  Indeed,
\eqref{reg:eq:repelling-layer-transport-match} implies 
$R_{d,p}(\delta_d)=1+O(\omega_d)$, while the preceding estimate gives
$|R_{d,p}(2\Delta_d)-R_{d,p}(\delta_d)|\leq C\omega_d$.  Thus, before
rescaling, $R_{d,p}(2\Delta_d)=1+O(\omega_d)>0$.  The rescaling is common to
$\boldsymbol\Phi_d$, $\boldsymbol w_d$, and
$\boldsymbol F_d^{a,1}$, and hence preserves all the relative
residual estimates obtained above, as well as
\eqref{reg:eq:repelling-layer-transport-match}.

With this normalization, for
$\Delta_d\leq h\leq2\Delta_d$,
\[
\begin{aligned}
 |R_{d,p}(h)-1|
 &\leq C\int_h^{2\Delta_d}|Z_d(s)|\,\mathrm{d}s,\\
 |R_{d,n}(h)-1|
 &\leq |R_{d,p}(h)-1|+|Z_d(h)|.
\end{aligned}
\]
Combining these estimates with
\eqref{reg:eq:repelling-ratio-system} and
\eqref{reg:eq:repelling-ratio-coefficient-bounds}, we obtain
\begin{equation}\label{reg:eq:repelling-common-match}
 \max_{i=p,n}\sum_{j=0}^1h^j
 \frac{|\partial_x^j(w_{d,i}-u_i)|}{u_i}
 \leq C\omega_d
 \left(\frac{\delta_d}{\Delta_d}\right)^{q+1},
 \qquad \Delta_d\leq h\leq2\Delta_d.
\end{equation}

Define the second cutoff and interpolation by
\[
 \chi_{a,d}^{(2)}(x)\coloneqq
 \chi\left(\frac{x-a-\Delta_d}{\Delta_d}\right),
 \qquad
 \boldsymbol F_d^{a,2}\coloneqq
 (1-\chi_{a,d}^{(2)})\boldsymbol w_d
 +\chi_{a,d}^{(2)}\boldsymbol u
\]
on $[a+\Delta_d,a+2\Delta_d]$.  On this interval,
\eqref{reg:eq:repelling-common-match} and the differentiated transport
equations imply
\[
 F_{d,i}^{a,2}\asymp u_i\asymp w_{d,i},
 \qquad
 \frac{|u_i''|}{u_i}
 +\frac{|(w_{d,i})''|}{w_{d,i}}
 \leq\frac C{\Delta_d^2}.
\]
The product rule, $|(\chi_{a,d}^{(2)})'|\leq C\Delta_d^{-1}$, and
$|(\chi_{a,d}^{(2)})''|\leq C\Delta_d^{-2}$ now give
\begin{equation}\label{reg:eq:second-repelling-splice-error}
 \max_{i=p,n}
 \frac{|[(\mathcal L_d-\Lambda)\boldsymbol F_d^{a,2}]_i|}
      {F_{d,i}^{a,2}}
 \leq C\left[
 \frac d{\Delta_d^2}
 +\frac{\omega_d\delta_d^{q+1}}{\Delta_d^{q+2}}
 \right]=o(1).
\end{equation}
Indeed, $d/\Delta_d^2\to0$, while
$
 {\delta_d^{q+1}}/{\Delta_d^{q+2}}
 =\delta_d^{q+1-\vartheta(q+2)}\to 0
$
by the choice of $\vartheta$.  Estimate
\eqref{reg:eq:repelling-common-match} also shows that
$\boldsymbol F_d^{a,2}\gg0$ for small $d$.  Since
$\chi=1$ on $[1/2,\infty)$, this interpolation equals
$\boldsymbol u$ near $a+2\Delta_d$.  Step 3 is complete.

\medskip
\noindent\emph{Step 4.}
It remains to assemble the pieces and verify the residual and phase
estimates.  Define
\[
 \boldsymbol F_d^a(x)\coloneqq
 \begin{cases}
  \boldsymbol\Phi_d(x),
   &a-3\delta_d\leq x\leq a+\delta_d,\\
  (1-\chi_{a,d}^{(1)})\boldsymbol\Phi_d
 +\chi_{a,d}^{(1)}\boldsymbol w_d,
   &a+\delta_d\leq x\leq a+\delta_d+h_d,\\
  \boldsymbol w_d(x),
   &a+\delta_d+h_d\leq x\leq a+\Delta_d,\\
  (1-\chi_{a,d}^{(2)})\boldsymbol w_d
 +\chi_{a,d}^{(2)}\boldsymbol u,
   &a+\Delta_d\leq x\leq a+2\Delta_d.
 \end{cases}
\]
Because $\chi$ is constant near the junction values, the values and
the first two derivatives agree at every interface.  Hence
$\boldsymbol F_d^a\in C^2$, and the preceding estimates give
$\boldsymbol F_d^a\gg0$.
The construction of $\boldsymbol F_d^a$ is
summarized in Figure~\ref{reg:fig:left-repelling-module}.

\begin{figure}[htbp]
\centering
\includegraphics[width=0.85\linewidth]{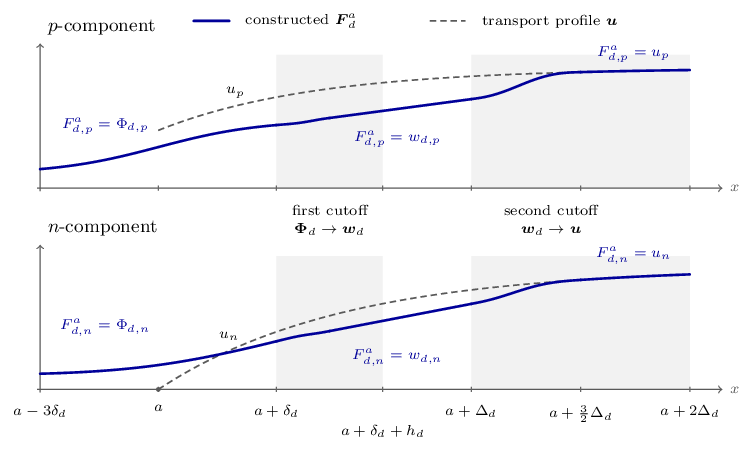}
\caption{\small The construction of $\boldsymbol F_d^a$ at a left repelling endpoint. Solid and dashed
curves represent $\boldsymbol F_d^a$ and $\boldsymbol u$,
respectively. The shaded bands mark the two cutoff intervals. The
labelled identities are exact, and coincident curves are drawn once.
In particular, $\boldsymbol F_d^a=\boldsymbol u$ for
$x\geq a+\tfrac32\Delta_d$.}
\label{reg:fig:left-repelling-module}
\end{figure}

The residual vanishes on the exact endpoint layer and is $o(1)$
relative to $\boldsymbol F_d^a$ on the two transition regions by
\eqref{reg:eq:first-repelling-splice-error} and
\eqref{reg:eq:second-repelling-splice-error}.  On the intervening
transport segment, the ratio estimates and the differentiated
transport equations give
\[
 \max_{i=p,n}\frac{|(w_{d,i})''|}{w_{d,i}}
 \leq\frac C{(x-a)^2}\leq\frac C{\delta_d^2}.
\]
Since $(\mathcal L_d-\Lambda)\boldsymbol w_d
=-d(\boldsymbol w_d)''$, its relative residual is at most
$Cd/\delta_d^2=o(1)$.  Together with the transition estimates, this
proves 
\eqref{reg:eq:repelling-residual}.

It remains to verify \eqref{reg:eq:repelling-phase}.  Let
$h=a-x\in[\delta_d,3\delta_d]$.  Then
$\boldsymbol F_d^a(a-h)=\boldsymbol\Phi_d(a-h)$ and
$y=-h/\sqrt d\to-\infty$.  The negative-$y$ asymptotics in
Lemma~\ref{reg:lem:repelling-profile}, together with
\eqref{reg:eq:repelling-corrected-C1}, give
\[
 -d\log F_{d,i}^a(a-h)
 =\frac\alpha2h^2
 +O\bigl(d|\log d|+d\kappa_d+d\omega_d\bigr).
\]
The last term accounts for the common scalar rescaling in Step 3.  On
the other hand,
\[
 \int_a^{a-h}b_p(t)\,\mathrm{d}t
 =\frac\alpha2h^2+O(h^3).
\]
Since
$
 d|\log d|+d\kappa_d+d\omega_d+\delta_d^3
 =o(\delta_d^2),
$
we obtain \eqref{reg:eq:repelling-phase}.  The proof is complete.
\end{proof}


\begin{lemma}
\label{reg:lem:attracting-domain-modules}
Let $\boldsymbol u$ be the positive profile from
Lemma~\ref{reg:lem:transport-value}. Suppose that $a$ is attracting.
Then $\boldsymbol u$ has a positive $C^1$ extension across $a$
that solves the transport system on both sides. For
$\varepsilon_d=d^{1/3}$, there is a positive $C^2$ function
$\boldsymbol F_d^a$, equal to this extension when
$|x-a|\geq2\varepsilon_d$, such that, uniformly near $a$,
\[
 \max_{i=p,n}
 \frac{\bigl|[(\mathcal L_d-\Lambda_I^{\mathrm{reg}})
                 \boldsymbol F_d^a]_i\bigr|}
      {F_{d,i}^a}
 =o(1).
\]
The corresponding assertion holds at a right attracting endpoint.

If $a=\ell$ is a domain endpoint, let $\chi_\ell$ be a fixed
cutoff equal to one near $\ell$, and set
\begin{equation}\label{reg:eq:left-domain-layer}
 B_{p,d}^\ell(x)
 \coloneqq
 d\frac{u_p'(\ell)}{b_p(\ell)}
 \exp\left[-\frac{b_p(\ell)(x-\ell)}{d}\right]\chi_\ell(x),
 \qquad
 \boldsymbol F_d^\ell
 \coloneqq(u_p+B_{p,d}^\ell,u_n)^{\mathsf T}.
\end{equation}
At a right domain endpoint, use
$B_{n,d}^r=d u_n'(r)b_n(r)^{-1}
\exp[b_n(r)(r-x)/d]\chi_r$ in the $n$-component, where
$\chi_r=1$ near $r$.
For small $d$, each corrected profile is positive, satisfies the
Neumann condition, and has relative residual $O(d)$.
\end{lemma}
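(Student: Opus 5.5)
The proof has two parts: the attracting endpoint and the domain endpoint. For an attracting left endpoint $a$, we have $b_n(a)=0$ with $b_n'(a)<0$, while $b_p(a)>0$.

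\emph{Extension across $a$.} Near $a$, the transport system \eqref{reg:eq:transport-system} is a linear first-order system. Its $n$-equation is regular-singular at $a$, and its $p$-equation is regular because $b_p(a)\neq0$. By \eqref{reg:eq:left-attracting-representation}, $u_n$ has a finite limit at $a$, satisfying the algebraic relation \eqref{reg:eq:endpoint-conditions}. I would write $u_n=u_n(a)+v$ and use the $n$-equation in the form $-b_nv'+(M_{nn}-\Lambda)v=\text{smooth}\cdot(\text{data})$, with indicial exponent $\beta_0=[M_{nn}(a)-\Lambda]/\gamma>0$. The solution is analytic in the regular direction except for a term $c\,|x-a|^{\beta_0}$. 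The relation \eqref{reg:eq:endpoint-conditions} selects the branch in which the right-hand side vanishes at $a$. Using the representation \eqref{reg:eq:left-attracting-representation} together with $\beta_0>0$, I would show that $u_n\in C^1$ up to $a$ once we take the regular (Frobenius) branch with $c=0$. This is the step I expect to be delicate when $\beta_0<1$, since there a nonzero $c$ would destroy $C^1$.

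One way to handle it is to define the extension to the left of $a$ as the unique $C^1$ solution on both sides, obtained by Picard iteration of the fixed-point problem for the regular branch. This is the same nonsingular Volterra reduction as in Lemma~\ref{reg:lem:repelling-endpoint-asymptotics}, with the roles reversed. The positive profile $\boldsymbol u$ must coincide with this branch. The reason is that any solution of the form $c|h|^{\beta_0}$ plus the regular part still has the algebraic trace, so $c$ has to be pinned by uniqueness in the $L^1$ class of the operator $\mathcal H_{n,\lambda}$. I would verify this from \eqref{reg:eq:left-attracting-representation}: for $C^1$ data $f_0$, the integral $\int_0^1 t^{\beta_0-1}E(th)f_0(th)\,dt$ is $C^1$ in $h$. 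Positivity across $a$ follows by continuity for $|x-a|$ small.

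\emph{Residual and cutoff.} Once $\boldsymbol u$ is $C^1$ across $a$, I need $C^2$ control to bound $d\,u_i''/u_i$. Differentiating the transport equations gives $u''=O(1)$ away from $a$. Near $a$, $u_n''$ may behave like $|x-a|^{\beta_0-1}$ when $\beta_0<1$. On $|x-a|\geq2\varepsilon_d$ the relative residual is therefore $O(d\,\varepsilon_d^{\beta_0-1}+d)=o(1)$. Inside $|x-a|<2\varepsilon_d$, I would replace $u_n$ by a mollification at scale $\varepsilon_d$, glued with a cutoff $\chi((x-a)/\varepsilon_d)$. The mollification error in the transport operator is $O(\varepsilon_d^{\beta_0})$ relative, using $b_n=O(\varepsilon_d)$ and the bound on $u_n'$. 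The diffusion error is $d\varepsilon_d^{-2}\cdot\varepsilon_d^{\min(\beta_0,1)+1}\to0$ when $\varepsilon_d=d^{1/3}$, or $O(d/\varepsilon_d)$ at worst. The cutoff derivatives multiply differences of size $\varepsilon_d^{1+\beta_0}$. All of these errors are $o(1)$ relative to $u_i$, since $u_i$ is bounded below near $a$.

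\emph{Domain endpoint.} A direct computation gives the boundary layer $B^\ell_{p,d}$ in \eqref{reg:eq:left-domain-layer}. It solves $-dB''-b_p(\ell)B'=0$ exactly where $\chi_\ell=1$, with $B'(\ell)=-u_p'(\ell)$, so $F'_{d,p}(\ell)=0$. The $n$-component needs no layer: the transport $n$-equation at $\ell$ together with the algebraic condition \eqref{reg:eq:endpoint-conditions} forces $b_n(\ell)u_n'(\ell)=0$, hence $u_n'(\ell)=0$ since $b_n(\ell)\neq0$. The residual splits into three pieces. The first is $-du''$, which is $O(d)$. The second is $(b_p(x)-b_p(\ell))B'$, which is $O(d)$ because $|x-\ell|e^{-b_p(\ell)(x-\ell)/d}\cdot d^{-1}\cdot d=O(d)$. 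The third is the zero-order coupling on $B$, also $O(d)$, and the cutoff terms are exponentially small. Positivity holds because $B=O(d)$ while $u_p$ is bounded below. The right endpoint is handled symmetrically.
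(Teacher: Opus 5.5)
Your proposal is correct and follows essentially the paper's route. You extend the attracting-endpoint representation \eqref{reg:eq:left-attracting-representation} to $|x-a|\le\rho$ by a contraction, regularize by an $\varepsilon_d$-mollification glued with a flat cutoff, and at a domain endpoint use $u_n'(\ell)=0$ together with the explicit layer $B_{p,d}^\ell$. The paper identifies $\boldsymbol u$ with the fixed point simply because it satisfies the same integral identities for $x\ge a$.

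One correction removes your ``delicate'' step. The homogeneous solutions of the $n$-equation near an attracting endpoint behave like $|x-a|^{-\beta_0}$, not $|x-a|^{\beta_0}$. Boundedness alone therefore selects the regular branch: there is no one-parameter family of continuous solutions with the same trace, and no special difficulty when $\beta_0<1$. Likewise $u_n''$ has no $|x-a|^{\beta_0-1}$ behaviour. The paper just uses the crude bound $|\widetilde u_n''(a+h)|\le C/|h|$, which gives your fallback $O(d/\varepsilon_d)$ both on $|x-a|\ge2\varepsilon_d$ and on the mollified piece.
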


\begin{proof}
Put $\Lambda=\Lambda_I^{\mathrm{reg}}$. We treat a left attracting
endpoint. Set $h=x-a$ 
and
$\beta_0=[M_{nn}(a)-\Lambda]/[-b_n'(a)]>0$. 
Since $[-b_n(a+h)]/h$ has a positive
$C^1$ extension at $h=0$, the coefficients in that representation
\eqref{reg:eq:left-attracting-representation} extend to negative $h$. For $|h|\leq\rho$, consider
\begin{equation}\label{liu-20260919-1}
 \begin{aligned}
 v_p(h)
 &=u_p(a)+\int_0^h
 \frac{[M_{pp}(a+s)-\Lambda]v_p(s)
       +M_{pn}(a+s)v_n(s)}{b_p(a+s)}\,\mathrm d s,\\
 v_n(h)
 &=\frac1{E(h)}\int_0^1t^{\beta_0-1}E(th)
 \frac{-M_{np}(a+th)}{[-b_n(a+th)]/(th)}
 v_p(th)\,\mathrm d t.
 \end{aligned}
\end{equation}
Since
$\beta_0>0$, the second line is a bounded map from $v_p$ to
$v_n$ on $C([-\rho,\rho])$. Substituting it into the first line
defines a map $\mathcal S$ satisfying
$
 \|\mathcal Sv_p-\mathcal S\widehat v_p\|_\infty
 \leq C\rho\|v_p-\widehat v_p\|_\infty.
$
Thus $\mathcal S$ is a contraction for small $\rho$. On
$h\geq0$, the given transport profile satisfies the same two
identities by the $p$-equation and
\eqref{reg:eq:left-attracting-representation}, and  hence the fixed point
extends $\boldsymbol u$. The first identity in \eqref{liu-20260919-1} implies
 $v_p\in C^1$, 
 and thus we also have
$v_n\in C^1$. At $h=0$, the second identity in \eqref{liu-20260919-1}  yields the positive
boundary in \eqref{reg:eq:characteristic-endpoint-values}. After reducing
$\rho$,
$\widetilde{\boldsymbol u}(a+h)
=(v_p(h),v_n(h))^{\mathsf T}$ is the required positive extension. 
It remains to regularize the extension. The transport equations imply
\[
 |\widetilde u_p''(a+h)|\leq C,
 \qquad
 |\widetilde u_n''(a+h)|\leq\frac{C}{|h|},
 \qquad h\neq0.
\]
Let $\varrho_\varepsilon$ be an even mollifier and choose
$\chi\in C_c^\infty((-2,2))$ with $\chi=1$ on $[-1,1]$.
Define
\[
 \boldsymbol F_d^a
 \coloneqq
 \widetilde{\boldsymbol u}
 +\chi\left(\frac{x-a}{\varepsilon_d}\right)
 \bigl(\varrho_{\varepsilon_d}*
       \widetilde{\boldsymbol u}-\widetilde{\boldsymbol u}\bigr).
\]
If $\omega$ is a modulus of continuity of
$\widetilde{\boldsymbol u}'$, by the preceding bounds and the flatness
of $\chi$ we obtain
\[
 \begin{aligned}
 \|\boldsymbol F_d^a-\widetilde{\boldsymbol u}\|_\infty
 \leq C\varepsilon_d\omega(C\varepsilon_d),
 \,\,\,\,
 \|(\boldsymbol F_d^a)'-\widetilde{\boldsymbol u}'\|_\infty
 \leq C\omega(C\varepsilon_d),\,\,\,\,
 \|(\boldsymbol F_d^a)''\|_\infty
  \leq C\varepsilon_d^{-1}.
 \end{aligned}
\]
Hence, we arrive at 
\[
 \left\|
 \left[-\operatorname{diag}(b_p,b_n)\partial_x
       +\boldsymbol M-\Lambda\right]\boldsymbol F_d^a
 \right\|_\infty=o(1),
 \qquad
 d\|(\boldsymbol F_d^a)''\|_\infty
 \leq C\frac d{\varepsilon_d}=o(1).
\]
The right attracting endpoint is
obtained by taking $h=b-x$ and interchanging $p$ and $n$.

Finally, suppose that $a=\ell$ is a domain endpoint. Since both
drifts are nonzero there, $\boldsymbol u$ is $C^2$ near
$\ell$. The endpoint condition and the $n$-equation give
$u_n'(\ell)=0$, while
$(B_{p,d}^\ell)'(\ell)=-u_p'(\ell)$. Thus
$(\boldsymbol F_d^\ell)'(\ell)=\boldsymbol0$. Where
$\chi_\ell=1$,
\[
 -d(B_{p,d}^\ell)''-b_p(x)(B_{p,d}^\ell)'
 =u_p'(\ell)[b_p(x)-b_p(\ell)]
  \exp\left[-\frac{b_p(\ell)(x-\ell)}d\right]=O(d),
\]
because $b_p(x)-b_p(\ell)=O(x-\ell)$ and
$\sup_{s\geq0}s\exp[-b_p(\ell)s/d]=O(d)$. Also,
$B_{p,d}^\ell=O(d)$, $-d\boldsymbol u''=O(d)$, and the cutoff
terms are exponentially small. Hence the corrected profile has
relative residual $O(d)$ and remains positive. The right domain
endpoint follows from the displayed formula in the statement by the
change of variables $s=r-x$ and the interchange of $p$ and
$n$. The proof is complete.
\end{proof}

Combining the appropriate modules at the two endpoints produces a
positive interior profile $\boldsymbol F_d$. It agrees with the transport
profile away from shrinking endpoint neighborhoods, satisfies the
original Neumann condition at every domain endpoint, and obeys
\begin{equation}\label{reg:eq:neutral-interior-profile}
 \max_{i=p,n}
 \frac{|[(\mathcal L_d-\Lambda_I^{\mathrm{reg}})
          \boldsymbol F_d]_i|}{F_{d,i}}=o(1).
\end{equation}
On $I$, a common scalar normalization gives
$-d\log F_{d,i}\to0$ uniformly.

\subsection{Local realization of the effective value}

It remains to make the comparison strict at the artificial endpoints
of an isolating neighborhood.  The following gluing argument also
identifies where the sign $M_{pn},M_{np}<0$ is essential.

At a left repelling endpoint, define
\[
 S_a(x)\coloneqq \int_a^x b_p(s)\,\mathrm{d}s>0,
 \qquad r_a(x)\coloneqq -b_p(x)>0,\qquad x<a.
\]
Choose a sufficiently short exterior one-sided neighborhood of $a$
so that $b_n(x)<b_p(x)<0$ there.
Fix $0<\vartheta<1$ and set
\begin{equation}\label{reg:eq:realization-repelling-functions}
 \begin{aligned}
 G_{p,d}^{a,-}(x)=\mathrm{e}^{-\vartheta S_a(x)/d},\qquad 
 G_{n,d}^{a,-}(x)=\frac{d\kappa_{a}}{r_a(x)}
 \mathrm{e}^{-\vartheta S_a(x)/d}.
 \end{aligned}
\end{equation}
Choose
$
 \kappa_a>
 \frac{-M_{np}(a)}{\vartheta[-b_n(a)]}.
$
Direct differentiation gives, for
$d^{2/5}\leq a-x\leq\rho$,
\begin{equation}\label{reg:eq:slow-tail}
 (\mathcal L_d-(\Lambda_I^{\mathrm{reg}}-\eta/4))
 \boldsymbol G_d^{a,-}
 \geq c\left(1+\frac{(a-x)^2}{d}\right)
 \boldsymbol G_d^{a,-}.
\end{equation}
Indeed, if $t=a-x$ and
$\lambda_0=\Lambda_I^{\mathrm{reg}}-\eta/4$, direct substitution gives
\[
\begin{aligned}
 \frac{[(\mathcal L_d-\lambda_0)\boldsymbol G_d^{a,-}]_p}
      {G_{p,d}^{a,-}}
 &=\frac{\vartheta(1-\vartheta)r_a^2}{d}
   +O\!\left(1+\frac d{r_a}\right),\\
 \frac{[(\mathcal L_d-\lambda_0)\boldsymbol G_d^{a,-}]_n}
      {G_{p,d}^{a,-}}
 &=\vartheta[-b_n(a)]\kappa_a+M_{np}(a)
   +O\!\left(t+\frac d{t^2}\right).
\end{aligned}
\]
Here $r_a\asymp t$, and the errors are uniform for
$d^{2/5}\leq t\leq\rho$.  The choice of $\kappa_a$, followed by a
smaller $\rho$, makes both rows positive and proves
\eqref{reg:eq:slow-tail}.  The right repelling
tails follow by reflection and interchange of the two components.
Since $(d^{2/5})^2/d=d^{-1/5}\to\infty$, the same tail has, for
every fixed $\eta>0$ and all sufficiently small $d$, the stronger
property
\begin{equation}\label{reg:eq:slow-tail-strong}
 (\mathcal L_d-(\Lambda_I^{\mathrm{reg}}+\eta))
 \boldsymbol G_d^{a,-}
 \geq \frac c2\left(1+\frac{(a-x)^2}{d}\right)
 \boldsymbol G_d^{a,-}
\end{equation}
throughout the region in which its cutoff will vary. The large
eikonal term in \eqref{reg:eq:slow-tail-strong} controls the
subtraction used in the compactly supported subsolution below.

At a left attracting endpoint, both drifts are positive on a short
exterior interval.  Set
$P_a(x)\coloneqq -\int_a^x b_n(s)\,\mathrm{d}s>0$ for $x<a$.
For a fixed vector $\boldsymbol e\gg0$, the tail
$\boldsymbol G_d^{a,-}\coloneqq
\boldsymbol e\,\mathrm{e}^{\vartheta P_a/d}$, $0<\vartheta<1$,
satisfies \eqref{reg:eq:slow-tail} and
\eqref{reg:eq:slow-tail-strong}, after decreasing $\rho$.
Its phase derivative is $\vartheta b_n$, and, for either fixed level
$\lambda=\Lambda_I^{\mathrm{reg}}\pm\eta$,
\[
 \frac{[(\mathcal L_d-\lambda)
          \boldsymbol G_d^{a,-}]_i}{G_{d,i}^{a,-}}
 =\frac{\vartheta b_n(b_i-\vartheta b_n)}d+O(1),
 \qquad i=p,n.
\]
After shrinking this exterior neighborhood, both leading expressions
are positive. For $i=n$, the leading term is
$\vartheta(1-\vartheta)b_n^2/d$. On
$a-x\geq d^{2/5}$, these terms dominate all zeroth-order terms.  At a
right attracting endpoint use
$P_b(x)\coloneqq -\int_b^x b_p(s)\,\mathrm{d}s>0$.  No exterior tail
is required at a domain endpoint.

\begin{lemma}
\label{reg:lem:local-realization}
Every regular interval $I$ has local effective value
$\Lambda_I^{\mathrm{reg}}$ in the sense of
Definition~{\rm\ref{def:local-effective-value}}.
\end{lemma}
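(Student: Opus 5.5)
The plan is to verify the two parts of Definition~\ref{def:local-effective-value} with the neutral profile $\boldsymbol F_d$ from \eqref{reg:eq:neutral-interior-profile} as the backbone. That profile is assembled from Lemmas~\ref{reg:lem:repelling-module} and \ref{reg:lem:attracting-domain-modules} and the transport profile $\boldsymbol u$ of Lemma~\ref{reg:lem:transport-value}. It will be glued at each non-domain endpoint to the exterior tails $\boldsymbol G_d^{a,-}$ (and their right counterparts). Fix $\eta>0$ and an isolating neighborhood $U$. Take $V=(a-\rho,b+\rho)\cap[\ell,r]$, with $\rho$ so small that the tails satisfy \eqref{reg:eq:slow-tail} and \eqref{reg:eq:slow-tail-strong} on $d^{2/5}\le a-x\le\rho$, and $V\Subset U$.

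\textbf{Supersolution.} Since $\boldsymbol F_d$ has relative residual $o(1)$ on its domain, we get $(\mathcal L_d-(\Lambda_I^{\mathrm{reg}}-\eta))\boldsymbol F_d\ge\frac\eta2\boldsymbol F_d$ there for small $d$. Define
\[
\boldsymbol\Psi_{d,\eta}=\max\{\boldsymbol F_d,\,t_d\boldsymbol G_d^{a,-}\}
\]
componentwise on the exterior side. A componentwise maximum of supersolutions of a cooperative system is a distributional supersolution, because at a kink the jump satisfies $(\Psi')^->(\Psi')^+$, which is exactly the allowed interface sign. The cooperative coupling terms have the right sign since $M_{pn},M_{np}<0$ and we only increase the other component. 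This is where the negativity of the off-diagonal entries is used.

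The constant $t_d$ must be chosen so that:
\begin{itemize}
\item[(a)] $\boldsymbol G$ dominates before $a-x=\rho$;
\item[(b)] $\boldsymbol F_d$ dominates on $[a-\delta_d,a]$, where only $\boldsymbol F_d$ is a supersolution.
\end{itemize}
At a repelling endpoint, the phase of $F_d$ on $[a-3\delta_d,a-\delta_d]$ is $S_a+o(\delta_d^2)$ by \eqref{reg:eq:repelling-phase}, while the phase of $G$ is $\vartheta S_a$ with $\vartheta<1$. So the two profiles cross inside $[a-3\delta_d,a-\delta_d]$ once $t_d$ is comparable to $F_d(a-2\delta_d)$, and beyond that point $\boldsymbol G$ wins. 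The $n$-component requires the weight $d\kappa_a/r_a$ in \eqref{reg:eq:realization-repelling-functions}, because $F_{d,n}/F_{d,p}\asymp\sqrt d$ in the inner layer. I expect this matching of the two components' crossing points to be the main obstacle. I would first check that the ratios $F_{d,n}/F_{d,p}$ (from $Y/Z\asymp\langle y\rangle^{-1}$ for $y<0$) and $G_n/G_p=d\kappa_a/r_a$ agree up to constants. If they do not cross cleanly, I would adjust the $n$-crossing by taking the maximum with a slightly larger multiple in that component.

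At an attracting endpoint, $F_d$ equals the $C^1$ extension of $\boldsymbol u$ outside $2\varepsilon_d$. That extension solves the transport equation and so is only neutral there; the crossing is instead placed at distance $\sim d^{2/5}$, using that the phase $-\vartheta P_a$ of $\boldsymbol G$ is strictly negative there. Strict positivity of the residual on the overlap comes from \eqref{reg:eq:slow-tail}.

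The phase gap \eqref{eq:local-phase-gap} at $\partial V$ then reads $\vartheta S_a(a-\rho)<d_H(a-\rho,I)$. This follows from \eqref{eq:explicit-critical-distance}, since $d_H(x,a)=\int_x^a(-b_+)_+=S_a(x)$ for $x<a$ near a repelling endpoint; hence $\delta_0=(1-\vartheta)S_a(a-\rho)/2$ works. At an attracting endpoint the phase of $\boldsymbol G$ is negative, while $d_H\ge0$. The phases tend to $0$ on $I$ by the normalization of $\boldsymbol F_d$. The Neumann condition at domain endpoints is inherited from Lemma~\ref{reg:lem:attracting-domain-modules}.

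\textbf{Subsolution.} Set
\[
\boldsymbol Q_{d,\eta}=(\boldsymbol F_d-\epsilon\,\boldsymbol G_d)_+,
\]
with $\boldsymbol G_d$ equal to the exterior tails on each side and glued through the interior as a small positive multiple. Here $\epsilon$ is chosen as in the proof of Theorem~\ref{iso:thm:isolated-local-value}. Inequality \eqref{reg:eq:slow-tail-strong} shows that $\boldsymbol G_d$ is a strict supersolution at level $\Lambda_I^{\mathrm{reg}}+\eta$ in the region where the subtraction is active. Together with the relative residual estimate, this gives $(\mathcal L_d-\Lambda_I^{\mathrm{reg}}-\eta)(\boldsymbol F_d-\epsilon\boldsymbol G_d)\le0$. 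The positive part of a subsolution is again a subsolution, and its support lies in a $\rho$-neighborhood of $I$ because $\boldsymbol F_d$ decays faster than the tails. At a domain endpoint the Neumann condition holds directly.
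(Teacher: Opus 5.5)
\textbf{Gap in the supersolution step.} A componentwise maximum of supersolutions of a cooperative system is \emph{not} a supersolution, and both of your justifications have the wrong sign.

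First, the kink. A maximum always has a convex kink, $(\Psi_i')^-\le(\Psi_i')^+$. So $-d\Psi_i''$ carries the negative point mass $-d[(\Psi_i')^+-(\Psi_i')^-]$ at each crossing. This is the opposite of the interface sign allowed in Definition~\ref{def:local-effective-value}(i), and no bounded residual can absorb a Dirac mass.

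Second, the coupling. Where $\Psi_i=F_{d,i}$ but $\Psi_j=t_dG_{d,j}>F_{d,j}$, one has $(\mathcal L_d\boldsymbol\Psi)_i-\lambda\Psi_i=(\mathcal L_d\boldsymbol F_d)_i-\lambda F_{d,i}+M_{ij}(\Psi_j-F_{d,j})$. Since $M_{ij}<0$, the last term is $\le0$: raising the other component \emph{lowers} the residual. The negativity of $M_{pn},M_{np}$ works against you here, not for you.

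For cooperative systems it is the minimum of supersolutions, and the maximum of subsolutions, that is preserved. The minimum is useless here. The gap \eqref{eq:local-phase-gap} forces the exterior piece to have phase strictly below $d_H(\cdot,I)$, e.g.\ $\vartheta S_a<S_a$ at a repelling end. Hence the exterior piece must be the larger function beyond the matching point, which is a max-geometry. No adjustment of the crossing points rescues this, and the componentwise mismatch you flagged is not the real obstacle.

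\textbf{The fix: linear gluing.} The missing idea is linear gluing with exponentially separated cutoffs, which is what the paper uses:
$\boldsymbol\Psi^-_{d,\eta}=\chi_{F,d}\boldsymbol F_d+\sum_\xi A_{\xi,d}\chi_{\xi,d}\boldsymbol G_d^{\xi,-}$.
Here $A_{a,d}$ matches the $p$-components at $a-2\delta_d$. The transition layers of $\chi_{a,d}$ and $\chi_{F,d}$ sit at $a-x\in[\delta_d,4\delta_d/3]$ and $a-x\in[8\delta_d/3,3\delta_d]$, on opposite sides of the matching point.
\begin{itemize}
\item On the overlap both pieces are strict supersolutions, so their sum is one. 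There is no coupling-sign issue and no kink.
\item The phase difference $(1-\vartheta)[S_a(a-2\delta_d)-S_a(a-h)]$ (respectively $\vartheta[P_a(a-2\delta_d)-P_a(a-h)]$ at an attracting end) yields \eqref{reg:eq:exponential-separation}. Each $O(\delta_d^{-1})$ relative commutator is therefore absorbed by the residual of the dominant piece.
\item The prefactors $d\kappa_a/r_a$ and $F_{d,n}/F_{d,p}$ shift phases only by $O(d|\log d|)=o(\delta_d^2)$. So the separation holds in both components at once, and no componentwise crossing analysis is needed.
\end{itemize}

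\textbf{The subsolution.} The same device is needed there. Your positive-part step is correct; that is exactly where a maximum is legitimate. However, a fixed small $\epsilon$ with a tail ``glued through $I$'' does not reproduce the mechanism of Theorem~\ref{iso:thm:isolated-local-value}. The reason is that $\boldsymbol G_d^{a,-}$ is only known to be a supersolution for $a-x\ge\delta_d$, and at a repelling end $G^{a,-}_{n,d}$ is singular at $x=a$ because $r_a(a)=0$.

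Subtract $A_{\xi,d}\chi_{\xi,d}\boldsymbol G_d^{\xi,-}$ only in the exterior, so that $\boldsymbol q_d=\boldsymbol F_d$ on $I$. Then the $\chi_{\xi,d}$-commutator is absorbed by $-\frac\eta2\boldsymbol F_d$, and the $\chi_{F,d}$-commutator by \eqref{reg:eq:slow-tail-strong}.
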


\begin{proof}
\noindent\emph{Step 1.}
Fix $\eta>0$ and a sufficiently small relative neighborhood $U$
of $I=[a,b]$ containing no other static class. Choose a relative
interval $V$ with $I\subset V\subset\overline V\subset U$ whose
artificial endpoints lie in the exterior intervals where
$\boldsymbol G_d^{a,-}$ and $\boldsymbol G_d^{b,-}$ satisfy
\eqref{reg:eq:slow-tail}. These endpoints are fixed independently
of $d$. Use the same $0<\vartheta<1$ in the definitions of
$\boldsymbol G_d^{a,-}$ and $\boldsymbol G_d^{b,-}$, and put
$\delta_d=d^{2/5}$. Let $\boldsymbol F_d$ be the positive function
in \eqref{reg:eq:neutral-interior-profile}, with the common
normalization specified there. All sums over $\xi$ below include
only the endpoints in $\{a,b\}\cap(\ell,r)$.

Choose smooth cutoffs with values in $[0,1]$ such that
\begin{equation}\label{reg:eq:realization-cutoffs}
\begin{aligned}
 \chi_{a,d}=0&\quad\text{for }x\geq a-\delta_d,
 &\chi_{a,d}=1&\quad\text{for }x\leq a-4\delta_d/3,\\
 \chi_{b,d}=0&\quad\text{for }x\leq b+\delta_d,
 &\chi_{b,d}=1&\quad\text{for }x\geq b+4\delta_d/3.
\end{aligned}
\end{equation}
The cutoff corresponding to a domain endpoint is omitted.
Choose $\chi_{F,d}=1$ on
$[a-8\delta_d/3,b+8\delta_d/3]\cap[\ell,r]$ and
$\chi_{F,d}=0$ outside
$(a-3\delta_d,b+3\delta_d)\cap[\ell,r]$.
For each cutoff, require $|\chi'|\leq C\delta_d^{-1}$ and
$|\chi''|\leq C\delta_d^{-2}$, with all derivatives zero at the
ends of its transition intervals. Extend
$\chi_{F,d}\boldsymbol F_d$ and
$\chi_{\xi,d}\boldsymbol G_d^{\xi,-}$ by zero wherever the
 corresponding uncut function is undefined. 

Define the positive constants $A_{a,d}$ and $A_{b,d}$ by
\begin{equation}\label{reg:eq:realization-normalization}
 A_{a,d}G_{p,d}^{a,-}(a-2\delta_d)=F_{d,p}(a-2\delta_d),
 \qquad
 A_{b,d}G_{n,d}^{b,-}(b+2\delta_d)=F_{d,n}(b+2\delta_d),
\end{equation}
omitting the relation at a domain endpoint. At repelling endpoints,
write $S_a(x)=\int_a^x b_p(s)\,\mathrm d s$ for $x<a$ and
$S_b(x)=\int_b^x b_n(s)\,\mathrm d s$ for $x>b$.
By
\eqref{reg:eq:repelling-phase} and
\eqref{reg:eq:realization-normalization}, we derive that 
\begin{equation}\label{reg:eq:realization-repelling-comparison}
\begin{aligned}
 -d\log\frac{A_{a,d}G_{i,d}^{a,-}(a-h)}{F_{d,i}(a-h)}
 &=(1-\vartheta)
   [S_a(a-2\delta_d)-S_a(a-h)]+o(\delta_d^2),\\
 -d\log\frac{A_{b,d}G_{i,d}^{b,-}(b+h)}{F_{d,i}(b+h)}
 &=(1-\vartheta)
   [S_b(b+2\delta_d)-S_b(b+h)]+o(\delta_d^2),\,\, i=p,n.
\end{aligned}
\end{equation}
Each equality is used only when the indicated endpoint is repelling.

At an attracting endpoint, $\boldsymbol F_d$ converges in $C^1$
to the positive extension from
Lemma~\ref{reg:lem:attracting-domain-modules}, so
$-d\log F_{d,i}=O(d)$. Here
$P_a(x)=-\int_a^x b_n(s)\,\mathrm d s$ on the left and
$P_b(x)=-\int_b^x b_p(s)\,\mathrm d s$ on the right, and
$\boldsymbol G_d^{\xi,-}=\boldsymbol e\exp(\vartheta P_\xi/d)$
with a fixed $\boldsymbol e\gg0$. Hence
\eqref{reg:eq:realization-normalization} gives
\begin{equation}\label{reg:eq:realization-attracting-comparison}
\begin{aligned}
 -d\log\frac{A_{a,d}G_{i,d}^{a,-}(a-h)}{F_{d,i}(a-h)}
 &=\vartheta[P_a(a-2\delta_d)-P_a(a-h)]+o(\delta_d^2),\\
 -d\log\frac{A_{b,d}G_{i,d}^{b,-}(b+h)}{F_{d,i}(b+h)}
 &=\vartheta[P_b(b+2\delta_d)-P_b(b+h)]+o(\delta_d^2).
\end{aligned}
\end{equation}
The errors in \eqref{reg:eq:realization-repelling-comparison}
and \eqref{reg:eq:realization-attracting-comparison} are uniform
for $\delta_d\leq h\leq3\delta_d$ and $i=p,n$.
At a left endpoint,
$S_a(a-h)=b_p'(a)h^2/2+O(h^3)$ in the repelling case and
$P_a(a-h)=-b_n'(a)h^2/2+O(h^3)$ in the attracting case.
At a right endpoint the corresponding expressions are
$S_b(b+h)=b_n'(b)h^2/2+O(h^3)$ and
$P_b(b+h)=-b_p'(b)h^2/2+O(h^3)$. All four quadratic
coefficients are positive in their respective cases. Since
$h\in[\delta_d,4\delta_d/3]$ on
$\operatorname{supp}\chi_{\xi,d}'$ and
$h\in[8\delta_d/3,3\delta_d]$ on
$\operatorname{supp}\chi_{F,d}'$, there is $c_0>0$ such that
\begin{equation}\label{reg:eq:exponential-separation}
\begin{aligned}
 A_{\xi,d}G_{i,d}^{\xi,-}
 &\leq\exp(-c_0\delta_d^2/d)F_{d,i}
 &&\text{on }\operatorname{supp}\chi_{\xi,d}',\\
 F_{d,i}
 &\leq\exp(-c_0\delta_d^2/d)A_{\xi,d}G_{i,d}^{\xi,-}
 &&\text{on }\operatorname{supp}\chi_{F,d}'
                  \cap\{\chi_{\xi,d}=1\}.
\end{aligned}
\end{equation}

Next, we define
\begin{equation}\label{reg:eq:realization-supersolution}
 \boldsymbol\Psi_{d,\eta}^-
 =\chi_{F,d}\boldsymbol F_d+
   \sum_\xi A_{\xi,d}\chi_{\xi,d}\boldsymbol G_d^{\xi,-}.
\end{equation}
Where $\chi_{F,d}<1$, at least one $\chi_{\xi,d}$ equals one.
Hence,  $\boldsymbol\Psi_{d,\eta}^-\gg0$ on $\overline V$,
and $\boldsymbol\Psi_{d,\eta}^-\in
C(\overline V)\cap C^2(V)$.
By \eqref{reg:eq:neutral-interior-profile} and
\eqref{reg:eq:slow-tail}, for small $d$, we have
\begin{equation}\label{reg:eq:realization-minus-residuals}
\begin{aligned}
 (\mathcal L_d-(\Lambda_I^{\mathrm{reg}}-\eta))\boldsymbol F_d
 &\geq\frac\eta2\boldsymbol F_d,\\
 (\mathcal L_d-(\Lambda_I^{\mathrm{reg}}-\eta))
     \boldsymbol G_d^{\xi,-}
 &\geq c\left(1+{\operatorname{dist}(x,I)^2}/{d}\right)
     \boldsymbol G_d^{\xi,-}.
\end{aligned}
\end{equation}
The first inequality holds wherever $\chi_{F,d}\ne0$, and the
second wherever $\chi_{\xi,d}\ne0$.

For a scalar $\chi$ and a vector $\boldsymbol W$,
\begin{equation}\label{reg:eq:realization-commutator}
 [[\mathcal L_d,\chi]\boldsymbol W]_i
 =-2d\chi'W_i'-d\chi''W_i-b_i\chi'W_i.
\end{equation}
On $\operatorname{supp}\chi_{F,d}'$, it follows from the bounds
\eqref{reg:eq:repelling-profile-ratios},
\eqref{reg:eq:repelling-corrected-C1}, and the formula
\eqref{reg:eq:repelling-inner-field} that
$|F_{d,i}'|\leq Cd^{-1}F_{d,i}$ at repelling endpoints.
At attracting endpoints the same bound follows from the positive
$C^1$ extension and its regularization in
Lemma~\ref{reg:lem:attracting-domain-modules}.
Differentiating \eqref{reg:eq:realization-repelling-functions}
and $e_i\exp(\vartheta P_\xi/d)$ gives
$|(G_{i,d}^{\xi,-})'|\leq Cd^{-1}G_{i,d}^{\xi,-}$ on
$\operatorname{supp}\chi_{\xi,d}'$. Thus
\eqref{reg:eq:realization-commutator} implies
\begin{equation}\label{reg:eq:realization-commutator-bounds}
\begin{aligned}
 |[[\mathcal L_d,\chi_{F,d}]\boldsymbol F_d]_i|
 \leq C\delta_d^{-1}F_{d,i},\quad
 |[[\mathcal L_d,\chi_{\xi,d}]\boldsymbol G_d^{\xi,-}]_i|
 \leq C\delta_d^{-1}G_{i,d}^{\xi,-}.
\end{aligned}
\end{equation}
Put $\gamma_d=C\delta_d^{-1}\exp(-c_0\delta_d^2/d)\to0$.
On $\operatorname{supp}\chi_{\xi,d}'$, we can derive from
\eqref{reg:eq:realization-minus-residuals}-\eqref{reg:eq:realization-commutator-bounds}
and the first inequality in \eqref{reg:eq:exponential-separation}
that
\[
 [(\mathcal L_d-(\Lambda_I^{\mathrm{reg}}-\eta))
       \boldsymbol\Psi_{d,\eta}^-]_i
 \geq(\eta/2-\gamma_d)F_{d,i}\geq0.
\]
On $\operatorname{supp}\chi_{F,d}'\cap\{\chi_{\xi,d}=1\}$,
the second inequality in \eqref{reg:eq:exponential-separation}
instead gives
\[
 [(\mathcal L_d-(\Lambda_I^{\mathrm{reg}}-\eta))
       \boldsymbol\Psi_{d,\eta}^-]_i
 \geq\left[c\left(1+
       {\operatorname{dist}(x,I)^2}/{d}\right)-\gamma_d\right]
       A_{\xi,d}G_{i,d}^{\xi,-}\geq0.
\]
Outside the derivative supports in
\eqref{reg:eq:realization-cutoffs}, all commutators vanish,
and \eqref{reg:eq:realization-minus-residuals} applies directly.
Therefore $\mathcal L_d\boldsymbol\Psi_{d,\eta}^-
\geq(\Lambda_I^{\mathrm{reg}}-\eta)\boldsymbol\Psi_{d,\eta}^-$
in $V$. At every domain endpoint,
$\boldsymbol\Psi_{d,\eta}^-=\boldsymbol F_d$ nearby and satisfies
the exact Neumann condition.

\medskip
\noindent\emph{Step 2.}
We verify the normalization and boundary inequality in
Definition~\ref{def:local-effective-value}(i). By \eqref{reg:eq:realization-repelling-comparison} and \eqref{reg:eq:realization-attracting-comparison}, 
it follows from the first identity in \eqref{reg:eq:realization-normalization}
that 
\[
\begin{aligned}
 -d\log A_{a,d}
 &=-d\log F_{d,p}(a-2\delta_d)
      +d\log G_{p,d}^{a,-}(a-2\delta_d)\\
 &=\begin{cases}
 (1-\vartheta)S_a(a-2\delta_d)+o(\delta_d^2),&a\text{ repelling},\\
 \vartheta P_a(a-2\delta_d)+O(d),&a\text{ attracting}.
 \end{cases}
\end{aligned}
\]
The second identity gives the corresponding formula for
$-d\log A_{b,d}$, with $S_a(a-2\delta_d)$ and
$P_a(a-2\delta_d)$ replaced by $S_b(b+2\delta_d)$ and
$P_b(b+2\delta_d)$. Thus $-d\log A_{\xi,d}=O(\delta_d^2)=o(1)$.
On $I$, we have 
$\boldsymbol\Psi_{d,\eta}^-=\boldsymbol F_d$ due to \eqref{reg:eq:realization-supersolution}. The normalization
following \eqref{reg:eq:neutral-interior-profile} therefore yields
$\max_{i=p,n}\|-d\log\Psi_{d,\eta,i}^-\|_{C(I)}\to0$.

For $x\in\partial V\cap(\ell,r)$, take $\xi=a$ if $x<a$ and
$\xi=b$ if $x>b$. For small $d$,
$\chi_{F,d}(x)=0$ and $\chi_{\xi,d}(x)=1$, so
$\Psi_{d,\eta,i}^-(x)=A_{\xi,d}G_{i,d}^{\xi,-}(x)$.
At these fixed points, the factors $d\kappa_a/[-b_p(x)]$ and
$d\kappa_b/b_n(x)$ in
\eqref{reg:eq:realization-repelling-functions} contribute
$O(d|\log d|)$ to $-d\log G_{i,d}^{\xi,-}(x)$.
Using \eqref{eq:explicit-critical-distance}, we obtain
\[
 -d\log\Psi_{d,\eta,i}^-(x)
 =\begin{cases}
 \vartheta d_H(x,I)+o(1),&\xi\text{ repelling},\\
 -\vartheta P_\xi(x)+o(1),&\xi\text{ attracting},
 \end{cases}
 \qquad i=p,n.
\]
Here $d_H(x,I)=S_\xi(x)>0$ in the repelling case and
$d_H(x,I)=0$, $P_\xi(x)>0$ in the attracting case. Hence
$d_H(x,I)+d\log\Psi_{d,\eta,i}^-(x)$ tends to
$(1-\vartheta)d_H(x,I)>0$ or $\vartheta P_\xi(x)>0$,
respectively. Choosing $\delta_0$ smaller than half the least
of these finitely many positive limits gives
\[
 \max_{i=p,n}[-d\log\Psi_{d,\eta,i}^-(x)]
 \leq d_H(x,I)-\delta_0
 \quad\text{on }\partial V\cap(\ell,r).
\]
If both endpoints are domain endpoints, this boundary set is empty.
Definition~\ref{def:local-effective-value}(i) follows.

\medskip
\noindent\emph{Step 3.}
Using the cutoffs in \eqref{reg:eq:realization-cutoffs} and the
constants in \eqref{reg:eq:realization-normalization}, define
\begin{equation}\label{reg:eq:realization-subsolution}
 \boldsymbol q_d
 =\chi_{F,d}\boldsymbol F_d-
   \sum_\xi A_{\xi,d}\chi_{\xi,d}\boldsymbol G_d^{\xi,-},
 \qquad
 \boldsymbol Q_{d,\eta}=(\boldsymbol q_d)_+.
\end{equation}
The positive part is taken componentwise. On $I$,
$\boldsymbol q_d=\boldsymbol F_d\gg0$. For $x\leq a-3\delta_d$
in $V$, $\boldsymbol q_d=-A_{a,d}\boldsymbol G_d^{a,-}\ll0$,
and for $x\geq b+3\delta_d$ in $V$,
$\boldsymbol q_d=-A_{b,d}\boldsymbol G_d^{b,-}\ll0$.
Hence, $\boldsymbol Q_{d,\eta}\not\equiv0$ and
\(
 \operatorname{supp}\boldsymbol Q_{d,\eta}
 \subset[a-3\delta_d,b+3\delta_d]\cap[\ell,r]
 \subset V^Q,
\) 
where $V^Q$ is a fixed relative neighborhood satisfying
$I\subset V^Q\subset\overline{V^Q}\subset V$.

Then \eqref{reg:eq:neutral-interior-profile} implies
$(\mathcal L_d-(\Lambda_I^{\mathrm{reg}}+\eta))\boldsymbol F_d
\leq-\eta\boldsymbol F_d/2$ for small $d$.
Where $\chi_{\xi,d}\ne0$, we have
$\operatorname{dist}(x,I)\geq\delta_d$.
Subtracting $5\eta\boldsymbol G_d^{\xi,-}/4$ from
\eqref{reg:eq:slow-tail}, and using $\delta_d^2/d\to\infty$,
wederive
\begin{equation}\label{reg:eq:realization-plus-residual}
 (\mathcal L_d-(\Lambda_I^{\mathrm{reg}}+\eta))
       \boldsymbol G_d^{\xi,-}
 \geq\frac c2\left(1+
       {\operatorname{dist}(x,I)^2}//{d}\right)
       \boldsymbol G_d^{\xi,-}
 \quad\text{where }\chi_{\xi,d}\ne0.
\end{equation}
Applying \eqref{reg:eq:realization-commutator} to
\eqref{reg:eq:realization-subsolution}, then using
\eqref{reg:eq:exponential-separation} and
\eqref{reg:eq:realization-commutator-bounds}, yields
\[
 [(\mathcal L_d-(\Lambda_I^{\mathrm{reg}}+\eta))
       \boldsymbol q_d]_i
 \leq(-\eta/2+\gamma_d)F_{d,i}\leq0
 \quad\text{on }\operatorname{supp}\chi_{\xi,d}'.
\]
On $\operatorname{supp}\chi_{F,d}'\cap\{\chi_{\xi,d}=1\}$,
the same formulas and \eqref{reg:eq:realization-plus-residual}
give
\[
 [(\mathcal L_d-(\Lambda_I^{\mathrm{reg}}+\eta))
       \boldsymbol q_d]_i
 \leq\left[-\frac c2\left(1+
       {\operatorname{dist}(x,I)^2}//{d}\right)+\gamma_d\right]
       A_{\xi,d}G_{i,d}^{\xi,-}\leq0.
\]
The inequality for $\boldsymbol F_d$ and
\eqref{reg:eq:realization-plus-residual} then give
$(\mathcal L_d-(\Lambda_I^{\mathrm{reg}}+\eta))
\boldsymbol q_d\leq\boldsymbol0$ throughout $V$.
Hence, $\boldsymbol Q_{d,\eta}$ satisfies
Definition~\ref{def:local-effective-value}(ii).
\end{proof}

If $I=[\ell,r]$ is of type $\mathrm{DD}$, the artificial boundary
is empty.  In this case the positive transport profile, together with
the two $O(d)$ Neumann boundary correctors, gives the positive
supersolution. A compactly supported subsolution is obtained from the same
interior profile. 

We conclude this section by proving Corollary \ref{cor:quadratic-potentials}.

\begin{proof}[Proof of Corollary {\rm\ref{cor:quadratic-potentials}}]
Since $b_1(x)b_2(x)=4kh(x-x_1)(x-x_2)$,
\eqref{eq:main-Aubry-set} gives the stated static classes. The signs
of $b_1'=2k$ and $b_2'=2h$ determine their endpoint types and the
frozen matrices. By inserting these data into
\eqref{reg:eq:transport-system}-\eqref{reg:eq:endpoint-conditions}
and applying Theorem~\ref{thm:main-one-dimensional}, we obtain the
listed limits in Corollary \ref{cor:quadratic-potentials}.
\end{proof}

\section{\bf Composite interval classes and common switching points}
\label{sec:composite-intervals}

Let $K=[\alpha,\beta]$ be a composite static class, with common zeros
$\mathscr Z_K=\{z_1<\cdots<z_q\}$, $q\geq1$. Set $z_0=\alpha$,
$z_{q+1}=\beta$, and $G_j=(z_j,z_{j+1})$. At a common zero $z$,
simplicity and $b_1b_2\leq0$ on both sides imply
$b_1'(z)b_2'(z)<0$. An interior zero of only one drift would change
the sign of $b_1b_2$, so no such zero lies in $K$.
Thus each cell admits fixed indices $\{p,n\}=\{1,2\}$ with
$b_p>0>b_n$, and these indices interchange across a common zero.
Throughout this section, vectors and matrices retain the original
component ordering. The symbols $p,n$ specify indices, not a permutation of a
vector. We write
$\mathscr T=-\operatorname{diag}(b_1,b_2)\partial_x+\boldsymbol M$
for the transport operator.

\subsection{The mixed Ornstein-Uhlenbeck value}

For $z\in\mathscr Z_K$, we define
\begin{equation}\label{comp:eq:node-value}
 \rho_z(\vartheta)=\sigma(\boldsymbol M(z)+\vartheta\boldsymbol B_z),
 \qquad
 \omega_z=\max_{0\leq\vartheta\leq1}\rho_z(\vartheta),
\end{equation}
where
$\boldsymbol B_z=\operatorname{diag}(b_1'(z),b_2'(z))$. 
Let $\boldsymbol v_z(\vartheta)\gg0$ be the corresponding Perron
vector, normalized by $v_{z,1}+v_{z,2}=1$.

\begin{lemma}\label{comp:lem:mixed-OU}
Let $\omega_{z,R}$ be the principal Dirichlet eigenvalue of the frozen operator 
$\mathscr A_z=-\partial_{yy}-\boldsymbol B_zy\partial_y+\boldsymbol M(z)$  on $(-R,R)$. 
Then $\omega_{z,R}\searrow\omega_z$ as $R\to\infty$.
For every $\vartheta\in(0,1)$ and $\nu<\rho_z(\vartheta)$, there
is an even $\boldsymbol\Psi\in C^2(\mathbb R;(0,\infty)^2)$ such that
\begin{equation}\label{comp:eq:OU-power-super}
 \mathscr A_z\boldsymbol\Psi\geq\nu\boldsymbol\Psi,\qquad
 \partial_y^j\!\left[
 \Psi_i(y)-|y|^{-\vartheta}v_{z,i}(\vartheta)\right]
 =o(|y|^{-\vartheta-j}),\quad j=0,1,2,
\end{equation}
as $|y|\to\infty$, for $i=1,2$.
\end{lemma}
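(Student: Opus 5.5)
We plan to prove the supersolution statement \eqref{comp:eq:OU-power-super} first, by an explicit power-law ansatz. Then we prove the monotone convergence $\omega_{z,R}\searrow\omega_z$ by a lower-bound supersolution and an upper-bound asymptotic argument.

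\emph{Basic computation and the construction of $\boldsymbol\Psi$.} Write $\boldsymbol B_z=\operatorname{diag}(\beta_1,\beta_2)$ with $\beta_1\beta_2<0$. Fix $\vartheta\in(0,1)$, and let $\rho=\rho_z(\vartheta)$ and $\boldsymbol v=\boldsymbol v_z(\vartheta)$, so that $(M_{ii}(z)v_i+M_{ij}(z)v_j)/v_i=\rho-\vartheta\beta_i$. For $f(y)=|y|^{-\vartheta}$ and $|y|\geq1$, one has $-\beta_iyf'=\vartheta\beta_if$ and $-f''=-\vartheta(\vartheta+1)|y|^{-2}f$. Hence
\[
 \mathscr A_z(f\boldsymbol v)=\rho f\boldsymbol v+O(|y|^{-2})f\boldsymbol v .
\]
Thus $f\boldsymbol v$ has a strict margin $\rho-\nu>0$ over the level $\nu$ for $|y|\geq R_0$. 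It fails only near $y=0$, where it is singular. We therefore take a smooth even positive $\boldsymbol g$ with $\boldsymbol g=|y|^{-\vartheta}\boldsymbol v$ for $|y|\geq R_0$; this gives the $o(|y|^{-\vartheta-j})$ asymptotics at once. On $|y|\leq R_0$ we only know $(\mathscr A_z-\nu)\boldsymbol g\geq-K\boldsymbol g$.

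To absorb this defect we use the first claim (proved below): $\omega_{z,R}\geq\omega_z\geq\rho>\nu$. Let $\boldsymbol\phi_R\gg0$ be the Dirichlet eigenfunction on $(-R,R)$, which is even by the symmetry $y\mapsto-y$. Choose $R_0<R_1<R$, and set $\boldsymbol\Psi=\boldsymbol g+C\chi\boldsymbol\phi_R$, where $\chi$ is an even cutoff equal to $1$ on $[-R_1,R_1]$ and supported in $(-R,R)$. On $|y|\leq R_0$, the term $(\omega_{z,R}-\nu)C\boldsymbol\phi_R$ dominates $K\boldsymbol g$ once $C$ is large. On $R_0\leq|y|\leq R_1$ both pieces are supersolutions. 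The only danger is the commutator $C[\mathscr A_z,\chi]\boldsymbol\phi_R$ on the transition zone, which must be absorbed by the margin $(\rho-\nu)\boldsymbol g\asymp|y|^{-\vartheta}$.

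\emph{Main obstacle.} This commutator estimate is the hardest step. Our first attempt is to take $R\gg R_1$ and place the cutoff near $|y|\approx R_1$. By the Gaussian-type confinement in the component with $\beta_i>0$ and the uniform positivity of $\boldsymbol\phi_R$ on compacts, $\boldsymbol\phi_R$ and its derivatives there should be comparable to fixed $R$-independent quantities. If this fails, the fallback is to replace $C\chi\boldsymbol\phi_R$ by a componentwise minimum of two supersolutions followed by mollification. The minimum is legitimate for cooperative systems: at a contact point $\Psi_i=u_i$, the inequality $\Psi_j\leq u_j$ together with $M_{ij}<0$ gives $M_{ij}\Psi_j\geq M_{ij}u_j$.

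\emph{The limit $\omega_{z,R}\searrow\omega_z$.} Monotonicity in $R$ is Dirichlet domain monotonicity for cooperative systems.

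For the lower bound $\omega_{z,R}\geq\omega_z-\eta$, let $\vartheta_*$ maximize $\rho_z$ on $[0,1]$. We build a positive supersolution of level $\omega_z-\eta$ on all of $\mathbb R$ of the form
\[
 \boldsymbol\Psi=(1+\varepsilon y^2)^{-\vartheta_*/2}\bigl(\boldsymbol v+s(y)\boldsymbol c\bigr),\qquad s=(1+\varepsilon y^2)^{-1}.
\]
The residual of the pure power term is $-\vartheta_*s\boldsymbol B_z\boldsymbol v$ plus $O(\varepsilon)$. We cancel the leading part by solving $(\boldsymbol M(z)-\rho)\boldsymbol c=\vartheta_*\boldsymbol B_z\boldsymbol v$. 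By Hellmann--Feynman, $\rho_z'(\vartheta_*)=\boldsymbol w^{\mathsf T}\boldsymbol B_z\boldsymbol v/\boldsymbol w^{\mathsf T}\boldsymbol v$, where $\boldsymbol w$ is the left Perron vector. If $\vartheta_*$ is interior, this derivative vanishes, which is exactly the Fredholm solvability condition. If $\vartheta_*=1$, then $\rho_z'(1)\geq0$ and the residual has a favorable sign. If $\vartheta_*=0$, the constant $\boldsymbol v$ works exactly. The remaining $O(\varepsilon)$ terms are then absorbed by $\eta$. Comparison with $\boldsymbol\phi_R$ gives $\omega_{z,R}\geq\omega_z-\eta$.

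For the upper bound, suppose $\omega_\infty\coloneqq\lim\omega_{z,R}>\omega_z$. The normalized $\boldsymbol\phi_R$ converge locally, by Harnack, to a positive whole-line solution $\boldsymbol U$ of $\mathscr A_z\boldsymbol U=\omega_\infty\boldsymbol U$. A regular-singular (Frobenius-type) analysis at $y=\infty$ shows that positive solutions decay like $|y|^{-\theta}\boldsymbol v(\theta)$ with $\rho_z(\theta)=\omega_\infty$ and $\theta\in[0,1]$; this range is forced by positivity and by the dominance of the Gaussian mode in the confined component. The concave function $\rho_z$ does not reach $\omega_\infty$ on $[0,1]$, so this is a contradiction.
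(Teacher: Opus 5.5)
Your argument for $\omega_{z,R}\ge\omega_z$ breaks down, and your Step~2 depends on it. Your ansatz is slowly varying: $\boldsymbol\Psi(y)=\boldsymbol F(\sqrt\varepsilon\,y)$ with $\boldsymbol F$ smooth and even. Hence $\boldsymbol\Psi'(0)=0$, $\boldsymbol\Psi''(0)=O(\varepsilon)$, and $(\mathscr A_z\boldsymbol\Psi)(0)=\boldsymbol M(z)\boldsymbol\Psi(0)+O(\varepsilon)$. A positive supersolution at level $\omega_z-\eta$ would then give, by Collatz--Wielandt (pair with the left Perron vector of $\boldsymbol M(z)$), $\omega_z-\eta\le\sigma(\boldsymbol M(z))+O(\varepsilon)=\rho_z(0)+O(\varepsilon)$. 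Let $\vartheta_*$ be the maximizer of $\rho_z$ on $[0,1]$. Whenever $\vartheta_*>0$, strict concavity gives $\omega_z>\rho_z(0)$, so this inequality fails for small $\eta,\varepsilon$. Concretely, $\vartheta_*\boldsymbol B_z\boldsymbol v=(\rho-\boldsymbol M(z))\boldsymbol v$, so your equation $(\boldsymbol M(z)-\rho)\boldsymbol c=\vartheta_*\boldsymbol B_z\boldsymbol v$ says $\boldsymbol v+\boldsymbol c\in\ker(\boldsymbol M(z)-\rho)$. Thus $\boldsymbol\Psi(0)=\boldsymbol v+\boldsymbol c$ is zero or sign-changing. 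Two further slips: you dropped the drift acting on $s(y)$, which contributes $(\vartheta_*+2)s(1-s)(1+\varepsilon y^2)^{-\vartheta_*/2}\boldsymbol B_z\boldsymbol c$, of order one where $\varepsilon y^2\asymp1$. Also, the Hellmann--Feynman solvability condition concerns $\boldsymbol M(z)+\vartheta_*\boldsymbol B_z-\rho$, not $\boldsymbol M(z)-\rho$. The value $\omega_z$ comes from $O(1)$-scale structure at the origin (Gaussian confinement in the component with $b_i'(z)>0$), which no slowly varying profile can capture. The paper constructs no supersolution here. It tests the $i$-th Dirichlet eigen-equation against $(U_i^R)^{m-1}$ with $m=1/\alpha>1$. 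The diffusion term is then nonnegative, the drift term is exactly $\alpha B_{ii,z}X_i^m$, and H\"older's inequality together with $M_{ij}<0$ controls the coupling. This gives $(\boldsymbol M(z)+\alpha\boldsymbol B_z)\boldsymbol X\le\omega_{z,R}\boldsymbol X$ for $X_i=\|U_i^R\|_{L^m}$, hence $\rho_z(\alpha)\le\omega_{z,R}$.

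Your other two steps are also incomplete. In Step~2, the commutator $C[\mathscr A_z,\chi]\boldsymbol\phi_R$ contains $-C\,b_i'(z)\,y\chi'\phi_{R,i}$, which has the wrong sign in the component with $b_i'(z)<0$. For a cutoff adapted to $[R_1,2R_1]$ one has $|y\chi'|\asymp1$, and a logarithmic cutoff gains only a logarithm. Absorbing this term into $(\rho-\nu)|y|^{-\vartheta}v_i$ therefore requires $C\phi_{R,i}\lesssim|y|^{-\vartheta}$ on the transition zone. You do not prove this decay, and it should not be expected in general. Heuristically, the positive solution at level $\omega_z$ behaves like $|y|^{-\vartheta_*}$ up to logarithms, which is slower than $|y|^{-\vartheta}$ once $\vartheta>\vartheta_*$; for $\vartheta_*=0$ the constant Perron vector of $\boldsymbol M(z)$ is an exact solution. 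Your unspecified minimum fallback does not supply this decay either. The paper avoids cutoffs. It solves $(\mathscr A_z-\nu)\boldsymbol G=\boldsymbol h$ on $\mathbb R$, with compactly supported $\boldsymbol h\ge-(\mathscr A_z-\nu)\widetilde{\boldsymbol P}$, as an increasing limit of Dirichlet problems bounded by a multiple of an entire solution at level $\omega_z$. It then proves $G_i=O(|y|^{-\vartheta_+})$ by comparison with the strict exterior supersolution $|y|^{-\vartheta_+}\boldsymbol v_z(\vartheta_+)$, where $\vartheta<\vartheta_+<1$ and $\rho_z(\vartheta_+)>\nu$. This faster power is exactly your missing ingredient. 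Finally, in your upper bound the restriction $\theta\in[0,1]$ cannot follow from positivity at infinity. Suppose the unconstrained maximizer of $\rho_z$ lies outside $[0,1]$ and $\omega_z<\omega_\infty\le\max_{\mathbb R}\rho_z$. Then both roots of $\rho_z=\omega_\infty$ lie outside $[0,1]$ and yield positive power solutions near infinity. Excluding them requires a global argument; for instance, when $\theta>1$, integrating the equation over $\mathbb R$ forces $\omega_\infty=\rho_z(1)$. The paper instead builds a compactly supported subsolution $e^{-H(\log|y|)}\boldsymbol r_{s(\log|y|)}$ with a slowly varying exponent, passing to the formal adjoint when that maximizer exceeds $1$.
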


\begin{proof}
\noindent\emph{Step 1.}
We identify the exhaustion value. Let $\boldsymbol U^R$ be a positive
Dirichlet eigenfunction associated with $\omega_{z,R}$. For $m>1$, set $X_i=\|U_i^R\|_{L^m(-R,R)}$.
Multiplying the $i$-th
equation by $(U_i^R)^{m-1}$ and integrating by parts, we obtain
\[
 \begin{aligned}
 \omega_{z,R}X_i^m
 ={}&(m-1)\int_{-R}^R
       (U_i^R)^{m-2}|(U_i^R)'|^2\,\mathrm{d}y
   +\alpha B_{ii,z}X_i^m+M_{ii,z}X_i^m \\
 &+\sum_{j\ne i}M_{ij,z}
   \int_{-R}^R U_j^R(U_i^R)^{m-1}\,\mathrm{d}y.
 \end{aligned}
\]
The Dirichlet condition eliminates the boundary terms, while
H\"older's inequality gives
\[
 \int_{-R}^R U_j^R(U_i^R)^{m-1}\,\mathrm{d}y
 \leq X_jX_i^{m-1}.
\]
Since $M_{ij,z}<0$ for $i\ne j$, it follows that
\[
 (\boldsymbol M_z+\alpha\boldsymbol B_z)\boldsymbol X
 \leq\omega_{z,R}\boldsymbol X,
 \qquad
 \boldsymbol X=(X_p,X_n)^{\mathsf T}\gg\boldsymbol 0.
\]
The matrix Collatz-Wielandt formula therefore yields
$\rho_z(\alpha)\leq\omega_{z,R}$.  The cases $\alpha=0,1$
follow by continuity.  Hence $\omega_z\leq\omega_{z,R}$.

For the reverse bound, write $a_i(\vartheta)=M_{ii}(z)+\vartheta B_{ii,z}$.
The formula
\[
 \rho_z(\vartheta)=\frac{a_1(\vartheta)+a_2(\vartheta)}2
 -\sqrt{\frac{(a_1(\vartheta)-a_2(\vartheta))^2}{4}
             +M_{12}(z)M_{21}(z)}
\]
shows that $\rho_z$ is strictly concave and tends to $-\infty$
at both ends of the real line. Let $\overline\vartheta$ be its
unique maximizer. If $\overline\vartheta\leq1$, use
$\widehat{\mathscr A}=\mathscr A_z$. Otherwise use its formal adjoint,
so that
$\widehat{\boldsymbol B}=-\boldsymbol B_z$ and
$\widehat{\boldsymbol M}=\boldsymbol M(z)^{\mathsf T}+\boldsymbol B_z$
in
$\widehat{\mathscr A}=-\partial_{yy}
-\widehat{\boldsymbol B}y\partial_y+\widehat{\boldsymbol M}$.
In the two cases,
$\widehat\rho(s)=\sigma(\widehat{\boldsymbol M}+s\widehat{\boldsymbol B})$
equals $\rho_z(s)$ and $\rho_z(1-s)$, respectively. Hence, we derive that
\begin{equation}\label{comp:eq:adjoint-rho-sup}
 \sup_{s\geq0}\widehat\rho(s)=\omega_z.
\end{equation}
Choose indices $p,n$ so that
$\widehat B_{pp}=\beta_p>0$, $\widehat B_{nn}=-\beta_n<0$.
Normalize the Perron vector $\boldsymbol r_s$ by $r_{s,n}=1$.
By the identities
\[
 r_{s,p}=\frac{-\widehat M_{pn}}
 {\widehat M_{pp}+\beta_ps-\widehat\rho(s)},\qquad
 \widehat\rho(s)=\widehat M_{nn}-\beta_ns+\widehat M_{np}r_{s,p}
\]
we can deduce that, as $s\to\infty$,
\begin{equation}\label{comp:eq:adjoint-Perron-asymptotics}
 \begin{aligned}
 \widehat\rho(s)&=\widehat M_{nn}-\beta_ns+O(s^{-1}),&
 r_{s,p}&=\frac{-\widehat M_{pn}}{(\beta_p+\beta_n)s}
                  (1+O(s^{-1})),\\
 \partial_s\log r_{s,p}&=-s^{-1}+O(s^{-2}),&
 \frac{\partial_{ss}r_{s,p}}{r_{s,p}}&=2s^{-2}+O(s^{-3}).
 \end{aligned}
\end{equation}
In particular, $\partial_s\log r_{s,p}$ is bounded for $s\geq0$.

Fix $\varepsilon>0$. Choose
$0<\gamma<\min\{1/2,\beta_n/(2\beta_p)\}$ small,
and then $\Theta>1$  large. Take a nondecreasing
$s\in C^2((-\infty,T))$, zero for $t\leq T_0$, with
\[
 \begin{aligned}
 0\leq s'\leq\gamma,\quad |s''|\leq C\gamma^2
       &&(T_0\leq t\leq T_1),\\
 s(t)=\Theta-1+\frac1{\gamma(T-t)},\quad
 T=T_1+\gamma^{-1}
       &&(T_1\leq t<T).
 \end{aligned}
\]
The first part is chosen to reach $s(T_1)=\Theta$ and to match
the first two derivatives of the last part. It may be translated
in $t$ after its shape has been fixed.
Set $H(t)=\int_{T_0}^t s(u)\,\mathrm{d}u$ and, for
$0<|y|<\exp(T)$, define
$\boldsymbol Z(y)=\exp(-H(\log|y|))\boldsymbol r_{s(\log|y|)}$.
It is constant near $y=0$. Direct calculations yield that 
\begin{equation}\label{comp:eq:OU-compact-ratio}
 \frac{(\widehat{\mathscr A}\boldsymbol Z)_i}{Z_i}
 =\widehat\rho(s)-\widehat B_{ii}s'\partial_s\log r_{s,i}
       -\mathrm{e}^{-2t}\mathcal R_i,
\end{equation}
where the function $\mathcal R_i$ is defined by
\[
 \mathcal R_i=s^2+s-s'
 +[s''-(2s+1)s']\partial_s\log r_{s,i}
 +(s')^2\frac{\partial_{ss}r_{s,i}}{r_{s,i}}.
\]
On $[T_0,T_1]$, the second term in
\eqref{comp:eq:OU-compact-ratio} is $O(\gamma)$, so that  $\mathcal R_i$
is bounded once $\Theta$ and the transition have been fixed.
A sufficiently large translation of $T_0$, together with
\eqref{comp:eq:adjoint-rho-sup}, bounds the whole expression by
$\omega_z+\varepsilon$.
On $[T_1,T)$, set $w=s-\Theta+1$, so $s'=\gamma w^2$,
$s''=2\gamma^2w^3$, and $1\leq w\leq s$.
For the $n$-component,
$\mathcal R_n=s^2+s-\gamma w^2>0$.
For the $p$-component, by \eqref{comp:eq:adjoint-Perron-asymptotics}
we have
\[
 \mathcal R_p
 =s^2\left[1+\gamma(w/s)^2
       -2\gamma^2(w/s)^3(1-w/s)\right]+O(s)
 \geq s^2-Cs>0
\]
 uniformly in $w/s\in(0,1]$, after $\Theta$ is enlarged. The remaining terms satisfy
\[
 \widehat\rho(s)-\beta_ps'\partial_s\log r_{s,p}
 \leq\widehat M_{nn}-(\beta_n-\gamma\beta_p)s+C
 \leq\omega_z.
\]
The corresponding correction in the $n$-component is zero.
Thus $\widehat{\mathscr A}\boldsymbol Z
\leq(\omega_z+\varepsilon)\boldsymbol Z$.
As $t\nearrow T$, $\exp(-H(t))$ vanishes to order $1/\gamma>2$,
while $r_{s(t),p}=O(T-t)$. The explicit last part of $s(t)$
gives the same vanishing for the first two $y$-derivatives.
Consequently, zero extension makes
$\boldsymbol Z\in C_c^2(\mathbb R;\mathbb R^2)$.
Pairing its inequality with the positive adjoint Dirichlet
eigenfunction on $(-R,R)$, $R>\exp(T)$, gives
$\omega_{z,R}\leq\omega_z+\varepsilon$.
This proves $\omega_{z,R}\searrow\omega_z$ as $R\to \infty$.

\medskip
\noindent\emph{Step 2.}
We construct the algebraic supersolution.
For $|y|\geq1$, let
$\boldsymbol P(y)=|y|^{-\vartheta}\boldsymbol v_z(\vartheta)$.
Then
\begin{equation}\label{comp:eq:OU-exterior-power}
 (\mathscr A_z-\nu)\boldsymbol P
 =\left(\rho_z(\vartheta)-\nu
        -\frac{\vartheta(\vartheta+1)}{y^2}\right)\boldsymbol P.
\end{equation}
Extend $\boldsymbol P$ to a positive even $C^2$ function
$\widetilde{\boldsymbol P}$, without changing it for large $|y|$.
By \eqref{comp:eq:OU-exterior-power}, one can choose an even
$\boldsymbol h\in C_c^\infty(\mathbb R;[0,\infty)^2)$ with
$\boldsymbol h\geq-(\mathscr A_z-\nu)\widetilde{\boldsymbol P}$.
Since $\nu<\omega_z\leq\omega_{z,R}$, the Dirichlet problems
\[
 (\mathscr A_z-\nu)\boldsymbol G_R=\boldsymbol h
       \quad\hbox{in }(-R,R),\qquad
 \boldsymbol G_R(\pm R)=0
\]
have nonnegative solutions, increasing with $R$.
Normalized Dirichlet eigenfunctions, Harnack's inequality, and local
elliptic estimates also yield a positive entire solution
$\mathscr A_z\boldsymbol W=\omega_z\boldsymbol W$.
Because $\boldsymbol h$ is compactly supported, a fixed $C$ satisfies
$(\mathscr A_z-\nu)C\boldsymbol W\geq\boldsymbol h$.
Comparison gives $\boldsymbol G_R\leq C\boldsymbol W$.
Thus $\boldsymbol G_R\to\boldsymbol G$ locally in $C^2$, where
$\boldsymbol G$ is even, nonnegative, and
$(\mathscr A_z-\nu)\boldsymbol G=\boldsymbol h$.

Choose $\vartheta_+\in(\vartheta,1)$ with
$\rho_z(\vartheta_+)>\nu$.
By \eqref{comp:eq:OU-exterior-power}, we observe that
$|y|^{-\vartheta_+}\boldsymbol v_z(\vartheta_+)$ is a strict
supersolution outside a fixed interval. A fixed multiple dominates
the traces of all $\boldsymbol G_R$ at the inner boundaries and
their zero traces at $\pm R$. By comparison, we have
$G_i(y)=O(|y|^{-\vartheta_+})$.

To control derivatives, on a sufficiently large positive half-line write
$G_i''+B_{ii,z}yG_i'=F_i$, where
$F_i=\sum_jM_{ij}(z)G_j-\nu G_i=O(y^{-\vartheta_+})$.
Multiplication by $\exp(B_{ii,z}y^2/2)$, followed by integration
from a fixed point if $B_{ii,z}>0$, or from infinity if
$B_{ii,z}<0$, gives $G_i'=O(y^{-\vartheta_+-1})$.
In the latter case, the polynomial bound on $G_i$ excludes the
growing homogeneous solution. Hence $F_i'=O(y^{-\vartheta_+-1})$, and integration by parts in the same formulas gives
\[
 G_i'(y)=\frac{F_i(y)}{B_{ii,z}y}
           +O(y^{-\vartheta_+-3}),\qquad
 G_i''(y)=O(y^{-\vartheta_+-2}).
\]
The evenness implies the estimates on the negative half-line.
Therefore $\boldsymbol\Psi=\widetilde{\boldsymbol P}+\boldsymbol G$
satisfies \eqref{comp:eq:OU-power-super}. The proof is complete.
\end{proof}

\subsection{Transport cells}

Let $G=(a,b)$ be a cell on which $b_p>0>b_n$, and choose an
admissible finite section $D=G_{\boldsymbol\delta}$ as in
Definition~\ref{def:main-class-values}. We prescribe zero incoming
trace at each artificial endpoint. If an endpoint of $G$ is left
unchanged, we retain \eqref{reg:eq:endpoint-conditions}, together with
the corresponding admissibility condition. Write
$\lambda_{\partial,G}$ for the minimum of the ceilings associated
with the unchanged endpoints, and set $\lambda_{\partial,G}=+\infty$
when neither endpoint is retained. Since every cell in a composite
class has a common zero at one or both ends, every finite section has
at least one artificial endpoint. In particular, its two endpoints
cannot both be repelling.

Applying Lemma~\ref{reg:lem:characteristic-operators} on $D$, with
zero incoming data at the artificial endpoints, gives a compact
positive feedback operator $\mathcal K_{D,\lambda}$ for
$\lambda<\lambda_{\partial,G}$. Its spectral radius is continuous
and strictly increasing in $\lambda$, and tends to zero as
$\lambda\to-\infty$. If one endpoint is unchanged, the rank-one
lower bound near its ceiling remains valid because the incoming datum
at the opposite artificial endpoint is zero. When both endpoints are
artificial, we instead restrict the two characteristic integrations
to separated interior subintervals. For a fixed $0\ne f\geq0$, this
gives $\mathcal K_{D,\lambda}f\geq C\exp(c\lambda)f$ once
$\lambda$ is large. The spectral radius therefore crosses $1$
exactly once. We denote the crossing by
$\Lambda_D=\Lambda_{G,\boldsymbol\delta}<\lambda_{\partial,G}$
and the corresponding positive transport mode by $\boldsymbol U_D$.
For $\lambda<\Lambda_D$, the same characteristic formulas, together
with the positive Neumann series for
$(I-\mathcal K_{D,\lambda})^{-1}$, solve
$(\mathscr T-\lambda)\boldsymbol V=\boldsymbol h$ for every
nonnegative compactly supported $\boldsymbol h$, subject to the
homogeneous endpoint data.

We shall use the following elementary contact observation several
times. Suppose that $\boldsymbol Z\geq0$, with derivatives at the
endpoints understood in the one-sided sense. At an interior zero of
$Z_i$, one has $Z_i'=0$. If $Z_p(a)=0$ at a left endpoint, then
$Z_p'(a)\geq0$ and $b_p(a)\geq0$. Likewise, $Z_n(b)=0$ at a
right endpoint gives $Z_n'(b)\leq0$ and $b_n(b)\leq0$. Since
$M_{ij}<0$ for $i\ne j$, in each case the vanishing component
satisfies
\begin{equation}\label{comp:eq:cell-contact-test}
 [ (\mathscr T-\lambda)\boldsymbol Z ]_i\leq0.
\end{equation}
The other possible endpoint contacts will be excluded by the endpoint
conditions. More precisely, at an unchanged left attracting or domain
endpoint, the strict boundary residual
$(M_{nn}-\lambda)Z_n+M_{np}Z_p>0$ is incompatible with $Z_n=0$,
because then its left-hand side is $M_{np}Z_p\leq0$. The reflected
argument applies at the right endpoint. At a repelling endpoint, the
power asymptotics will keep the contact point away from the endpoint.

\begin{lemma}\label{comp:lem:cell-exhaustion}
As the finite section increases, its value decreases and converges to
a finite limit $\Lambda_G^{\mathrm{tr}}$ that does not depend on the
chosen exhaustion. If the positive modes are normalized at a fixed
interior point, a subsequence converges in
$C^1_{\mathrm{loc}}(G)$ to a function $\boldsymbol W_G\gg0$
satisfying
\begin{equation}\label{comp:eq:limiting-cell-mode}
 \mathscr T\boldsymbol W_G=\Lambda_G^{\mathrm{tr}}\boldsymbol W_G
 \quad\hbox{in }G,
\end{equation}
with the prescribed admissibility at every unchanged endpoint.
\end{lemma}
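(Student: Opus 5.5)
The plan has four parts: monotonicity, a lower bound, compactness, and independence of the exhaustion.

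\emph{Monotonicity.} Let $D\subset D'$ be two admissible finite sections of $G$ with the same unchanged endpoints. I would compare the positive feedback operators $\mathcal K_{D,\lambda}$ and $\mathcal K_{D',\lambda}$.

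Take the positive mode $\boldsymbol U_D$ and extend it by zero to $D'$. Because $D$ carries zero incoming data at its artificial endpoints, the characteristic formulas show that, componentwise on $D$,
\[
\mathcal K_{D',\lambda}\bigl(U_{D}\bigr)\geq \mathcal K_{D,\lambda}\bigl(U_{D}\bigr).
\]
The reason is that the kernels on $D'$ integrate over a larger range with the same positive integrand. At $\lambda=\Lambda_D$ this gives $\mathcal K_{D',\Lambda_D}U\geq U$. The Collatz--Wielandt inequality then yields $R_{D'}(\Lambda_D)\geq1$. Since $R_{D'}$ is strictly increasing and crosses $1$ exactly once, $\Lambda_{D'}\leq\Lambda_D$.

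An equivalent route works directly with $(\mathscr T-\lambda)$. Here $\boldsymbol U_D$, extended by zero, is a nonnegative subsolution on $D'$. A contact argument with $t\boldsymbol U_{D'}-\boldsymbol U_D$ uses the contact test \eqref{comp:eq:cell-contact-test} and the endpoint discussion preceding the lemma. Hence $\Lambda_G^{\mathrm{tr}}=\lim\Lambda_{G,\boldsymbol\delta}$ exists in $[-\infty,\infty)$.

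\emph{Finite lower bound.} This is the main obstacle, since the cell constant must be kept away from $-\infty$ uniformly as the sections approach the common zeros. I would produce a single positive strict supersolution $\boldsymbol S$ on all of $G$ with $(\mathscr T-\nu)\boldsymbol S\geq0$ for some fixed $\nu$, compatible with the endpoint conditions. Given such $\boldsymbol S$, the contact argument applied to $t\boldsymbol S-\boldsymbol U_D$ at the first touching $t$ forces $\Lambda_D\geq\nu$ for every section. Artificial endpoints are harmless here, since $\boldsymbol U_D$ vanishes on the incoming component.

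Near a common zero $z$, I would take $\boldsymbol S=\boldsymbol v\,|x-z|^{-\vartheta}$ with a Perron vector $\boldsymbol v$ and $\vartheta\in(0,1)$. This is the transport analogue of the power profile in Lemma~\ref{comp:lem:mixed-OU}. A direct computation gives
\[
(\mathscr T\boldsymbol S)_i/S_i = \vartheta b_i(x)/(x-z)+(\boldsymbol M\boldsymbol v)_i/v_i \to \rho_z(\vartheta),
\]
which is bounded below. Away from the common zeros, I would connect this to $\boldsymbol e\exp(-Cx)$ with $C$ large, or to the regular endpoint profiles from Section~\ref{sec:regular-intervals} at any unchanged endpoint. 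This yields a uniform $\nu$ and hence $\Lambda_G^{\mathrm{tr}}>-\infty$.

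\emph{Compactness and the limiting mode.} I would normalize each mode by $\sum_iU_{D,i}(x_0)=1$ at a fixed interior point $x_0$. On a compact $J\Subset G$ the drifts $b_p$ and $b_n$ are bounded away from zero. The transport system, written as a first-order linear ODE with bounded coefficients and with $\Lambda_D$ bounded, then gives uniform $C^1(J)$ bounds by Gronwall, starting from the normalization at $x_0$. Arzel\`a--Ascoli and a diagonal argument give $C^1_{\mathrm{loc}}$ convergence to some $\boldsymbol W_G\geq0$ satisfying \eqref{comp:eq:limiting-cell-mode}.

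Strict positivity of $\boldsymbol W_G$ follows from the cooperative structure. If one component vanished at a point, the Volterra representations \eqref{reg:eq:Hn}--\eqref{reg:eq:Hp} would force both components to vanish identically, contradicting the normalization.

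At an unchanged endpoint, the sections coincide near that endpoint. I would therefore use the uniform endpoint representations: formula \eqref{reg:eq:left-attracting-representation}, the domain terms, or Lemma~\ref{reg:lem:repelling-endpoint-asymptotics}. These pass the endpoint relation \eqref{reg:eq:endpoint-conditions} and integrability to the limit. The strict inequality $\Lambda_D<\lambda_{\partial,G}$ is preserved as $\leq$ in the limit, and strictness is recovered from positivity of the algebraic trace.

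\emph{Independence of the exhaustion.} Any two exhaustions are mutually cofinal: every section of one is contained in some section of the other. Monotonicity then forces the two limits to coincide.
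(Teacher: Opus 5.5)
Your argument is correct. Compactness, the limiting mode and independence of the exhaustion go essentially as in the paper, but your monotonicity step takes a different route.

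\emph{Monotonicity.} The paper assumes $\Lambda_{D_2}>\Lambda_{D_1}$, forms $\boldsymbol U^2-t\boldsymbol U^1$ with $t$ the minimal componentwise ratio on $\overline{D_1}$, and excludes each contact location by \eqref{comp:eq:cell-contact-test}. This forces it to track the ratio at every endpoint type: exponent comparison at unchanged repelling ends via Lemma~\ref{reg:lem:repelling-endpoint-asymptotics}, incoming versus outgoing ratios at shared artificial ends, divergence at artificial ends of $D_1$ lying inside $D_2$, and boundary residuals at attracting or domain ends. That is the work hidden in your ``equivalent route.'' Your kernel domination needs only that $D'$ carries zero incoming data, so that $\mathcal K_{D',\lambda}$ integrates the same nonnegative integrand over a longer range. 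Collatz--Wielandt and the single crossing of $R_{D'}$ then conclude, which is cleaner.

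One technicality you skipped: the zero extension $\tilde f$ of the continuous component jumps wherever that component has a positive trace at an artificial end of $D$ (e.g.\ $u_p$ at a left artificial end), so $\tilde f\notin C(\overline{D'})$. Work instead with $g=\mathcal K_{D',\Lambda_D}\tilde f$. It is continuous, satisfies $g\geq\tilde f$, and hence $\mathcal K_{D',\Lambda_D}g\geq g\geq0$ with $g\not\equiv0$.

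\emph{Lower bound.} You overestimate the difficulty here, and no power profile near the common zeros is needed. For a constant $\boldsymbol e\gg0$ one has $\mathscr T\boldsymbol e=\boldsymbol M\boldsymbol e$, so the paper takes $(\boldsymbol M-\lambda_0 I)\boldsymbol e\gg0$ on $\overline G$. This also makes the boundary residuals at attracting and domain ends positive. It then argues by contradiction from $\Lambda_D<\lambda_0$, with $\lambda_0$ chosen so negative that $\boldsymbol U_D$ vanishes at unchanged repelling ends. You need that same contradiction hypothesis to control the ratio at those ends, and should state it.

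\emph{Strictness and positivity.} Strictness below the ceiling comes directly from monotonicity: $\Lambda_G^{\mathrm{tr}}\leq\Lambda_{D_0}<\lambda_{\partial,G}$. This uniform window \eqref{comp:eq:cell-uniform-spectral-window} must be in hand before you apply dominated convergence in the endpoint representations; it cannot be recovered afterwards from the limiting trace. Likewise, positivity of $\boldsymbol W_G$ follows from local ODE uniqueness after a contact, since $W_i=W_i'=0$ forces $W_j=0$ through the $i$-th equation. The global Volterra formulas are anchored at the common-zero ends of $G$ and need not hold for the limit.
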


\begin{proof}
Take two finite sections $D_1\subset D_2$, with eigenpairs
$(\lambda_k,\boldsymbol U^k)$, and suppose for contradiction that
$\lambda_2>\lambda_1$. At an unchanged repelling endpoint, the exponent
$q(\lambda)=(M_{ii}-\lambda)/b_i'$, where $b_i'>0$, decreases
strictly with $\lambda$. Lemma~\ref{reg:lem:repelling-endpoint-asymptotics}
then yields $U_i^2/U_i^1\to+\infty$ in both components. At a shared
artificial left endpoint $c$,
$(U_n^k)'(c)=M_{np}(c)U_p^k(c)/b_n(c)>0$, whence
\[
 \lim_{x\searrow c}\frac{U_n^2(x)}{U_n^1(x)}
   =\frac{U_p^2(c)}{U_p^1(c)}.
\]
Thus, if the minimum ratio is attained in the incoming component at a
shared artificial endpoint, it is also attained in the outgoing
component there.
The right-end relation follows by reflection. If an artificial
endpoint of $D_1$ lies in the interior of $D_2$, the ratio of the
incoming components tends to $+\infty$. At every remaining endpoint
the ratios have finite positive limits. Thus the componentwise ratios
on $\overline D_1$, extended by these endpoint limits, attain a
positive minimum $t$. The function
$\boldsymbol Z=\boldsymbol U^2-t\boldsymbol U^1$ is nonnegative,
has a zero in at least one component, and satisfies
\[
 (\mathscr T-\lambda_2)\boldsymbol Z
   =t(\lambda_2-\lambda_1)\boldsymbol U^1\gg0.
\]
At an unchanged left attracting or domain endpoint, its boundary
residual is
\[
 (M_{nn}-\lambda_2)Z_n+M_{np}Z_p
   =t(\lambda_2-\lambda_1)U_n^1>0,
\]
with the analogous identity on the right. The contact observation
\eqref{comp:eq:cell-contact-test}, the endpoint residuals, and the
repelling asymptotics now exclude every possible location of the
minimum. This contradiction proves $\lambda_2\leq\lambda_1$.

For a uniform lower bound, choose $\boldsymbol e\gg0$ and a
sufficiently negative $\lambda_0$ such that
$(\boldsymbol M(x)-\lambda_0I)\boldsymbol e\gg0$ on $\overline G$.
Decrease $\lambda_0$, if necessary, so that all exponents at the
unchanged repelling endpoints are positive whenever
$\Lambda_D<\lambda_0$. Under this inequality both components of
$\boldsymbol U_D$ vanish at every such endpoint. The ratios
$e_i/U_{D,i}$, assigned the value $+\infty$ at a zero trace,
therefore attain a positive minimum. Subtracting the corresponding
multiple of $\boldsymbol U_D$ from $\boldsymbol e$ produces a
nonnegative function with a strictly positive differential residual.
Its incoming traces at artificial endpoints are positive. At an
attracting or domain endpoint, the incoming boundary residual is the
corresponding row of
$(\boldsymbol M-\Lambda_DI)\boldsymbol e$, which is also positive.
The contact observation gives a contradiction. Consequently
$\Lambda_D\geq\lambda_0$ for every finite section.

Let $L=\inf_D\Lambda_D$. Given $\varepsilon>0$, choose a finite
section $D_0$ such that $\Lambda_{D_0}<L+\varepsilon$. Every
exhaustion eventually contains $D_0$, and monotonicity places all
subsequent values in $[L,L+\varepsilon)$. Hence every exhaustion
converges to the same finite number
$\Lambda_G^{\mathrm{tr}}=L$.

Fix $x_*\in D_0$ and normalize the modes by
$U_{D,p}(x_*)+U_{D,n}(x_*)=1$. For $D\supset D_0$,
\begin{equation}\label{comp:eq:cell-uniform-spectral-window}
 \lambda_0\leq\Lambda_D\leq\Lambda_{D_0}
       <\lambda_{\partial,G}.
\end{equation}
On each compact subinterval of $G$, the matrices
$\operatorname{diag}(b_1^{-1},b_2^{-1})
(\boldsymbol M-\Lambda_DI)$ are uniformly bounded. The transport
system and the normalization at $x_*$ therefore give uniform
$C^1$-bounds. By Arzel\`a-Ascoli theorem, a subsequence converges in
$C^1_{\mathrm{loc}}(G)$ to a nonzero function
$\boldsymbol W_G\geq0$, which satisfies
\eqref{comp:eq:limiting-cell-mode}. If one component vanished at an
interior point, nonnegativity and the corresponding transport equation
would force the other component to vanish there as well. Uniqueness
for the first-order system would then contradict the normalization.
Thus $\boldsymbol W_G\gg0$ in $G$.

It remains to pass to the endpoints of $G$ that are unchanged by the
exhaustion. At a domain endpoint, ordinary continuous dependence for
the transport system extends the convergence to a fixed one-sided
neighborhood. At an attracting endpoint, the same conclusion follows
from \eqref{reg:eq:left-attracting-representation} and its reflected
form. The inequality 
\eqref{comp:eq:cell-uniform-spectral-window} keeps the corresponding
exponents uniformly positive, so dominated convergence applies in
these formulas. At a repelling endpoint, we use instead
\eqref{reg:eq:left-repelling-volterra}-%
\eqref{reg:eq:left-repelling-weighted-p}. The repelling exponents stay
uniformly above $-1$, and we obtain convergence in the usual
norm at domain and attracting endpoints, and in the appropriate
weighted norm at repelling endpoints. The leading coefficient of the
limit cannot vanish. Indeed, the homogeneous local representation
would then make the solution identically zero near that endpoint, and
interior uniqueness would contradict its normalization at $x_*$.
All unchanged endpoint conditions consequently pass to the limit.
\end{proof}

We now restore diffusion on a fixed finite section. Artificial
endpoints are handled by Dirichlet layers, while the modules from
Section~\ref{sec:regular-intervals} remain available at every
unchanged endpoint.

\begin{lemma}\label{comp:lem:finite-section-viscous-subsolution}
Fix $D=G_{\boldsymbol\delta}$ and $\tau>0$.
For all sufficiently small $d$, there is a nonzero, nonnegative,
compactly supported subsolution
$\boldsymbol Q_d\in H^1((\ell,r);\mathbb R^2)$, positive in $D$
and supported in any prescribed relative neighborhood of
$\overline D$, such that
\begin{equation}\label{comp:eq:finite-section-viscous-subsolution}
 \mathcal L_d\boldsymbol Q_d
 \leq(\Lambda_D+\tau)\boldsymbol Q_d
\end{equation}
in the standard sense for compactly supported subsolutions of the
 Neumann problem.
\end{lemma}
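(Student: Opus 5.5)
We mimic Step 3 of the proof of Lemma~\ref{reg:lem:local-realization}, with the exterior tails at artificial endpoints replaced by Dirichlet-type layers. Let $\boldsymbol U_D\gg0$ be the positive transport mode on $D=(\alpha',\beta')$ at $\Lambda_D$, with zero incoming data at artificial endpoints. First we build a positive interior profile $\boldsymbol F_d$ on a slightly shrunken section whose relative residual for $\mathcal L_d-\Lambda_D$ is $o(1)$. At unchanged endpoints we use the modules of Lemmas~\ref{reg:lem:repelling-module} and~\ref{reg:lem:attracting-domain-modules}; these depend only on the local endpoint geometry, the transport system and its admissibility. Away from the endpoints, $\boldsymbol U_D$ is smooth and $-d\boldsymbol U_D''$ is $O(d)$.

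At an artificial left endpoint $c$, the profile satisfies $U_{D,n}(c)=0$, $U_{D,p}(c)>0$, with $b_p(c)>0>b_n(c)$. Neither drift vanishes there, so the endpoint is noncharacteristic. To avoid a boundary layer for $u_p$, which is outgoing to the right and so determined by upstream data, we do not cut the mode at $c$. Instead we use a perturbed section. Choose $D'=(c+\epsilon,\cdot)\Subset D$ with artificial endpoints moved inward. By the monotonicity in Lemma~\ref{comp:lem:cell-exhaustion} and continuity of the crossing in the endpoint location, $\Lambda_{D'}\le\Lambda_D+\tau/4$ for small $\epsilon$; continuity follows from the compact feedback operators of Lemma~\ref{reg:lem:characteristic-operators} depending continuously on the section. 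Since $\boldsymbol U_{D'}$ is positive at an interior contact, the subsolution can be cut at scale $\epsilon$.

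Concretely, set $\boldsymbol q_d=\boldsymbol F_d-\kappa\boldsymbol e$ near the artificial endpoint $c+\epsilon$. Alternatively, and more simply, set $\boldsymbol Q_d=\chi\boldsymbol F_d$ with a fixed cutoff $\chi$ vanishing at $c+\epsilon$ and equal to one beyond $c+2\epsilon$. The commutator $[\mathcal L_d,\chi]\boldsymbol F_d=-2d\chi'F'-d\chi''F-b_i\chi'F$ then has sign $-b_i\chi'F_i$ to leading order. For the $p$-component, $b_p\chi'>0$, so this term is favorable, i.e. negative. For the $n$-component it is unfavorable, $|b_n|\chi'F_n$. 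However, $F_n$ is small near $c$ because of the zero incoming trace: $U_{D',n}(x)\approx C(x-c-\epsilon)$, while $U_{D',p}$ stays order one. We therefore choose the cutoff in the $n$-component to act where $u_n$ is already so small relative to $u_p$ that the coupling term $M_{np}u_p<0$ dominates.

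This is the main obstacle. The cutoff must be matched to the boundary-layer structure $u_n\sim (x-c)$. I would try componentwise cutoffs $\chi_p,\chi_n$ supported at different distances, with the $n$-cutoff inside the $p$-cutoff region, exploiting $M_{np}<0$ to absorb $|b_n|\chi_n' u_n$. If that fails, I would replace the cutoff by the positive part of $\boldsymbol F_d-\kappa\boldsymbol G$, where $\boldsymbol G$ is an exterior tail as in \eqref{reg:eq:slow-tail}, built from $-\int b_n$. Because $b_n<0$ at $c$, the eikonal term $\vartheta(1-\vartheta)b_n^2/d$ is large, exactly as in the attracting-endpoint tail. The positive-part construction then gives a compactly supported $H^1$ subsolution; the kinks have the correct sign for subsolutions of $\max$ type.

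Finally we verify three things: $\boldsymbol Q_d\ge0$, $\boldsymbol Q_d\not\equiv0$, and that $\boldsymbol Q_d$ is positive on $D$ after shrinking $\epsilon$ relative to the prescribed neighborhood. Its support lies in any given neighborhood of $\overline D$. The inequality $\mathcal L_d\boldsymbol Q_d\le(\Lambda_{D'}+\tau/2)\boldsymbol Q_d\le(\Lambda_D+\tau)\boldsymbol Q_d$ holds for small $d$, and the exact Neumann condition holds at any retained domain endpoint via the $O(d)$ corrector.
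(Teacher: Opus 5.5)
Your proposal leaves the one new ingredient of this lemma unresolved: a subsolution that reaches up to an \emph{artificial} endpoint, where $b_p>0>b_n$. Neither of your two remedies supplies it.

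\emph{The fallback fails.} For a tail $G_i=e_i\mathrm{e}^{\phi/d}$ with fixed $\boldsymbol e\gg0$,
\[
 \frac{[(\mathcal L_d-\lambda)\boldsymbol G]_i}{G_i}=-\frac{\phi'(\phi'+b_i)}{d}+O(1).
\]
Both rows are nonnegative only if $\phi'\in[-b_p,0]\cap[0,-b_n]=\{0\}$. This is just $\{p:H(x,p)\le0\}=\{0\}$ on $\mathcal A_0$. For your choice $\phi'=-\vartheta b_n$, the $p$-row equals $\vartheta b_n(b_p-\vartheta b_n)/d<0$. So subtracting $\kappa\boldsymbol G$ creates an unfavorable error of order $1/d$. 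The attracting-endpoint tail works only because both drifts are positive outside the class.

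\emph{The componentwise cutoff is only announced, and its heuristic is wrong as stated.} With $\chi_p\ge\chi_n$, the $n$-row gains $-|M_{np}|(1-\chi_n)F_p$ where $\chi_p=1$. But $F_n\approx|M_{np}|F_p\,t/|b_n|$, with $t$ the distance to the zero of $F_n$. The requirement therefore reduces to
\[
 \chi_n'\le\frac{1-\chi_n}{t}+\frac{\tau}{2|b_n|}\,\chi_n .
\]
The coupling balances the drift commutator exactly, at the scale-invariant rate $1/t$, and it disappears as $\chi_n\to1$. Without the $\tau$-term, $t(1-\chi_n)$ is nondecreasing, so $\chi_n$ never reaches $1$. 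The transition can be completed only by spending the $\tau$-slack over a zone $[t_1,t_2]$ with $t_2\gtrsim\sqrt{t_1|b_n|/\tau}$. One must then still check the $p$-row error $|M_{pn}|(\chi_p-\chi_n)F_n\lesssim t_2F_p$ against $\tau F_p$. None of this appears in the proposal.

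Perturbing the section also goes the wrong way. A $\boldsymbol Q_d$ built from $\boldsymbol U_{D'}$ with $D'\Subset D$ vanishes near the new endpoint, so it is not positive in $D$. Enlarging instead would give $\Lambda_{D''}\le\Lambda_D$ from monotonicity alone, with no continuity claim needed.

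\emph{How the paper handles it.} It places a Dirichlet layer of width $d$ on the outgoing component. With $s=(x-x_L)/d$, it takes $\Phi_p=(1-\mathrm{e}^{-\beta_ps})U_p(x_L)$ and $\Phi_n=0$, adds correctors $d\boldsymbol\Gamma+d^2\boldsymbol\Theta$, and matches to $\boldsymbol U+dC_L\boldsymbol e_n$ through a logarithmic cutoff. Both components then vanish at $x_L$ with positive inward derivatives, so the zero extension has a kink of the favorable sign.

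\emph{A layer-free route closer to your instinct.} Extend $\boldsymbol U$ as a transport solution slightly past the noncharacteristic point $x_L$. There $U_n$ changes sign transversally, since $U_n'(x_L)=M_{np}U_p/b_n>0$. Take $Q_n=(U_n)_+$, whose kink is favorable. Cut $U_p$ by a fixed cutoff inside $\{U_n<0\}$. There $Q_n=0$, and the $p$-row gains both $-b_p\chi_p'U_p\le0$ and $-\chi_pM_{pn}U_n<0$ from the transport identity. The residual $-dU_n''$ is not small relative to $U_n$ near its zero. It is absorbed by replacing $U_p$ with $(1+Kd)U_p$ near $x_L$, which adds $-Kd|M_{np}|U_p$ to the $n$-row.
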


\begin{proof}
Write $D=(x_L,x_R)$, set $\lambda=\Lambda_D$, and let
$\boldsymbol U$ be its positive transport solution.

\noindent\emph{Step 1.}
We first treat an artificial left endpoint, i.e. $U_n(x_L)=0$. Introduce
$s=(x-x_L)/d$ and write
$\beta_p=b_p(x_L)>0$ and
$\beta_n=-b_n(x_L)>0$. The transport equation gives
\[a_p=U_p'(x_L)=(M_{pp}(x_L)-\lambda)U_p(x_L)/\beta_p, \quad a_n=U_n'(x_L)=-M_{np}(x_L)U_p(x_L)/\beta_n>0.\]
In the formulas below, coefficients without an explicit spatial
argument are evaluated at $x_L$. Let
$\mathcal B_i=-\partial_{ss}-b_i(x_L)\partial_s$, and define
\begin{equation}\label{comp:eq:left-artificial-boundary-functions}
 \begin{aligned}
 \Phi_p&=(1-{\rm e}^{-\beta_ps})U_p(x_L),&
 \Gamma_p&=a_ps(1+{\rm e}^{-\beta_ps})
       +b_p'{\rm e}^{-\beta_ps}(s^2/2+s/\beta_p)U_p(x_L),\\
 \Phi_n&=0,&
 \Gamma_n&=a_ns+C_L(1-{\rm e}^{-\beta_ps}),
 \end{aligned}
\end{equation}
where $C_L=M_{np}A/[\beta_p(\beta_p+\beta_n)]<0$.
A direct calculation gives $\mathcal B\boldsymbol\Phi=0$ and
\[
 \mathcal B\boldsymbol\Gamma
 =s\operatorname{diag}(b_1',b_2')\boldsymbol\Phi'
  -(\boldsymbol M-\lambda I)\boldsymbol\Phi.
\]
In addition, it follows that
\begin{equation}\label{comp:eq:left-artificial-n-lower-bound}
 \Gamma_n(s)
 =a_n\left[s-\frac{\beta_n(1-\exp(-\beta_ps))}
                       {\beta_p(\beta_p+\beta_n)}\right]
 \geq\frac{a_n\beta_p}{\beta_p+\beta_n}s.
\end{equation}
To further estimate 
 the $n$-component, 
we choose
$\boldsymbol\Theta$ to solve
\begin{equation}\label{comp:eq:left-artificial-second-corrector-equation}
 \begin{aligned}
 \mathcal B\boldsymbol\Theta
 ={}&s\operatorname{diag}\bigl(b_p'(x_L),b_n'(x_L)\bigr)
          \boldsymbol\Gamma'+\frac{s^2}{2}
   \operatorname{diag}\bigl(b_p''(x_L),b_n''(x_L)\bigr)
          \boldsymbol\Phi'\\
 &-\bigl(\boldsymbol M(x_L)-
          \lambda_{\boldsymbol\delta}\boldsymbol I_2\bigr)
          \boldsymbol\Gamma
   -s\boldsymbol M'(x_L)\boldsymbol\Phi,
 \end{aligned}
\end{equation}
with $\Theta_p(0)=\Theta_p'(0)=\Theta_n(0)=0$, and omit the growing
${\rm e}^{-\beta_ns}$-mode from $\Theta_n$.
The right-hand side is a polynomial of degree at most one plus
exponentially decaying polynomial terms. Integrating factors
therefore give, for $s\geq0$,
\[
 |\boldsymbol\Theta(s)|\leq Cs(1+s),\qquad
 |\boldsymbol\Theta'(s)|\leq C(1+s),\qquad
 |\boldsymbol\Theta''(s)|\leq C.
\]
Take $S_d=c|\log d|$, where $c\kappa>2$ and
$\kappa=\min\{\beta_p,\beta_n\}$, and set
\[
 \boldsymbol F_d^L(x_L+ds)
 =\boldsymbol\Phi(s)+d\boldsymbol\Gamma(s)
  +d^2\boldsymbol\Theta(s),
 \qquad 0\leq s\leq S_d^2.
\]
For small $d$,  we can derive from \eqref{comp:eq:left-artificial-n-lower-bound} that $ F_{d,p}^L\geq c_1\min\{s,1\}$, $F_{d,n}^L\geq c_1ds$, and
\[
 \begin{gathered}
 (F_{d,p}^L)'(x_L)=A\beta_p/d+O(1)>0,\qquad
 (F_{d,n}^L)'(x_L)=\frac{a_n\beta_p}{\beta_p+\beta_n}+O(d)>0.
 \end{gathered}
\]
Both components vanish at $x_L$. Then 
by the expansion of $b_i$  
and $\boldsymbol M$ near $x_L$, we  derive that  
\[
 |[(\mathcal L_d-\lambda)\boldsymbol F_d^L]_p(x_L+ds)|
       \leq\varepsilon_d\min\{s,1\},\quad
 |[(\mathcal L_d-\lambda)\boldsymbol F_d^L]_n(x_L+ds)|
       \leq\varepsilon_d ds,\quad \varepsilon_d\to0,
\]
which is uniformly for
$0<s\leq S_d^2$.
The potentially leading drift remainder in the $n$-equation is
absent because $\Phi_n'=0$, and the matrix remainder is
$o(ds)$. After division by the lower bounds above, every other
term is controlled either by $d$ times a fixed polynomial in
$S_d$, or by a coefficient modulus tending to zero. We have thus
obtained that
\begin{equation}\label{comp:eq:left-artificial-relative-residual}
 |[(\mathcal L_d-\lambda)\boldsymbol F_d^L]_i|
       \leq o(1)F_{d,i}^L.
\end{equation}

For the matching, the constant term $dC_L$ in the small component
must be retained. Let $\boldsymbol e_n$ be the $n$-th coordinate
vector and set
$\boldsymbol U_d^{\mathrm{out}}=\boldsymbol U+dC_L\boldsymbol e_n$.
On $S_d\leq s\leq S_d^2$,
\begin{equation}\label{comp:eq:left-artificial-overlap-mismatch}
 \begin{aligned}
 |\boldsymbol F_d^L-\boldsymbol U_d^{\mathrm{out}}|
   &\leq C[d^2(1+s)^2+(1+s)^2{\rm e}^{-\kappa s}],\\
 |\partial_x(\boldsymbol F_d^L-\boldsymbol U_d^{\mathrm{out}})|
   &\leq C[d(1+s)+d^{-1}(1+s)^2{\rm e}^{-\kappa s}].
 \end{aligned}
\end{equation}
Throughout this overlap, the $p$-components are bounded below by
$c_1$, while the $n$-components are bounded below by $c_1ds$.
Choose fixed numbers $0<\rho_1<\rho_2<x_R-x_L$ and a smooth cutoff
$\zeta_L$ such that $\zeta_L=1$ on $[x_L,x_L+\rho_1]$ and $\zeta_L=0$ on $[x_L+\rho_2,x_R]$.  
Both components of $\boldsymbol U$ have a positive lower bound on
$[x_L+\rho_1,x_L+\rho_2]$. We reduce $d$, if necessary, so that
$dS_d^2<\rho_1$.
Choose the cutoff $\chi_d(s)$ which is zero for $s\leq S_d$,
equals one for $s\geq S_d^2$, and satisfies
\begin{equation}\label{comp:eq:left-artificial-log-cutoff}
 |\chi_d'(s)|\leq\frac{C}{s\log S_d},
 \qquad
 |\chi_d''(s)|\leq\frac{C}{s^2\log S_d}.
\end{equation}
We define the complete left-end profile by
\begin{equation}\label{comp:eq:left-artificial-joined-profile}
 \boldsymbol F_d^{L,\mathrm{join}}(x)=
 \begin{cases}
 \boldsymbol F_d^L(x),
   &0\leq s\leq S_d,\\[1mm]
 (1-\chi_d(s))\boldsymbol F_d^L(x)
 +\chi_d(s)\boldsymbol U_d^{L,\mathrm{out}}(x),
   &S_d\leq s\leq S_d^2,\\[1mm]
 \boldsymbol U(x)+dC_L\zeta_L(x)\boldsymbol e_n,
   &x\geq x_L+dS_d^2,
 \end{cases}
 \qquad s=\frac{x-x_L}{d}.
\end{equation}
The definitions agree at $s=S_d$ and $s=S_d^2$. Indeed,
$\zeta_L=1$ at $x=x_L+dS_d^2$, so the last expression equals
$\boldsymbol U_d^{L,\mathrm{out}}$ there. The cutoffs may be chosen
flat at the endpoints of their transition intervals, and hence
$\boldsymbol F_d^{L,\mathrm{join}}$ is piecewise $C^2$ with
matching first derivatives.

We next estimate the residual on $S_d\leq s\leq S_d^2$. Set
$\boldsymbol D_d\coloneqq
\boldsymbol U_d^{L,\mathrm{out}}-\boldsymbol F_d^L$.
Dots below denote derivatives with respect to $s$. 
Direct calculations yield that 
\[
\begin{aligned}
 [(\mathcal L_d-\lambda)
   \boldsymbol F_d^{L,\mathrm{join}}]_i
={}&(1-\chi_d)
 [(\mathcal L_d-\lambda)\boldsymbol F_d^L]_i
 +\chi_d
 [(\mathcal L_d-\lambda)
   \boldsymbol U_d^{L,\mathrm{out}}]_i\\
&-2\dot\chi_d\,\partial_xD_{d,i}
 -\frac{\ddot\chi_d+b_i(x)\dot\chi_d}{d}D_{d,i}.
\end{aligned}
\]
The absolute residual of
$\boldsymbol U_d^{L,\mathrm{out}}$ is $O(d)$. Therefore,
on the overlap,
\[
 \frac{\left|
 [(\mathcal L_d-\lambda)
   \boldsymbol U_d^{L,\mathrm{out}}]_p\right|}
 {U_{d,p}^{L,\mathrm{out}}}
 \leq Cd,
 \qquad
 \frac{\left|
 [(\mathcal L_d-\lambda)
   \boldsymbol U_d^{L,\mathrm{out}}]_n\right|}
 {U_{d,n}^{L,\mathrm{out}}}
 \leq\frac{C}{s}\leq\frac{C}{S_d}.
\]
Using \eqref{comp:eq:left-artificial-overlap-mismatch},
\eqref{comp:eq:left-artificial-log-cutoff}, and the componentwise
lower bounds, we obtain
\[
\begin{aligned}
 &\max_{i=p,n}\sup_{S_d\leq s\leq S_d^2}
 \frac{\left|
 2\dot\chi_d\,\partial_xD_{d,i}
 +d^{-1}(\ddot\chi_d+b_i\dot\chi_d)D_{d,i}
 \right|}
 {F_{d,i}^{L,\mathrm{join}}}\leq
 \frac{C}{\log S_d}
 +\frac{C\mathrm e^{-\kappa S_d}}
        {d^2\log S_d}
 +o(1).
\end{aligned}
\]
Since $S_d=c|\log d|$, the second term tends to zero when
$c\kappa>2$. Combining this estimate with
\eqref{comp:eq:left-artificial-relative-residual}, we then deduce that
\[
 \max_{i=p,n}\sup_{S_d\leq s\leq S_d^2}
 \frac{\left|
 [(\mathcal L_d-\lambda)
   \boldsymbol F_d^{L,\mathrm{join}}]_i\right|}
 {F_{d,i}^{L,\mathrm{join}}}
 \leq
 o(1)+\frac{C}{S_d}+\frac{C}{\log S_d}
 +\frac{C\mathrm e^{-\kappa S_d}}
        {d^2\log S_d}
 \to0.
\]

On $x_L+dS_d^2\leq x\leq x_L+\rho_1$, the joined profile equals
$\boldsymbol U+dC_L\boldsymbol e_n$. The local expansion of
$U_n$ at $x_L$ shows that its relative residual is
$O(S_d^{-2})+O(d)$. On
$[x_L+\rho_1,x_L+\rho_2]$, both components of
$\boldsymbol U$ are bounded below, and the cutoff involving
$\zeta_L$ contributes only $O(d)$ to the relative residual.
Hence, we arrive at
\begin{equation}\label{comp:eq:left-artificial-joined-residual}
 \max_{i=p,n}\sup_{x_L<x<x_L+\rho_2}
 \frac{\left|
 [(\mathcal L_d-\lambda)
   \boldsymbol F_d^{L,\mathrm{join}}]_i\right|}
 {F_{d,i}^{L,\mathrm{join}}}
 \to 0.
\end{equation}
Moreover, we observe that
\begin{equation}\label{comp:eq:left-artificial-joined-matching}
 \begin{gathered}
 \boldsymbol F_d^{L,\mathrm{join}}(x_L)=\boldsymbol0,
 \qquad
 \partial_xF_{d,i}^{L,\mathrm{join}}(x_L)>0,
 \quad i=p,n,\\
 \boldsymbol F_d^{L,\mathrm{join}}\gg\boldsymbol0
 \quad\text{on }(x_L,x_L+\rho_2],
 \qquad
 \boldsymbol F_d^{L,\mathrm{join}}=\boldsymbol U
 \quad\text{for }x\geq x_L+\rho_2.
 \end{gathered}
\end{equation}
Reflection about $x_R$, followed by interchange of $p$ and $n$,
produces a right-end profile
$\boldsymbol F_d^{R,\mathrm{join}}$. It equals
$\boldsymbol U$ outside a fixed right-end neighborhood, has
relative residual $o(1)$, and satisfies
\[
 \boldsymbol F_d^{R,\mathrm{join}}(x_R)=\boldsymbol0,
 \qquad
 \partial_xF_{d,i}^{R,\mathrm{join}}(x_R)<0,
 \quad i=p,n.
\]

\medskip
\noindent\emph{Step 2.}
At each unchanged endpoint,
$\lambda<\lambda_{\partial,G}$. The constructions in
Lemmas~\ref{reg:lem:repelling-module} and
\ref{reg:lem:attracting-domain-modules} depend only on the local
transport equation, the endpoint condition, and this strict
inequality. They therefore apply to $\boldsymbol U$ at level
$\lambda$.
 Choose the endpoint neighborhoods to be mutually disjoint. At an
artificial left or right endpoint, use respectively
$\boldsymbol F_d^{L,\mathrm{join}}$ or
$\boldsymbol F_d^{R,\mathrm{join}}$. At every unchanged endpoint,
use the corresponding module from the two lemmas above. Each of
these functions agrees with $\boldsymbol U$ outside its endpoint
neighborhood, so we may set $\boldsymbol F_d=\boldsymbol U$ on the
remaining part of $D$. This defines a positive function
$\boldsymbol F_d$ satisfying
\begin{equation}\label{comp:eq:finite-section-neutral-core}
 |[(\mathcal L_d-\lambda)\boldsymbol F_d]_i|
 \leq\varepsilon_dF_{d,i},
 \qquad i=p,n,
 \qquad \varepsilon_d\to0.
\end{equation}
Both components of $\boldsymbol F_d$ vanish at every artificial
endpoint and have strictly positive inward derivatives there. At a
domain endpoint, $\boldsymbol F_d$ satisfies the exact Neumann
condition.

Take $d$ sufficiently small that $\varepsilon_d<\tau/4$. Then
\begin{equation}\label{comp:eq:finite-section-strict-core}
 [\mathcal L_d-(\lambda+\tau)]\boldsymbol F_d
 \leq-\frac{3\tau}{4}\boldsymbol F_d.
\end{equation}
At each unchanged interior endpoint $\xi$, take the slow exterior
tail $\boldsymbol G_d^{\xi,-}$, the separated cutoffs
$\chi_{F,d}$, $\chi_{\xi,d}$, and the matching constant
$A_{\xi,d}>0$ from the proof of
Lemma~\ref{reg:lem:local-realization}. These tails are constructed at
the spectral level $\lambda+\tau$. Define
\begin{equation}\label{comp:eq:finite-section-Q-construction}
 \boldsymbol q_d
 \coloneqq
 \chi_{F,d}\boldsymbol F_d
 -\sum_\xi
 A_{\xi,d}\chi_{\xi,d}\boldsymbol G_d^{\xi,-},
 \qquad
 \boldsymbol Q_d\coloneqq(\boldsymbol q_d)_+,
\end{equation}
where the sum is taken only over the unchanged interior endpoints and
the positive part is componentwise. The cutoffs are chosen so that
$\boldsymbol q_d=\boldsymbol F_d\gg\boldsymbol0$ on $D$, and
they do not modify either artificial layer. 
By \eqref{comp:eq:finite-section-strict-core},
\eqref{reg:eq:slow-tail-strong}, and
\eqref{reg:eq:exponential-separation}, the cutoff commutators are
absorbed by the negative residual in
\eqref{comp:eq:finite-section-strict-core} and the positive residual
of the subtracted tails. Hence,
$
 [\mathcal L_d-(\lambda+\tau)]\boldsymbol q_d
 \leq\boldsymbol0.
$ 
The standard positive-part property for cooperative systems gives
\begin{equation}\label{comp:eq:finite-section-Q-inequality}
 \mathcal L_d\boldsymbol Q_d
 \leq(\lambda+\tau)\boldsymbol Q_d.
\end{equation}
Thus $\boldsymbol Q_d$ is nonzero and positive in $D$, while its
support lies in the prescribed relative neighborhood of
$\overline D$. The proof is complete.
\end{proof}

We shall also need cell supersolutions whose behavior at a common zero
can be matched to the neighboring Ornstein-Uhlenbeck profile.

\begin{lemma}\label{comp:lem:prescribed-cell-powers}
Let $\nu<\Lambda_G^{\mathrm{tr}}$. At each common endpoint $z$,
choose $c_z>0$ and $\vartheta_z\in(0,1)$ such that
$\rho_z(\vartheta_z)>\nu$. Then there exists
$\boldsymbol U_G\in C^2(G;(0,\infty)^2)$ satisfying
\begin{equation}\label{comp:eq:strict-cell-super}
 \mathscr T\boldsymbol U_G\geq\nu\boldsymbol U_G \quad \text{ in } G.
\end{equation}
Near an unchanged endpoint, $\boldsymbol U_G$ solves the transport
equation at level $\nu$ and has the prescribed admissibility. At
each common endpoint, there holds
\begin{equation}\label{comp:eq:prescribed-cell-tail}
 \partial_x^j\!\left[
 U_{G,i}(x)-c_z|x-z|^{-\vartheta_z}v_{z,i}(\vartheta_z)\right]
   =o(|x-z|^{-\vartheta_z-j}),
 \qquad j=0,1,2,\quad i=1,2.
\end{equation}
\end{lemma}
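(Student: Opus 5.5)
The plan is to build $\boldsymbol U_G$ from three pieces and glue them. The first piece is an explicit power profile near each common endpoint. The second is a positive transport correction on a finite section, obtained from the fact that $\nu<\Lambda_G^{\mathrm{tr}}$. The third is the level-$\nu$ transport solution near each unchanged endpoint. The key local computation is the following. Near a common zero $z$, write $h=x-z$ and $P_i(x)=c_z|h|^{-\vartheta_z}v_{z,i}(\vartheta_z)$. Since $b_i(x)=b_i'(z)h+O(h^2)$ and $\partial_x|h|^{-\vartheta}=-\vartheta\,\mathrm{sgn}(h)|h|^{-\vartheta-1}$, one has
\[
 \mathscr T\boldsymbol P=\bigl(\boldsymbol M(z)+\vartheta_z\boldsymbol B_z\bigr)\boldsymbol P+O(|h|)\boldsymbol P
 =\rho_z(\vartheta_z)\boldsymbol P+O(|h|)\boldsymbol P .
\]
This is the transport (zero-diffusion) analogue of \eqref{comp:eq:OU-exterior-power}. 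Because $\rho_z(\vartheta_z)>\nu$, the power profile is a strict supersolution, $(\mathscr T-\nu)\boldsymbol P\geq\frac12(\rho_z(\vartheta_z)-\nu)\boldsymbol P$, on a fixed one-sided neighborhood $(z,z+r_0]$ or $[z-r_0,z)$. Any $C^2$ perturbation of size $o(|h|^{-\vartheta_z-j})$ in its first $j\le 2$ derivatives keeps \eqref{comp:eq:prescribed-cell-tail}.

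Next I handle the interior of the cell. By Lemma~\ref{comp:lem:cell-exhaustion}, the section values decrease to $\Lambda_G^{\mathrm{tr}}>\nu$, so every finite section $D$ has $\Lambda_D>\nu$. I extend $\boldsymbol P$ to a positive $C^2$ function $\widetilde{\boldsymbol P}$ on $G$. Near each common endpoint it equals the power profile. Near each unchanged endpoint it equals the level-$\nu$ transport solution with the prescribed admissibility; this solution exists, with the right trace and local asymptotics, by the characteristic formulas \eqref{reg:eq:Hn}--\eqref{reg:eq:Hp} and Lemma~\ref{reg:lem:repelling-endpoint-asymptotics}, since $\nu<\Lambda_G^{\mathrm{tr}}<\lambda_{\partial,G}$. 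The negative part $\boldsymbol h\coloneqq[(\mathscr T-\nu)\widetilde{\boldsymbol P}]_-$ is then compactly supported in $G$. I choose a finite section $D$ whose artificial endpoints lie inside the strict power regions, and which contains $\operatorname{supp}\boldsymbol h$ well inside. Using the Neumann series for $(I-\mathcal K_{D,\nu})^{-1}$ stated before Lemma~\ref{comp:lem:cell-exhaustion}, I solve $(\mathscr T-\nu)\boldsymbol V=\boldsymbol h$ on $D$ with zero incoming data at the artificial endpoints and homogeneous conditions at the unchanged ones. This gives $\boldsymbol V\ge0$. I then set $\boldsymbol U_G=\widetilde{\boldsymbol P}+\boldsymbol V$ on the middle part of $G$. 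Near an unchanged endpoint, $\boldsymbol V$ solves the homogeneous transport equation with the homogeneous endpoint condition, so $\boldsymbol U_G$ still solves the level-$\nu$ transport equation there with the required admissibility. Strict positivity in the interior follows from the positive kernels of the characteristic operators.

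The main obstacle is joining $\boldsymbol V$ to the power tail near a common endpoint. At the artificial endpoint, $\boldsymbol V$ has one zero incoming component and a bounded outgoing one, and it is defined only on $D$. My first attempt is a cutoff $\zeta$, equal to one on $D'\Supset\operatorname{supp}\boldsymbol h$ and zero near $\partial D$. The transition is placed inside the strict power region, where $\boldsymbol V$ is bounded while $\widetilde{\boldsymbol P}\gtrsim c_z r^{-\vartheta_z}$. The commutator $-\operatorname{diag}(b_1,b_2)\zeta'\boldsymbol V$ must then be absorbed by the margin $\frac12(\rho_z(\vartheta_z)-\nu)\widetilde{\boldsymbol P}$. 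This requires a quantitative choice, because $\|\boldsymbol V\|$ depends on $D$ through $\boldsymbol h$. I will make the choices in the following order:
\begin{enumerate}
\item fix the middle profile and hence $\boldsymbol h$, independently of how close the artificial endpoints are to $z$;
\item note that the Neumann-series bound for $\boldsymbol V$ is monotone and uniform as $D$ grows, since $\Lambda_D\ge\Lambda_G^{\mathrm{tr}}>\nu$ gives $r(\mathcal K_{D,\nu})\le r(\mathcal K_{D_0,\nu})<1$ by comparison of kernels;
\item place the cutoff region at distance $r$ from $z$, with $r$ small enough that $c_z r^{-\vartheta_z}$ dominates $C\,r^{0}\|\boldsymbol V\|/|\zeta'|^{-1}$.
\end{enumerate}
The cutoff and $\boldsymbol V$ leave the tail untouched within distance $r/2$ of $z$, so \eqref{comp:eq:prescribed-cell-tail} holds exactly there.

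If the uniform bound in step (2) fails, the fallback is to take a componentwise minimum of supersolutions, namely of the positive mode $M\boldsymbol U_{D}$ (which satisfies $(\mathscr T-\nu)\boldsymbol U_D=(\Lambda_D-\nu)\boldsymbol U_D>0$) and the power profile, and then smooth the corner. This uses the contact argument \eqref{comp:eq:cell-contact-test} to keep the supersolution property under mollification.
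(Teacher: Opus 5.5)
\textbf{There is a genuine gap: your step (2) is false, and it is the step that controls the correction near $z$.}

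Your overall architecture matches the paper's: a strict power supersolution $c_z|x-z|^{-\vartheta_z}\boldsymbol v_z(\vartheta_z)$ near each common zero, a compactly supported defect $\boldsymbol h$, and a nonnegative resolvent correction with zero incoming data at artificial endpoints.

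\emph{The spectral-radius argument.} The kernel comparison runs the other way. For $D_0\subset D$ the kernels of $\mathcal K_{D,\nu}$ dominate those of $\mathcal K_{D_0,\nu}$, so $r(\mathcal K_{D,\nu})\geq r(\mathcal K_{D_0,\nu})$; this is exactly why $\Lambda_D$ decreases as $D$ grows. Moreover, a spectral-radius gap below $1$ would not bound $(I-\mathcal K_{D,\nu})^{-1}$ uniformly on the varying spaces $C(\overline D)$.

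\emph{The uniform bound itself fails.} In general $\boldsymbol V_D$ is not bounded near $z$ uniformly in $D$. Let $z$ be the left end of $G$, so $b_p'(z)>0$ and $p$ is the outgoing component at $z+\delta$. Where $\boldsymbol h=0$ one has $-b_pV_{D,p}'+(M_{pp}-\nu)V_{D,p}=-M_{pn}V_{D,n}\geq0$. Hence
\[
 V_{D,p}(x)\geq V_{D,p}(z+r_0)\exp\Bigl[\int_x^{z+r_0}\frac{\nu-M_{pp}(s)}{b_p(s)}\,\mathrm{d}s\Bigr],
 \qquad z+\delta<x<z+r_0 .
\]
Since $\boldsymbol V_D$ increases with $D$, the factor $V_{D,p}(z+r_0)$ is bounded below. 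So if $M_{pp}(z)<\nu$, you get $V_{D,p}(x)\geq c|x-z|^{-\kappa}$ with some $\kappa>0$, uniformly in $D$. Nothing in the lemma's hypotheses excludes $M_{pp}(z)<\nu$: the condition $\rho_z(\vartheta_z)>\nu$ bounds $\nu$ above by $M_{pp}(z)+\vartheta_zb_p'(z)$, not by $M_{pp}(z)$.

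Your transition region sits at distance $r$ from $z$, and it must move toward $z$ together with the artificial endpoint as $r\to0$. On it the commutator $|b_i\zeta'|V_{D,i}$ is therefore not $O(1)$, and step (3) cannot be carried out as stated.

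\emph{What is needed instead.} Your cutoff requires $V_{D,i}(x)\leq C|x-z|^{-\vartheta'}$ for some $\vartheta'<\vartheta_z$, with $C$ independent of $D$. Then $|b_i\zeta'|V_{D,i}\leq Cr^{-\vartheta'}$ is absorbed by the margin $\tfrac12(\rho_z(\vartheta_z)-\nu)c_zr^{-\vartheta_z}v_{z,i}(\vartheta_z)$ once $r$ is small. This is exactly \eqref{comp:eq:common-end-correction-bound}. The paper obtains it from two barrier comparisons that your proposal lacks.
\begin{enumerate}
\item A global bound $\boldsymbol V_D\leq C\boldsymbol W_G$ with $C$ independent of $D$, where $\boldsymbol W_G$ is the limiting cell mode of Lemma~\ref{comp:lem:cell-exhaustion}. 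Its residual $(\Lambda_G^{\mathrm{tr}}-\nu)C\boldsymbol W_G$ dominates $\boldsymbol h$. Its incoming traces at artificial endpoints are positive, and its boundary residuals at attracting or domain endpoints are positive. At repelling endpoints $V_{D,i}/W_i\to0$, because $q(\nu)>q(\Lambda_G^{\mathrm{tr}})$. So the contact test \eqref{comp:eq:cell-contact-test} excludes a first touching point.
\item A local bound on a neighborhood of $z$ disjoint from $\operatorname{supp}\boldsymbol h$, by comparison with the strict supersolution $|x-z|^{-\vartheta_z^-}\boldsymbol v_z(\vartheta_z^-)$, where $\vartheta_z^-<\vartheta_z$ and $\rho_z(\vartheta_z^-)>\nu$. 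Its size at the endpoint of that neighborhood lying inside $G$ is controlled by step 1.
\end{enumerate}
With this bound the paper lets $D\to G$ and sets $\boldsymbol U_G=\boldsymbol P+\boldsymbol V$, the correction being of lower order than the prescribed power. Once you have the bound, your cutoff variant also works, and it reproduces the tail exactly near $z$.

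\emph{The fallback.} It does not close the gap. $U_{D,n}$ vanishes at the artificial endpoint, so a componentwise minimum with $M\boldsymbol U_D$ cannot be continued toward $z$ by the power profile without a downward jump in the $n$-component. Such a jump is incompatible with $[\mathscr T\boldsymbol Z]_n\geq\nu Z_n$, since $-b_n>0$ there.
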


\begin{proof}
Write $G=(a,b)$. For each
$z\in\partial G\cap\mathscr Z_K$, let
$\boldsymbol v_z(\vartheta_z)\gg\boldsymbol0$ be the normalized
positive eigenvector satisfying $\bigl(\boldsymbol M(z)+\vartheta_z\boldsymbol B_z\bigr)
 \boldsymbol v_z(\vartheta_z)
 =
 \rho_z(\vartheta_z)\boldsymbol v_z(\vartheta_z)$ and $v_{z,1}(\vartheta_z)+v_{z,2}(\vartheta_z)=1$.
Then by direct calculations, we obtain
\begin{equation}\label{comp:eq:cell-model-power}
 (\mathscr T-\nu)
 \bigl[|x-z|^{-\vartheta_z}
       \boldsymbol v_z(\vartheta_z)\bigr]
 =
 |x-z|^{-\vartheta_z}
 \left[
  \bigl(\rho_z(\vartheta_z)-\nu\bigr)
       \boldsymbol v_z(\vartheta_z)
  +O(|x-z|)
 \right].
\end{equation}
Because $\rho_z(\vartheta_z)>\nu$ and
$\boldsymbol v_z(\vartheta_z)\gg\boldsymbol0$, we may choose
$0<\delta<(b-a)/4$ such that, for every
$z\in\partial G\cap\mathscr Z_K$ and $i=1,2$,
\[
 \left[
  (\mathscr T-\nu)
  \bigl(|x-z|^{-\vartheta_z}
        \boldsymbol v_z(\vartheta_z)\bigr)
 \right]_i
 \geq
 \frac{\rho_z(\vartheta_z)-\nu}{2}
 |x-z|^{-\vartheta_z}v_{z,i}(\vartheta_z)>0\ \text{ for }0<|x-z|<2\delta.
\]
Fix $\chi\in C^\infty([0,\infty);[0,1])$ such that
$\chi=1$ on $[0,1]$ and $\chi=0$ on $[2,\infty)$. We define
\[
 \boldsymbol P(x)=
 \sum_{z\in\partial G\cap\mathscr Z_K}
 c_z\chi\left(\frac{|x-z|}{\delta}\right)
 |x-z|^{-\vartheta_z}\boldsymbol v_z(\vartheta_z).
\]
The summands have disjoint supports. For the summand associated with
$z$, we have
\[
\begin{aligned}
 &(\mathscr T-\nu)
 \left[
  c_z\chi\left(\frac{|x-z|}{\delta}\right)
  |x-z|^{-\vartheta_z}\boldsymbol v_z(\vartheta_z)
 \right]\\
 ={}&
 c_z\chi\left(\frac{|x-z|}{\delta}\right)
 (\mathscr T-\nu)
 \left[
  |x-z|^{-\vartheta_z}\boldsymbol v_z(\vartheta_z)
 \right] \\
 &-
 c_z\operatorname{diag}(b_1,b_2)
 \partial_x\!\left[
  \chi\left(\frac{|x-z|}{\delta}\right)
 \right]
 |x-z|^{-\vartheta_z}\boldsymbol v_z(\vartheta_z).
\end{aligned}
\]
The second term vanishes for $|x-z|\leq\delta$, where the first
term is componentwise positive, while both terms vanish for
$|x-z|\geq2\delta$. Hence, for $i=1,2$,
\[
 \operatorname{supp}
 \left(-[(\mathscr T-\nu)\boldsymbol P]_i\right)_+
 \subset
 \bigcup_{z\in\partial G\cap\mathscr Z_K}
 \{x\in G:\delta\leq|x-z|\leq2\delta\}
 \Subset G.
\]
We may therefore choose
$\boldsymbol h\in C_c^\infty(G;\mathbb R^2)$, nonnegative and
positive in both components on some interior interval, such that
\[
 h_i(x)\geq
 \left(-[(\mathscr T-\nu)\boldsymbol P]_i(x)\right)_+,
 \qquad x\in G,\quad i=1,2.
\]
Thus
$(\mathscr T-\nu)\boldsymbol P+\boldsymbol h\geq\boldsymbol0$.
Both $\boldsymbol P$ and $\boldsymbol h$ vanish near every
unchanged endpoint.

For every sufficiently large finite section $D$, the positive
resolvent introduced before
Lemma~\ref{comp:lem:cell-exhaustion} provides a solution of
\begin{equation}\label{comp:eq:cell-resolvent-correction}
 (\mathscr T-\nu)\boldsymbol V_D=\boldsymbol h,
\end{equation}
with zero incoming data at the artificial endpoints and the actual
endpoint conditions at level $\nu$. Strict cooperation gives
$\boldsymbol V_D\gg0$ in $D$.
Let $\boldsymbol W=\boldsymbol W_G$ be the limiting mode in
Lemma~\ref{comp:lem:cell-exhaustion}. Since
$
 (\mathscr T-\nu)\boldsymbol W
 = (\Lambda_G^{\mathrm{tr}}-\nu)\boldsymbol W,
$
a fixed multiple $C\boldsymbol W$ has residual strictly larger than
$\boldsymbol h$. We claim that
$\boldsymbol V_D\leq C\boldsymbol W$, with $C$ independent of
$D$. To see this, compare
$\boldsymbol Z=C\boldsymbol W-s\boldsymbol V_D$ as $s$ increases
from $0$ to $1$. Its differential residual is strictly positive,
and its incoming traces at the artificial endpoints are positive. At
an unchanged left attracting or domain endpoint, the incoming boundary
residual is
\[
 (M_{nn}-\nu)Z_n+M_{np}Z_p
   =C(\Lambda_G^{\mathrm{tr}}-\nu)W_n>0.
\]
Near a repelling endpoint the source vanishes, while
$
{V_{D,i}}/{W_i}
   =O(t^{q(\nu)-q(\Lambda_G^{\mathrm{tr}})})\to0$ 
for $ i=1,2$, 
since $q(\nu)>q(\Lambda_G^{\mathrm{tr}})$.
The right endpoint is treated in the same way. Hence, any first contact
must occur away from the repelling endpoints, where it is excluded by
\eqref{comp:eq:cell-contact-test}. The claimed bound follows.

A sharper estimate is needed near a common endpoint. Choose
$0<\vartheta_z^-<\vartheta_z$, sufficiently close to
$\vartheta_z$, so that $\rho_z(\vartheta_z^-)>\nu$. By
\eqref{comp:eq:cell-model-power},
$\boldsymbol S_z=t^{-\vartheta_z^-}\boldsymbol v_z(\vartheta_z^-)$
is a strict supersolution in a smaller neighborhood of $z$, chosen
disjoint from $\operatorname{supp}\boldsymbol h$. The estimate
$\boldsymbol V_D\leq C\boldsymbol W$ controls the traces at the
fixed endpoint of this neighborhood lying inside $G$. A fixed
multiple of $\boldsymbol S_z$ dominates those traces, while its
incoming trace at the artificial endpoint is positive. Applying
\eqref{comp:eq:cell-contact-test} to the resulting comparison gives
\begin{equation}\label{comp:eq:common-end-correction-bound}
 0< V_{D,i}(x)\leq C|x-z|^{-\vartheta_z^-},
\end{equation}
with $C$ independent of the artificial endpoint.

By the interior estimate and
\eqref{comp:eq:cell-resolvent-correction},  we have a subsequence converging
in $C^1_{\mathrm{loc}}(G)$ to a solution $\boldsymbol V$ of
$(\mathscr T-\nu)\boldsymbol V=\boldsymbol h$. The positive
characteristic formulas show that $\boldsymbol V\gg0$. Near an
unchanged endpoint the equation is homogeneous at level $\nu$, so
the continuous or weighted endpoint representations used in the proof
of Lemma~\ref{comp:lem:cell-exhaustion} pass the required
admissibility to the limit. Near a common endpoint,
\eqref{comp:eq:common-end-correction-bound} and
$|b_i(x)|\asymp t$ give
$|V_i'|=O(t^{-\vartheta_z^--1})$ through the transport equation.
Differentiating once more yields
$|V_i''|=O(t^{-\vartheta_z^--2})$.

Set $\boldsymbol U_G:=\boldsymbol P+\boldsymbol V$. The choice of
$\boldsymbol h$ gives \eqref{comp:eq:strict-cell-super}, and the
last two estimates show that the correction is of lower order than
each prescribed power. It therefore changes neither its coefficient
nor its exponent, which proves
\eqref{comp:eq:prescribed-cell-tail}.
\end{proof}

\subsection{The value of a composite interval class}

The cell profiles and the common-zero profiles constructed above can
now be joined at the value $\Lambda_K^{\mathrm{comp}}$ defined in
\eqref{eq:main-composite-value}.

\begin{theorem}\label{comp:thm:local-realization}
Every composite interval class $K$ has local effective value
$\Lambda_K^{\mathrm{comp}}$ in the sense of
Definition {\rm\ref{def:local-effective-value}}.
\end{theorem}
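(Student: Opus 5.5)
We verify the two parts of Definition~\ref{def:local-effective-value} for $K=[\alpha,\beta]$ with $\Lambda=\Lambda_K^{\mathrm{comp}}$. The subsolution part (ii) is the easier direction, and we take it first. If the minimum in \eqref{eq:main-composite-value} is attained by a cell value $\Lambda_{G_j}^{\mathrm{tr}}$, Lemma~\ref{comp:lem:cell-exhaustion} gives a finite section $D\subset G_j$ with $\Lambda_D<\Lambda+\eta/2$. Lemma~\ref{comp:lem:finite-section-viscous-subsolution} with $\tau=\eta/2$ then supplies a compactly supported subsolution at level $\Lambda+\eta$, with support near $\overline D\subset K$.

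If instead the minimum is attained at a common zero, $\Lambda=\omega_z$, we use Lemma~\ref{comp:lem:mixed-OU}. Choose $R$ with $\omega_{z,R}<\omega_z+\eta/2$ and take the positive Dirichlet eigenfunction $\boldsymbol U^R$ of $\mathscr A_z$ on $(-R,R)$, extended by zero. Set $\boldsymbol Q_d(x)=\boldsymbol U^R((x-z)/\sqrt d)$. The variable-coefficient error, estimated as in \eqref{iso:eq:variable-coefficient-error}, is $o(1)$ uniformly on the bounded support $|y|\leq R$. Since the Dirichlet eigenfunction has a positive inward derivative at $\pm R$, the zero extension is a subsolution across $|y|=R$. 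For the relative error near $\pm R$, where $\boldsymbol U^R$ vanishes linearly, we first take the principal eigenfunction on a slightly larger interval and then take the positive part of it minus a small constant multiple of a positive vector. This gives a strict margin, just as in the proof of Theorem~\ref{iso:thm:isolated-local-value}.

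For the supersolution part (i), fix $\nu=\Lambda-\eta$. Then $\nu<\Lambda_{G_j}^{\mathrm{tr}}$ for every cell, and $\nu<\omega_z=\max_\vartheta\rho_z$. By strict concavity of $\rho_z$ we can choose $\vartheta_z\in(0,1)$ with $\rho_z(\vartheta_z)>\nu$. For each common zero $z$ we take the even profile $\boldsymbol\Psi_z$ of Lemma~\ref{comp:lem:mixed-OU}, which satisfies $\mathscr A_z\boldsymbol\Psi_z\geq\nu'\boldsymbol\Psi_z$ with $\nu<\nu'<\rho_z(\vartheta_z)$. The inner field near $z$ is $d^{\vartheta_z/2}\boldsymbol\Psi_z((x-z)/\sqrt d)$, whose tail is $|x-z|^{-\vartheta_z}\boldsymbol v_z(\vartheta_z)$. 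On each cell, Lemma~\ref{comp:lem:prescribed-cell-powers} gives $\boldsymbol U_{G_j}$ with tail $c_z|x-z|^{-\vartheta_z}\boldsymbol v_z(\vartheta_z)$ at every common endpoint. The same exponent and vector appear on both sides of $z$, so choosing the constants $c_z$ equal on the two sides makes the inner and outer fields match to leading order.

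The outer fields on the cells are then assembled as follows.
\begin{enumerate}
\item On the overlap $\sqrt d\ll|x-z|\ll1$, splice the inner and outer fields with logarithmic cutoffs, as in \eqref{comp:eq:left-artificial-log-cutoff}. The $o(|x-z|^{-\vartheta-j})$ remainders in \eqref{comp:eq:OU-power-super} and \eqref{comp:eq:prescribed-cell-tail} make the commutator error relatively $o(1)$.
\item Away from the common zeros, add the diffusion $-d\boldsymbol U_{G_j}''$. Its relative size is $O(d/|x-z|^2)$, which is small off the $\sqrt d$-layer.
\item At the endpoints $\alpha,\beta$ of $K$, insert the endpoint modules of Lemmas~\ref{reg:lem:repelling-module} and~\ref{reg:lem:attracting-domain-modules}, applied at level $\nu$. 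This is allowed because $\nu<\lambda_{\partial,G}$.
\item Glue the exterior tails $\boldsymbol G_d^{\xi,-}$ with the cutoffs of Lemma~\ref{reg:lem:local-realization}. This yields the supersolution inequality at level $\Lambda-\eta$ and the phase gap \eqref{eq:local-phase-gap}.
\end{enumerate}
The phase condition on $K$ holds because every piece has only algebraic size on $K$: the inner field is of order $d^{\vartheta/2}$ and the cell fields blow up only like $|x-z|^{-\vartheta}$. Hence $-d\log\Psi_i=O(d|\log d|)\to0$ on $K$.

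The main obstacle is the matching near common zeros. The outer cell solution is a first-order transport profile singular like $|x-z|^{-\vartheta}$, and the diffusion error $d|x-z|^{-2}$ must be compared against the strict margins $\rho_z(\vartheta_z)-\nu$ and $\Lambda_{G}^{\mathrm{tr}}-\nu$ uniformly across the overlap. The exponents must agree exactly, which is precisely why Lemma~\ref{comp:lem:prescribed-cell-powers} prescribes the same $\vartheta_z$ and $\boldsymbol v_z(\vartheta_z)$ on both cells. If the logarithmic splice leaves too large an error, I would first try splicing at $|x-z|=d^{1/2-\epsilon}$, using the strict supersolution margin of both fields there to absorb the cutoff commutators.
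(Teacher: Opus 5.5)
Your part (i) and the cell case of part (ii) follow the paper's proof. The gap is in the subsolution at a minimizing common zero.

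\textbf{The gap.} You correctly observe that the rescaled \emph{frozen} eigenfunction $\boldsymbol U^R((x-z)/\sqrt d)$ has a residual that is $o(1)$ but need not vanish at $|y|=R$, where $\boldsymbol U^R$ does. So the subsolution inequality can fail in a layer at the edge of the support. Your repair $\boldsymbol Q=(\boldsymbol q)_+$ with $\boldsymbol q=\boldsymbol U^{R'}-\epsilon\boldsymbol e$ does not create a margin.

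At a boundary point of $\{q_i>0\}$ one has $q_i=0$, so the level $\lambda$ drops out. The constant contributes no derivative terms, and $M_{ij}[(q_j)_+-U^{R'}_j]\geq0$. Hence, for fixed $\boldsymbol e\gg0$, $\epsilon>0$ and any $\lambda$, the $i$-th row of $(\mathcal L_d-\lambda)\boldsymbol Q$ there is at least
\[
 \epsilon\bigl(\omega_{z,R'}-M_{ii}(z)\bigr)e_i-o(1)\qquad(d\searrow0).
\]
Since $\omega_{z,R'}\geq\omega_z$, this is positive whenever $\omega_z>M_{ii}(z)$, and that case occurs. For $M_{11}(z)=-10$, $M_{22}(z)=0$, $M_{12}(z)=M_{21}(z)=-1$, $b_1'(z)=20$, $b_2'(z)=-1$, one gets $\omega_z\geq\rho_z(10/21)=-31/21>M_{11}(z)$.

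The analogy with Theorem~\ref{iso:thm:isolated-local-value} breaks down. There, either the subtracted function has a large positive residual from its own derivatives (the slow Gaussian when $\beta_i>0$), or the level is exactly $\sigma(\boldsymbol M(\xi))$ and the constant is its Perron vector (when $\beta_i<0$). Neither mechanism is available for mixed-sign $\boldsymbol B_z$.

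\textbf{The fix (the paper's route).} Take the exact principal Dirichlet eigenpair $(\lambda_d,\boldsymbol\Phi_d)$ of $\mathcal L_d$ on $(z-R\sqrt d,z+R\sqrt d)$. After $x=z+\sqrt d\,y$, the coefficients $d^{-1/2}b_i(z+\sqrt d\,y)$ and $\boldsymbol M(z+\sqrt d\,y)$ converge uniformly on $[-R,R]$ to $b_i'(z)y$ and $\boldsymbol M(z)$. Hence $\lambda_d\to\omega_{z,R}<\omega_z+\eta/2$. The zero extension of $\boldsymbol\Phi_d$, which has positive inward derivatives by Hopf, is then a subsolution with no residual at all.

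\textbf{Two smaller corrections.}
\begin{itemize}
\item The node profile must be $d^{-\vartheta_z/2}\boldsymbol\Psi_z((x-z)/\sqrt d)$, not $d^{\vartheta_z/2}\boldsymbol\Psi_z$, to have tail $|x-z|^{-\vartheta_z}\boldsymbol v_z(\vartheta_z)$. It is then of size $d^{-\vartheta_z/2}$ at $z$, still algebraic, so your phase argument is unaffected.
\item With $\nu=\Lambda-\eta$, the cell profiles satisfy only $\mathscr T\boldsymbol U_G\geq\nu\boldsymbol U_G$, with no margin. The $o(1)$ diffusion, Taylor and splicing errors therefore push you below $\Lambda-\eta$. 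Build all pieces at an intermediate level such as $\Lambda-\eta/2$, as the paper does.
\end{itemize}
For the splice itself, the paper uses a logarithmic cutoff on $d^{1/4}\leq|x-z|\leq r_0$ with $r_0$ fixed. It needs only $U_{G,i}\asymp\Psi_{z,d,i}\asymp|x-z|^{-\vartheta_z}$, not smallness of the mismatch. Your shrinking overlap with leading-order matching works as well.
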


\begin{proof}
Fix $\eta>0$ and a sufficiently small relative neighborhood $U$
of $K$ containing no other static class, and write
$\Lambda=\Lambda_K^{\mathrm{comp}}$.

We begin with the compactly supported subsolution required in
Definition~\ref{def:local-effective-value}(ii). Suppose that
$\Lambda=\Lambda_G^{\mathrm{tr}}$ for some cell $G$. Choose a
finite interval $D$ for which $\Lambda_D<\Lambda+\eta/2$.
Applying Lemma~\ref{comp:lem:finite-section-viscous-subsolution} with
$\tau=\eta/2$, and taking its support in a fixed relative
neighborhood compactly contained in $U$, gives the desired
subsolution.

Suppose instead that $\Lambda=\omega_z$ at a common zero $z$.
Choose $R$ such that $\omega_{z,R}<\omega_z+\eta/2$. After the
rescaling $x=z+\sqrt d\,y$, the coefficients of the Dirichlet
problem on $(z-R\sqrt d,z+R\sqrt d)$ converge on $[-R,R]$ to
those of the truncated local problem. 
As in the proof of Lemma \ref{comp:lem:mixed-OU}, its principal eigenvalue converges to
$\omega_{z,R}$. For small $d$, the corresponding positive
eigenfunction then 
gives a compactly supported subsolution in the standard sense,
with support contained in $U$. If the minimum is attained at more
than one cell or common zero, any one of these constructions may be
used.

It remains to construct the positive supersolution in
Definition~\ref{def:local-effective-value}(i). Set
$\nu=\Lambda-\eta/2$. Since $\Lambda\leq\omega_z$ at every
common zero, for each $z$ one can choose
$\vartheta_z\in(0,1)$ such that $\rho_z(\vartheta_z)>\nu$.
This remains possible when the maximum is attained at $0$ or
$1$. Apply Lemma~\ref{comp:lem:prescribed-cell-powers} to every
cell with $c_z=1$, and apply Lemma~\ref{comp:lem:mixed-OU} at every
common zero, always at the level $\nu$.
The resulting fields satisfy
$\mathscr T\boldsymbol U_G\geq\nu\boldsymbol U_G$ and
$\mathscr A_z\boldsymbol\Psi_z\geq\nu\boldsymbol\Psi_z$.
Near an unchanged endpoint,
$\mathscr T\boldsymbol U_G=\nu\boldsymbol U_G$, together with the
actual admissibility condition. Moreover, the two cell profiles
adjacent to $z$ have the same leading behavior
$|x-z|^{-\vartheta_z}\boldsymbol v_z(\vartheta_z)$.

Rescale the node profile by
\begin{equation}\label{comp:eq:scaled-node-super}
 \boldsymbol\Psi_{z,d}(x)
   =d^{-\vartheta_z/2}
       \boldsymbol\Psi_z\!\left(\frac{x-z}{\sqrt d}\right).
\end{equation}
Choose $r_0>0$, independent of $d$, so that the
$r_0$-neighborhoods of the common zeros are disjoint, avoid
$\alpha,\beta$, and remain within the neighborhoods where
\eqref{comp:eq:prescribed-cell-tail} holds. Taylor expansion of the
coefficients, together with \eqref{comp:eq:OU-power-super}, gives
\[
 \left|[\mathcal L_d\boldsymbol\Psi_{z,d}]_i
 -d^{-\vartheta_z/2}
       [\mathscr A_z\boldsymbol\Psi_z]_i
            \!\left(\frac{x-z}{\sqrt d}\right)\right|
 \leq C(r_0+\sqrt d)\Psi_{z,d,i}
 \quad (|x-z|\leq r_0).
\]
In the $y$-variable, the drift remainder is
$O(\sqrt d\,|y|^2)$. For $|y|\geq1$, the required weighted
derivative bounds follow from \eqref{comp:eq:OU-power-super}, while the 
positivity and $C^2$-regularity control $|y|\leq1$. We first
reduce $r_0$ and then take $d$ small enough that the relative
error is below $\eta/8$. Hence, we conclude that
$\mathcal L_d\boldsymbol\Psi_{z,d}
\geq(\nu-\eta/8)\boldsymbol\Psi_{z,d}$.

Set $h_d=d^{1/4}$ and $L_d=\log(r_0/h_d)$.
On the portions of a cell outside the $h_d$-neighborhoods of the
common zeros and outside the unchanged endpoint neighborhoods, it follows from
\eqref{comp:eq:prescribed-cell-tail} that
$d|U_{G,i}''|\leq C(d/h_d^2)U_{G,i}=o(1)U_{G,i}$.
Hence,
$\mathcal L_d\boldsymbol U_G\geq(\nu-\eta/8)\boldsymbol U_G$
 for small $d$.
On each overlap $h_d\leq t=|x-z|\leq r_0$, we can choose a scalar cutoff
$\chi_{z,d}$, constant near both ends, equal to zero at the inner
edge and one at the outer edge, with
$t|\chi_{z,d}'|+t^2|\chi_{z,d}''|\leq C/L_d$.
In view of $h_d/\sqrt d\to\infty$, we observe that 
\[
 U_{G,i}\asymp\Psi_{z,d,i}\asymp t^{-\vartheta_z},
 \qquad
 |U_{G,i}'|+|\Psi_{z,d,i}'|\leq Ct^{-\vartheta_z-1},
\]
which is uniform for  small $d$.
For
$\boldsymbol F_{z,G,d}
=\chi_{z,d}\boldsymbol U_G+(1-\chi_{z,d})\boldsymbol\Psi_{z,d}$,
the matrix term commutes with the scalar cutoff. The gluing error is
\begin{equation}\label{comp:eq:composite-gluing-error}
 E_{z,d,i}
 =-d\chi_{z,d}''(U_{G,i}-\Psi_{z,d,i})
  -2d\chi_{z,d}'(U_{G,i}'-\Psi_{z,d,i}')
  -b_i\chi_{z,d}'(U_{G,i}-\Psi_{z,d,i}).
\end{equation}
Since $|b_i(x)|\leq Ct$, we can derive that
\[
 |E_{z,d,i}|
 \leq C\left(\frac1{L_d}+\frac d{h_d^2L_d}\right)F_{z,G,d,i}
 =o(1)F_{z,G,d,i}.
\]
Perform this gluing at every common zero, and denote the resulting
profile by $\boldsymbol F_d$. For small $d$, it satisfies
$\mathcal L_d\boldsymbol F_d\geq(\nu-\eta/4)\boldsymbol F_d$
in the interior of $K$, apart from the neighborhoods reserved for
unchanged endpoints. Since each cutoff is constant near the ends of
its overlap, all joins are $C^2$.

Consider now an unchanged endpoint of a cell $G$. By the definition
of $\Lambda$ and the spectral window in
\eqref{comp:eq:cell-uniform-spectral-window},
$\nu<\Lambda\leq\Lambda_G^{\mathrm{tr}}<\lambda_{\partial,G}$.
In a neighborhood of this endpoint, the transport profile solves the
exact equation at level $\nu$. We may therefore insert the modules
from Lemmas~\ref{reg:lem:repelling-module} and
\ref{reg:lem:attracting-domain-modules}, as explained in
Lemma~\ref{comp:lem:finite-section-viscous-subsolution}.
Taking the endpoint neighborhoods disjoint, we obtain
\[
 (\mathcal L_d-(\Lambda-\eta))\boldsymbol F_d
       \geq\frac{\eta}{4}\boldsymbol F_d
\]
on a neighborhood of $K$, with exact Neumann data at every domain
endpoint.

To extend this profile across the artificial boundaries, we use the
addition construction from Steps 1-2 in the proof of
Lemma~\ref{reg:lem:local-realization}:
\[
 \boldsymbol\Psi_{d,\eta}
   =\chi_{F,d}\boldsymbol F_d
       +\sum_{\xi\in\{\alpha,\beta\}\cap(\ell,r)}
         A_{\xi,d}\chi_{\xi,d}\boldsymbol G_d^{\xi,-}.
\]
The verification
in that lemma depends only on the endpoint modules, their positive
differential residual, and the slow-tail formulas, so it applies here
without change. In particular,
\eqref{reg:eq:exponential-separation} makes each
cutoff error exponentially smaller than the profile dominating its
support. Formula~\eqref{reg:eq:slow-tail} absorbs the remaining exterior
errors at the fixed level $\Lambda-\eta$.
We have therefore obtained a relative interval $V$, with
$K\subset V\subset\overline V\subset U$, on which
\begin{equation}\label{comp:eq:composite-final-super}
 \mathcal L_d\boldsymbol\Psi_{d,\eta}
      \geq(\Lambda-\eta)\boldsymbol\Psi_{d,\eta}.
\end{equation}
The function is positive, has no derivative jumps, and satisfies
the Neumann condition at every domain endpoint.

It remains to check the phase normalization. Away from their
endpoints, the cell profiles do not depend on $d$. On the other
parts of $K$, formula \eqref{comp:eq:scaled-node-super}, the
common-zero tails, and the repelling-endpoint estimates
\eqref{reg:eq:repelling-inner-field} and
\eqref{reg:eq:repelling-common-match} give polynomial
upper and lower bounds. The profiles near attracting and domain
endpoints remain positive and bounded. After multiplication by one
common positive scalar, we therefore have
$d^C\leq\Psi_{d,\eta,i}\leq d^{-C}$ on $K$, and
$\max_i\|-d\log\Psi_{d,\eta,i}\|_{C(K)}\to0$.
At each fixed artificial boundary, the slow-tail formulas give
\[
 -d\log\Psi_{d,\eta,i}(x)=
 \begin{cases}
  \theta d_H(x,K)+o(1),&\text{adjacent to a repelling endpoint},\\
  -\theta P_\xi(x)+o(1),&\text{adjacent to an attracting endpoint}.
 \end{cases}
\]
In the first case $d_H(x,K)>0$. In the second,
$P_\xi(x)>0$ and $d_H(x,K)=0$. As in
Lemma~\ref{reg:lem:local-realization}, the matching factors satisfy
$-d\log A_{\xi,d}=o(1)$. Since only finitely many boundary points
are involved, these estimates give the uniform strict gap
\eqref{eq:local-phase-gap}. When both endpoints of $K$ are domain
endpoints, there is no artificial boundary to consider. This proves
Definition~\ref{def:local-effective-value}(i) and completes the proof.
\end{proof}

\bigskip

\noindent{\bf Acknowledgments.} 
We would like to thank Prof. Yuan Lou for insightful discussions on this work, and Prof. King-Yeung Lam (Adrian)
 for
suggesting the interpretation of regular interval classes in terms
of switching between the component drifts.
The author is partially supported by the NSFC grants (12201041, 12631017) and 
Beijing Institute of Technology Research Fund Program for Young Scholars. 

\medskip
\noindent{\bf Declaration of AI usage.}
The AI tool used in this paper is DeepSeek, which was used to polish the grammar, phrasing, and readability of the manuscript.

\baselineskip 18pt
\renewcommand{\baselinestretch}{1.2}

\end{document}